\theoremstyle{plain}
\newtheorem{theorem}{Theorem}[section]
\newtheorem{lemma}[theorem]{Lemma}
\newtheorem{proposition}[theorem]{Proposition}
\newtheorem{corollary}[theorem]{Corollary}
\theoremstyle{remark}
\newtheorem{definition}[theorem]{Definition}
\newtheorem{remark}[theorem]{Remark}
\renewcommand{\b}[1]{\boldsymbol{\mathrm{#1}}} 
\newcommand{\wt}{\widetilde}
\newcommand*\di{\mathop{}\!\mathrm{d}} 
\newcommand*\dd{\mathop{}\!\mathrm{d}} 
\newcommand*\ii{\mathop{}\!\mathrm{i}} 
\newcommand{\st}{such that }
\renewcommand{\P}{\mathbb{P}}
\newcommand{\E}{\mathbb{E}}
\newcommand{\R}{\mathbb{R}}
\newcommand{\C}{\mathbb{C}}
\newcommand{\N}{\mathbb{N}}
\newcommand{\e}{\mathrm{e}}
\newcommand{\caN}{{\mathcal N}}
\newcommand{\bbE}{{\mathbb E}}
\newcommand{\bsv}{{\boldsymbol v}}
\newcommand{\bsw}{{\boldsymbol w}}
\newcommand{\bsH}{{\boldsymbol H}}
\newcommand{\p}[1]{(#1)} 
\newcommand{\pb}[1]{\bigl(#1\bigr)}
\newcommand{\pB}[1]{\Bigl(#1\Bigr)}
\newcommand{\pbb}[1]{\biggl(#1\biggr)}
\newcommand{\pa}[1]{\left(#1\right)}
\newcommand{\q}[1]{[#1]} 
\newcommand{\qb}[1]{\bigl[#1\bigr]}
\newcommand{\qB}[1]{\Bigl[#1\Bigr]}
\newcommand{\qa}[1]{\left[#1\right]}
\newcommand{\h}[1]{\{#1\}} 
\newcommand{\hb}[1]{\bigl\{#1\bigr\}}
\newcommand{\abs}[1]{\lvert #1 \rvert} 
\newcommand{\absb}[1]{\bigl\lvert #1 \bigr\rvert}
\newcommand{\absB}[1]{\Bigl\lvert #1 \Bigr\rvert}
\newcommand{\absbb}[1]{\biggl\lvert #1 \biggr\rvert}
\newcommand{\absBB}[1]{\Biggl\lvert #1 \Biggr\rvert}
\newcommand{\absa}[1]{\left\lvert #1 \right\rvert}
\newcommand{\norm}[1]{\lVert #1 \rVert} 
\newcommand{\scalar}[2]{\langle{#1} \mspace{2mu}, {#2}\rangle}
\DeclareMathOperator{\diag}{diag}
\DeclareMathOperator{\tr}{Tr}
\DeclareMathOperator{\Tr}{Tr}
\DeclareMathOperator{\im}{Im}
\newcommand{\beq}{ \begin{equation} }
\newcommand{\eeq}{ \end{equation} }
\newcommand{\expect}[1]{\mathbb{E} \left[ #1 \right] }
\newcommand{\G}[1]{G_{#1}}
\newcommand{\rvline}{\hspace*{-\arraycolsep}\vline\hspace*{-\arraycolsep}}
\renewcommand{\t}{t}
\numberwithin{equation}{section} 
\numberwithin{theorem}{section}
\title{Spectral Properties and Weak Detection in Stochastic Block Models}
\author{%
	Yoochan Han \footnote{Department of Mathematical Sciences, KAIST, Daejeon, 34141, Korea
		\newline email: \texttt{happycuki71@kaist.ac.kr}}, \;
	Ji Oon Lee \footnote{Department of Mathematical Sciences, KAIST, Daejeon, 34141, Korea
		\newline email: \texttt{jioon.lee@kaist.edu}},	\;
	and Wooseok Yang \footnote{Department of Mathematical Sciences, KAIST, Daejeon, 34141, Korea
		\newline email: \texttt{ws.yang@kaist.ac.kr}}
}
\begin{document}

\maketitle

\begin{abstract}
We consider the spectral properties of balanced stochastic block models of which the average degree grows slower than the number of nodes (sparse regime) or proportional to it (dense regime). For both regimes, we prove a phase transition of the extreme eigenvalues of SBM at the Kesten--Stigum threshold. We also prove the central limit theorem for the linear spectral statistics for both regimes. We propose a hypothesis test for determining the presence of communities of the graph, based on the central limit theorem for the linear spectral statistics.
\end{abstract}

\section{Introduction}\label{sec:intro}

We consider the stochastic block model (SBM), one of the most fundamental models for the networks with community structure. We focus on the spectral properties of balanced SBMs in which the difference between the intra-community probability and the inter-community probability is significantly smaller than their average. 

\vskip10pt

\textbf{Stochastic Block Model:} 
A stochastic block model we consider is a graph with $N$ nodes, partitioned into disjoint subsets, called the communities, $C_1, \dots, C_K$ of equal sizes, where the number of the communities $K$ is independent of $N$. Its adjacency matrix $\wt M$ is a symmetric $N \times N$ matrix whose entries are Bernoulli random variables satisfying
\begin{align}\label{def:wt M}
	\P( \wt M_{ij} =1 )=
	\begin{cases}
		p_s & (i \sim j) \\
		p_d & (i\not\sim j)
	\end{cases}, \qquad
	\P( \wt M_{ij} =0 )=
	\begin{cases}
		1-p_s & (i \sim j) \\
		1-p_d & (i\not\sim j)
	\end{cases},
\end{align}
where $i \sim j$ means that $i$ and $j$ are within the same community. We assume that the SBM is balanced, i.e., the communities are with the same size.

For the spectral analysis, it is easier to rescale the adjacency matrix so that the typical size of the eigenvalues is of order one. 
We rescale $\wt M$ via the average edge probability $p_a$ defined as
\beq \label{eq:p_a}
	p_a:=\frac{p_s+(K-1)p_d}{K},
\eeq
which can be obtained from given data. We introduce the rescaled matrix $M$ defined by
\beq \label{eq:M}
	M_{ij} = \frac{\wt M_{ij} - p_a}{\sigma}, \qquad \sigma:=\sqrt{N\cdot\frac{p_s(1-p_s)+(K-1)p_d(1-p_d)}{K}}.
\eeq
With the rescaling, the variance of the entries $M_{ij}$ is $\Theta(N^{-1})$ and it can be checked that the most of the eigenvalues of $M$ are contained in $[-2, 2]$. Note that the entries of $M$ are not centered due to the difference between $p_s$ and $p_d$.

\vskip10pt

\textbf{Spiked Wigner Matrix:}
A spiked Wigner matrix is a random matrix of the form $\lambda X X^T + H$, where the spike $X$ is an $N \times K$ matrix whose column vectors are with the unit norm and $H$ is an $N \times N$ Wigner matrix. The parameter $\lambda$ corresponds to the signal-to-noise ratio (SNR). In this model, with the normalization $\E H_{ij}^2 = N^{-1}$ for $i \neq j$, the largest eigenvalue of $\lambda X X^T + H$ converges to $\lambda + \lambda^{-1}$ if $\lambda > 1$ and to $2$ if $\lambda < 1$. This phase transition is called the Baik--Ben Arous--P\'ech\'e (BBP) transition, after the seminal work of \cite{BBP05} for the phase transition of the largest eigenvalue of a spiked Wishart matrix.

The BBP-transition result suggests that the standard principal component analysis (PCA) can be applied to the detection problem for spiked Wigner matrices in case $\lambda > 1$, which guarantees reliable detection. On the other hand, in case $\lambda < 1$, it is known that reliable detection is impossible if the noise is Gaussian and the spike is rank-$1$ \cite{PWBM18}. In this case, one can consider the weak detection, which is a hypothesis test about the presence of the signal. The likelihood ratio (LR) test is optimal as can be checked from Neyman--Pearson lemma, but one can also construct an optimal test based on the behavior of the linear spectral statistics (LSS) of the eigenvalues \cite{CL19,JCL20}, which is a linear functional defined as
\beq \label{def:LSS}
	L_M(f) :=  \sum_{i=1}^N f(\mu_i(M))
\eeq
for a given function $f$, where $\mu_1(M),\cdots\mu_N(M)$ are the eigenvalues of the matrix $M$. 

The rescaled adjacency matrix $M$ can be viewed as a generalized spiked Wigner matrix with a spike of rank-$(K-1)$ as follows: We first decompose $M$ into $M = \E M + H$. The expectation $\E M$ is a deterministic matrix whose rank is $(K-1)$ and its only non-zero eigenvalue is $\frac{N(p_s-p_d)}{K\sigma}$ with multiplicity $(K-1)$. The noise $H := M - \E M$, which we call a centered SBM, is a random matrix. It can be easily computed that the SNR of the SBM is given by
\begin{align}\label{def:SNR}
	\frac{N(p_s-p_d)^2}{K(p_s+(K-1)p_d)}.
\end{align}
From the fact that $1$ is the threshold for the SNR in the BBP-transition, it can be deduced that it is possible to reliably detect the communities in an SBM if the SNR in \eqref{def:SNR} is larger than $1$. The threshold is called the Kesten--Stigum (KS) threshold, which first appeared in \cite{KS66}. 

\vskip10pt

\textbf{Main Problem:}
The spectral properties, including the location of the largest eigenvalues, of SBMs are largely unknown. The main difficulty in the spectral analysis of the SBM is that the variances of the entries of the noise $H$ are not identical. Furthermore, in the sparse regime where $p_a = o(1)$, the analysis is more involved due to its singular nature that the entries of $M$ are highly concentrated at a single value $-p_a/\sigma$. Our goal is to prove spectral properties of SBM, including the BBP-type transition and the CLT of the LSS, and apply it to the detection problem.

\vskip10pt

\textbf{Main Contribution:}
Our main contributions in this work are as follows:
\begin{enumerate}[(1)]
	\item We prove the eigenvalue phase transition in the dense regime and the sparse regime. (Theorems~\ref{thm:eigval bbp} and~\ref{thm:eigval_bbp_sparse}).
	\item We prove the central limit theorem (CLT) of the LSS in both the dense regime and the sparse regime. (Theorems~\ref{thm:CLT} and~\ref{thm:converge gaussian}).
	\item We propose a test based on the CLT of the LSS. (Theorem~\ref{thm:CLT test}).
  \item We prove the local law for the sparse centered generalized stochastic block model. (Lemma~\ref{lemma:local law cgSBM})
\end{enumerate}

In our work, in terms of the average edge probability $p_a$ in \eqref{eq:p_a}, the dense regime means $p_a = \Theta(1)$ and the sparse regime means $p_a = N^{-c}$ for some constant $c \in (0, 1)$. See also Definitions \ref{def:Csbm H} and \ref{def:Dsbm M}.

Our first main result is the phase transition of the largest eigenvalues of the SBM in both the dense regime and the sparse regime. More precisely, we prove that the largest eigenvalues pop up from the bulk of the spectrum if and only if the SNR is above the KS-threshold. For the proof, we adapt the strategy of \cite{Rao11}, based on an estimate on the resolvent of a random matrix, known as the isotropic local law in random matrix theory. While we can directly apply the isotropic local law for generalized Wigner matrices in the dense regime, the corresponding result was not known in the sparse regime. In Lemma \ref{lemma:local law cgSBM}, we prove a weaker version of the isotropic local law in the sparse regime, which is enough for the proof of the eigenvalue phase transition. The local law we proved in this paper is not simply a tool for the proof of the eigenvalue transition but of great importance per se, since the result itself and also the idea of the proof for it can be used in many other problems on sparse random matrices.

Our second main result is the CLT for the LSS of SBM with general ranks. For Wigner matrices, the proof of the CLT is based on the analysis of the resolvent in Cauchy's integral formula \cite{BY05} or the analysis of the characteristic function \cite{Lytova2009}, and the proof can be extended to more general models by the interpolation with a reference matrix \cite{Lytova2009,CL19}. For the SBM in the dense regime, the reference matrix for the interpolation is a generalized Wigner matrix for which the CLT for the LSS was proved in \cite{LX20}. In the sparse regime, however, the corresponding result is not known and thus as the first step we introduce a centered SBM and prove the CLT for the LSS for it. The proof in the first step requires the ideas from the both methods, the analysis of Cauchy's integral formula and the analysis of the characteristic function. We finish the proof by applying the interpolation method.

With the CLT of the LSS, we follow the ideas in \cite{CL19,JCL20} to propose a hypothesis test between the hypotheses on the number of communities $K$,
\[
	\bsH_1 : K=K_1, \qquad \bsH_2 : K=K_2,
\]
for non-negative integer $K_1 < K_2$, independent of $N$. The test is computationally easy and the idea can also be used for the estimation of the rank of the spike. We prove the limiting error of the proposed test and numerically check its performance.

While the main motivation of the current work lies in the study of the spectral behavior of the SBM, our results naturally extend to more general models. See Definitions \ref{def:Csbm H} and \ref{def:Dsbm M}.

\vskip10pt

\textbf{Related Works:}
The stochastic block model was introduced in the study of social networks \cite{HLL83}. It provides a basic yet fundamental model in various fields of study, most notably in the research for the community detection (and recovery) problem. Several methods have been proposed for the problem, including spectral clustering \cite{KMM13,SB15,Jin15}, maximization of modularities \cite{BC09}, semi-definite programming \cite{GV16}, and penalized ML detection with optimal misclassification proportion \cite{GMZZ17}. See \cite{Abbe17} and references therein for the history and more recent developments.

The spectral properties of spiked Wigner matrices, especially the behavior of the extremal eigenvalues, have been extensively studied in random matrix theory, e.g., \cite{P06,Rao11,BV16}. Such results have been applied to the detection problem for spiked Wigner models \cite{BS16,Lei16}. The limits of detection in this model have been considered in statistical learning theory \cite{MRZ17,PWBM18,EKJ20,CL19}. 

The study of sparse random matrices is also of great importance in random matrix theory. The analysis of sparse random matrices based on the Stieltjes-transform method was initiated in \cite{EKYY12,EKYY13}, where the main objects of study was Erd\H{o}--R\'enyi graphs in the sparse regime. The behavior of the extreme eigenvalues of sparse Erd\H{o}--R\'enyi graphs were considered in various works \cite{LS18,HLY20,HK20,lee2021higher,huang2022edge}. Other related sparse random matrix models were also studied, including the sparse sample covariance matrices \cite{Hwang2018} and sparse SBM \cite{HLY20}.

\vskip10pt

\textbf{Definition of the Model:}
Here, we precisely define the model we consider in this paper.

\begin{definition}[Centered Generalized SBM, cgSBM] \label{def:cgSBM_main} \label{def:Csbm H}
	Fix any $0 < \phi \leq 1/2$. We assume that $H=(H_{ij})$ is a real $N \times N$ block random matrix with $K$ balanced communities with $1\leq K \leq N$, whose entries are independently distributed random variables, up to symmetry constraint $H_{ij}=H_{ji}$. We suppose that each $H_{ij}$ satisfies the moment conditions
	\begin{align}\label{eq:moments}
		\E H_{ij}=0, \qquad \E|H_{ij}|^2=\sigma_{ij}^2,\qquad\E|H_{ij}|^k \leq \frac{(Ck)^{ck}}{Nq^{k-2}},\qquad (k\geq 2),
	\end{align}
	with sparsity parameter $q$ satisfying
	\begin{align} \label{eq:sparsity}
		q = C\cdot N^\phi
	\end{align}
 for some constant $C$.
	Here, we further assume the normalization condition $\sum_{i}\sigma_{ij}^2 =1.$
\end{definition}
Note that the condition (\ref{eq:sparsity}) can be extended to $C_1\cdot N^\phi \leq q \leq C_2 \cdot N^{1/2}$ for some constant $C_1$ and $C_2$.

\begin{definition}[Deformed cgSBM]\label{def:Dsbm M}
	Let $H$ be a centered SBM given in Definition \ref{def:Csbm H}, $K \in \N$ be fixed, $V$ be a deterministic $N \times K$ matrix satisfying $V^T V = I$, and $d_1, \dots, d_K$ be (possibly $N$-dependent) deterministic constants such that $d_1\ge\cdots\ge d_K >0$.
	A rank-$K$ deformed SBM is a matrix $M$ of the form
	\begin{equation}\label{eq:deformation}
		M= H + 	V D V^T, 
	\end{equation}
	where $V D V^T = \sum_{i = 1}^K d_i \b v^{(i)} (\b v^{(i)})^T$ and $D=\diag(d_1, \dots, d_K)$ with $V = [\b v^{(1)}, \dots, \b v^{(K)}]$. Also, each column $v^{(i)}$ of $V$ should have block structure, which means that $v^{(i)}$ can be partitioned into finite number of blocks with the same dimension and the entries in each block have same value.
\end{definition}

Let
\beq \label{eq:gamma}
\gamma_N := \frac{N(p_s-p_d)}{\sigma K}=\sqrt{\frac{N(p_s-p_d)^2}{K(p_s(1-p_s)+(K-1)p_d(1-p_d))}}.
\eeq
Then, the (rescaled) SBM in \eqref{eq:M}is a deformed cgSBM with $d_1=\dots=d_{K-1}= \gamma_N$ and its $H$ is cgSBM satisfying $q^2 = Np_a$. We mostly focus on the case where $\gamma := \lim_{N \to \infty} \gamma_N \in (0, \infty)$, which happens when $|p_s - p_d|$ is sufficiently small; in the dense regime, for example, $|p_s - p_d| = O(N^{-1/2})$.

\textbf{Organization of the Paper:}
The rest of the paper is organized as follows: In Section \ref{sec:main result dense}, we present our main results for the dense regime, including a BBP-like transition for the extreme eigenvalues and the central limit theorem for the linear statistics, and we also propose an algorithm for a hypothesis test for the weak detection, based on the linear spectral statistics. In Section \ref{sec:main result sparse}, we present our main results for the sparse regime, including a BBP-like transition for the extreme eigenvalues and the central limit theorem for the linear statistics. In Section \ref{sec:proof of thm eig bbp}, we prove the local law of the sparse centered generalized stochastic block model and use it to prove the transition for the extreme eigenvalues. In Section \ref{sec:CLT}, we explain the main ideas of our proof of the central limit theorems. We conclude the paper in Section \ref{sec:conclusion} with the summary of our works and possible future research directions. Some results from numerical experiments and the technical details of the proofs can be found in Appendices.

\section{Main Results - Dense Regime}\label{sec:main result dense}
In this section, we consider the cgSBM in the dense regime, satisfying $\phi = 1/2$ so that $q = N^{1/2}$ and in terms of the (rescaled) SBM in \eqref{eq:M},
\[
	p_a = \frac{p_s+(K-1)p_d}{K} = \Theta(1).
\]
\subsection{Eigenvalue phase transition}\label{subsec:bbp}
Our first main result is the following phase transition for the largest eigenvalues, which basically coincides with the BBP-transition.
\begin{theorem}[Eigenvalue phase transition]\label{thm:eigval bbp}
	Let $M$ be a deformed cgSBM that satisfies Definition \ref{def:Dsbm M} with cgSBM $H$ has $\phi=1/2$. The block structure condition of $M$ can be omitted. Denote the ordered eigenvalues of $ M$ by $\lambda_1( M)\ge\cdots \ge \lambda_N( M)$.
	Then, for each $1\le  i \leq K$,
	$$\lambda_i(M) \rightarrow \begin{cases} d_i+d_i^{-1}& \textrm{ if } d_i>1,\\ 
		2 &\textrm{ otherwise,}\end{cases}$$
	as $N \to \infty$. Moreover, for each fixed $i>K$, $\lambda_i(M)\rightarrow 2$ almost surely as $N \to \infty$.
\end{theorem}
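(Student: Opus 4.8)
\emph{Proof proposal.} I would follow the resolvent strategy of \cite{Rao11} (``BBP transition via the isotropic local law''). Write $G(z) := (H-z)^{-1}$ for the resolvent of the centered matrix, and let $m_{sc}$ be the Stieltjes transform of the standard semicircle law, i.e.\ the branch of $\tfrac12(-z+\sqrt{z^2-4})$ analytic on $\C\setminus[-2,2]$ and decaying at infinity; on $(2,\infty)$ it increases bijectively onto $(-1,0)$, and a short computation from $(d+d^{-1})^2-4=(d-d^{-1})^2$ gives $m_{sc}(d+d^{-1})=-d^{-1}$ for $d>1$. Since $\phi=1/2$, the cgSBM $H$ has entrywise variances of order $N^{-1}$, the row-sum normalization $\sum_i\sigma_{ij}^2=1$, and the sub-exponential moment bound \eqref{eq:moments}, so it is covered by the isotropic local law for generalized Wigner matrices. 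I would record the two consequences I need: (i) the isotropic local law
\[
\absb{\scalar{\b u}{G(z)\b w}-m_{sc}(z)\scalar{\b u}{\b w}}\to 0
\]
with probability $1-o(1)$, uniformly over unit vectors $\b u,\b w$ and over $z$ in compact subsets of $\C\setminus[-2,2]$; and (ii) the edge estimates $\lambda_1(H)\to 2$, $\lambda_N(H)\to-2$ almost surely, with $\norm H=O(1)$.

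Next I would reduce to a $K\times K$ secular equation. Fix a small $\delta>0$. On the event $\{\norm H<2+\delta\}$, every eigenvalue $\lambda$ of $M=H+VDV^T$ with $\abs\lambda>2+\delta$ has $H-\lambda$ invertible, so $\det(H+VDV^T-\lambda)=0$ becomes, by the Schur/Sylvester determinant identity,
\[
0=\det\!\pb{I_K+D\,V^TG(\lambda)V}=\det\!\pb{D^{-1}+V^TG(\lambda)V}.
\]
Only $V^TV=I_K$ is used here, so the block structure of $V$ in Definition~\ref{def:Dsbm M} plays no role, which is why it can be dropped. By the isotropic local law $V^TG(z)V=m_{sc}(z)I_K+o(1)$ in operator norm, uniformly on compacts of $\C\setminus[-2,2]$, so the secular determinant converges (uniformly, together with its analytic continuation to a complex neighbourhood of any compact real interval in $(2,\infty)$) to $\prod_{j=1}^K\pb{d_j^{-1}+m_{sc}(z)}$, whose zeros in $(2,\infty)$ are exactly the points $d_j+d_j^{-1}$ with $d_j>1$.

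I would then conclude by a counting/continuity argument. Assume (as in the case $d_i=\gamma_N\to\gamma\neq 1$ of interest) the $d_i$ are bounded and bounded away from $1$, and set $\ell:=\#\{j:d_j>1\}$. Choose $\delta$ small and $R$ large enough that all $\ell$ points $d_j+d_j^{-1}$ lie in $(2+2\delta,R)$ and $\lambda_1(M)\le\norm H+d_1<R$ on $\{\norm H<2+\delta\}$. A Hurwitz/Rouch\'e argument applied to the uniformly convergent analytic functions above then shows that, with probability $1-o(1)$, $M$ has exactly $\ell$ eigenvalues in $(2+\delta,R)$, and they converge to $d_1+d_1^{-1}\ge\cdots\ge d_\ell+d_\ell^{-1}$; hence $\lambda_i(M)\to d_i+d_i^{-1}$ for $1\le i\le\ell$. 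For $\ell<i\le K$ we have $d_i\le 1$: the same argument gives $\limsup_N\lambda_i(M)\le 2+\delta$ for every $\delta>0$, while $VDV^T\succeq 0$ and Weyl's inequality give $\lambda_i(M)\ge\lambda_i(H)\to 2$, so $\lambda_i(M)\to 2$. Finally, for fixed $i>K$, the rank-$K$ perturbation interlacing $\lambda_{i+K}(H)\le\lambda_i(M)\le\lambda_{i-K}(H)$ with $\lambda_j(H)\to 2$ almost surely for each fixed $j$ yields $\lambda_i(M)\to 2$ almost surely.

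The main obstacle is not the soft part above but (a) confirming that the cgSBM with $\phi=1/2$ really meets the hypotheses of the isotropic local law — this is exactly where the dense-regime conditions $q=N^{1/2}$, \eqref{eq:moments}, and $\sum_i\sigma_{ij}^2=1$ enter — and (b) excluding ``near-edge'' outliers of $M$ in $(2,2+\delta)$ that are invisible to the limiting secular equation; (b) is handled by first working at scale $\abs\lambda>2+\delta$ via $\norm H<2+\delta$ and then letting $\delta\downarrow 0$. In the dense regime step (b) is routine precisely because the isotropic local law holds up to the edge; it is the step that fails in the sparse regime and motivates the separate development of Lemma~\ref{lemma:local law cgSBM} and Theorem~\ref{thm:eigval_bbp_sparse}.
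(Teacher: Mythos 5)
Your proposal is correct and follows essentially the same route as the paper: the determinant identity $\det\pb{D^{-1}+V^TG(\mu)V}=0$ (Lemma \ref{lem: det identity}), the isotropic local law applied to the finitely many vectors $\b v^{(i)}$ (Lemma \ref{lem:iso local law}), Weyl/interlacing for the non-outlier and $i>K$ eigenvalues, and the identity $m_{sc}(d+d^{-1})=-d^{-1}$ for $d>1$. The one genuine difference is the endgame: the paper locates the outliers by invoking Lemma \ref{lem:phase transition} (Lemma 6.1 of \cite{Rao11}), which requires pairwise distinct $d_i$, and then treats repeated $d_i$ by a separate perturbation step using the Hoffman--Wielandt inequality, whereas you run a Hurwitz/Rouch\'e zero count on the secular determinant $\det\pb{D^{-1}+V^TG(z)V}$ directly; since that count is with multiplicity, it handles coincident $d_i$ (the case $d_1=\cdots=d_{K-1}=\gamma_N$ actually used for the SBM) in one stroke, at the price of fixing contours avoiding the limiting zeros, which is why you add the assumption that the $d_i$ are bounded and bounded away from $1$ --- an assumption that is implicitly present in the paper's argument as well. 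One small caution: the isotropic local law does not hold simultaneously ``uniformly over all unit vectors'' as your item (i) states (that would amount to an operator-norm bound on $G(z)-m_{sc}(z)I$ outside the spectrum, which is false); it holds for each fixed deterministic pair with overwhelming probability, and a union bound over the $K^2$ pairs $(\b v^{(i)},\b v^{(j)})$ together with a lattice/continuity argument in $z$ is what you actually need --- and is all your proof uses, so this is an overstatement rather than a gap.
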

We prove Theorem \ref{thm:eigval bbp} in Section \ref{subsec:proof of thm eig bbp dense}. We remark that similar results hold for the smallest eigenvalues $\lambda_{N-i}(M)$ when $d_{K-i} < -1$.

By Theorem \ref{thm:eigval bbp}, if $\gamma = \lim_{N \to \infty} \gamma_N >1$, the number of communities in \eqref{eq:M} can be reliably checked by PCA. 
Results of numerical experiments regarding Theorem \ref{thm:eigval bbp} are provided in Appendix \ref{sec:example BBP}.

\subsection{CLT with perturbation}\label{subsec:clt perturbation}
Our second result in this section is the CLT for the LSS of deformed SBMs. For a precise statement, we introduce the Chebyshev polynomial (of the first kind).
\begin{definition}[Chebyshev polynomial] The $n$-th Chebyshev polynomials of the first kind $T_n$ are obtained from the recurrence relation $T_0(x) = 1$, $T_1(x)=x$ and 
	$$
	T_{n+1}(x) = 2x T_n (x) - T_{n-1}(x).
	$$
\end{definition}

\begin{theorem}[CLT for deformed SBM]\label{thm:CLT}
	Let $M$ be a deformed cgSBM that satisfies Definition \ref{def:Dsbm M} with $d_i =\gamma_N$ for all $i = 1, 2, \dots, K$ with $\gamma_N < 1$ with cgSBM $H$ having $\phi=1/2$. The block structure condition of $M$ can be omitted. Then, for any function $f$ analytic on an open interval containing $[-2, 2]$,
	\begin{align} \label{eq:CLT dSBM}
	 \pB{L_M(f) - N \int_{-2}^2 \frac{\sqrt{4-z^2}}{2\pi} f(z) \di z}  \Rightarrow \mathcal{N}\left(m_K(f), V_0(f)\right)\,.
	\end{align}
	The mean and the variance of the limiting Gaussian distribution are given by
	\begin{align} 
	&m_K(f) = \frac{1}{4} \left( f(2) + f(-2) \right) -\frac{1}{2} \tau_0(f)  - \tau_2(f) + k_4 \tau_4(f) + K \sum_{\ell=1}^{\infty} \gamma_{N}^{\ell} \tau_{\ell}(f),\label{eq:m_k(f)} \\
	&V_0(f) = -\tau_1(f)^2 + 2k_4 \tau_2(f)^2 
		+ 2\sum_{\ell=1}^{\infty} \ell \tau_{\ell}(f)^2\,,\label{def:v_0(f)}
	\end{align}
	where we let
	\[
	\tau_{\ell}(f) = \frac{1}{\pi} \int_{-2}^2 T_{\ell} \left( \frac{x}{2} \right) \frac{f(x)}{\sqrt{4-x^2}} \di x
	\]
	with $T_{\ell}$ be the $\ell$-th Chebyshev polynomial. The parameter $k_4$ is defined by
	\beq \label{eq:k_4}
		k_4 := \frac{1-7p +12p^2 -6p^3}{p(1-p)^2}.
	\eeq
        where p is the limitig value of $q^2/N$
\end{theorem}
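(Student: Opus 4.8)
The plan is to reduce the statement to two pieces: (i) the CLT for the LSS of the \emph{centered} cgSBM $H$ (i.e.\ the $D=0$ case), and (ii) a deterministic shift of the mean coming from the rank-$K$ perturbation $VDV^T$. For step (i), since $\phi = 1/2$ the matrix $H$ is a generalized Wigner matrix up to the block structure of the variance profile $(\sigma_{ij}^2)$, so one can invoke the interpolation strategy of \cite{Lytova2009,CL19}: interpolate between $H$ and a reference generalized Wigner matrix $W$ with the same variance profile but Gaussian entries, for which the CLT for the LSS is already available from \cite{LX20}. The interpolation introduces the fourth-cumulant correction: the only term in $m_K(f)$ and $V_0(f)$ that is sensitive to the non-Gaussianity of the entries is the $k_4$ term, and a direct computation of the fourth cumulant of $H_{ij} = (\wt M_{ij} - p_a)/\sigma$ in terms of $p = \lim q^2/N$ yields exactly the formula \eqref{eq:k_4}. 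Here I would use Cauchy's integral formula $L_M(f) = -\frac{1}{2\pi \i}\oint f(z)\,\tr(M-z)^{-1}\,\di z$ together with the local law for generalized Wigner matrices to control the resolvent along the contour, and then track the derivative of the characteristic functional along the interpolation path; the terms $-\tfrac12\tau_0(f) - \tau_2(f)$ in the mean and $-\tau_1(f)^2 + 2\sum_\ell \ell\,\tau_\ell(f)^2$ in the variance are the universal generalized-Wigner contributions, while $\tfrac14(f(2)+f(-2))$ comes from the edge.

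For step (ii), I would write $M = H + VDV^T$ with $D = \gamma_N I$ and $\gamma_N < 1$, so that by Theorem~\ref{thm:eigval bbp} no eigenvalue of $M$ escapes $[-2,2]$, and expand
\[
\tr(M-z)^{-1} = \tr(H-z)^{-1} + \sum_{m\ge 1} (-1)^m \tr\!\big[(H-z)^{-1} VDV^T\big]^m .
\]
Because $V$ has block structure and $H$ satisfies the isotropic local law, each bilinear form $(\b v^{(i)})^T (H-z)^{-1} \b v^{(j)}$ concentrates around $m_{sc}(z)\,\delta_{ij}$ (the Stieltjes transform of the semicircle law) up to an error $o(1)$, so the perturbation series is, to leading order, deterministic: it contributes $K \sum_{m\ge1}(-1)^m \gamma_N^m m_{sc}(z)^m$ to $\tr(M-z)^{-1}$, which does not fluctuate and hence only shifts the mean. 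Integrating this deterministic correction against $f$ via Cauchy's formula and using the generating-function identity $\sum_{\ell\ge 1} T_\ell(x/2)\,w^\ell = \dots$ (equivalently, expanding $m_{sc}(z)^\ell$ in Chebyshev polynomials) produces precisely the term $K\sum_{\ell=1}^\infty \gamma_N^\ell \tau_\ell(f)$ in \eqref{eq:m_k(f)}; the variance is unchanged because the subleading fluctuations of the bilinear forms are $o(1)$ and do not survive in the limit.

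The main obstacle I anticipate is controlling the perturbation series uniformly on a contour that approaches the edges $\pm 2$, where $m_{sc}(z)$ has modulus close to $1$ and the naive geometric bound $\gamma_N^m |m_{sc}(z)|^m$ is only marginally summable; one needs the isotropic local law with the correct $z$-dependent error (an error of size $\sqrt{\im m_{sc}(z)/(Nq\,\eta)} + (Nq\eta)^{-1}$ type, square-root-regularized near the edge) and a careful choice of contour distance $\eta \sim N^{-1}$ away from $[-2,2]$ to push the error terms below the $N^{-1}$ threshold needed for a genuine CLT (no $N$-growing bias). A secondary technical point is that the block structure of $V$ must be used to ensure the bilinear forms $(\b v^{(i)})^T(H-z)^{-1}\b v^{(i)}$ are genuinely close to $m_{sc}(z)$ rather than to some weighted average determined by the variance profile — this is where the ``block structure condition'' of Definition~\ref{def:Dsbm M} enters, and it is also why the theorem can assert that this condition ``can be omitted'' only after one checks that for the specific variance profile of the cgSBM the relevant deterministic equivalent still collapses to the scalar $m_{sc}(z)$. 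Assembling (i) and (ii), adding the means and the variances, gives \eqref{eq:CLT dSBM}.
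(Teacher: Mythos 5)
Your overall plan is the same as the paper's: take the CLT for the centered matrix $H$ as given (the paper simply invokes \cite{LX20} for the cgSBM itself in Theorem \ref{thm:general CLT}, so your extra Gaussian-interpolation step in (i) is not needed -- that reference already allows general entries and carries the $k_4$ correction through the variance profile), and then show that the rank-$K$ perturbation only produces a deterministic $O(1)$ shift of the mean, computed through the isotropic local law and a Chebyshev/contour calculation. The paper implements the shift by differentiating $\Tr(M(\theta)-z)^{-1}$ in the interpolation parameter $\theta$ and integrating over $\theta\in[0,1]$; your direct resolvent expansion is an acceptable substitute in principle.

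However, the expansion you wrote is incorrect, and the error lands exactly on the quantity you need. The Neumann series is $(M-z)^{-1}=\sum_{m\ge 0}(-1)^m\big[(H-z)^{-1}VDV^T\big]^m(H-z)^{-1}$, so
\begin{equation*}
\Tr(M-z)^{-1}-\Tr(H-z)^{-1}=\sum_{m\ge1}(-1)^m\Tr\!\Big(\big[G(z)VDV^T\big]^m G(z)\Big)
= -\frac{\di}{\di z}\log\det\!\big(I_K+DV^TG(z)V\big),
\end{equation*}
not $\sum_{m\ge1}(-1)^m\Tr\big[G(z)VDV^T\big]^m$. With $V^TG(z)V\approx m_{sc}(z)I_K$ and $D=\gamma_N I_K$, your formula gives the deterministic equivalent $-K\gamma_N m_{sc}/(1+\gamma_N m_{sc})$, whereas the correct one is $-K\gamma_N m_{sc}'/(1+\gamma_N m_{sc})$. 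The distinction matters: under the substitution $s=m_{sc}(z)$, $z=-s-1/s$, it is precisely the factor $m_{sc}'(z)\,\di z=\di s$ that turns the contour integral into the geometric series producing $K\sum_{\ell\ge1}\gamma_N^\ell\tau_\ell(f)$; integrating $f(z)$ against your version instead brings in the Jacobian $(s^2-1)/s^2$ and yields a different combination of the $\tau_\ell$'s, so the stated mean shift is not actually derived. The repair is easy (keep the trailing resolvent, use the log-determinant identity above, or follow the paper's $\theta$-derivative, which builds the $z$-derivative in automatically via $\partial_\theta\Tr R(\theta,z)=-\gamma_N\sum_m\partial_z\langle \b v^{(m)},R(\theta,z)\b v^{(m)}\rangle$), but as written this step fails. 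Two minor points: the worry about the contour approaching $\pm2$ is unnecessary -- since $\gamma_N<1$, Theorem \ref{thm:eigval bbp} keeps all eigenvalues inside a fixed rectangular contour at distance $v_0$ from $[-2,2]$, where the isotropic law (Lemma \ref{lem:iso local law}) gives a uniform $O_\prec(N^{-1/2})$ error, and since the trace difference is $O(1)$ no $N^{-1}$-precision is required; and in the dense regime the isotropic law holds for arbitrary deterministic unit vectors because the variance profile is doubly stochastic, which is exactly why the block-structure condition can be dropped without any additional check.
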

The parameter $k_4$ is approximately the sum of the fourth cumulants of $H_{ij}$. Note that the variance $V_0(f)$ of the limiting Gaussian does not depend on $K$. We prove Theorem \ref{thm:CLT} in Appendix \ref{sec:proof of thm CLT}. 

\subsection{Detection}\label{subsec:detection}
Recall that $\bsH_1$ and $\bsH_2$ are the hypotheses such that
\[
	\bsH_1 : K=K_1, \qquad \bsH_2 : K=K_2,
\]
for non-negative integer $K_1 < K_2$, independent of $N$. Note that a hypothesis test between $\bsH_1$ and $\bsH_2$ corresponds to the weak detection for the presence of the community structure if $K_1 = 0$.

Suppose that the value $\gamma_N < 1$ is known and our task is to detect whether the community structure is present from a given data matrix $M$ given in \eqref{eq:M}. If we construct a hypothesis test based on the LSS, it is clear that we need to maximize
\begin{align}
	\absa{ \frac{m_{K_1}(f)-m_{K_2}(f)}{\sqrt{V_0(f)}} }
\end{align}
Following the proof of Theorem 6 in \cite{CL19}, it can be proved that optimal $f$ is of the form $f=C_1\phi_\lambda+C_2$ for some constant $C_1$ and $C_2$, where 
\begin{align}
	\phi_{\gamma_N}(x) := \log \pB{\frac{1}{1-\gamma_N x+\gamma_N^2}}+\gamma_N x +\gamma_N^2 \pB{ \frac{1}{k_4+2} - \frac{1}{2} }x^2.
\end{align}
We thus use a test statistic $L_{\lambda}$ for the hypothesis test, defined as
\begin{align}\label{def:test stat}
	L_{\gamma_N} &:=L_M(\phi_{\gamma_N}) - N \int_{-2}^2 \frac{\sqrt{4-z^2}}{2\pi} \phi_{\gamma_N}(z)  \di z \notag \\
	&= -\log \det \pB{(1+\gamma_N^2)I-\gamma_N M} + \frac{\gamma_N^2 N}{2}  + \gamma_N\tr M + \gamma_N^2 \pB{\frac{1}{k_4+2} -\frac{1}{2}}(\tr M^2 -N).
\end{align} 

For $L_{\lambda}$, we have the following CLT result as a direct consequence of \eqref{eq:CLT dSBM}.

\begin{theorem}\label{thm:CLT test}
			Let $M$ be a deformed cgSBM that satisfies Definition \ref{def:Dsbm M} with $d_i =\gamma_N$  for all $i$'s and cgSBM $H$ has $\phi=1/2$. The block structure condition of $M$ can be omitted. Then,
	\begin{equation*}
		L_{\gamma_N} \Rightarrow \mathcal{N}(m_K,V_0),
	\end{equation*}
	where the mean $m_K$ is given by 
	\begin{align}\label{def:m_K}
		m_K &= m_0 + K\qB{-\log(1-\gamma_N^2) + \gamma_N^2 + \pB{\frac{1}{k_4+2}-\frac{1}{2}}\gamma_N^4 }
	\end{align}
	with
	\begin{align}\label{def:m_0}
		m_0 = -\frac{1}{2}\log (1- \gamma_N^2) -\frac{1}{2}\gamma_N^2 + \frac{k_4\gamma_N^4}{4},
	\end{align}
	and 
	\begin{align}\label{def:V_0}
		V_0 = -2 \log(1-\gamma_N^2) +2\gamma_N^2 + \pB{ \frac{2}{k_4+2} -1}  \gamma_N^4.
	\end{align}
\end{theorem}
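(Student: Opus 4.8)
The plan is to obtain this statement as an immediate corollary of Theorem~\ref{thm:CLT}, applied to the test function $f=\phi_{\gamma_N}$ (we are throughout in the regime $\gamma_N<1$ required there). First I would verify that $\phi_{\gamma_N}$ satisfies the hypothesis of Theorem~\ref{thm:CLT}, namely analyticity on an open interval containing $[-2,2]$: since $\gamma_N<1$, the only real zero of $1-\gamma_N x+\gamma_N^2$ sits at $x=\gamma_N+\gamma_N^{-1}>2$, so this quadratic is strictly positive on $[-2,2]$ (in fact at least $(1-\gamma_N)^2$) and $\phi_{\gamma_N}$ extends analytically to a neighborhood of $[-2,2]$. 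Since, by \eqref{def:test stat}, $L_{\gamma_N}$ is precisely the centered linear statistic $L_M(\phi_{\gamma_N})-N\int_{-2}^2\frac{\sqrt{4-z^2}}{2\pi}\phi_{\gamma_N}(z)\,\di z$ appearing on the left of \eqref{eq:CLT dSBM}, Theorem~\ref{thm:CLT} yields $L_{\gamma_N}\Rightarrow\mathcal N\pb{m_K(\phi_{\gamma_N}),V_0(\phi_{\gamma_N})}$, and the only remaining work is to evaluate $m_K(\phi_{\gamma_N})$ and $V_0(\phi_{\gamma_N})$ and recognize them as \eqref{def:m_K} and \eqref{def:V_0}.

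The computational core is the Chebyshev expansion of $\phi_{\gamma_N}$. I would invoke the generating identity $-\log(1-2tz+t^2)=2\sum_{\ell\ge1}\tfrac{t^\ell}{\ell}T_\ell(z)$ for $|t|<1$, $z\in[-1,1]$; with $z=x/2$, $t=\gamma_N$ this reads
\[
\log\frac{1}{1-\gamma_N x+\gamma_N^2}=2\sum_{\ell\ge1}\frac{\gamma_N^\ell}{\ell}\,T_\ell\!\pB{\tfrac{x}{2}},
\]
and, together with $x=2T_1(x/2)$ and $x^2=2T_2(x/2)+2T_0(x/2)$, it gives $\phi_{\gamma_N}(x)=\sum_{\ell\ge0}c_\ell T_\ell(x/2)$ with $c_0=2\gamma_N^2(\tfrac{1}{k_4+2}-\tfrac12)$, $c_1=4\gamma_N$, $c_2=\gamma_N^2+2\gamma_N^2(\tfrac{1}{k_4+2}-\tfrac12)$, and $c_\ell=2\gamma_N^\ell/\ell$ for $\ell\ge3$. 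This series converges uniformly on $[-2,2]$ (as $\gamma_N<1$), so it may be integrated term by term against $\tfrac1\pi(4-x^2)^{-1/2}$; with $x=2\cos\theta$ one has $\tfrac1\pi\int_0^\pi\cos\ell\theta\cos m\theta\,\di\theta=\tfrac12\delta_{\ell m}$ for $\ell,m\ge1$ and $=1$ for $\ell=m=0$, so $\tau_0(\phi_{\gamma_N})=c_0$ and $\tau_\ell(\phi_{\gamma_N})=c_\ell/2$ for $\ell\ge1$, i.e.\ $\tau_0(\phi_{\gamma_N})=2\gamma_N^2(\tfrac{1}{k_4+2}-\tfrac12)$, $\tau_1(\phi_{\gamma_N})=2\gamma_N$, $\tau_2(\phi_{\gamma_N})=\tfrac{\gamma_N^2}{k_4+2}$, $\tau_4(\phi_{\gamma_N})=\tfrac{\gamma_N^4}{4}$, and $\tau_\ell(\phi_{\gamma_N})=\tfrac{\gamma_N^\ell}{\ell}$ for $\ell\ge3$. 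I would also record $\phi_{\gamma_N}(\pm2)=-2\log(1\mp\gamma_N)\pm2\gamma_N+4\gamma_N^2(\tfrac{1}{k_4+2}-\tfrac12)$, whence $\phi_{\gamma_N}(2)+\phi_{\gamma_N}(-2)=-2\log(1-\gamma_N^2)+8\gamma_N^2(\tfrac{1}{k_4+2}-\tfrac12)$.

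It then remains to substitute these into \eqref{eq:m_k(f)} and \eqref{def:v_0(f)} and simplify. For the mean, the $K$-free part $\tfrac14(\phi_{\gamma_N}(2)+\phi_{\gamma_N}(-2))-\tfrac12\tau_0(\phi_{\gamma_N})-\tau_2(\phi_{\gamma_N})+k_4\tau_4(\phi_{\gamma_N})$ reduces—once the $\gamma_N^2/(k_4+2)$ contributions cancel—to $-\tfrac12\log(1-\gamma_N^2)-\tfrac12\gamma_N^2+\tfrac{k_4}{4}\gamma_N^4$, which is $m_0$ of \eqref{def:m_0}; and the $K$-linear part $K\sum_{\ell\ge1}\gamma_N^\ell\tau_\ell(\phi_{\gamma_N})=K\pB{2\gamma_N^2+\tfrac{\gamma_N^4}{k_4+2}+\sum_{\ell\ge3}\tfrac{\gamma_N^{2\ell}}{\ell}}$, using $\sum_{\ell\ge1}\gamma_N^{2\ell}/\ell=-\log(1-\gamma_N^2)$, becomes $K\qB{-\log(1-\gamma_N^2)+\gamma_N^2+\pb{\tfrac{1}{k_4+2}-\tfrac12}\gamma_N^4}$, giving \eqref{def:m_K}. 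For the variance, $2\sum_{\ell\ge1}\ell\,\tau_\ell(\phi_{\gamma_N})^2=8\gamma_N^2+\tfrac{4\gamma_N^4}{(k_4+2)^2}+2\sum_{\ell\ge3}\tfrac{\gamma_N^{2\ell}}{\ell}$; adding $-\tau_1(\phi_{\gamma_N})^2=-4\gamma_N^2$ and $2k_4\tau_2(\phi_{\gamma_N})^2=\tfrac{2k_4\gamma_N^4}{(k_4+2)^2}$, and using $\sum_{\ell\ge3}\gamma_N^{2\ell}/\ell=-\log(1-\gamma_N^2)-\gamma_N^2-\tfrac12\gamma_N^4$ together with $\tfrac{2k_4+4}{(k_4+2)^2}=\tfrac{2}{k_4+2}$, produces exactly $-2\log(1-\gamma_N^2)+2\gamma_N^2+\pb{\tfrac{2}{k_4+2}-1}\gamma_N^4=V_0$.

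I do not anticipate a genuine obstacle: given Theorem~\ref{thm:CLT}, the argument is essentially bookkeeping. The points that call for care are (a) the analyticity check, which is exactly where $\gamma_N<1$ enters; (b) the justification of term-by-term integration of the Chebyshev series against $(4-x^2)^{-1/2}$ via the uniform convergence afforded by $\gamma_N<1$ (no convergence issue arises for $L_M(\phi_{\gamma_N})$ itself, since it is almost surely a finite sum over the $N$ eigenvalues); and (c) tracking the algebraic cancellations, chiefly the $(k_4+2)^{-1}$ terms collapsing inside $m_0$ and the identity $\tfrac{2(k_4+2)}{(k_4+2)^2}=\tfrac{2}{k_4+2}$ in $V_0$.
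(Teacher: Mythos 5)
Your proposal is correct and follows exactly the paper's route: Theorem \ref{thm:CLT test} is obtained as a direct consequence of Theorem \ref{thm:CLT} applied to $f=\phi_{\gamma_N}$ (which is what \eqref{def:test stat} defines $L_{\gamma_N}$ to be), and your Chebyshev-coefficient bookkeeping for $\tau_\ell(\phi_{\gamma_N})$, $\phi_{\gamma_N}(\pm2)$, and the resulting $m_0$, $m_K$, $V_0$ checks out. The paper simply omits this computation, so your write-up supplies precisely the details it leaves implicit.
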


We now propose a hypothesis test based on the CLT for the LSS. In this test, described in Algorithm \ref{alg:algorithm1}, for a given (rescaled) adjacency matrix $M$, we compute $L_\lambda$ and compare it with the average of $m_{K_1}$ and $m_{K_2}$,
\begin{align} \label{def:hypho critical value}
	m_{c} :=& \frac{m_{K_1}+m_{K_2}}{2} \notag \\
	=& -\frac{K_1 + K_2 + 1}{2} \log(1-\gamma_N^2) + \left( \frac{K_1 + K_2 - 1}{2} \right) \gamma_N^2
	+ \left( \frac{k_4 - K_1 - K_2}{4} + \frac{K_1 + K_2}{2(k_4+2)} \right) \gamma_N^4\,.
\end{align}
We accept $\bsH_1$ if $L_{\gamma_N} \leq m_c$ and reject it otherwise. The sum of the type-I and type-II errors of the proposed test can be computed as in Section 3 of \cite{EKJ20}, and we state it as a corollary here.

\begin{corollary}\label{coro:alg1 error}
	The error of the test in Algorithm \ref{alg:algorithm1} converges to 
	\begin{equation}\label{eq:alg1 error}
		\emph{erfc}\pbb{   \frac{K_2-K_1}{4}\sqrt{  -\log(1-\gamma_N^2) + \gamma_N^2 + \pB{\frac{1}{k_4+2}-\frac{1}{2}}\gamma_N^4    }  }.
	\end{equation}
	where $\emph{erfc}(\cdot)$ is the complementary error function defined as $\emph{erfc}(x)=\frac{2}{\sqrt{\pi}}\int_{x}^{\infty}e^{-t^2}\di t$.
\end{corollary}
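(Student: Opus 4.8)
The plan is to derive Corollary~\ref{coro:alg1 error} directly from the CLT in Theorem~\ref{thm:CLT test}, treating the test in Algorithm~\ref{alg:algorithm1} as a threshold test on a single real-valued statistic whose asymptotic law is Gaussian under each hypothesis. First I would observe that under $\bsH_1$ (i.e. $K = K_1$), Theorem~\ref{thm:CLT test} gives $L_{\gamma_N} \Rightarrow \caN(m_{K_1}, V_0)$, and under $\bsH_2$ it gives $L_{\gamma_N} \Rightarrow \caN(m_{K_2}, V_0)$, with the \emph{same} limiting variance $V_0$ (this is exactly the point that $V_0$ does not depend on $K$). The test accepts $\bsH_1$ when $L_{\gamma_N} \le m_c$ and rejects otherwise, where $m_c = (m_{K_1}+m_{K_2})/2$ is the midpoint. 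Hence the type-I error is $\P_{\bsH_1}(L_{\gamma_N} > m_c)$ and the type-II error is $\P_{\bsH_2}(L_{\gamma_N} \le m_c)$, and I would pass to the limit in each using the CLT (the threshold $m_c$ is a continuity point of each limiting Gaussian CDF, so weak convergence suffices).

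Next I would carry out the standardization. Under $\bsH_1$, $(L_{\gamma_N} - m_{K_1})/\sqrt{V_0} \Rightarrow \caN(0,1)$, so the limiting type-I error is $\P(Z > (m_c - m_{K_1})/\sqrt{V_0})$ with $Z \sim \caN(0,1)$; since $m_c - m_{K_1} = (m_{K_2}-m_{K_1})/2$, this equals $\frac12\,\mathrm{erfc}\!\pB{ (m_{K_2}-m_{K_1})/(2\sqrt{2V_0}) }$, using $\P(Z > t) = \frac12\mathrm{erfc}(t/\sqrt2)$. By the symmetry of the Gaussian and the fact that $m_c$ is the midpoint, the limiting type-II error $\P(Z \le (m_c - m_{K_2})/\sqrt{V_0}) = \P(Z \ge (m_{K_2}-m_{K_1})/(2\sqrt{V_0}))$ is \emph{equal} to the limiting type-I error. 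Therefore the sum of the two errors converges to $\mathrm{erfc}\!\pB{ (m_{K_2}-m_{K_1})/(2\sqrt{2V_0}) }$.

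It then remains to simplify the argument of $\mathrm{erfc}$ using the explicit formulas \eqref{def:m_K} and \eqref{def:V_0}. From \eqref{def:m_K}, $m_{K_2}-m_{K_1} = (K_2-K_1)\qB{-\log(1-\gamma_N^2) + \gamma_N^2 + \pB{\frac{1}{k_4+2}-\frac12}\gamma_N^4}$. Writing $A := -\log(1-\gamma_N^2) + \gamma_N^2 + \pB{\frac{1}{k_4+2}-\frac12}\gamma_N^4$ for this common bracket, one checks from \eqref{def:V_0} that $V_0 = 2A$ exactly. Hence $(m_{K_2}-m_{K_1})/(2\sqrt{2V_0}) = (K_2-K_1)A/(2\sqrt{4A}) = (K_2-K_1)\sqrt{A}/4$, which is precisely the argument appearing in \eqref{eq:alg1 error}. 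Substituting back $A$ in terms of $\gamma_N$ and $k_4$ gives the stated expression, completing the proof.

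The computations here are entirely routine; the only thing requiring any care is the bookkeeping with the $\mathrm{erfc}$-versus-standard-normal conventions and verifying the identity $V_0 = 2A$, so I would not expect a genuine obstacle. The one conceptual point worth stating explicitly — and the reason the clean closed form emerges — is that the limiting variance $V_0$ is the same under both hypotheses and equals twice the common per-community mean increment $A$; if the variances differed, the midpoint threshold would no longer be optimal and the error would not collapse to a single $\mathrm{erfc}$ term. Since Theorem~\ref{thm:CLT test} (hence \eqref{eq:CLT dSBM}) is assumed, there is nothing deeper to prove.
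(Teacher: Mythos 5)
Your argument is correct and is essentially the paper's own route: the paper simply invokes the standard computation of Section 3 of \cite{EKJ20}, which is exactly this Gaussian threshold-test calculation (equal limiting variances under both hypotheses, midpoint threshold, standardization, and the identity $V_0 = 2A$ yielding the single $\mathrm{erfc}$ term). Your bookkeeping, including $(m_{K_2}-m_{K_1})/(2\sqrt{2V_0}) = (K_2-K_1)\sqrt{A}/4$, checks out against \eqref{def:m_K} and \eqref{def:V_0}.
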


\begin{algorithm}[t]
	\caption{:Hypothesis test}\label{alg:algorithm1}
	\begin{algorithmic}[1]	
		\STATE Data: $\wt M$, parameter $\gamma_N$
		\STATE $M$ $\gets$ matrix given by \eqref{eq:M}, $L_{\gamma_N} \gets$ test statistic in \eqref{def:test stat}, $m_c \gets$ critical value in \eqref{def:hypho critical value}
		\IF{$L_{\gamma_N} \leq m_c$} 
		\STATE Accept $\boldsymbol{H_1}$
		\ELSE 
		\STATE Reject $\boldsymbol{H_1}$
		\ENDIF
	\end{algorithmic}
\end{algorithm}

For a numerical experiment, we generated $1200 \times 1200$ rescaled adjacency matrices with the average probability $p_a = 0.1$ and the SNR $\gamma_N = \sqrt{0.7}$. In Figure \ref{fig:simulation}(a), we plot the histograms of the test statistic $L_{\gamma_N}$ for 10,000 independent samples with $K=0, 1, 2, 3, 4$, respectively. It can be seen from the figure that there is a deterministic shift in the histograms as $K$ increases, which is predicted by \eqref{def:m_K}. 

We also performed the test illustrated in Algorithm \ref{alg:algorithm1} and compare the error from the numerical simulation and the theoretical error of the proposed test in Corollary \ref{coro:alg1 error} for 10,000 independent samples with $K_1=0$ and $K_2 = 1, 2, 3, 4$, respectively, with $p_a = 0.1$ and varying $\lambda$ from $0$ to $\sqrt{0.7}$. The numerical errors of the test closely match the theoretical errors, which are depicted in Figure \ref{fig:simulation}(b).

Theorem \ref{thm:CLT test} can also be used in the estimation of $K$. Since the distance of the means $|m_{K+1} - m_K|$ does not depend on $K$, for a given test statistic $L_{\gamma_N}$, the best candidate for the rank $K$ is the minimizer of $|L_{\gamma_N} - m_K|$. This procedure of the estimation is equivalent to find the nearest non-negative integer of the value
	\begin{align}\label{def:alg2 critical value}
		\kappa' := \frac{L_{\gamma_N}-m_0}{-\log(1-\gamma_N^2) + \gamma_N^2 + \pB{\frac{1}{k_4+2}-\frac{1}{2}}\gamma_N^4 }.
	\end{align}

\begin{figure}[t]
	\centering
	\subfigure[]{
	\includegraphics[width=0.48\textwidth,height=0.2\textheight]{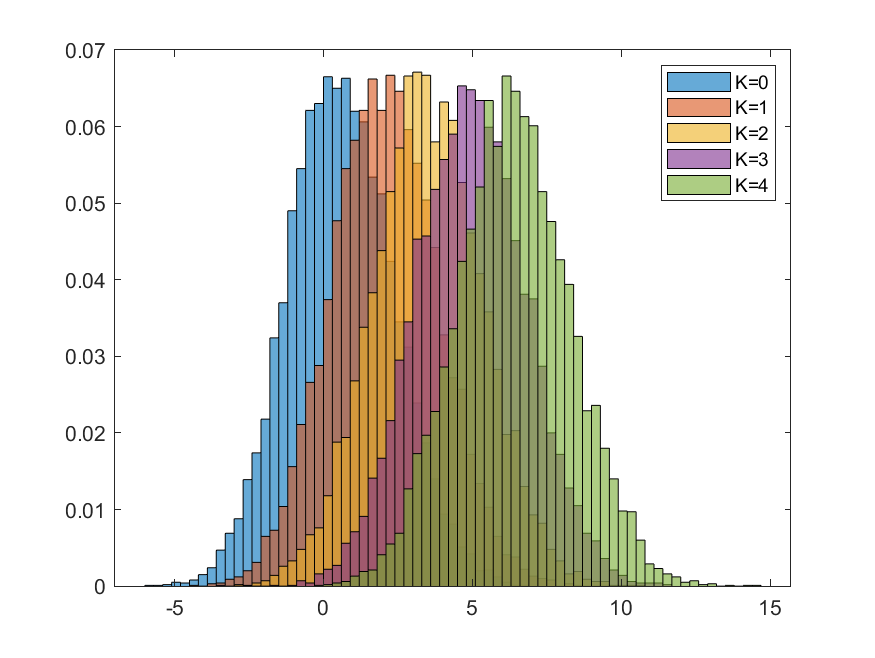}
}
	\centering
	\subfigure[]{
	\includegraphics[width=0.48\textwidth,height=0.2\textheight]{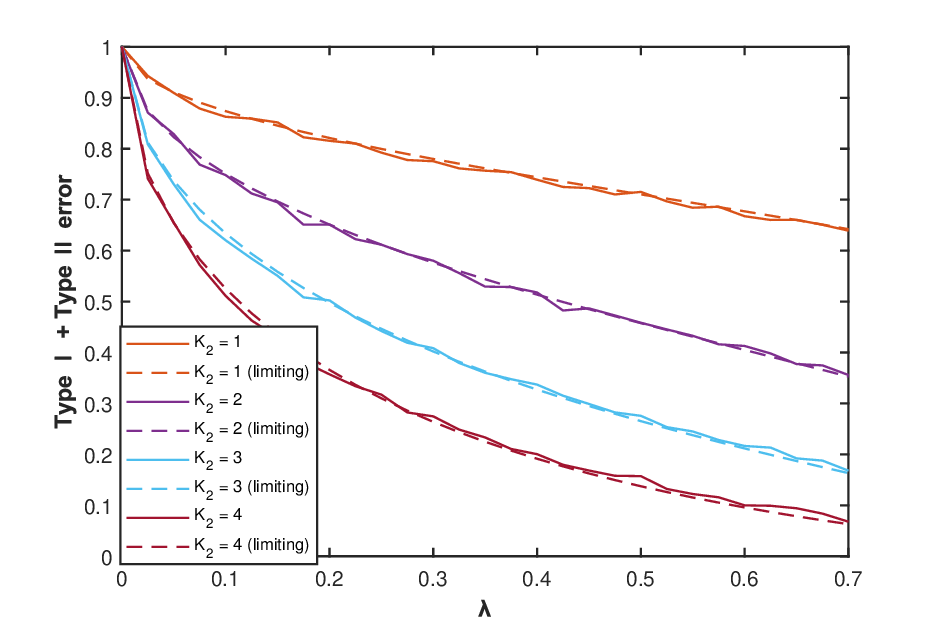}
}
	\caption{ Under the setting in Section \ref{subsec:detection} with $N=1200$ and fixed $p_a=0.1$, (a) the histograms of the test statistic $L_{\gamma_N}$ for $K=0,1,2,3,4$, with $\gamma_N=\sqrt{0.7}$, and (b) the errors from the simulation with Algorithm 1 (solid) versus the limiting errors in \eqref{eq:alg1 error} (dashed) with $K_2=1,2,3,4$.}\label{fig:simulation}
\end{figure}

\section{Main Results - Sparse Regime}\label{sec:main result sparse}
In this section, we consider the cgSBM in the sparse regime, satisfying
\[
0<\phi<1/2
\]
so that $q$ can be less than $N^{1/2}$ and in terms of the (rescaled) SBM in \eqref{eq:M},
\[
	p_a = \frac{p_s+(K-1)p_d}{K} = o(1).
\]
\subsection{Eigenvalue phase transition}
Our first main result in the section is the following phase transition for the largest eigenvalues.  We can get the following phase transition for the largest eigenvalues of the matrix with some sparsity condition.
\begin{theorem}[Eigenvalue phase transition]\label{thm:eigval_bbp_sparse_dcgSBM}
	Let $M$ be a deformed cgSBM that satisfies Definition \ref{def:Dsbm M} with cgSBM $H$ has $1/8<\phi<1/2$. Denote the ordered eigenvalues of $ M$ by $\lambda_1( M)\ge\cdots \ge \lambda_N( M)$.
	Then, for each $1\le  i \leq K$,
	$$\lambda_i(M) \rightarrow \begin{cases} d_i+d_i^{-1}& \textrm{ if } d_i>1,\\ 
		2 &\textrm{ otherwise,}\end{cases}$$
	as $N \to \infty$. Moreover, for each fixed $i>K$, $\lambda_i(M)\rightarrow 2$ almost surely as $N \to \infty$.
\end{theorem}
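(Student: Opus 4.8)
The plan is to adapt the resolvent / secular-equation strategy of \cite{Rao11} that underlies the dense-regime Theorem~\ref{thm:eigval bbp}, substituting the (weaker) local law for the sparse cgSBM from Lemma~\ref{lemma:local law cgSBM} for the isotropic local law available in the dense case. Write $M = H + VDV^T$ and $G(z) := (H - zI)^{-1}$. An elementary computation shows that for $z \notin \mathrm{spec}(H)$, the number $z$ is an eigenvalue of $M$ (with the correct multiplicity) if and only if
\[
  \det\!\big( I_K + D\, V^T G(z)\, V \big) = 0;
\]
indeed an eigenvector $\bsv$ of $M$ at such a $z$ satisfies $\bsv = -G(z) V D (V^T\bsv)$ with $V^T\bsv \neq 0$ and $(I_K + D V^T G(z) V)(V^T\bsv) = 0$, while conversely any nonzero solution $\bsu$ of $(I_K + D V^T G(z) V)\bsu = 0$ produces the eigenvector $-G(z)V\bsu$. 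Thus all eigenvalues of $M$ at distance $\geq \delta$ from $[-2,2]$ are exactly the zeros of $z \mapsto \det(I_K + D V^T G(z) V)$ there, and it remains to show this determinant behaves as in the semicircle model.

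The key input is that, since every column $\bsv^{(i)}$ of $V$ is block-constant, each entry $(V^T G(z) V)_{ij} = \scalar{\bsv^{(i)}}{G(z)\bsv^{(j)}}$ is an \emph{average} of entries of $G(z)$ over macroscopic index blocks. Consequently the averaged local law of Lemma~\ref{lemma:local law cgSBM} --- which is all that is presently available in the sparse regime, and which is why the block-structure hypothesis that could be dropped in Theorem~\ref{thm:eigval bbp} cannot be dropped here --- gives
\[
  \big\| V^T G(z) V - m_{\mathrm{sc}}(z)\, I_K \big\| \longrightarrow 0
\]
in probability, uniformly for $z$ in a complex neighbourhood of $[2+\epsilon,\ d_1 + d_1^{-1} + \epsilon]$ and, with a quantitative rate, down to distance $N^{-c}$ from the edge, where $m_{\mathrm{sc}}$ is the Stieltjes transform of the semicircle law. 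Feeding this into the determinant, the secular equation reduces to leading order to $\prod_{i=1}^K (1 + d_i\, m_{\mathrm{sc}}(z)) = 0$, i.e. $m_{\mathrm{sc}}(z) = -d_i^{-1}$ for some $i$.

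Since $m_{\mathrm{sc}}$ maps $(2,\infty)$ strictly increasingly onto $(-1,0)$ with $m_{\mathrm{sc}}(d + d^{-1}) = -d^{-1}$, the equation $m_{\mathrm{sc}}(z) = -d_i^{-1}$ has a unique root in $(2,\infty)$ exactly when $d_i > 1$, namely $z = d_i + d_i^{-1}$; a Rouch\'e / implicit-function argument applied to the $o(1)$-perturbed determinant then pins the associated eigenvalue of $M$ at $d_i + d_i^{-1} + o(1)$, with distinct values $d_i > 1$ giving distinct outliers and the count matching. For indices with $d_i \leq 1$, $1 + d_i m_{\mathrm{sc}}(z)$ stays bounded away from $0$ on $[2+\epsilon,\infty)$, so, using also the crude bound $\|M\| \leq \|H\| + d_1 = O(1)$, the matrix $M$ has no eigenvalue above $2 + \epsilon$ apart from the $\#\{i : d_i > 1\}$ outliers. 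Finally, for the indices without an outlier (including all $i > K$) the matching lower bound $\lambda_i(M) \geq 2 - o(1)$ follows from $VDV^T \succeq 0$ and Weyl's inequality $\lambda_i(M) \geq \lambda_i(H)$, combined with the convergence of the empirical spectral distribution of $H$ to the semicircle law and the edge bound $\|H\| \leq 2 + o(1)$ --- both consequences of Lemma~\ref{lemma:local law cgSBM} --- which force $\lambda_i(H) \to 2$ for each fixed $i$. When $d_i \to 1$ both regimes of the claimed limit coincide with $2$, so no separate treatment is needed.

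The main obstacle is concentrated entirely in the resolvent estimate near the spectral edge used above: proving the averaged local law for the centered generalized SBM $H$, with non-identical variances $\sigma_{ij}^2$ and at spectral parameters with only polynomially small imaginary part, in the sparse regime. Sparsity contributes self-energy / third-moment corrections of order $q^{-1} = N^{-\phi}$ on top of the usual $(N\eta)^{-1/2}$-type errors, and requiring these to remain negligible near $E = 2$ is precisely what forces the restriction $\phi > 1/8$; this is the content of Lemma~\ref{lemma:local law cgSBM}, where all the genuinely new difficulty lies. Once that estimate, and the operator-norm bound on $H$ that it yields, are in hand, the remaining arguments --- the secular equation, the location of its roots, and the edge lower bound --- are deterministic and run exactly as in the dense case.
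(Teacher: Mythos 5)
Your proposal is correct and follows essentially the same route as the paper: the determinant/secular-equation criterion of Lemma \ref{lem: det identity}, with the isotropic law replaced by the averaged local law of Lemma \ref{lemma:local law cgSBM} exploited through the block structure of the columns of $V$ (exactly the content of Lemma \ref{lemma:innerpro}), followed by locating the roots of $1+d_i m_{sc}(z)$ and a Weyl-type bound for the non-outlier eigenvalues. The only cosmetic discrepancy is that the semicircle convergence of the spectrum of $H$ and the bound $\|H\|\le 2+o(1)$ are supplied by the sparse local law and norm estimate quoted from \cite{HLY20} (Theorems \ref{thm:locallaw} and \ref{thm:matrix norm}) rather than by Lemma \ref{lemma:local law cgSBM} itself, which does not affect your argument.
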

We prove Theorem \ref{thm:eigval_bbp_sparse_dcgSBM} in section \ref{subsec:proof of thm eig bbp sparse}. Recall the rescaled stochastic block matrix in (\ref{eq:M}). We can get the following corollary for the eigenvalues of $M$ in (\ref{eq:M}).
\begin{corollary} \label{thm:eigval_bbp_sparse}
    Consider the $N\times N$ matrix $M$ in (\ref{eq:M}) with $N^{-3/4}\ll p_a \ll 1$. Denote the ordered eigenvalues of $M$ by $\lambda_1(M)\geq\lambda_2(M)\geq\cdots\geq\lambda_N(M)$. Recall the definition of $\gamma_N$ in \eqref{eq:gamma} and we define the constant $\gamma$ as $\gamma = \lim_{N \to \infty} \gamma_N$. Then, as $N\to\infty$,
    \begin{enumerate}[(a)]
    \item if $0\leq \gamma \leq 1$, $\lambda_i(M)\rightarrow 2$ for each $1\leq i\leq N$. \\
    \item if $1<\gamma<\infty$, $\lambda_i(M)\rightarrow \begin{cases}
            \gamma+\gamma^{-1} & \text{ for } 1\leq  i \leq K-1 \\
            2 & \text{ for } i>K-1 
        \end{cases}$
    \item if $\gamma=\infty$, $\begin{cases}
            \lambda_i(M)-(\gamma_N + \gamma_N^{-1} )\rightarrow 0 & \text{ for } 1\leq  i \leq K-1 \\
            \lambda_i(M)\rightarrow 2 & \text{ for } i>K-1 
        \end{cases}$
    \end{enumerate}
\end{corollary}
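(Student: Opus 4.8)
The plan is to deduce Corollary~\ref{thm:eigval_bbp_sparse} from Theorem~\ref{thm:eigval_bbp_sparse_dcgSBM} by identifying the rescaled adjacency matrix $M$ of \eqref{eq:M}, up to a perturbation of operator norm $o(1)$, with a rank-$(K-1)$ deformed cgSBM in the sense of Definition~\ref{def:Dsbm M} having $d_1=\cdots=d_{K-1}=\gamma_N$, and then passing from the (possibly $N$-dependent) parameter $\gamma_N$ to its limit $\gamma$ by continuity.

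First I would verify that $H:=M-\E M$ is a cgSBM as in Definition~\ref{def:Csbm H} with sparsity exponent in the admissible range. Since the model is balanced and $p_a=o(1)$, both $p_s$ and $p_d$ are $O(p_a)$ and $o(1)$, so $\sigma^2=N\cdot\frac{p_s(1-p_s)+(K-1)p_d(1-p_d)}{K}\asymp Np_a$; thus the sparsity parameter is $q:=\sqrt{Np_a}$, and the hypothesis $N^{-3/4}\ll p_a\ll1$ is precisely the statement that $H$ has sparsity exponent in the window $1/8<\phi<1/2$ required by Theorem~\ref{thm:eigval_bbp_sparse_dcgSBM}. For the moment bound in \eqref{eq:moments}, the entries $H_{ij}=(\wt M_{ij}-\E\wt M_{ij})/\sigma$ are centered and, for every $k\ge2$, $\E|\wt M_{ij}-\E\wt M_{ij}|^k\le p_s(1-p_s)\vee p_d(1-p_d)\le Cp_a$, so that $\E|H_{ij}|^k\le Cp_a/\sigma^k\lesssim p_a\,(Np_a)^{-k/2}\asymp \tfrac{1}{Nq^{k-2}}$, which is dominated by $(Ck)^{ck}/(Nq^{k-2})$. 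Finally $\sum_i\sigma_{ij}^2$ equals $1$ exactly if the model allows self-loops and equals $1-p_s(1-p_s)/\sigma^2=1-O(N^{-1})$ otherwise; in the latter case an inconsequential rescaling of $\sigma$ by $1+O(N^{-1})$ restores the normalization and shifts every eigenvalue by $o(1)$.

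Next I would isolate the spike. Using $p_s-p_a=\frac{K-1}{K}(p_s-p_d)$ and $p_d-p_a=-\frac1K(p_s-p_d)$, a direct computation shows that the symmetric matrix $B$ with $B_{ij}=(p_s-p_a)/\sigma$ for $i\sim j$ and $B_{ij}=(p_d-p_a)/\sigma$ for $i\not\sim j$ has every community-constant vector with zero sum as an eigenvector of eigenvalue $\frac{N(p_s-p_d)}{\sigma K}=\gamma_N$, while the all-ones vector and all vectors orthogonal to the community-constant subspace lie in $\ker B$. Hence $B=VDV^T$ with $D=\gamma_N I_{K-1}$ and $V$ an $N\times(K-1)$ matrix whose columns form an orthonormal, block-structured basis of the zero-sum community-constant subspace. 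Now $\E M=B$ when self-loops are allowed, and $\E M=B-\tfrac{p_s}{\sigma}I$ otherwise; in both cases $\E M=VDV^T+E$ with $\|E\|\le p_s/\sigma=O(\sqrt{p_a/N})=o(1)$. Therefore $\wh M:=H+VDV^T$ satisfies Definition~\ref{def:Dsbm M} with rank $K-1$ and with cgSBM $H$ having $\phi\in(1/8,1/2)$, and $M=\wh M+E$, so Weyl's inequality gives $|\lambda_i(M)-\lambda_i(\wh M)|\le\|E\|\to0$ for every $i$; it suffices to prove (a)--(c) for $\wh M$.

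Finally I would apply Theorem~\ref{thm:eigval_bbp_sparse_dcgSBM} to $\wh M$ (legitimate since $\phi\in(1/8,1/2)$, $K-1$ is fixed, and the columns of $V$ are block-structured). Writing $g(x)=x+x^{-1}$ for $x>1$ and $g(x)=2$ for $0\le x\le1$ (so that $g$ is continuous on $[0,\infty)$), the theorem yields $\lambda_i(\wh M)-g(\gamma_N)\to0$ for $1\le i\le K-1$ and $\lambda_i(\wh M)\to2$ for each fixed $i>K-1$. If $0\le\gamma\le1$ then $g(\gamma_N)\to g(\gamma)=2$, giving~(a); if $1<\gamma<\infty$ then $g(\gamma_N)\to\gamma+\gamma^{-1}$, giving~(b); and if $\gamma=\infty$ then $\gamma_N>1$ for all large $N$, so $g(\gamma_N)=\gamma_N+\gamma_N^{-1}$ and $\lambda_i(\wh M)-(\gamma_N+\gamma_N^{-1})\to0$, giving~(c). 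Combining with the Weyl estimate transfers each conclusion to $M$. Granting Theorem~\ref{thm:eigval_bbp_sparse_dcgSBM}, there is no deep obstacle: the only points that need care are matching $q=\sqrt{Np_a}$ to the window $1/8<\phi<1/2$ (equivalently $N^{-3/4}\ll p_a\ll1$) together with the moment bound \eqref{eq:moments} for centered, rescaled Bernoulli entries, and absorbing the non-block-structured diagonal of $\E M$ through the $o(1)$ perturbation $E$, which is why we pass through $\wh M$ rather than identifying $M$ itself with a deformed cgSBM.
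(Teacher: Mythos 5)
Your proposal is correct and follows essentially the same route as the paper: decompose $M$ into the cgSBM noise $H$ (with $q^2=Np_a$, hence $1/8<\phi<1/2$ exactly when $N^{-3/4}\ll p_a\ll1$) plus the exact rank-$(K-1)$ spike $\E M=VDV^T$ with $D=\gamma_N I_{K-1}$ and block-structured $V$ (the content of Lemma~\ref{lemma:EM}), then apply Theorem~\ref{thm:eigval_bbp_sparse_dcgSBM} and pass to the limit $\gamma$ by continuity. Your additional verifications (moment bounds for the centered Bernoulli entries, the normalization $\sum_i\sigma_{ij}^2=1$, and the $o(1)$ diagonal perturbation handled by Weyl's inequality) only make explicit what the paper leaves as "easily checked."
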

We prove Theorem \ref{thm:eigval_bbp_sparse} in Section \ref{subsec:proof of thm eig bbp sparse}. Note that the condition $N^{-3/4}\ll p_a \ll 1$ is from $1/8<\phi<1/2$ since $q^2 = Np_a$.
As in the dense regime, we find that the number of communities can be reliably checked by PCA if $\gamma >1$. 

We remark that in the sparse regime the difference between $\sigma$ in \eqref{eq:M} and its approximation
\[
	\hat{\sigma}=\sqrt{Np_a(1-p_a)}
\]
is negligible in the sense that $|\sigma - \hat{\sigma}| = o(1)$. Thus, in case it is easier to find $p_a$ than $p_s$ and $p_d$, it is possible to use $\hat{\sigma}$ instead of $\sigma$ for the PCA, since the change of the extreme eigenvalues is $o(1)$.

\subsection{CLT with perturbation}\label{subsec:CLT_sparse}
Heuristically, if we assume that Theorem \ref{eq:CLT dSBM} holds also in the sparse regime, then the mean $m_K(f)$ and the variance $V_0(f)$ in Theorem \ref{thm:CLT} will be dominated by the terms containing $k_4$ as a coefficient, since $k_4 \sim p^{-1} \gg 1$ while all other terms are $O(1)$. This in particular suggests that $m_K(f), V_0(f) \sim p^{-1}$ and we need to rescale the LSS by the factor $p^{1/2}$ to observe its fluctuation.

Our main result in this section is the following theorem.
\begin{theorem}\label{thm:converge gaussian}
Suppose that $M$ is a deformed cgSBM satisfying Definition \ref{def:Dsbm M} where the cgSBM $H$ has $1/6<\phi<1/2$. Let $f$ be an analytic function on an open interval containing $[-2,2]$ such that $\tau_2(f) = \Theta(1)$. Then 
\[
    \frac{q}{\sqrt{2N}} \cdot \frac{L_M(f) -\E\q{L_M(f)} }{|\tau_2(f)|} \Rightarrow \caN(0, 1),
\]
where 
the right side is a standard Gaussian random variable. 
\end{theorem}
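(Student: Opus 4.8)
\textbf{Proof proposal for Theorem \ref{thm:converge gaussian}.}

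The plan is to reduce the statement to a CLT for the \emph{centered} cgSBM $H$ alone, and then prove that CLT by a combination of the Cauchy-integral (resolvent) method and the characteristic-function method, exactly as announced in the introduction. First I would argue that the deformation $VDV^T$ with $d_i = \gamma_N = O(1)$ is a rank-$K$ bounded perturbation, hence by the interlacing/resolvent-perturbation argument used for Theorem \ref{thm:eigval_bbp_sparse_dcgSBM} the difference $L_M(f) - L_H(f)$ is $O_{\prec}(1)$; since the target fluctuation scale is $\sqrt{2N}|\tau_2(f)|/q \sim N^{1/2-\phi} \to \infty$, this perturbation is negligible after the rescaling, and it also does not shift $\E[L_M(f)]$ by more than $O(1)$. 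So it suffices to prove that $\frac{q}{\sqrt{2N}}\cdot\frac{L_H(f)-\E[L_H(f)]}{|\tau_2(f)|}\Rightarrow\caN(0,1)$, where $H$ is the centered cgSBM of Definition \ref{def:Csbm H} with $1/6<\phi<1/2$.

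For the centered model the heuristic in Section \ref{subsec:CLT_sparse} tells us the variance is asymptotically $\frac{2N}{q^2}\tau_2(f)^2$, coming from the $2k_4\tau_2(f)^2$ term with $k_4 \sim q^2/N$ — i.e. the fluctuation is driven entirely by the (near-)fourth-cumulant contribution, which in the sparse regime dominates the $O(1)$ Wigner-type fluctuations. Concretely I would proceed as follows. (i) Establish a local law for $H$ in the sparse regime — this is precisely Lemma \ref{lemma:local law cgSBM} — giving control of the Stieltjes transform $m_H(z) = \frac1N\tr(H-z)^{-1}$ up to errors of order $(Nq\eta)^{-1}$-type down to spectral scales $\eta \gg q^{-1}$. (ii) Write $L_H(f) - \E L_H(f) = -\frac{1}{2\pi i}\oint_\Gamma f(z)\,N(m_H(z) - \E m_H(z))\,dz$ over a contour $\Gamma$ enclosing $[-2,2]$ at distance $\Theta(1)$, so that only the macroscopic behavior of $m_H$ enters. (iii) For the CLT itself, rather than pushing the resolvent expansion to the fluctuation order (which is delicate because the variance profile $\sigma_{ij}^2$ is non-constant and sparse), I would analyze the characteristic function $\E\exp(it\,q N (m_H(z)-\E m_H(z)))$ via the self-consistent/cumulant-expansion (or interpolation with a Gaussian sparse matrix) approach of \cite{Lytova2009,HLY20}: differentiate in $t$, apply the cumulant expansion to $\E[H_{ij}\,(\cdots)]$, and observe that in the sparse regime the third-cumulant term (scale $q^{-1}N^{-1/2}$, contributing after summation) and the fourth-cumulant term (scale $q^{-2}N^{-1}$, which after the $\sum_{ij}$ of $O(N^2/q^2\cdot\ldots)$ survives) produce the leading contribution, while higher cumulants are suppressed by the constraint $\phi > 1/6$. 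This yields a linear ODE in $t$ whose solution is the Gaussian characteristic function with variance matching the $\frac{2N}{q^2}\tau_2(f)^2$ prediction; transcribing from the resolvent to $f$ via the contour integral and the identity $\tau_2(f) = \frac{1}{2\pi i}\oint f(z)\,(\text{appropriate kernel})\,dz$ gives the claimed limit. The condition $\tau_2(f)=\Theta(1)$ guarantees the normalization is non-degenerate.

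The main obstacle I expect is step (iii) in the sparse, non-homogeneous setting: controlling the cumulant expansion of $\E[\partial_t \chi(t)]$ uniformly in the spectral parameter $z$ near the edge of the contour and showing that all error terms (in particular the truncation of the cumulant expansion at order determined by $\phi>1/6$, and the terms involving the fluctuation of $m_H$ beyond leading order) are $o(q^{-1}N^{-1/2})$ after summing over $i,j$ — this is where the sparse local law of Lemma \ref{lemma:local law cgSBM} and careful bookkeeping of the block variance profile $\sigma_{ij}^2$ (which affects only lower-order, $K$-dependent terms that wash out under the $q/\sqrt{2N}$ rescaling) are essential. A secondary technical point is justifying the interchange of the $N\to\infty$ limit with the contour integral, which follows from the local law together with a standard high-probability bound $\|H\| \le 2 + o(1)$ on the operator norm in this range of $\phi$.
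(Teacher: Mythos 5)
Your plan is sound and, for its core, coincides with the paper's: the CLT for the centered matrix $H$ is proved here exactly as you propose, by writing the LSS as a contour integral of $\tr G(z)-\E\tr G(z)$, studying the characteristic function $\phi(\t)$, and running a cumulant (generalized Stein) expansion against the sparse local laws to obtain the ODE $\phi'(\t)=-2\t\,\tau_2(f)^2\phi(\t)+o(1)$ (Proposition \ref{prop:CLT sparse} and Lemma \ref{lem:E e Tr G}), whence L\'evy continuity gives the Gaussian limit with variance $2N\tau_2(f)^2/q^2$. Where you genuinely diverge is the reduction from $M$ to $H$: the paper keeps the interpolation $M(\theta)=H+\theta\gamma_N VV^T$ and shows $\frac{q}{\sqrt N}\,\partial_\theta\Tr(M(\theta)-zI)^{-1}=o(1)$ with overwhelming probability, which in the sparse regime requires the isotropic-type control of $\scalar{\b v^{(m)}}{R(\theta,z)\b v^{(m)}}$ and hence the block structure of $V$ (Lemma \ref{lemma:innerpro}); you instead invoke rank-$K$ interlacing plus a bounded-variation bound to get $|L_M(f)-L_H(f)|=O(1)$ deterministically on the event $\|H\|\le 2+o(1)$, which is washed out by the factor $q/\sqrt{2N}$. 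That shortcut is legitimate and arguably cleaner — it dispenses with the block-structure hypothesis on $V$ for this step — with the same implicit caveat as the paper that all eigenvalues of $M$ must lie where $f$ is defined (an issue only if some $d_i>1$ creates outliers beyond the domain of analyticity). Two minor corrections to your sketch: the local law actually needed in the characteristic-function computation is the entrywise and averaged law of \cite{HLY20} (Theorem \ref{thm:locallaw}, Proposition \ref{prop:local law on contour}), not Lemma \ref{lemma:local law cgSBM}, which controls the full entry sum $s(z)=\frac1N\sum_{i,j}G_{ij}$ and is used for the eigenvalue transition; and the third-cumulant terms do not contribute at leading order — in the paper's bookkeeping the terms $I_2$ are negligible, and the entire variance $2\tau_2(f)^2$ (after the $q/\sqrt{N}$ rescaling) comes from the fourth-cumulant contributions, consistent with the heuristic $k_4\sim p^{-1}$.
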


We prove Theorem \ref{thm:converge gaussian} in Section \ref{sec:proof of prop CLT sparse}.


For the mean $L_M(f)$ in Theorem \ref{thm:converge gaussian}, we have the following expansion formula.
\begin{proposition}[Expectation of LSS]\label{prop:mean of LSS}
Suppose that the assumptions in Theorem \ref{thm:converge gaussian} hold. Then, the expectation $L_M(f)$ satisfies
	\begin{equation}
		\E\qB{ \frac{q}{\sqrt{N}}\pB{ L_M(f) - N \int_{-2}^2 \frac{\sqrt{4-z^2}}{2\pi} f(z) \di z -\frac{q^2}{N}\tau_4(f) }  } \to 0.
	\end{equation}
\end{proposition}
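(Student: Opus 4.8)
\textbf{Proof proposal for Proposition \ref{prop:mean of LSS}.}

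The plan is to compute $\E[L_M(f)]$ via Cauchy's integral formula, writing
\[
L_M(f) = -\frac{1}{2\pi\ii} \oint_{\Gamma} f(z)\, \tr (M-z)^{-1}\, \di z,
\]
where $\Gamma$ is a contour enclosing $[-2,2]$ (and the outlier eigenvalues, which by Theorem \ref{thm:eigval_bbp_sparse_dcgSBM} are $O(1)$-close to $\gamma_N+\gamma_N^{-1}$ when $\gamma_N>1$, hence trapped by a fixed contour once $\gamma_N$ is bounded; when $\gamma_N\to\infty$ one enlarges $\Gamma$ appropriately or treats the finitely many outlier terms separately using that $f$ is analytic). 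Thus the task reduces to an expansion of $\E\, m_N(z) := \frac{1}{N}\E\,\tr(M-z)^{-1}$ that is accurate to order $q^{-2} = N^{-1}/p$, which is precisely the order of the claimed $\tau_4(f)$ correction. Since $\tau_4$ is the Chebyshev-weighted integral picking out the degree-$4$ contribution, the appearance of $\frac{q^2}{N}\tau_4(f) = p^{-1}\cdot(q^2/N)^{?}$... more precisely $\frac{q^2}{N}$ is the limiting constant $p^{-1}\cdot$... here one should be careful: with $q^2 = Np_a$ we have $q^2/N = p_a \to p$, so the stated term is $p\,\tau_4(f)$, wait — re-reading, the term is $\frac{q^2}{N}\tau_4(f)$, which is $O(1)$ when $\phi<1/2$... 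I should instead follow the paper's normalization literally and target the correction at the order where, after multiplying by $q/\sqrt N = \sqrt{p/N}\cdot$ ... the residual vanishes.

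Concretely, the key steps I would carry out are: (i) derive a self-consistent equation for $\E\, m_N(z)$ using the local law for cgSBM (Lemma \ref{lemma:local law cgSBM}) to control the fluctuating part, together with a resolvent expansion (Schur complement / cumulant expansion in the entries $H_{ij}$) that tracks terms up to the relevant order in $1/q$; the third-moment terms contribute at order $q^{-1}$ but are killed either by symmetry of the weight or enter only the fluctuation (not the mean), and the fourth-cumulant terms $\E|H_{ij}|^4 \sim (Nq^2)^{-1}$ summed over $N^2$ pairs give the $O(q^{-2}\cdot N)$ correction to $\tr$; (ii) identify this correction, after the contour integral against $f$, with $\frac{q^2}{N}\tau_4(f)$ by recognizing that the relevant resolvent polynomial $\frac{1}{2\pi\ii}\oint f(z) m_{sc}(z)^{5}(\cdots)\di z$ reduces to the Chebyshev coefficient $\tau_4$ (this is the same computation as the $k_4\tau_4(f)$ term in \eqref{eq:m_k(f)}, here isolated because in the sparse regime $k_4 \approx p^{-1}$ and $\frac{q^2}{N}\cdot p^{-1} = 1$ up to $o(1)$, so $k_4\tau_4(f)$ becomes $\frac{q^2}{N}\tau_4(f)$ after absorbing the scaling); (iii) show all remaining terms in the expansion of $\E[L_M(f)]$ — the $\frac14(f(2)+f(-2))$, the $-\frac12\tau_0$, the $-\tau_2$, the $K\sum \gamma_N^\ell \tau_\ell$ deformation contributions — are $O(1)$, hence negligible after multiplying by $q/\sqrt N = O(N^{\phi-1/2}) = o(1)$; and (iv) bound the contribution of the outlier eigenvalues and of the contour error terms, again showing they are $o(\sqrt N/q)$.

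The main obstacle will be step (i): controlling the resolvent expansion of $\E\,\tr(M-z)^{-1}$ to the required precision uniformly for $z$ on a contour at distance $\Theta(1)$ from the spectrum, while the entries have only $\Theta(N^{-1})$ variance but heavy tails (the $(Ck)^{ck}/(Nq^{k-2})$ moment bound). One must verify that the higher cumulant terms ($k\geq 5$) in the cumulant expansion are genuinely lower order — each extra factor costs $q^{-1}$ but the combinatorics of self-energy renormalization must be handled, as in \cite{EKYY13,HLY20}; since we only need the statement along a macroscopic contour (not down to the optimal scale $\im z \sim N^{-1}$), the weak local law of Lemma \ref{lemma:local law cgSBM} should suffice and the bookkeeping is lighter than a full sharp local law. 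A secondary subtlety is that $M$ is not centered and the deformation $VDV^T$ with $d_i=\gamma_N$ possibly diverging must be incorporated; here one uses the block structure of $V$ and the identity $\det(M-z) = \det(H-z)\det(I + DV^T(H-z)^{-1}V)$ to separate the deformation contribution cleanly, and then checks it is $O(1)$ in the mean.
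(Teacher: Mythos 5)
Your overall route is the same as the paper's: Cauchy's integral formula reduces the claim to an expansion of $\E\,\tr(H-z)^{-1}$ on a macroscopic contour accurate to order $N/q^2$; the paper obtains this by a generalized Stein (cumulant) expansion yielding the self-consistent relation $\E\qb{(1+zm+m^2+\tfrac{\xi^{(4)}}{q^{2}}m^4)(z+m+\zeta m)}=O_\prec(N^{-1/2-\delta}q^{-1})$, compares it with the fixed-point equation for $m_{sc}$ to get $\E\qb{q\sqrt N\,(m-m_{sc})}=\tfrac{\sqrt N}{q}\xi^{(4)}m_{sc}^3m_{sc}'+o(1)$, and integrates against $f$ to produce the $\tau_4(f)$ coefficient; the third-cumulant terms indeed do not contribute to the mean at this order, and the rank-$K$ deformation is disposed of by showing $\tfrac{q}{\sqrt N}\,\partial_\theta\Tr(M(\theta)-zI)^{-1}=o(1)$ (your determinant factorization would serve the same purpose, since only an $O(1)$ bound on the deformation contribution is needed). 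So your steps (i)--(iv) are the right plan.

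The one genuine problem is the normalization of the centering term, where your step (ii) contradicts your (correct) step (i). As you say in (i), the fourth cumulants $\sim (Nq^2)^{-1}$ summed over $N^2$ pairs shift $\E\,\tr G$ by $O(N/q^2)$; after the contour integral this is $\tfrac{N}{q^{2}}\xi^{(4)}\tau_4(f)$ (i.e.\ essentially $p_a^{-1}\tau_4(f)$, the sparse avatar of the dense $k_4\tau_4(f)$ term), and this is what must be subtracted so that the $q/\sqrt N$-rescaled quantity tends to zero, since $\tfrac{q}{\sqrt N}\cdot\tfrac{N}{q^{2}}=\tfrac{\sqrt N}{q}$. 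Your assertion in (ii) that ``$k_4\tau_4(f)$ becomes $\tfrac{q^2}{N}\tau_4(f)$ after absorbing the scaling'' has the factor inverted: $\tfrac{q^2}{N}\tau_4(f)=p_a\tau_4(f)=o(1)$, so subtracting it changes nothing and the statement with that centering would fail whenever $\tau_4(f)\neq 0$. (The displayed proposition shares this inversion; the paper's own proof produces exactly the value $\tfrac{\sqrt N}{q}\xi^{(4)}\tau_4(f)$ for $\E\qb{\tfrac{q}{\sqrt N}(L_H(f)-N\langle\mu_{sc},f\rangle)}$, i.e.\ the $\tfrac{N}{q^2}$ centering.) Once you fix this exponent, your outline coincides with the paper's argument, and the remaining work is the cumulant-expansion bookkeeping (the estimates of the terms $I_{r,s}$ in the paper's Lemma), which your step (i) correctly anticipates.
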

We prove Proposition \ref{prop:mean of LSS} in Appendix \ref{sec:proof of mean}.

Note that Theorem~\ref{thm:converge gaussian} and Proposition~\ref{prop:mean of LSS} can be used to the matrix $M$ in (\ref{eq:M}) with $N^{-3/4}\ll p_a \ll 1$ by the condition $1/6 < \phi < 1/2$. We remark that the condition $1/6<\phi<1/2$ is assumed due to a technical reason and we believe that it can be relaxed to for any $\phi<1/2$.

\section{Phase transition of the largest eigenvalue}\label{sec:proof of thm eig bbp}
\subsection{Phase transition of the largest eigenvalue in the dense regime}\label{subsec:proof of thm eig bbp dense}
In this section, we prove Theorem \ref{thm:eigval bbp} by studying the spectrum of the rank-$K$ deformed SBM in \eqref{eq:deformation}, defined by
\begin{equation*}
	M \;=\; H + V D V^T\,.
\end{equation*}
We first introduce a result for the location of the outlier eigenvalues, which is a special case of Lemma 6.1 in \cite{Rao11}. 
\begin{lemma}\label{lem:phase transition}
	Fix a positive integer $K$, a family $d_1,\ldots, d_K$ of pairwise distinct nonzero real numbers.
	Let us define, for $z\in \C\backslash[-2,2]$, the $K\times K$ matrix 
	\begin{equation}
		M_G(z)=\diag( 1+d_1m_{sc}(z),\ldots, 1+d_Km_{sc}(z)),		
	\end{equation} and denote by $z_1>\cdots > z_p$  the $z$'s such that $M_G(z)$ is singular, where $p\in\{0,\ldots, K\}$ is identically equal to the number of $i$'s such that $-1<1/d_i<1$.
	
	Let us also consider a $K\times K$ Hermitian matrix $M_{n}(z)$, defined on $z \in \C\backslash[a_n, b_n]$ \st the  entries of $M_n(z) $ are analytic functions of $z$ and $[a_n, b_n] \rightarrow [-2,2]$. We suppose that
$M_{n}(z)$ converges, to the function $M_G(z)$, uniformly on $\mathcal{D}:=\{z\in \C : \operatorname{dist}(z, [-2,2])\geq \eta \}$, for all $\eta >0$ as $n \rightarrow \infty$.
	Then	
	\begin{itemize}
		\item There exists $p$ real sequences $z_{n,1}> \ldots > z_{n,p}$ converging respectively to $z_1, \ldots, z_p$   such that for any (small) $\epsilon>0$  , for (large)  $n$  ,   the $z$'s in $\R\backslash[-2-\epsilon, 2+\epsilon]$ such that $M_n(z)$ is singular are exactly $z_{n,1}, \ldots, z_{n,p}$,
	\end{itemize}
\end{lemma}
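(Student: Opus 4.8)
The plan is to translate the singularity statement into one about zeros of the scalar analytic functions
\[
h_n(z) := \det M_n(z), \qquad h(z) := \det M_G(z) = \prod_{i=1}^K \pb{1 + d_i\, m_{sc}(z)},
\]
and to control them with Rouch\'e's theorem. Note $M_n(z)$ is singular exactly when $h_n(z) = 0$. The first step is to locate the zeros of the limit $h$ in $\C\setminus[-2,2]$. Since $m_{sc}$ maps $\C^{\pm}$ into $\C^{\pm}$, a solution of $m_{sc}(z) = -1/d_i \in \R$ must be real; since $m_{sc}$ satisfies $m_{sc}(z)^2 + z\,m_{sc}(z) + 1 = 0$, any such solution equals $z = -m_{sc}(z) - m_{sc}(z)^{-1} = d_i + d_i^{-1} =: z_i$; and since $m_{sc}$ takes values of modulus $< 1$ off $[-2,2]$, one checks (comparing the two roots $-d_i, -d_i^{-1}$ of $w^2 + z_i w + 1 = 0$) that $m_{sc}(z_i) = -d_i^{-1}$ precisely when $|d_i| > 1$. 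Hence the singular points of $M_G$ are exactly the $z_i = d_i + d_i^{-1}$ with $|d_i| > 1$; these are real with $|z_i| > 2$, pairwise distinct (as the $d_i$ are), and each is a \emph{simple} zero of $h$ because only the $i$-th factor vanishes there and $m_{sc}'(z_i) \neq 0$. In particular their number is exactly $p$.

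The second step records two facts about $h_n$. As $|m_{sc}(z)| \le 1$ on $\C\setminus[-2,2]$, the $M_G(z)$ are uniformly bounded there, hence so are the $M_n(z)$ on each $\mathcal D_\eta := \{z : \operatorname{dist}(z,[-2,2]) \ge \eta\}$ for $n$ large; since the determinant is locally Lipschitz, the assumed uniform convergence $M_n \to M_G$ on $\mathcal D_\eta$ upgrades to $h_n \to h$ uniformly on $\mathcal D_\eta$, for every $\eta > 0$. Moreover, on the connected $\R$-symmetric domain $\C\setminus[a_n,b_n]$ the entries of $M_n$ are analytic and $M_n$ is Hermitian along $\R\setminus[a_n,b_n]$, so the identity theorem gives $M_n(\bar z) = M_n(z)^{*}$, whence $h_n(\bar z) = \overline{h_n(z)}$; thus $h_n$ is real on $\R\setminus[a_n,b_n]$ and its zero set is symmetric about $\R$. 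Also, because $[a_n,b_n]\to[-2,2]$, for fixed small $\epsilon>0$ the $h_n$ are defined on all of $\R\setminus(-2-\epsilon,2+\epsilon)$ once $n$ is large.

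The third step is the Rouch\'e argument. Fix $\epsilon>0$ small enough that every $z_i$ lies outside $[-2-\epsilon,2+\epsilon]$. Around each $z_i$ take a closed disk $\overline{B(z_i,r_i)} \subset \C\setminus[-2-\epsilon,2+\epsilon]$ centered on $\R$, with the disks pairwise disjoint and each containing no zero of $h$ but $z_i$; then $|h|\ge\delta_i>0$ on $\partial B(z_i,r_i)$, so for $n$ large $|h_n - h| < \delta_i$ there and Rouch\'e gives a unique zero $z_{n,i}$ of $h_n$ in $B(z_i,r_i)$. Shrinking $r_i$ shows $z_{n,i}\to z_i$, and because $\overline{z_{n,i}}$ is also a zero of $h_n$ in the symmetric disk, uniqueness forces $z_{n,i}\in\R$. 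To exclude any other real singular point, note that $\pb{\R\setminus(-2-\epsilon,2+\epsilon)}\cup\{\infty\}$ is compact (with $M_G(\infty)=I$), $h$ is continuous and nonvanishing on it away from $\bigcup_i B(z_i,r_i)$ and hence bounded below there, while $h_n\to h$ uniformly on $\R\setminus(-2-\epsilon,2+\epsilon)$, including near $\infty$, directly from the hypothesis; hence $h_n$ has no zero there for $n$ large. Ordering $z_{n,1}>\cdots>z_{n,p}$, possible for large $n$ since the limits are distinct, finishes the proof.

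The main difficulty is organizational rather than conceptual: the edge of $[-2,2]$ and the point at infinity have to be handled together — keeping $\epsilon$ small enough that $[-2-\epsilon,2+\epsilon]$ excludes all the $z_i$ while the disks still fit in $\C\setminus[-2,2]$, using $[a_n,b_n]\to[-2,2]$ so that the $h_n$ are even defined where needed, and using that the hypothesis provides uniform convergence on the \emph{unbounded} sets $\mathcal D_\eta$ (with $|m_{sc}|\le1$) to control $h_n$ out to $\infty$. The one structural input is Hermiticity: it makes $h_n$ conjugation-symmetric and so forces the perturbed singular points $z_{n,i}$ to be exactly real rather than merely near $\R$.
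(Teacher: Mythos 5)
Your proof is correct. One point of comparison: the paper does not actually prove this lemma at all — it imports it as a special case of Lemma 6.1 of \cite{Rao11} and uses it as a black box — so your argument is a self-contained replacement rather than a variant of an in-paper proof. What you wrote is essentially the standard argument behind the cited result: reduce singularity of $M_n(z)$ to vanishing of the scalar analytic function $h_n=\det M_n$, identify the zeros of the limit $h(z)=\prod_i(1+d_i m_{sc}(z))$ off $[-2,2]$ as the simple zeros $z_i=d_i+d_i^{-1}$ with $|d_i|>1$ (using that $m_{sc}$ maps $\C\setminus[-2,2]$ into the punctured unit disk and $m_{sc}'(z_i)\neq 0$), transfer zeros by Rouch\'e on small conjugation-symmetric disks, force reality of the perturbed zeros via the Schwarz-reflection identity $M_n(\bar z)=M_n(z)^*$ coming from Hermiticity on the real axis, and exclude spurious zeros by a uniform lower bound on $|h|$ on the compactified set $\bigl(\R\setminus(-2-\epsilon,2+\epsilon)\bigr)\cup\{\infty\}$, where $h(\infty)=1$. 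You handle the genuinely delicate points correctly: the domains $\C\setminus[a_n,b_n]$ eventually contain the disks because $[a_n,b_n]\to[-2,2]$, the hypothesis of uniform convergence on the unbounded sets $\mathcal{D}_\eta$ is exactly what controls $h_n$ near infinity, and the quantifier order (for each small $\epsilon$, then $n$ large) matches the statement, with the sequences $z_{n,i}$ defined independently of $\epsilon$ as the unique zeros of $h_n$ near $z_i$. The only caveat worth recording is interpretive rather than mathematical: the lemma's phrase ``Hermitian matrix $M_n(z)$'' cannot mean Hermitian for every complex $z$ (that would contradict analyticity), and your reading — Hermitian for real $z$ outside $[a_n,b_n]$, which is what holds in the application $M_N(z)=V^TG(z)V+D^{-1}$ — is the right one and is exactly what your reflection step needs.
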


The following result has been used in several works on finite-rank deformations of random matrices.
\begin{lemma} \label{lem: det identity}
	If $\mu \in \R \setminus \sigma(H)$ and $\det(D) \neq 0$ then $\mu \in \sigma(M)$ if and only if
	\begin{equation*}
		\det \pb{V^T G(\mu) V + D^{-1}}\;=\;0\,.
	\end{equation*}
\end{lemma}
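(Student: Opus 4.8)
The plan is to reduce the eigenvalue equation $\det(M - \mu I) = 0$ to a lower-rank determinant identity via the Schur complement / Weinstein--Aronszajn (matrix determinant) formula, exploiting that the perturbation $VDV^T$ has rank at most $K$. First I would write $M - \mu I = (H - \mu I) + V D V^T$. Since $\mu \notin \sigma(H)$, the matrix $H - \mu I$ is invertible with inverse $G(\mu)$ (up to sign: $G(\mu) = (H-\mu I)^{-1}$, possibly $(\mu I - H)^{-1}$ depending on the paper's sign convention — I would fix this consistently). Factoring out the invertible part,
\[
	M - \mu I = (H - \mu I)\pb{ I + G(\mu) V D V^T },
\]
so $\mu \in \sigma(M)$ if and only if $\det\pb{ I + G(\mu) V D V^T } = 0$, because $\det(H-\mu I) \neq 0$.

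Next I would apply Sylvester's determinant identity $\det(I_N + AB) = \det(I_K + BA)$ with $A = G(\mu) V$ an $N \times K$ matrix and $B = D V^T$ a $K \times N$ matrix (here using $\det D \neq 0$ only at the very end). This gives
\[
	\det\pb{ I_N + G(\mu) V D V^T } = \det\pb{ I_K + D V^T G(\mu) V }.
\]
Finally, since $D$ is invertible, factor $\det\pb{ I_K + D V^T G(\mu) V } = \det(D) \cdot \det\pb{ D^{-1} + V^T G(\mu) V }$, and as $\det(D) \neq 0$ this vanishes precisely when $\det\pb{ V^T G(\mu) V + D^{-1} } = 0$. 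Chaining the three equivalences yields the claim.

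There is essentially no serious obstacle here; the only points requiring a little care are the bookkeeping of the sign/definition of the resolvent $G$ used elsewhere in the paper, and making sure the hypotheses $\mu \notin \sigma(H)$ (needed to invert $H - \mu I$) and $\det D \neq 0$ (needed for the final factorization) are invoked at the right places. I would also note for completeness that $V^T V = I$ plays no role in this particular lemma — it is only the invertibility of $D$ and the resolvent that matter — so the statement holds verbatim for the deformed cgSBM of Definition \ref{def:Dsbm M}.
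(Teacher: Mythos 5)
Your proof is correct, and it is exactly the elementary matrix-algebra argument the paper has in mind (the paper omits the proof of Lemma \ref{lem: det identity}, stating it "can be checked by an elementary matrix algebra"): factor $M-\mu I = (H-\mu I)\pb{I + G(\mu)VDV^T}$, apply Sylvester's identity $\det(I_N+AB)=\det(I_K+BA)$, and pull out $\det(D)\neq 0$. Your remarks about the sign convention for $G$ and the irrelevance of $V^TV=I$ are also accurate; nothing is missing.
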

We omit the proof of Lemma \ref{lem: det identity} as it can be checked by an elementary matrix algebra.

We now prove Theorem \ref{thm:eigval bbp}. From Weyl's interlacing inequality, for all $1\le i\le N$,
\begin{equation}\label{eq:weyl}
	 \lambda_{i-K}(H) \leq \lambda_i(M) \leq\lambda_{i}(H),
\end{equation} 
where we use the convention that $\lambda_k(H)=-\infty$ if  $k\le 0$.
Since the empirical spectral distribution of $H$ converges to $\mu_{sc}$, it follows that the empirical spectral distribution of $M$ does as well. Note that for any $N$-independent $i\ge 1$, $\lambda_i(H)\rightarrow 2$.
By \eqref{eq:weyl}, we deduce the that $\liminf_{n\to\infty} \lambda_i(M)\ge 2$ for any $N$-independent $i>1$ and also $\lambda_i(M) \to 2$ if $i>K$ in addition.

Let us now consider the eigenvalues of $M$ outside the spectrum of $H$.
By Lemma \ref{lem: det identity}, these are precisely those values $z$ outside the spectrum of $H$ \st the $K\times K$ matrix
\begin{equation}\label{eq:Mn z}
	M_{N}(z):=V^T G(z) V + D^{-1}
\end{equation}
is singular. 

We first consider the case where all $d_i's$ are pairwise distinct.
From the isotropic local law for generalized Wigner matrices (see Lemma \ref{lem:iso local law} in Appendix), the $(i,j)$-entry of $M_N$ satisfies an estimate
\begin{equation} \label{eq:eig bbp dense local law}
	(M_N)_{i,j} = \scalar{\b v_i}{G(z) \b v_j}+\delta_{ij}d_i^{-1} = m_{sc}(z) \scalar{\b v_i }{\b v_j}+\delta_{ij}d_i^{-1} + O(N^{-1/2+\delta})
\end{equation}
for any $\delta>0$, with overwhelming probability. Thus we have,
\begin{equation}
	(M_N)_{i, j} \to \delta_{ij} \left(m_{sc}(z)+ \frac{1}{d_i} \right).
\end{equation}
Note that this convergence is uniform on $\mathcal{D}$. 

We now apply Lemma \ref{lem:phase transition} to find that 
\begin{enumerate}
\item for $i=1, 2, \dots, p$ with $d_i >1$, the eigenvalues $\lambda_1(M) > \lambda_2(M) > \dots \lambda_p(M)$ are outside $[-2-\epsilon, 2+\epsilon]$ for some $\epsilon>0$ with overwhelming probability and $\lambda_i(M) \to z_i$ where $z_i$ satisfies
\[
	m_{sc}(z_i) + \frac{1}{d_i} =0,
\]
and
\item for $i=p+1, \dots, K$ with $d_i \leq 1$, $\lambda_i(M) \to 2$.
\end{enumerate}

From the identity $m_{sc}^2(z) + zm_{sc}(z)+1 =0$, we easily find that $m_{sc}(z)+d^{-1}=0$ if and only if $d>1$ and $z=d+d^{-1}$. This concludes the proof of Theorem \ref{thm:eigval bbp} in the case $d_i$'s are pairwise distinct. 

Lastly, we consider the case where the $d_i$'s are not necessarily pairwise distinct. If $d_i = d_{i+1}$, for any (small) $\epsilon>0$, using the continuity of $\rho(x) := x + x^{-1}$ for $x>0$, we can choose distinct numbers $d'_i$ and $d'_{i+1}$ such that $\abs{\rho(d_i)-\rho(d'_i)}, \abs{\rho(d_{i+1})-\rho(d'_{i+1})} \leq \epsilon$. We then use the inequality by Hoffman and Wielandt, Corollary 6.3.8 of \cite{HJ85}, to control the change of the eigenvalue of $M$ by the differences $|d_i-d'_i|$ and $|d_{i+1}-d'_{i+1}|$. Putting these results together with Theorem \ref{thm:eigval bbp} for pairwise distinct $d_i$ case, we can show that the Theorem \ref{thm:eigval bbp} holds true for the case where the $d_i$'s are not necessarily pairwise distinct.

\subsection{Phase transition of the largest eigenvalue in the sparse regime}\label{subsec:proof of thm eig bbp sparse}
In this section, we prove Theorem \ref{thm:eigval_bbp_sparse_dcgSBM} and Corollary \ref{thm:eigval_bbp_sparse}. The proof of Theorem \ref{thm:eigval_bbp_sparse_dcgSBM} follows the proof of Theorem \ref{thm:eigval bbp} which presented in Section \ref{subsec:proof of thm eig bbp dense} except the (\ref{eq:eig bbp dense local law}) since Lemma \ref{lem:iso local law} in Appendix can be used only in the dense regime. To prove same equation in the sparse regime, we prove the local law for the sparse centered generalized SBM which is defined in Definition \ref{def:cgSBM_main} with some sparsity condition.

\begin{proposition}[Local law for the sparse cgSBM] \label{lemma:local law cgSBM}
    For centered generalized stochastic block model $H$ defined in Definition \ref{def:cgSBM_main} with $1/8<\phi<1/2$ and its Green function $G$ which is defined as $G(z)=(H-zI)^{-1}$, define the function $s(z)$ by
    \begin{align}
        s(z) = \frac{1}{N}\sum_{i,j}\G{ij}(z)
    \end{align}
    Then, the function $s(z)$ follows
    \begin{align}
        \left\lvert s(z)-\frac{-1}{z+m_{sc}} \right \rvert = \abs{s(z)-m_{sc}(z)} \prec \frac{\sqrt{N}}{q^4}+\frac{1}{q}
    \end{align}
\end{proposition}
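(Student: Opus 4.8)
The plan is to analyze $s(z) = \frac{1}{N}\sum_{i,j}G_{ij}(z)$ via a self-consistent equation obtained from the Schur complement / resolvent expansion, exploiting the block structure of the variance profile $\sigma_{ij}^2$. Write $\mathbf{e} = N^{-1/2}(1,\dots,1)^T$ so that $s(z) = \langle \mathbf{e}, G(z)\mathbf{e}\rangle$ after rescaling (up to the normalization $\sum_i \sigma_{ij}^2 = 1$, which forces $\sigma_{ij}^2 = N^{-1}(1+o(1))$ in the balanced case, so $H$ is close to a standard Wigner matrix in its second moments). First I would record the entrywise and averaged local laws for sparse generalized Wigner matrices that are available in the regime $\phi > 1/8$: namely $|G_{ij}(z) - m_{sc}(z)\delta_{ij}| \prec \frac{1}{q} + \sqrt{\frac{\operatorname{Im} m_{sc}}{N\eta}}$ and $|m_N(z) - m_{sc}(z)| \prec \frac{1}{q^2} + \frac{1}{N\eta}$ where $m_N = \frac{1}{N}\operatorname{Tr} G$. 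These follow from the methods of \cite{EKYY12,EKYY13}; the point is that the diagonal part of $s(z)$, i.e. $\frac{1}{N}\sum_i G_{ii} = m_N(z)$, is already controlled to precision $q^{-2}$, which is better than the claimed bound, so the whole difficulty is in the off-diagonal sum $\frac{1}{N}\sum_{i\neq j}G_{ij}$.

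The key step is to derive a closed equation for $s(z)$ itself. Using the identity $zG + I = HG$ applied to the vector $\mathbf{1}$, one gets $z N s(z) + N = \sum_{i,j,k} H_{ik}G_{kj}$; expanding $H_{ik}G_{kj}$ by the standard cumulant expansion (integration by parts / Stein's identity for the sparse moment conditions \eqref{eq:moments}) produces a self-consistent relation of the schematic form $(z + m_{sc}(z)) s(z) = -1 + \mathcal{E}(z)$, where $\mathcal{E}(z)$ collects the error terms. The leading correction in $\mathcal{E}$ comes from the third-cumulant contribution, which is of size $\sim \kappa_3 \sim \frac{1}{q}\cdot\frac{1}{\sqrt N}$ times a sum of $N$ terms involving products like $G_{ii}G_{jj}$ summed against the all-ones structure; since $\mathbf{1}$ has norm $\sqrt N$ and there are two such contractions, this term is of order $\frac{\sqrt N}{q}$... more care is needed here and in fact the dominant error turns out to be the fourth-cumulant term, of size $\frac{1}{q^2}$ per entry, which after the double sum against $\mathbf{1}\mathbf{1}^T$ and using $\|G\mathbf{1}\| \lesssim \sqrt N$ contributes $\frac{\sqrt N}{q^4}\cdot(\text{something})$, matching the claimed $\frac{\sqrt N}{q^4}$; the $\frac{1}{q}$ term in the bound is the residual from lower-order cumulant terms that do not benefit from the extra averaging. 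Then I would invert the stability equation: since $z + m_{sc}(z) = -1/s_{det}(z)$ is bounded away from zero for $z \in \C\setminus[-2,2]$ with $\operatorname{dist}(z,[-2,2]) \geq \eta$, dividing through yields $|s(z) - m_{sc}(z)| = |s(z) + \frac{1}{z+m_{sc}(z)}| \prec \frac{\sqrt N}{q^4} + \frac 1q$ on that domain.

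The main obstacle, as usual for this kind of estimate, is the bookkeeping of the cumulant expansion: one must show that the higher-order cumulant terms (fifth order and beyond) are negligible, which requires the moment bound $\E|H_{ij}|^k \leq (Ck)^{ck}/(Nq^{k-2})$ together with a priori bounds on the Green function entries, and that the "unstable" terms — those where the expansion reproduces $s(z)$ or $m_N(s) s(z)$ — are correctly identified and moved to the left-hand side so that the self-consistent structure closes. A secondary subtlety is that we are not tracking $m_N(z)$ alone but the fully-summed quantity $\langle \mathbf{1}, G\mathbf{1}\rangle$, so fluctuation averaging must be applied to a weighted sum against the structured vector $\mathbf{1}$ rather than a uniform average; one handles this by noting $\mathbf{1}$ decomposes according to the block structure of $H$ (the normalization $\sum_i \sigma_{ij}^2 = 1$ makes the variance profile doubly-stochastic-like), so that $\mathbf{1}$ is essentially the top eigenvector direction of the variance operator and the relevant bounds reduce to averaged local-law quantities. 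I expect the threshold $\phi > 1/8$ to enter precisely at the point of controlling the third/fourth cumulant error terms $\frac{\sqrt N}{q^4} = N^{1/2 - 4\phi}$ being $o(1)$, i.e. requiring $\phi > 1/8$.
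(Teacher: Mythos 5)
Your overall route is the same as the paper's (expand $zs(z)+1=\frac{1}{N}\sum_{i,j}(HG)_{ij}$ by the cumulant expansion, close a self-consistent equation, and invert the stable factor), but there is a genuine gap exactly at the point you flag with ``more care is needed here.'' As you yourself compute, the third-cumulant term is of naive size $\sqrt{N}/q=N^{1/2-\phi}$, which diverges for every $\phi<1/2$; it cannot simply be superseded by the fourth-cumulant term. The paper deals with it in two steps that are absent from your proposal. First, its deterministic leading part is \emph{kept inside} the self-consistent equation: one works with $P(s)=1+(z+m_{sc}-\kappa_3 m_{sc}^2)s(z)$, where $\kappa_3=\sum_j\kappa^{(3)}_{ij}=O(1/q)$, and only at the very end is $\frac{-1}{z+m_{sc}-\kappa_3 m_{sc}^2}$ replaced by $m_{sc}$; this replacement is precisely the source of the $1/q$ term in the statement, not a generic ``residual from lower-order cumulants.'' Second, the fluctuating remainders of both the third- and fourth-cumulant terms involve signed row sums $\sum_j G_{kj}$, and bounding these by the trivial $\prec\sqrt{N}$ only yields errors of order $\sqrt{N}/q^2$. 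The paper first proves a separate improved estimate $T_k(z)=N^{-1/2}\sum_j G_{kj}\prec q^{-2}$ (Lemma on $T_k$, itself a full high-moment cumulant argument), and it is this extra factor $q^{-2}$ that upgrades the error to the claimed $\sqrt{N}/q^{4}$. Your appeal to ``fluctuation averaging against the structured vector $\mathbf{1}$'' gestures in this direction but does not produce, or even identify, the needed $q^{-2}$ gain, so the bookkeeping you describe cannot reach $\sqrt{N}/q^4+1/q$.

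Two smaller points: the paper obtains stochastic domination via high moments $\E\,\lvert P(s)\rvert^{2D}$ combined with Young's and Jensen's inequalities, rather than a one-shot estimate of the error $\mathcal{E}(z)$, and your claim that the normalization $\sum_i\sigma_{ij}^2=1$ forces $\sigma_{ij}^2=N^{-1}(1+o(1))$ is not part of the model (the within- and between-block variances may differ; only the row sums are normalized), though this does not affect the main structure of your argument. The identification of $\phi>1/8$ as the condition making $\sqrt{N}/q^4=N^{1/2-4\phi}$ small is correct and matches the paper.
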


This Proposition \ref{lemma:local law cgSBM} will be proved in Appendix~\ref{sec:proof of local law}. Using this Lemma \ref{lemma:local law cgSBM}, we can prove a lemma about the sparse version of (\ref{eq:eig bbp dense local law}) that can finalize the proof of Theorem \ref{thm:eigval_bbp_sparse}. 

\begin{lemma}\label{lemma:innerpro}
    Let $M$ be a deformed cgSBM that satisfies Definition \ref{def:Dsbm M} with cgSBM $H$ has $1/8<\phi<1/2$. Then for all $i,j=1,\cdots,K$, the green function $G(z) = (H-zI)^{-1}$ satisfies
    \begin{align}
        \scalar{\mathbf{v}_i}{G(z)\mathbf{v}_j} = \langle \mathbf{v}_i, \mathbf{v}_j \rangle \left(m_{sc}(z)+O\left(\frac{\sqrt{N}}{q^4}+\frac{1}{q^2}\right)\right)
    \end{align}
    while $\mathbf{v}_i, \mathbf{v}_j\ (i,j=1,\cdots,K)$ are column vectors of $V$.
\end{lemma}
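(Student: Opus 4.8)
The plan is to exploit the block structure of the vectors $\mathbf{v}_i$ to reduce the quadratic form $\scalar{\mathbf{v}_i}{G(z)\mathbf{v}_j}$ to a linear combination of sums of the form $\frac{1}{N}\sum_{k,l} G_{kl}(z)$ taken over pairs of blocks, which is precisely what Proposition~\ref{lemma:local law cgSBM} controls (after localizing to the relevant block coordinates). First I would write $\mathbf{v}_i = \sum_{\alpha} c^{(i)}_\alpha \mathbf{1}_{B_\alpha}/\sqrt{|B_\alpha|}$, where $B_1,\dots,B_r$ are the common blocks on which every $\mathbf{v}_i$ is constant (so $r$ is bounded and each $|B_\alpha| = \Theta(N)$ by balancedness), and $c^{(i)}_\alpha$ are the block values. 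Then
\[
	\scalar{\mathbf{v}_i}{G(z)\mathbf{v}_j} = \sum_{\alpha,\beta} \frac{c^{(i)}_\alpha c^{(j)}_\beta}{\sqrt{|B_\alpha||B_\beta|}} \sum_{k\in B_\alpha,\, l\in B_\beta} G_{kl}(z),
\]
so it suffices to show that for each fixed pair of blocks, $\frac{1}{N}\sum_{k\in B_\alpha,\, l\in B_\beta} G_{kl}(z) = \frac{|B_\alpha\cap B_\beta|}{N} m_{sc}(z) + O\big(\tfrac{\sqrt N}{q^4} + \tfrac{1}{q^2}\big)$ with overwhelming probability, i.e.\ that the off-diagonal block sums concentrate near $0$ and the diagonal ones near $|B_\alpha| m_{sc}/N$; summing against the bounded coefficients and using $\langle \mathbf v_i,\mathbf v_j\rangle = \sum_\alpha c^{(i)}_\alpha c^{(j)}_\alpha \mathbf 1\{B_\alpha = B_\beta\}$ then gives the claim.

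The key step is therefore a \emph{blockwise} refinement of Proposition~\ref{lemma:local law cgSBM}: rather than the single global average $s(z) = \frac1N\sum_{i,j}G_{ij}$, one needs control of $s_{\alpha\beta}(z) := \frac1N\sum_{k\in B_\alpha, l\in B_\beta} G_{kl}(z)$ for each ordered pair of blocks. I would run the same self-consistent-equation argument used to prove Proposition~\ref{lemma:local law cgSBM}, but now at the level of the vector $(s_{\alpha\beta})_{\alpha,\beta}$. The resolvent identity $G = -z^{-1} + z^{-1} H G$ (or the Schur-complement expansion used in the proof of the proposition) yields, after averaging the relevant entries, a closed system for the $s_{\alpha\beta}$ whose deterministic solution is $s_{\alpha\beta} \approx \frac{|B_\alpha\cap B_\beta|}{N}\cdot\frac{-1}{z+m_{sc}} = \frac{|B_\alpha\cap B_\beta|}{N} m_{sc}(z)$, using the balanced block structure of the variance profile $\sigma_{ij}^2$ and the normalization $\sum_i \sigma_{ij}^2 = 1$. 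The fluctuation estimate and the error bound $\tfrac{\sqrt N}{q^4}+\tfrac1q$ come out of the large-deviation bounds for the random error terms exactly as in the proof of Proposition~\ref{lemma:local law cgSBM}; the $\tfrac1q$ there becomes $\tfrac{1}{q^2}$ after the rescaling by $|B_\alpha|^{-1/2}|B_\beta|^{-1/2}\sim N^{-1}$ against sums of order $N$, since in the inner product one divides by $\sqrt{|B_\alpha||B_\beta|}$ rather than by $N$ — I would double-check this bookkeeping, but the structure is parallel.

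The main obstacle I expect is precisely making this blockwise version of the local law go through with the \emph{same} self-consistent equation machinery: one must verify that passing from the fully averaged quantity $s(z)$ to the finitely many partially averaged quantities $s_{\alpha\beta}(z)$ does not destroy the fluctuation averaging that produced the bound in Proposition~\ref{lemma:local law cgSBM}. Since the number of blocks is bounded independently of $N$ and the blocks are all of size $\Theta(N)$, the averaging over $k \in B_\alpha$ and $l \in B_\beta$ still involves $\Theta(N)$ summands, so the concentration should be of the same strength; the balanced structure guarantees the variance profile is constant on each block $\times$ block cell, which is what makes the self-consistent system close. An alternative, if one wants to avoid reproving the local law, is to note that $\mathbf 1_{B_\alpha}$ is (up to normalization) an eigenvector of $\E M$ in the SBM setting, invoke the entrywise local law $G_{ij} \approx \delta_{ij} m_{sc}$ plus fluctuation averaging for $\sum_{k,l} G_{kl}$ directly, and absorb the off-diagonal contributions into the stated error; but I expect the cleanest route is the blockwise self-consistent equation sketched above, reusing verbatim the estimates established in the proof of Proposition~\ref{lemma:local law cgSBM}.
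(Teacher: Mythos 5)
Your proposal follows essentially the same route as the paper: decompose $\scalar{\mathbf{v}_i}{G(z)\mathbf{v}_j}$ along the blocks on which the $\mathbf{v}_i$ are constant, argue that the off-diagonal block sums are negligible, and identify each diagonal block sum with a rescaling of the averaged quantity controlled by Proposition~\ref{lemma:local law cgSBM}, then resum against the block values using orthonormality. The only real difference is explicitness: you correctly flag that what is needed is a blockwise version of that proposition (the paper simply asserts that the within-block sum ``has the same behavior'' as the global average and applies the proposition with $N$ replaced by $N/T$), so your sketch of rerunning the self-consistent argument for the partial block averages is precisely the step the paper leaves implicit.
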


This Lemma \ref{lemma:innerpro} will be proved in Appendix \ref{sec:EM} and it finalizes the proof of Theorem \ref{thm:eigval_bbp_sparse_dcgSBM}. For the proof of Corollary \ref{thm:eigval_bbp_sparse}, recall the $N\times N$ matrix $M$ in (\ref{eq:M}). The proof starts with proving that $M$ is rank-($K-1$) deformed cgSBM in Definition \ref{def:Dsbm M}. The following lemma will be proved in Appendix \ref{sec:EM}.

\begin{lemma} \label{lemma:EM}
    For the expectation matrix $\mathbb{E}M$ of the rescaled SBM $M$ in (\ref{eq:M}), the rank of $\mathbb{E}M$ is $K-1$ and the nonzero eigenvalues of $\mathbb{E}M$ is $\gamma_N$ with multiplicity $K-1$. Consider the orthonormal eigenvectors $\mathbf{v}_i \ (i=1,\cdots,K-1)$ of $\mathbb{E}M$ with respect to eigenvalue $\gamma_N$. Then, $\mathbf{v}_i$ have $K$ blocks with each blocks have $N/K$ elements. Also, each blocks are consisted of same number. In other word, each $\mathbf{v}_i$ have block structure.
\end{lemma}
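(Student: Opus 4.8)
The plan is to compute $\E M$ explicitly from the definition \eqref{eq:M} and identify its spectral decomposition directly. Write $\E \wt M_{ij} = p_s$ when $i \sim j$ and $p_d$ otherwise (ignoring, as usual, the diagonal, which contributes only a negligible perturbation), so that $\E M_{ij} = (p_s - p_d)/\sigma$ when $i \sim j$ and $0$ otherwise. Thus, up to the diagonal correction, $\E M = \frac{p_s - p_d}{\sigma} B$, where $B$ is the block-diagonal matrix consisting of $K$ blocks of size $N/K$, each block being the all-ones matrix $J_{N/K}$. The spectrum of $J_{N/K}$ is $\{N/K, 0, \dots, 0\}$, so the spectrum of $B$ is $N/K$ with multiplicity $K$ and $0$ with multiplicity $N - K$; hence $\E M$ has eigenvalue $\frac{p_s - p_d}{\sigma} \cdot \frac{N}{K} = \gamma_N$ with multiplicity $K$ by \eqref{eq:gamma}.

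The issue is that this naive count gives multiplicity $K$, not $K-1$. The resolution — and the first real step — is to account for the constraint built into the rescaling: $\E M$ is (asymptotically) centered in the sense that its overall mean is subtracted off. More precisely, I would work with the genuinely centered object: since $M_{ij} = (\wt M_{ij} - p_a)/\sigma$ and $p_a = \frac{p_s + (K-1)p_d}{K}$ is exactly the average of the block-constant values of $\E\wt M$, the matrix $\E M$ restricted to its natural $K$-dimensional eigenspace has row sums summing to zero; equivalently, the all-ones vector $\mathbf{1}_N$ lies in the kernel of $\E M$. Concretely, $\E M \, \mathbf{1}_N = \frac{p_s - p_d}{\sigma} B \mathbf{1}_N = \frac{p_s-p_d}{\sigma}\cdot\frac{N}{K}\mathbf{1}_N$ is \emph{not} zero as written, so instead I should observe that the correct deterministic part, after removing the rank-one piece corresponding to $\mathbf{1}_N$, is what \eqref{eq:M} produces: the vector of block-averages of $\E M$ is the constant vector, and subtracting $p_a$ removes precisely that constant component. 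So the $K$-dimensional space $W$ spanned by the block-indicator vectors $\{\mathbf{1}_{C_\ell}\}_{\ell=1}^K$ splits as $W = \mathbb{R}\mathbf{1}_N \oplus W_0$ with $\dim W_0 = K-1$, and $\E M$ acts as $\gamma_N$ on $W_0$ and as $0$ on $\mathbb{R}\mathbf{1}_N \oplus W^\perp$.

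Granting that, the rest is bookkeeping. The eigenvalue is $\gamma_N$ with multiplicity $K-1$, and one may choose an orthonormal basis $\mathbf{v}_1, \dots, \mathbf{v}_{K-1}$ of $W_0$; since $W_0 \subset W = \operatorname{span}\{\mathbf{1}_{C_\ell}\}$, every $\mathbf{v}_i$ is a linear combination of block-indicator vectors, hence is constant on each community $C_\ell$ — that is, each $\mathbf{v}_i$ has the asserted block structure with $K$ blocks of size $N/K$, each block constant. Finally, I would note that the diagonal entries of $\E M$ (which I dropped above) differ from the stated rank-one structure only by a diagonal matrix of operator norm $O(p_a/\sigma) = O(N^{-1/2})$, so it can either be absorbed into the noise $H$ in Definition~\ref{def:Dsbm M} or handled by the same Hoffman--Wielandt / Weyl perturbation argument used elsewhere; this does not affect the rank or the block structure claims. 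The main obstacle is purely the bookkeeping in the previous paragraph — correctly locating where the "$-1$" in $K-1$ comes from, i.e., recognizing that the rescaling by $p_a$ in \eqref{eq:M} projects out exactly the constant direction inside the $K$-dimensional block-constant subspace — after which every step is elementary linear algebra.
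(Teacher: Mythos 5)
Your proposal is correct in substance but follows a genuinely different route from the paper. The paper computes $\E M$ entrywise from \eqref{eq:M}: with $x=\frac{p_s-p_d}{\sigma K}$ it equals $(K-1)x$ inside the diagonal blocks and $-x$ elsewhere; the rank is then reduced to that of the $K\times K$ matrix with the same pattern (nullity $1$, coming from the all-ones vector), explicit eigenvectors $\mathbf{w}_i$ of the form $(+1,\dots,+1,-1,\dots,-1,0,\dots,0)$ are exhibited with eigenvalue $Nx=\gamma_N$, and Gram--Schmidt is invoked to get an orthonormal, still block-constant, basis. You instead identify the spectral decomposition directly: writing (implicitly) $\E M=\frac{p_s-p_d}{\sigma}\bigl(B-\tfrac{1}{K}J_N\bigr)$, where $B$ is the block-diagonal all-ones matrix and $J_N=\mathbf{1}_N\mathbf{1}_N^T$, you observe that the rank-one correction produced by centering with $p_a$ exactly cancels the action of $B$ on $\mathbb{R}\mathbf{1}_N$, so $\E M$ acts as $\gamma_N$ on $W_0=\mathrm{span}\{\mathbf{1}_{C_\ell}\}\cap\mathbf{1}_N^{\perp}$ (dimension $K-1$) and as $0$ on $\mathbb{R}\mathbf{1}_N\oplus W^{\perp}$; block structure of the eigenvectors is then automatic since $W_0$ consists of block-constant vectors. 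This is more conceptual and coordinate-free, and it explains where the ``$-1$'' comes from, whereas the paper's argument is more explicit (it actually produces eigenvectors). Two small cautions: your opening computation of $\E M$ (value $(p_s-p_d)/\sigma$ within blocks and $0$ across) corresponds to centering by $p_d$ rather than $p_a$ and is wrong as written --- you do correct it in the next paragraph, but in a final write-up you should start from the correct entries $(p_s-p_a)/\sigma$ and $(p_d-p_a)/\sigma$, or state the identity $\E M=\frac{p_s-p_d}{\sigma}(B-\frac1K J_N)$ explicitly. Also, the worry about the diagonal is unnecessary here: in the model \eqref{def:wt M} the diagonal entries are Bernoulli with parameter $p_s$ (since $i\sim i$), so $\E M$ is exactly the block-constant matrix and the lemma is an exact algebraic statement; note that if there genuinely were a diagonal discrepancy, a Weyl/Hoffman--Wielandt perturbation bound would not preserve the exact rank and multiplicity claimed, so that fallback should be dropped rather than relied upon.
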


Using this Lemma \ref{lemma:EM}, $N\times N$ matrix $\bbE M$ can be decomposed as $\bbE M = VDV^T$ where $VDV^T = \sum_{i=1}^{K-1} \gamma_N \mathbf{v}_i \mathbf{v}_i^T$ with $V^TV=I$, $V$ is $N\times (K-1)$ matrix with columns $\mathbf{v}_i$ for $i=1,\cdots,K-1$ and $D = \gamma_N \cdot I_{K-1}$. Also, it can be easily checked that $H=M-\bbE M$ satisfies the Definition \ref{def:cgSBM_main} with $q^2:=Np_a$. Since $N^{-3/4} \ll p_a \ll 1$ implies that $1/8<q<1/2$, the $M$ satisfies the condition for Theorem \ref{thm:eigval_bbp_sparse_dcgSBM}. Directly applying $M$ into Theorem \ref{thm:eigval_bbp_sparse_dcgSBM}, Corollary \ref{thm:eigval_bbp_sparse} can be obtained.

\section{Central Limit Theorems for Stochastic Block Models}\label{sec:CLT}
\subsection{Sketch of Proof of theorem \ref{thm:CLT}}\label{subsec:sketch clt}

Following \cite{CL19,JCL20}, we introduce a family of interpolating matrices
\begin{equation}\label{eq:M(theta)}
	M(\theta) = H + \theta \gamma_N VV^T
\end{equation}
for $\theta \in [0, 1]$ and denote the corresponding eigenvalues of $M(\theta)$ by $\{\lambda_i(\theta)\}_{i=1}^{N}$. 
We choose constants $a \in (2, 3)$ and $v_0 \in (0, 1)$ so that the function $f$ is analytic on the rectangular contour $\Gamma$ with vertices $(\pm a \pm \ii v_0)$. By Theorem \ref{thm:eigval bbp}, we may assume that all eigenvalues of $M$ are inside $\Gamma$. From Cauchy's integral formula,
\beq \begin{split} \label{eq:Cauchy int}
		&\sum_{i=1}^N f(\lambda_i(1)) - N \int_{-2}^2 \frac{\sqrt{4-z^2}}{2\pi} f(z) \di z  \\
		&=-\frac{1}{2\pi \ii} \oint_{\Gamma} f(z) \big( \Tr (M(1) - zI)^{-1} - \Tr (H - zI)^{-1} \big) \dd z \\
		& \qquad +\frac{1}{2\pi \ii} \oint_{\Gamma} f(z) \big( \Tr (H - zI)^{-1} - Nm_{sc}(z) \big) \dd z
\end{split} \eeq
where	we let $m_{sc}(z):=\frac{-z + \sqrt{z^2 -4}}{2}$ be the Stieltjes transform of the Wigner semicircle measure.
Note that the second integral in the right side of \eqref{eq:Cauchy int} converges to a Gaussian, as proved by \cite{LX20}. (See Theorem \ref{thm:general CLT} in Appendix for more detail.)
Define the resolvent $G(z)$ of $H$ and its normalized trace $m^H$ by
\begin{align}\label{eq:resolvent}
	G^H(z)\equiv G(z) := (H-zI)^{-1}, \qquad  m^H(z)\equiv m(z) := \frac{1}{N}\tr G^H(z), 
\end{align}
where $z = E + \ii \eta \in \C^+$. 
From an elementary calculation involving the resolvents,
\begin{equation}
(H-zI)^{-1} - (M(\theta) - zI)^{-1} = \sum_{m=1}^K\theta\sqrt{\lambda} (M(\theta) - zI)^{-1} \b v^{(m)} ( \b v^{(m)})^T (H-zI)^{-1}.
\end{equation}
We can apply the isotropic local law for the inner products $\langle\b v^{(m)}, \b v^{(m)} \rangle$, which asserts that with isotropic local law, Lemma \ref{lem:iso local law}, we obtain that with overwhelming probability,
\begin{align}
	\langle\b v^{(m)},(M(\theta) - zI)^{-1} \b v^{(m)} \rangle = \frac{ m_{sc}(z)}{1+\theta \gamma_N m_{sc}(z)} + O(N^{-1/2 +\delta})
\end{align}
with overwhelming probability. (See Lemma \ref{lem:iso local law} in Appendix for the isotropic local law. See Definition \ref{def:overwhelming probability} in Appendix for the precise definition of overwhelming probability events.)
Using matrix differentiation identities and properties of resolvents, we find that
\begin{align}
	\frac{\partial}{\partial \theta} \Tr (M(\theta) - zI)^{-1} &= -\sum_{m=1}^K \gamma_N \frac{\partial}{\partial z} \left( ({\bold{v}^{(m)}})^T (M(\theta) - zI)^{-1} {\bold{v}^{(m)}} \right) \notag \\
	&= -\frac{K \gamma_N m_{sc}'(z)}{(1+\theta \gamma_N m_{sc}(z))^2} + o(1),
\end{align}
with overwhelming probability.
Integrating over $\theta$ from 0 to 1 and applying Cauchy's integral formula again, we then find that the difference between the LSS of $M$ and that of $H$ is
\[
K \sum_{\ell=1}^{\infty} \gamma_N^\ell \tau_{\ell}(f),
\]
and this proves the desired theorem. See Appendix \ref{sec:proof of thm CLT} for the detailed proof.

\subsection{Proof of Theorem \ref{thm:converge gaussian}}\label{sec:proof of prop CLT sparse}

Our proof of Theorem \ref{thm:converge gaussian} is based on the following proposition about the characteristic function of the (rescaled) LSS of a centered SBM $H$.

\begin{proposition}\label{prop:CLT sparse}
	Suppose that $H$ satisfies conditions in Definition \ref{def:Csbm H} with $1/6<\phi<1/2$. Let $f$ be an analytic function on an open interval containing $[-2,2]$ and define
	\begin{equation}
		\phi(\t) := \E[\exp\hb{\ii \t q^2(L_H(f) -\E\q{L_H(f)}   )/N }]. \qquad (\t \in \R)
	\end{equation}
	Then, with overwhelming probability, the characteristic function $\phi$ satisfies 
	$$
	\phi'(\t)=-2\t \phi(\t) \tau_2(f)^2+ o(1).
	$$
\end{proposition}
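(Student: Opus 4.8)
The plan is to compute the derivative $\phi'(\t)$ directly by differentiating under the expectation and then using a cumulant (or cluster) expansion to close the resulting expression up to an $o(1)$ error. Writing $\caL := q^2(L_H(f)-\E[L_H(f)])/N$, we have $\phi'(\t) = \ii\,\E[\caL\, e^{\ii\t\caL}]$, and the task is to show this equals $-2\t\,\tau_2(f)^2\,\phi(\t) + o(1)$. To access $L_H(f)$ through the resolvent we use Cauchy's integral formula as in \eqref{eq:Cauchy int}, so that $L_H(f) - N\int \frac{\sqrt{4-z^2}}{2\pi}f(z)\di z = -\frac{1}{2\pi\ii}\oint_\Gamma f(z)\big(\Tr G(z) - Nm_{sc}(z)\big)\di z$, and therefore $\caL$ is (up to the deterministic recentering) a contour integral of $\frac{q^2}{N}\big(\Tr G(z) - \E\Tr G(z)\big)$. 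Thus everything reduces to understanding the fluctuations of the linear statistic $X(z) := \frac{q^2}{N}(\Tr G(z) - \E\Tr G(z))$, jointly in $z$, against the exponential weight $e^{\ii\t\caL}$.

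First I would set up the self-consistent equation for $\E[(\Tr G - \E\Tr G)e^{\ii\t\caL}]$. The standard device is the identity $zG = HG - I$, take the normalized trace, and expand $\E[H_{ij}G_{ji}\cdots]$ via the cumulant expansion in the entries $H_{ij}$; because the entries are only $q$-sparse, the $k$-th cumulant of $H_{ij}$ is of order $(Nq^{k-2})^{-1}$, so the third- and higher-cumulant terms carry extra powers of $q^{-1}$. The leading self-consistent term reproduces $m_{sc}$, the $O(q^{-2})$ term from the fourth cumulant is exactly the deterministic shift that gets absorbed into $\E[L_H(f)]$ (this is consistent with Proposition \ref{prop:mean of LSS}), and the term that survives at the relevant scale $N/q^2$ in the variance comes from the second-cumulant (Gaussian) contraction where the derivative $\partial_{H_{ij}}$ hits a second resolvent factor, producing a product of two resolvent entries. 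Carefully bookkeeping which contractions contribute at order $\t\,\phi(\t)$ and which are lower order (using the a priori local law and $1/6 < \phi < 1/2$ to discard the error terms) yields, after the double contour integration, the coefficient $-2\t\,\tau_2(f)^2$; the factor $\tau_2(f)^2$ arises because the relevant resolvent kernel $\partial_{z_1}\partial_{z_2}\log(\cdots)$-type expression, integrated against $f(z_1)f(z_2)$ over $\Gamma\times\Gamma$, collapses to the single Chebyshev coefficient $\tau_2(f)$ squared — the same mechanism by which $k_4\tau_2(f)^2$ dominates $V_0(f)$ in Theorem \ref{thm:CLT} once $k_4 \sim p^{-1}$, rescaled by $q^2/N$.

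The key steps, in order, are: (i) express $\caL$ as a contour integral of $X(z)$ via Cauchy's formula and reduce $\phi'(\t)$ to a double contour integral of $\E[X(z_1)X(z_2)e^{\ii\t\caL}]$-type quantities (after one differentiation brings down one factor of $\caL$); (ii) run the cumulant expansion on $\E[X(z)e^{\ii\t\caL}]$ using $zG=HG-I$, isolating the second-cumulant term (which, when its derivative acts on $e^{\ii\t\caL}$, produces the $\t$-dependence) from the fourth-cumulant term (deterministic, absorbed in the centering) and the higher/remainder terms (shown $o(1)$ using the entrywise local law for $H$ and the sparsity bound $q = N^\phi$ with $\phi > 1/6$); (iii) identify the leading kernel and perform the $z_1,z_2$ contour integrals against $f$, using the generating-function identity $\sum_\ell \tau_\ell(f) w^\ell$-type expansions and the residue structure of $m_{sc}$ on $\Gamma$ to extract precisely $\tau_2(f)^2$; (iv) collect the constant $-2\t\,\phi(\t)\tau_2(f)^2$ and bound all remaining terms by $o(1)$, which holds with overwhelming probability once the fluctuation and local-law estimates are invoked on the good event.

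The main obstacle I anticipate is step (ii): controlling the higher-cumulant and remainder terms of the expansion \emph{uniformly on the contour $\Gamma$} down to the spectral scale, where $G$ is not bounded and the naive local law gives only $\Im m_{sc}/(N\eta) + 1/(q^2\eta)$-type control. One must choose $\Gamma$ with imaginary part bounded away from zero (which is fine since $f$ is analytic on a neighborhood of $[-2,2]$), so the resolvent bounds are benign there; the real difficulty is then the combinatorics of showing that every term with three or more $H$-derivatives, and every term where a derivative hits the weight $e^{\ii\t\caL}$ more than once, contributes $o(1)$ after rescaling by $q^2/N$ — this requires tracking the competition between the $q^{-1}$ gains from sparse cumulants and the $q^2/N$ prefactors and the possible $\t$-dependent factors of $\caL$, which themselves must be shown to have bounded moments on the good event. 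This is essentially the sparse-SBM analogue of the characteristic-function argument of \cite{Lytova2009}, adapted to handle both the non-identical variances (block structure) and the sparsity, and it is where the restriction $\phi > 1/6$ enters.
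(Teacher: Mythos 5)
Your skeleton --- write $L_H(f)-\E[L_H(f)]$ as a contour integral of $\tr G-\E\tr G$ via Cauchy's formula, differentiate the characteristic function, and close the resulting quantity with a cumulant expansion based on $zG=HG-I$, discarding error terms with the local law and $\phi>1/6$ --- is exactly the paper's strategy (Section \ref{sec:proof of prop CLT sparse} plus Lemma \ref{lem:E e Tr G}). However, there is a genuine gap in your identification of the surviving term, and it is not a bookkeeping detail: you attribute the $\Theta(N/q^2)$ variance to the second-cumulant (Gaussian) contraction, declare the fourth-cumulant contribution to be ``deterministic, absorbed in the centering,'' and assert that every term in which the derivative hits the weight $e^{\ii\t\caL}$ more than once is $o(1)$. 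In the sparse regime this is backwards. The Gaussian contraction produces only the $O(1)$ part of the variance (the $-\tau_1(f)^2+2\sum_\ell \ell\,\tau_\ell(f)^2$ part of $V_0(f)$ in Theorem \ref{thm:CLT}), which is wiped out by the rescaling; what survives is the fourth-cumulant part, $\sum_{i\neq j}\kappa^{(4)}_{ij}\sim N/q^2$, i.e.\ the $2k_4\tau_2(f)^2$ piece --- precisely the mechanism you mention in passing at the end of your second paragraph but then exclude from the computation. In the characteristic-function calculation this contribution enters exactly through the terms where \emph{two} derivatives land on the exponential weight: in the paper's notation, the $\kappa^{(4)}_{ij}\,\E\qb{\partial_{ij}^2 e(\t)\cdot\partial_{ij}G_{ij}}$ piece of $I_3$, together with the analogous fourth-cumulant term $\wt I_3$ generated inside the secondary expansion of the second-cumulant term $I_1$, are the only sources of the $\t$-dependent leading term; the term you propose to keep, $\sum\kappa^{(2)}_{ij}\E\qb{\partial_{ij}e(\t)\,G_{ij}}$, is negligible at the relevant scale $\sqrt{N}/q$.

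This misidentification also breaks your step (iii): the Gaussian kernel would produce $2\sum_\ell \ell\,\tau_\ell(f)^2$, not $\tau_2(f)^2$; only the fourth-cumulant kernel, which is $m_{sc}(z)m_{sc}'(z)$ in each contour variable, collapses after integration against $f$ to $\pb{\oint_\Gamma f\, m_{sc}m_{sc}'\,\di z}^2\propto\tau_2(f)^2$, yielding $\phi'(\t)=-2\t\phi(\t)\tau_2(f)^2+o(1)$. Run as written, your plan gives $\phi'(\t)=o(1)$, i.e.\ a degenerate limit. Two smaller remarks: the ODE as stated holds for the normalization $q\pb{L_H(f)-\E[L_H(f)]}/\sqrt{N}$, which is what the paper's proof (and Theorem \ref{thm:converge gaussian}) actually uses --- with the $q^2/N$ prefactor you copied the variance would indeed vanish, so work with $q/\sqrt{N}$; and the paper handles the contour near the real axis not by deforming it but by splitting $\Gamma$ into $\Gamma_1=\{\eta\le N^{-5}\}$ and $\Gamma_2$ and discarding $\Gamma_1$, though your choice of a contour with $\Im z$ bounded away from zero would also be fine since $f$ is analytic in a neighborhood of $[-2,2]$.
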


 Applying the Arzel\`a-Ascoli theorem and L\'evy's continuity theorem with Proposition \ref{prop:CLT sparse}, we can easily see that the CLT result in Theorem \ref{thm:converge gaussian} holds. To prove Theorem \ref{thm:converge gaussian} for deformed SBM $M$ that replaced by $H$, we follow the proof of Theorem \ref{thm:CLT} to show that
\[
	\frac{q}{\sqrt{N}}\pB{\frac{\partial}{\partial \theta} \Tr (M(\theta) - zI)^{-1} } = o(1)
\]
with overwhelming probability, which would imply that the desired theorem holds for $M$.

We now prove Proposition \ref{prop:CLT sparse}. 
From Cauchy's integral formula,
\[ \begin{split}
\frac{q}{\sqrt{N}}\pb{ L_H(f) - \E\q{L_H(f)} } &= -\frac{q\sqrt{N}}{2\pi\ii} \oint_\Gamma f(z)  \qb{m(z)-\E m(z)} \di z\\
	&= -\frac{q}{2\pi\ii\sqrt{N}} \oint_\Gamma f(z)  \qb{\tr G(z)-\E\tr G(z) } \di z.
\end{split} \]
Recall that vertices of the contour $\Gamma$ are $(\pm a \pm \ii v_0)$ for constants $a \in (2, 3)$ and $v_0 \in (0, 1)$. We rewrite the characteristic function $\phi(\lambda)$ as
\beq \begin{split} \label{eq:phi_t}
	\phi(\t)  &:= \E \qb{\exp\hb{\ii\t \frac{q}{\sqrt{N}}(L_H(f) -\E\q{L_H(f)}   ) }}\\
	&=\E  \qb{\exp\hb{-\frac{\t q}{2\pi\sqrt{N}} \oint_\Gamma f(z)(\tr G(z)-\E\tr G(z)) \di z    }}. \qquad (\t \in \R)
\end{split} \eeq
If we decompose the contour $\Gamma$ into
\begin{equation}\label{eq:contour Gamma1,2}
	\Gamma_1 :=  \{ z=E+\ii \eta \in \Gamma : \eta \leq N^{-5}    \}, \qquad 	\Gamma_2 :=  \{ z=E+\ii \eta \in \Gamma : \eta > N^{-5}    \},
\end{equation} 
it can be easily checked that the obvious that in the integral appearing in the right-side of \eqref{eq:phi_t}, the contribution from the integral on $\Gamma_1$ is negligible, i.e., $o(1)$, in $\phi'(\t)$ and $\phi(\t)$. We thus consider the integral on $\Gamma_2$ only.

Differentiating $\phi(\t)$, we get 
\beq \label{eq:char fct diff}
	\phi'(\t)=-\frac{\t q}{2\pi\sqrt{N}} \int_{\Gamma_2} f(z)\E \qb{ e(\t)(\tr G(z)-\E\tr G(z))  }\di z + o(1),
\eeq
where we let
\beq
	e(\t) := \exp\hb{-\frac{\t q}{2\pi\sqrt{N}} \int_{\Gamma_2} f(z)(\tr G(z)-\E\tr G(z)) \di z    }.
\eeq
We thus focus on $\E \qb{ e(\t)\cdot(\tr G(z)-\E\tr G(z)) }$ for which we have the following estimate.

\begin{lemma} \label{lem:E e Tr G} 
Assume the conditions in Proposition \ref{prop:CLT sparse}. Then, uniformly on $z \in \Gamma_2$,
\beq \begin{split}
		&\E \qb{ e(\t)\cdot(\tr G(z)-\E\tr G(z)) } \\ 
		&= -\E [e(\t)]\frac{\t\sqrt{N}}{\pi q} m_{sc}(z)m_{sc}'(z)\oint_{\Gamma}f(z')m_{sc}(z')m_{sc}'(z') \di z'  + o(\frac{\sqrt{N}}{q}),
\end{split} \eeq
	with overwhelming probability.
\end{lemma}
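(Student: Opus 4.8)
The plan is to expand $\tr G(z)-\E\tr G(z)$ via a cumulant (or ``self-consistent resolvent'') expansion in the entries of $H$, keeping track of the sparsity scale $q$, and to identify the leading contribution as coming from the third-cumulant term which produces the factor $\tr G^2 \sim Nm_{sc}m_{sc}'$ and, after pairing with a second resolvent trace arising inside $e(\t)$, the stated double contour integral. First I would write $\tr G - \E \tr G$ using the resolvent identity $zG = HG - I$, so that $z\,\E[e(\t)\tr G] = \E[e(\t)\sum_{ij}H_{ij}G_{ji}] - \E[e(\t)]\,N$, and then apply the cumulant expansion to $\E[e(\t)H_{ij}G_{ji}]$, differentiating both $G_{ji}$ and $e(\t)$ with respect to $H_{ij}$. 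The term where the derivative hits $e(\t)$ is the one that produces the product structure: $\partial_{H_{ij}} e(\t)$ brings down a factor $\tfrac{\t q}{\sqrt N}\oint f(z') (\partial_{H_{ij}}\tr G(z'))\di z' = \tfrac{\t q}{\sqrt N}\oint f(z')(G^2(z'))_{ji}\di z'$ (up to the symmetry factor), and combined with the second-order cumulant $\sigma_{ij}^2$ and the resolvent entry $G_{ji}(z)$ this yields, after summing over $i,j$ and using $\sum_i \sigma_{ij}^2 = 1$ together with the local law $\tfrac1N\sum_j (G^2)_{jj} \approx m_{sc}'$, precisely the term $-\E[e(\t)]\tfrac{\t\sqrt N}{\pi q} m_{sc}(z)m_{sc}'(z)\oint f(z')m_{sc}(z')m_{sc}'(z')\di z'$.

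The remaining terms I would argue are $o(\sqrt N / q)$. The second-cumulant term where the derivative hits $G_{ji}$ (not $e(\t)$) reconstructs, via the standard argument, the self-consistent equation $m(z)(z + m(z)) + 1 = o(1)$, i.e.\ it contributes to $\E[e(\t)(\tr G - \E\tr G)]$ only through the already-controlled fluctuation of $\tr G$ around its mean and through error terms of size governed by the local law in Proposition~\ref{lemma:local law cgSBM}; one must check that these errors, multiplied by the prefactor $\tfrac{\t q}{\sqrt N}$ appearing in $\phi'$, are negligible. The third-cumulant term not acting on $e(\t)$ gives a contribution involving $\sum_{ij}\E[H_{ij}^3]G_{ji}(G^2)_{??}$-type expressions; since $\E H_{ij}^3 = O(1/(Nq))$ and there are $N^2$ terms, a bound of order $N^2 \cdot \tfrac1{Nq}\cdot O(1) = O(N/q)$ times the $G$-factors shows this is $O(\sqrt N/q)$ only if there is an extra $N^{-1/2}$ saving from the off-diagonal decay of $G$; here I would invoke the fluctuation averaging / block structure estimates (this is where the restriction $\phi > 1/6$ enters). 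The fourth- and higher-cumulant terms carry $\E|H_{ij}|^k = O((Nq^{k-2})^{-1})$ and, after the combinatorics, contribute powers of $\sqrt N/q^2 \ll \sqrt N / q$, hence are negligible; the factor-of-$k!$ growth in the moment bound \eqref{eq:moments} is absorbed because the cumulant expansion is truncated at a large but fixed order with a tail bounded using $\eta \geq N^{-5}$ on $\Gamma_2$.

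The main obstacle will be the careful bookkeeping of the third-cumulant term and the error terms from the local law: one needs both the averaged local law of Proposition~\ref{lemma:local law cgSBM} (to replace $\tfrac1N\tr G$ and $\tfrac1N\tr G^2$ by $m_{sc}$ and $m_{sc}'$ with error $o(q/\sqrt N)$ after multiplication by the prefactor) and an entrywise/isotropic control on off-diagonal $G_{ij}$ to handle the genuinely new third-cumulant contributions that are special to the non-symmetric, sparse SBM setting. Establishing that all these are uniformly $o(\sqrt N/q)$ on $\Gamma_2$ — in particular uniformly down to $\eta = N^{-5}$, which requires the crude deterministic bound $\|G\| \leq \eta^{-1} \leq N^5$ to beat the polynomially small measure of bad events — is the technical heart of the argument, and is precisely the place where the hypothesis $1/6 < \phi < 1/2$ is used.
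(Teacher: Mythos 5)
There is a genuine gap: you have identified the wrong cumulant order as the source of the leading term, and you dismiss as negligible the channel that actually produces it. In your expansion, the term where the derivative hits $e(\t)$ at second-cumulant order is $\sum_{i,j}\kappa^{(2)}_{ij}\E\bigl[(\partial_{ij}e(\t))\,G_{ji}(z)\bigr]$ with $\partial_{ij}e(\t)=\frac{\t q}{\pi\sqrt N}\,e(\t)\oint f(z')(G^2(z'))_{ij}\,\di z'$. Both resolvent factors $(G^2(z'))_{ij}$ and $G_{ji}(z)$ are off-diagonal, and since $\kappa^{(2)}_{ij}=O(N^{-1})$, the double sum collapses to $\frac{1}{N}\tr\bigl(G^2(z')G(z)\bigr)=O(1)$; hence this whole contribution is $O(q/\sqrt N)=o(1)$, far below the claimed main term of size $\sqrt N/q$. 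Your step of invoking $\sum_i\sigma^2_{ij}=1$ together with $\frac1N\sum_j(G^2)_{jj}\approx m_{sc}'$ silently replaces off-diagonal entries by a diagonal trace, which the index structure does not permit; this channel is the Gaussian-variance channel that dominates in the dense regime but is subleading in the sparse one.

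The actual main term comes from the fourth cumulant, which you estimate as $O(\sqrt N/q^2)$ and discard. The point (and the reason Theorem \ref{thm:converge gaussian} rescales by $q/\sqrt N$) is that $\partial^2_{ij}e(\t)=-\frac{\t q}{\pi\sqrt N}e(\t)\int_{\Gamma_2}f(z')\bigl(G_{ii}(z')(G')_{jj}(z')+G_{jj}(z')(G')_{ii}(z')\bigr)\di z'+O_\prec(\log N/\sqrt N)$ contains a purely diagonal product $\approx 2m_{sc}m_{sc}'$ with no off-diagonal smallness, so the term $\sum_{i\neq j}\kappa^{(4)}_{ij}\E\bigl[\partial^2_{ij}e(\t)\,\partial_{ij}G_{ij}\bigr]$ (note $\partial_{ij}G_{ij}$ contains $-G_{ii}G_{jj}\approx -m_{sc}^2$) has size $N^2\cdot\frac{1}{Nq^2}\cdot\frac{q}{\sqrt N}=\frac{\sqrt N}{q}$, and together with a twin term generated by a secondary expansion of $\sum_{i,j}\kappa^{(2)}_{ij}\E\bigl[e(\t)(1-\E)(G_{ii}G_{jj})\bigr]$ it yields exactly the stated coefficient, with the factor $\xi^{(4)}$ (equal to $1$ for the SBM) that your mechanism could never produce. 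Relatedly, the second-cumulant term where the derivative hits $G_{ji}$ is not merely the already-controlled fluctuation of $\tr G$: it must itself be expanded (the paper's $I_{1,1}$) to generate the factor $(z+m_{sc})$ one divides by at the end, and the third-cumulant terms are shown negligible not by an off-diagonal saving heuristic but by multiplying by $z$ and running a second self-consistent cumulant expansion.
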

Lemma \ref{lem:E e Tr G} is proved in Appendix \ref{sec:lem E e Tr G}. Combining Lemma \ref{lem:E e Tr G} with \eqref{eq:char fct diff}, we get 
\beq
	\phi'(\t) = \frac{\t}{2\pi^2}\phi(\t) \pB{\oint_{\Gamma}f(z)m_{sc}(z)m_{sc}'(z)dz}^2 + o(1) = -2\t \phi(\t)\tau_2(f)^2 + o(1)
\eeq
	with overwhelming probability and this concludes the proof of the Proposition \ref{prop:CLT sparse}.

\section{Conclusion and Future Works}\label{sec:conclusion}

In this paper, we considered spectral properties of $N \times N$ balanced stochastic block models with general number of communities. We first proved a BBP-like phase transition of extreme eigenvalues with the threshold equal to the Kesten--Stigum threshold for both the dense regime and the sparse regime. We then proved the central limit theorem for the linear spectral statistics for both the dense regime and the sparse regime, where in the dense regime the variance of the limiting Gaussian distribution does not depend on the number of communities while the mean depends on it. Exploiting this property, we proposed a hypothesis test based on the linear spectral statistics to determine the number of communities. We also provided the theoretical error of the proposed test and numerically checked the accuracy of the test. For the proof of the BBP-like phase transition of extreme eigenvalues for the sparse regime, we prove the local law for the sparse centered generalized stochastic block model. 

A possible future research direction is to extend our results in the dense regime to the sparse regime, especially the phase transition of the extremal eigenvalues and the optimal test statistic for a hypothesis test. We also hope to generalize the results in this paper to non-balanced stochastic block models or non-symmetric models such as directed graphs or bipartite graphs with community structure.


\appendix

\section{Simulations for Theorem \ref{thm:eigval bbp}}\label{sec:example BBP}
In Appendix \ref{sec:example BBP}, we provide the results from the numerical simulation on the outlier eigenvalues. We fix $N=8000$, $K=4$ and $p_a = 0.1$. In Figure \ref{fig:simulation BBP}, we compare the eigenvalue distributions of the matrices with $\lambda=1.5$ and with $\lambda=0.5$. As can be seen from the figure, it can be seen that even if all other conditions are the same, the presence or absence of an outlier is determined according to the value of $\lambda$. As $\lambda$ increases, the gap between the outlier and the support of the semicircle distribution increases, and as the $\lambda$ approaches 1, the gap gradually decreases. When the $\lambda$ is less than 1, the two distributions are mixed and cannot be distinguished.
\begin{figure}[t]
	\centering
	\subfigure[]{
		\includegraphics[width=0.45\textwidth,height=0.15\textheight]{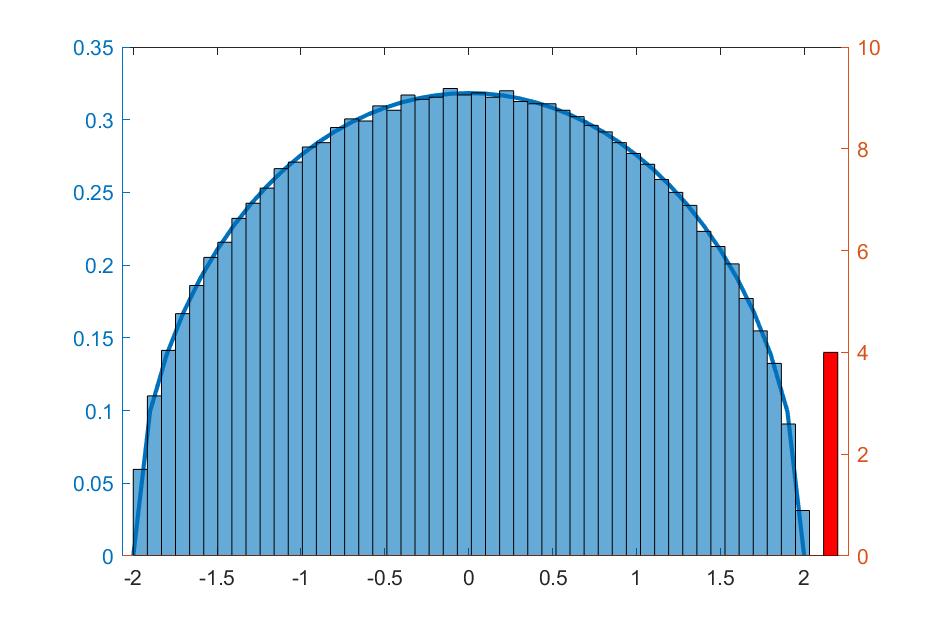}
	}
	\centering
	\subfigure[]{
		\includegraphics[width=0.45\textwidth,height=0.15\textheight]{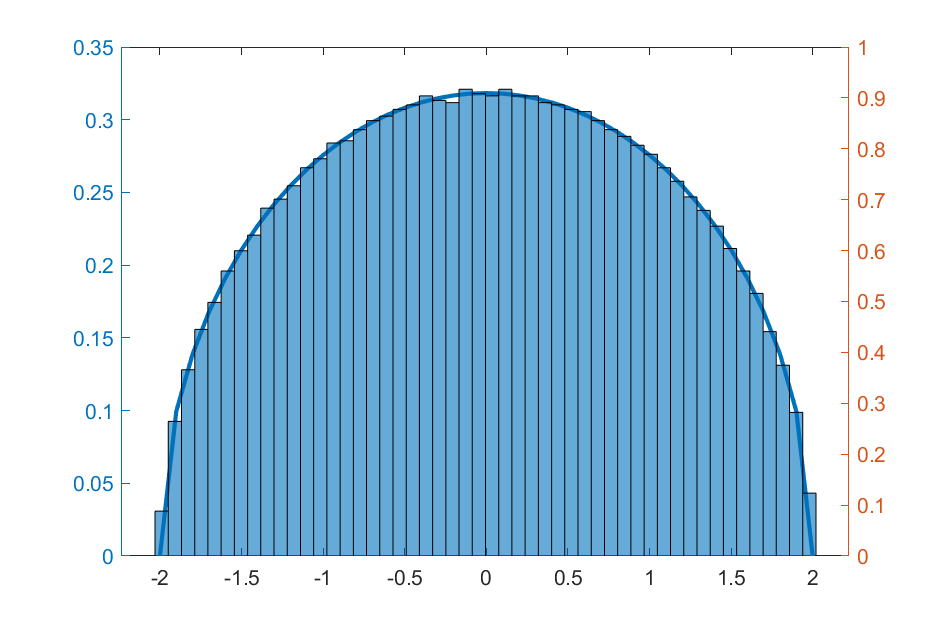}
	}
	\caption{ Under the setting in Theorem \ref{thm:eigval bbp} with fixed $N=8000$, $K=4$ and $p_a = 0.1$, the empirical eigenvalue distribution (a) with $\lambda=1.5$ and (b) with $\lambda=0.5$. Red bar	shows the number of outliers. }\label{fig:simulation BBP}
\end{figure}

\section{Preliminaries}\label{sec:prelim}
In Appendix \ref{sec:prelim}, we explain the basic definitions and theorems for our results. We first introduce more general class of matrix model that generalize Definition \ref{def:Csbm H}.
\begin{definition}[Centered Generalized SBM, cgSBM] \label{def:cgSBM} 
	Fix any $\phi < 1/2$. We assume that $H=(H_{ij})$ is a real $N \times N$ block random matrix with $K$ balanced communities with $1\leq K \leq N$, whose entries are independently distributed random variables, up to symmetry constraint $H_{ij}=H_{ji}$. We suppose that each $H_{ij}$ satisfies the moment conditions
	\begin{align}
		\E H_{ij}=0, \qquad \E|H_{ij}|^2=\sigma_{ij}^2,\qquad\E|H_{ij}|^k \leq \frac{(Ck)^{ck}}{Nq^{k-2}},\qquad (k\geq 2),
	\end{align}
	with sparsity parameter $q$ satisfying
	\begin{align}
		N^{\phi}\leq q \leq N^{1/2}.
	\end{align}
	Here, we further assume the normalization condition $\sum_{i}\sigma_{ij}^2 =1.$
\end{definition}
We  also shall study the spectrum of the deformed cgSBM that generalize Definition \ref{def:Dsbm M} in the sense that entries do not have to be Bernoulli distribution. Note that the matrix $M$ defined in \eqref{eq:M} is the special case of rank-$(K-1)$ deformation of cgSBM with $q^2:=Np_a$.

\begin{definition}[Deformed generalized stochastic block model, dgSBM]\label{def:dgSBM}
	Let $H$ be a centered SBM given in Definition \ref{def:cgSBM}, $K \in \N$ be fixed, $V$ be a deterministic $N \times K$ matrix satisfying $V^T V = I$, and $d_1, \dots, d_K \in \R \setminus \{0\}$ be deterministic numbers such that  $d_1\ge\cdots\ge d_K >0$. We also use the notation $V = [\b v^{(1)}, \dots, \b v^{(K)}]$, where $\b v^{(1)}, \dots, \b v^{(K)} \in \R^N$ are orthonormal.
	Then we define the rank-$K$ deformed SBM by
	\begin{equation}
		M= H + 	V D V^T, 
	\end{equation}
	where $V D V^T = \sum_{i = 1}^K d_i \b v^{(i)} (\b v^{(i)})^T$ and $D=\diag(d_1, \dots, d_K).$
\end{definition}

We denote by $\kappa_{ij}^{(k)}$ the $k$-th cumulant of $H_{ij}$. Under the moment condition in Definition \ref{def:cgSBM},
\begin{align}
	\kappa_{ij}^{(1)}=0, \qquad	|\kappa_{ij}^{(k)}| = O_k \pB{  \frac{1}{Nq^{k-2}}} ,\quad\qquad (k\geq 2) .
\end{align}
For our model with the block structure, we abbreviate $\kappa_{ij}^{(k)}$ as
\begin{align} 
	\kappa_{ij}^{(k)}=
	\begin{cases}
		\kappa_{s}^{(k)} & \text{ if } i \text{ and } j \text{ are within same community}, \\
		\kappa_{d}^{(k)} & \text{ otherwise }.
	\end{cases}
\end{align}
We will also use the normalized cumulants, $s^{(k)}$, by setting
\begin{align}
	s_{(\b{\cdot})}^{(1)}:=0, \quad s_{(\b{\cdot})}^{(k)}:=Nq^{k-2}\kappa_{(\b{\cdot})}^{(k)}, \quad (k\geq 2).
\end{align}
For convenience, we define the parameters $\zeta$ and $\xi^{(4)}$ as  
\begin{equation}\label{eq:xi,zeta}
	\zeta : = \frac{s_s^{(2)} - s_d^{(2)}}{K}=\frac{N(\kappa_s^{(2)} - \kappa_d^{(2)})}{K} , \qquad \xi^{(4)} :=  \frac{s_s^{(4)}+(K-1)s_d^{(4)}}{K}.
\end{equation}
We introduce some notations of basic definitions.
\begin{definition}[Overwhelming probability events]\label{def:overwhelming probability}
	We say that an $N$-dependent event $\Omega \equiv \Omega^{(N)}$ holds with overwhelming probability if for any (large) $D>0$,
	\[\P(\Omega^c) \leq N^{-D}, \]
	for $N\geq N_0(D)$ sufficiently large.
\end{definition}

\begin{definition}[Stochastic domination] Let $X \equiv X^{(N)}, Y \equiv Y^{(N)}$ be $N$-dependent non-negative random variables. We say that $X$ stochastically dominates $Y$ if, for all small $\epsilon>0$ and large $D>0$,
	\begin{align}
		\mathbb{P}(X^{(N)}>N^\epsilon Y^{(N)})\leq N^{-D},
	\end{align}
	for sufficiently large $N\geq N_0(\epsilon,D)$, and we write $X\prec Y$. When $X^{(N)}$ and $Y^{(N)}$ depend on a parameter $u \in U$, then we say $X(u) \prec Y(u)$ uniformly in $u\in U$ if the threshold $N_0(\epsilon,D)$ can be chosen independently of $u$. We also use the notation $X = O_\prec(Y)$ if $|X| \prec |Y|$ and $X = o_\prec(Y)$ if $|X| \prec |YN^{-\epsilon'}|$ for some sufficiently small fixed constant $\epsilon'$. 
\end{definition}
Throughout rest of this paper, we choose $\epsilon>0$ sufficiently small. More precisely, it is smaller than $(1/2-\phi)/20$, where $\phi$ is the fixed parameter in Definition \ref{def:cgSBM} above.

\begin{definition}[Stieltjes transform]
	For given a probability measure $\nu$, we define the Stieltjes transforms of $\nu$ as
	\[m_\nu(z):= \int \frac{\nu(\di x)}{x-z}, \qquad (z\in \C^+)  \] 
\end{definition}
For example, the  Stieltjes transform of the $\emph{semicircle measure}$,
\[ \rho_{sc}(\di x):=\frac{1}{2\pi}\sqrt{(4-x^2)_+} \di x, \]
is given by
\[ m_{sc}(z) = \int \frac{\rho_{sc}(\di x)}{x-z} = \frac{-z + \sqrt{z^2 -4}}{2}, \]
where the square root $\sqrt{z^2-4}$ is chosen so that $m_{sc}(z) \in \C^+$ for $z \in \C^+$ and $\sqrt{z^2-4} \sim z$ as $z \to \infty $.
Clearly, we have 
\[ m_{sc}(z) + \frac{1}{m_{sc}(z)}+z =0.  \] 
\begin{definition}[Green function(Resolvent)]
	Given a real symmetric matrix $H$ we define its resolvent or Green function, $G(z)$, and the normalized trace of its Green function, $m^H$, by
	\begin{align}\label{eq:greenfunction}
		G^H(z)\equiv G(z) := (H-zI)^{-1}, \qquad  m^H(z)\equiv m(z) := \frac{1}{N}\tr G^H(z), 
	\end{align}
	where $z = E + \ii \eta \in \C^+$ and $I$ is the $N\times N$ identity matrix. 
\end{definition}
Denoting by $\lambda_1 \geq \lambda_2 \geq \cdots \geq \lambda_N$ the ordered eigenvalues of $H$, we note that $m^H$ is the Stieltjes transform of the empirical eigenvalue measure of $H$, $\mu^H$, defined as 
\[
\mu^H := \frac{1}{N}\sum_{i=1}^{N}\delta_{\lambda_i}.
\]
In the rest of this section, we state the lemmas and theorems used to prove our results.
\begin{lemma}[Cumulant expansion, generalized Stein's lemma]\label{lemma:Stein}
	Let $F\in C^{\ell +1} (\mathbb{R}; \mathbb{C}^{+})$ and fix $\ell\in\mathbb{N}$. Let $Y$ be a centered random variable with finite moments to order $\ell +2$. Then,
	\begin{equation}
		\mathbb{E}[YF(Y)] = \sum_{r=1}^{\ell} \frac{\kappa ^{(r+1)}(Y)}{r!} \mathbb{E}[F^{(r)}(Y)] + \mathbb{E}[\Omega_{\ell}(YF(Y))],
	\end{equation}
	where $\mathbb{E}$ denotes the expectation with respect to $Y$, $\kappa^{(r+1)}(Y)$ denotes the $(r+1)$-st cumulant of~$Y$ and $F^{(r)}$ denotes the $r$-th derivative of the function~$F$. The error term $ \Omega_{\ell}(YF(Y))$ satisfies
	\begin{align}
		\mathbb{E}[\Omega_{\ell}(YF(Y))]& \leq C_{\ell} \mathbb{E}[|Y|^{\ell +2}] \sup_{|t|\leq Q} |F^{(\ell +1)}(t)| + C_{\ell}\mathbb{E}[|Y|^{\ell +2} \mathds{1}(|Y|>Q) \sup_{t\in \mathbb{R}}|F^{(\ell +1)}(t)|,
	\end{align}
	where $Q>0$ is an arbitrary fixed cutoff and $C_{\ell}$ satisfies $C_{\ell}\leq(C\ell)^\ell / \ell !$ for some constant $C$.
\end{lemma}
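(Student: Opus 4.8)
The plan is to reduce the statement, for a general $C^{\ell+1}$ function $F$, to the case of polynomials — where the identity is precisely the classical moment--cumulant recursion — and then to control the remainder directly via Taylor's theorem with integral remainder. Concretely, the two ingredients are: (i) the relation
\[
	\E[Y^{n+1}] \;=\; \sum_{r=0}^{n} \binom{n}{r}\,\kappa^{(r+1)}(Y)\,\E[Y^{n-r}],
\]
valid whenever $\E|Y|^{n+1}<\infty$; and (ii) the decomposition $F = P_\ell + R_\ell$, where $P_\ell$ is the degree-$\ell$ Taylor polynomial of $F$ at $0$ and $R_\ell(y) = \frac{1}{\ell!}\int_0^y (y-u)^\ell F^{(\ell+1)}(u)\,\di u$.

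For (i), since $\E|Y|^{\ell+2}<\infty$ the characteristic function $\phi(s):=\E[e^{\ii s Y}]$ is $C^{\ell+2}$, so $\kappa^{(k)}(Y) = \ii^{-k}(\log\phi)^{(k)}(0)$ for $k\le\ell+1$, and differentiating $\phi' = \phi\cdot(\log\phi)'$ a total of $n\le\ell$ times at $s=0$ and translating back through $\E[Y^k] = \ii^{-k}\phi^{(k)}(0)$ gives (i); only moments up to order $\ell+1$ enter. Applying (i) to each monomial $y^n$ with $0\le n\le\ell$ — where it reads $\E[Y\cdot Y^n] = \sum_{r=0}^{n}\frac{\kappa^{(r+1)}(Y)}{r!}\E[(y^n)^{(r)}(Y)]$ — and summing gives the exact identity $\E[Y P_\ell(Y)] = \sum_{r=0}^{\ell}\frac{\kappa^{(r+1)}(Y)}{r!}\E[P_\ell^{(r)}(Y)]$.

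Next I would substitute $P_\ell^{(r)} = F^{(r)} - R_\ell^{(r)}$ (valid for $0\le r\le\ell$) and use $\E[YF(Y)] = \E[YP_\ell(Y)] + \E[YR_\ell(Y)]$ together with $\kappa^{(1)}(Y)=0$ — this is where the centering hypothesis enters, to drop the $r=0$ term. The result is exactly the claimed formula with
\[
	\Omega_\ell(YF(Y)) \;=\; Y R_\ell(Y) \;-\; \sum_{r=1}^{\ell} \frac{\kappa^{(r+1)}(Y)}{r!}\, R_\ell^{(r)}(Y).
\]
Differentiating under the integral sign, $R_\ell^{(r)}(y) = \frac{1}{(\ell-r)!}\int_0^y (y-u)^{\ell-r} F^{(\ell+1)}(u)\,\di u$, so $|R_\ell^{(r)}(y)| \le \frac{|y|^{\ell-r+1}}{(\ell-r+1)!}\sup_{|u|\le|y|}|F^{(\ell+1)}(u)|$ and $|YR_\ell(Y)| \le \frac{|Y|^{\ell+2}}{(\ell+1)!}\sup_{|u|\le|Y|}|F^{(\ell+1)}(u)|$. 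Combining with the crude bound $|\kappa^{(k)}(Y)|\le C_k\,\E|Y|^{k}$ for $k\le\ell+1$ and splitting the expectation over $\{|Y|\le Q\}$ and $\{|Y|>Q\}$ — bounding $\sup_{|u|\le|Y|}|F^{(\ell+1)}(u)|$ by $\sup_{|t|\le Q}|F^{(\ell+1)}(t)|$ on the first event and by $\sup_{t\in\R}|F^{(\ell+1)}(t)|$ on the second — yields the asserted two-term estimate, with $C_\ell$ tracked to be at most $(C\ell)^\ell/\ell!$.

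I expect the only delicate point to be this final bookkeeping: keeping the combinatorial constants sharp and absorbing the lower powers $|Y|^{\ell-r+1}$ that appear in the cumulant-weighted remainder terms into $\E|Y|^{\ell+2}$ (using $|Y|^j\le 1+|Y|^{\ell+2}$ for $j\le\ell$ and, if needed, the power-mean inequality $\E|Y|^k\le(\E|Y|^{\ell+2})^{k/(\ell+2)}$), together with checking that the interchange of expectation and the Taylor integral is justified by $\E|Y|^{\ell+2}<\infty$. The conceptual skeleton — Taylor expansion plus the moment--cumulant dictionary — is otherwise entirely routine.
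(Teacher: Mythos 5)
The paper does not actually prove Lemma \ref{lemma:Stein}: it is quoted as a standard cumulant-expansion (generalized Stein) lemma from the random matrix literature, so there is no in-paper argument to compare against. Your route --- the moment--cumulant recursion $\E[Y^{n+1}]=\sum_{r=0}^{n}\binom{n}{r}\kappa^{(r+1)}(Y)\E[Y^{n-r}]$ applied to the degree-$\ell$ Taylor polynomial $P_\ell$ of $F$, plus Taylor's integral remainder --- is exactly the classical proof, and the identity part of your argument is complete; in particular the centering hypothesis is used correctly to drop the $r=0$ term via $\kappa^{(1)}(Y)=0$, and the derivation of the recursion from $\phi'=\phi\,(\log\phi)'$ only needs moments up to order $\ell+1$.

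The one place where the tools you name do not quite suffice as written is the error bound for the cross terms $\frac{\kappa^{(r+1)}}{r!}\E[R_\ell^{(r)}(Y)]$. After splitting on $\{|Y|\le Q\}$, the piece carrying $\sup_{t\in\R}|F^{(\ell+1)}(t)|$ is proportional to $|\kappa^{(r+1)}|\,\E\bigl[|Y|^{\ell-r+1}\mathds{1}(|Y|>Q)\bigr]$, and the inequalities you cite ($|Y|^j\le 1+|Y|^{\ell+2}$ and the unrestricted power-mean bound) lose the indicator, so they do not by themselves yield the required $C_\ell\,\E\bigl[|Y|^{\ell+2}\mathds{1}(|Y|>Q)\bigr]$. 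The repair is short: bound $|\kappa^{(r+1)}|\le C\,\E[|Y|^{r+1}]\le C\bigl(Q^{r+1}+\E[|Y|^{r+1}\mathds{1}(|Y|>Q)]\bigr)$; for the first piece use the pointwise bound $Q^{r+1}|Y|^{\ell-r+1}\le |Y|^{\ell+2}$ on $\{|Y|>Q\}$, and for the second use H\"older on the restricted event, which gives $\E[|Y|^{r+1}\mathds{1}(|Y|>Q)]\,\E[|Y|^{\ell-r+1}\mathds{1}(|Y|>Q)]\le \E[|Y|^{\ell+2}\mathds{1}(|Y|>Q)]$. Together with your Lyapunov/H\"older bound $\E[|Y|^{r+1}]\,\E[|Y|^{\ell-r+1}]\le\E[|Y|^{\ell+2}]$ for the piece carrying $\sup_{|t|\le Q}|F^{(\ell+1)}(t)|$, this produces the stated two-term estimate with constants of the claimed order, so your proposal is correct up to this minor bookkeeping adjustment.
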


\begin{theorem}[Local law, \cite{HLY20}] \label{thm:locallaw}
	Let $H$ satisfy Definition \ref{def:cgSBM} with $\phi>0$. Then, there exist an algebraic function $\wt{m}:\mathbb{C}^+ \rightarrow \mathbb{C}^+$ such that the following hold:
	\begin{align}\label{thm:stronglaw}
		|m(z) -\wt{m}(z)| &\prec \frac{1}{q^2}+\frac{1}{N\eta},\\
		\max_{i,j}|G_{ij}(z)-\delta_{ij}\wt{m}| &\prec \frac{1}{q} + \frac{1}{\sqrt{N\eta}},
	\end{align}
	uniformly on the domain $\mathcal{D}_\ell:=\{ z=E+\ii \eta \in \C^+ : |E|<3, N^{-1+\ell} <\eta \leq 3\} $ ,where $\ell$ is a small positive constant.
\end{theorem}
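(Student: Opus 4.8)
The theorem is quoted verbatim from \cite{HLY20}, so the ``proof'' I would give is a verification that the ensemble of Definition \ref{def:cgSBM} belongs to the class of sparse random block matrices treated there, followed by a direct appeal to their local law. The points to check are: $H$ is real symmetric with entries independent up to the symmetry constraint; the variance profile $(\sigma_{ij}^2)$ takes only the two values $\sigma_s^2$ (within a community) and $\sigma_d^2$ (across communities) and is doubly stochastic, which is exactly what the normalization $\sum_i \sigma_{ij}^2 = 1$ together with the balanced-block assumption provides; and the moment bound $\E|H_{ij}|^k \le (Ck)^{ck}/(Nq^{k-2})$ with $q = N^\phi$, $\phi>0$, is the sparsity hypothesis of \cite{HLY20}. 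With this identification, $\wt m(z)$ is the algebraic function determined by the sparse self-consistent equation of \cite{HLY20}; because the profile is doubly stochastic and the communities are balanced, the accompanying vector Dyson equation collapses to a single scalar polynomial equation perturbing $1 + zm + m^2 = 0$ by a fourth-cumulant term of size $O(q^{-2})$, so that $\wt m = m_{sc} + O(q^{-2})$ uniformly on $\mathcal{D}_\ell$.

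\textbf{Recollection of the underlying argument.} For completeness, and because the same machinery (the cumulant expansion together with the analysis of a self-consistent equation) reappears in the proof of Proposition \ref{lemma:local law cgSBM}, I would recall the structure of the argument in \cite{HLY20}. One writes $G = (H - zI)^{-1}$, expresses the diagonal entries $G_{ii}$ via the Schur complement, and applies the cumulant expansion of Lemma \ref{lemma:Stein} to the resulting terms $\E[H_{ij} G_{ji}(\cdots)]$; since $\kappa_{ij}^{(k)} = O(1/(Nq^{k-2}))$, the $k$-th cumulant enters only at order $q^{-(k-2)}$, so the leading correction beyond the semicircle equation is the $k=4$ term, which produces the polynomial defining $\wt m$. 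Balanced-block symmetry forces the two community-averages of $G_{ii}$ to satisfy the same scalar equation, which is why a single scalar $\wt m$ suffices. One then checks that this equation is \emph{stable} away from the spectral edge --- the derivative of the defining polynomial in $m$ stays bounded below on $\mathcal{D}_\ell$ --- so that any approximate solution is forced close to $\wt m$. Finally, a bootstrap in the spectral parameter (starting at $\eta$ of order one, where concentration is elementary, and decreasing $\eta$), combined with large-deviation bounds for $\sum_j H_{ij} G_{ji}$ and Ward identities, gives the entrywise estimate $\max_{ij}|G_{ij} - \delta_{ij}\wt m| \prec q^{-1} + (N\eta)^{-1/2}$.

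\textbf{The averaged bound and the main difficulty.} The stronger bound $|m(z) - \wt m(z)| \prec q^{-2} + (N\eta)^{-1}$ then follows from the fluctuation-averaging lemma: averaging the self-consistent equations over $i$ gains one power in both the sparse error ($q^{-1} \to q^{-2}$) and the $N\eta$ error ($(N\eta)^{-1/2} \to (N\eta)^{-1}$), after which stability transfers this to $m - \wt m$. The genuinely technical step --- and essentially the only place where anything beyond the dense-case argument is required --- is the control of the sparse error terms generated by the cumulant expansion: there are many terms weighted by high-order cumulants, and bounding their sum needs the self-improving (``sparse local law'') estimate of \cite{EKYY12,EKYY13} adapted to the two-value variance profile, in which a crude bound is iterated into a sharper one. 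Since all of this is carried out in \cite{HLY20}, the proof here consists of the hypothesis check above together with the citation.
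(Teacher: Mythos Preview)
Your proposal is correct and matches the paper's treatment: Theorem~\ref{thm:locallaw} is simply quoted from \cite{HLY20} with no proof given in the paper, so your verification that Definition~\ref{def:cgSBM} falls within the scope of that reference, together with the citation, is exactly what is needed. The additional recollection of the argument's structure is more than the paper itself provides, but is accurate and useful context.
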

\begin{theorem}[\cite{HLY20}]\label{thm:matrix norm}
	Suppose that $H$ satisfies Definition \ref{def:cgSBM} with $\phi >0$. Then,
	\begin{equation}
		\abs{\norm{H} -L} \prec \frac{1}{q^4} + \frac{1}{N^{2/3}}.
	\end{equation}
 where  $L:= 2+\frac{\xi^{(4)}}{q^2} +O(q^{-4})$.
\end{theorem}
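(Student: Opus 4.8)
The plan is to derive this operator-norm estimate by the route used for sparse Erd\H{o}s--R\'enyi matrices in \cite{LS18} and for block-structured sparse matrices in \cite{HLY20}: combine a \emph{refined deterministic equation}, whose associated density has a square-root edge at $L$, with the strong local law of Theorem~\ref{thm:locallaw} in the edge regime, and then convert the resulting spectral information into a bound on $\norm{H}$ through an eigenvalue-counting argument. Because the two-block variance profile of Definition~\ref{def:cgSBM} is doubly stochastic under the normalization $\sum_i\sigma_{ij}^2=1$, the Dyson vector equation collapses to a scalar one, and the function $\wt m$ of Theorem~\ref{thm:locallaw} (refined to the next order as needed) solves, schematically,
\[
	\wt m(z)+\frac{1}{\wt m(z)}+z+\frac{\xi^{(4)}}{q^2}\,\wt m(z)^3+O(q^{-4})=0,
\]
with $\xi^{(4)}$ as in \eqref{eq:xi,zeta}; this is the familiar cubic perturbation of the semicircle equation $m_{sc}+m_{sc}^{-1}+z=0$.

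The first step is to locate the edge. Writing the branch point as $\wt m_*=-1+\delta$, the condition that the cubic equation have a double root in $\wt m$ at the relevant $z$ forces $\delta=\tfrac{3\xi^{(4)}}{2q^2}+O(q^{-4})$, and evaluating the equation itself at $\wt m_*$ then gives the edge $z=2+\delta^2+\tfrac{\xi^{(4)}}{q^2}+O(q^{-4})=2+\tfrac{\xi^{(4)}}{q^2}+O(q^{-4})$, which is precisely $L$ (here $\delta^2=O(q^{-4})$). The same Taylor expansion shows the standard square-root edge behaviour $\operatorname{Im}\wt m(E+\ii0)\asymp\sqrt{(L-E)_+}$ and $\operatorname{Im}\wt m(E+\ii\eta)\asymp\eta/\sqrt{|E-L|+\eta}$ for $E$ near $L$. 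By the symmetry $(z,\wt m)\mapsto(-z,-\wt m)$ of the equation the left edge is $-L$ with mirror-image behaviour, so it suffices to control $\lambda_1(H)$, and the claim for $\norm{H}=\max\{\lambda_1(H),-\lambda_N(H)\}$ follows.

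For the upper bound I would use that Theorem~\ref{thm:locallaw} (in its edge-regime form) holds for $\eta$ well below $N^{-2/3}$ and run the Helffer--Sj\"ostrand counting-function argument: for $E\ge L+N^{\varepsilon}(q^{-4}+N^{-2/3})$, the number of eigenvalues of $H$ in $[E,\infty)$ is dominated by $N\eta\,\operatorname{Im} m(E+\ii\eta)$ at an optimally chosen $\eta$, which by the local law and the edge behaviour of $\wt m$ is $o(1)$ with overwhelming probability; a union bound over a polynomial net of such $E$, together with the crude a~priori bound $\norm{H}\le3$, then gives $\lambda_1(H)\le L+N^{\varepsilon}(q^{-4}+N^{-2/3})$ with overwhelming probability, for every $\varepsilon>0$. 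For the matching lower bound, take $E=L-N^{\varepsilon}(q^{-4}+N^{-2/3})$ with the corresponding $\eta$; there $\operatorname{Im}\wt m(E+\ii\eta)$ dominates the errors of the local law, so $N\eta\,\operatorname{Im} m(E+\ii\eta)\gg1$ with overwhelming probability, which forces at least one eigenvalue of $H$ in $(E-\eta,E+\eta)$ and hence $\lambda_1(H)\ge L-N^{\varepsilon}(q^{-4}+N^{-2/3})$. Combining the two directions (and the analogous bounds at $-L$) yields $\abs{\norm{H}-L}\prec q^{-4}+N^{-2/3}$.

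The main obstacle is the sharp edge analysis, not the bulk estimates: one must push the local law and the counting/rigidity argument all the way to the optimal scale $\eta\sim N^{-2/3}$ at the branch point, where the stability factor of the cubic equation degenerates like $(|E-L|+\eta)^{-1/2}$ and the sparse errors $q^{-2}$ and $(N\eta)^{-1/2}$ are only marginally subdominant to $\operatorname{Im}\wt m$. Controlling this marginal regime --- which in particular forces the expansion of $\wt m$ to be carried one order further, to $O(q^{-4})$ --- is exactly what produces the $q^{-4}$ term in the final bound, and is consistent with the fact that the cleaner $N^{-2/3}$ fluctuation regime requires $q\gg N^{1/6}$. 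A secondary but delicate point is the cumulant bookkeeping behind the displayed equation: verifying that the fourth cumulants contribute precisely the term $\xi^{(4)}q^{-2}\wt m^3$, that the odd cumulants drop out by the symmetry $H_{ij}=H_{ji}$, and that all remaining cumulant and self-consistent corrections are $O(q^{-4})$ uniformly for $z$ near $L$.
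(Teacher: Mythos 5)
The paper does not actually prove this statement: it is imported from \cite{HLY20}, whose argument (following \cite{LS18}) is precisely the route you outline --- a fourth-cumulant-corrected self-consistent equation, the shifted edge $L=2+\xi^{(4)}q^{-2}+O(q^{-4})$, a local law near that edge, and an eigenvalue-location argument. Your edge computation ($\wt m_*=-1+\tfrac{3\xi^{(4)}}{2q^2}+O(q^{-4})$, hence $L=2+\xi^{(4)}q^{-2}+O(q^{-4})$) is correct, so as a roadmap the proposal matches the cited proof.

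The genuine gap is in the exclusion (upper-bound) step, which as written would fail. If some eigenvalue lies in $[E-\eta,E+\eta]$ then $\im m(E+\ii\eta)\ge (2N\eta)^{-1}$, so to rule out eigenvalues above $L+N^{\varepsilon}(q^{-4}+N^{-2/3})$ you must control $\im m$ there to precision strictly better than $(N\eta)^{-1}$. Theorem~\ref{thm:locallaw}, whose averaged error is $q^{-2}+(N\eta)^{-1}$, can never give this: the $(N\eta)^{-1}$ term alone makes $N\eta\,\im m$ of order one (indeed $N^{\varepsilon}$), at every scale $\eta$, so the counting bound you invoke yields $O_\prec(1)$ rather than $o(1)$. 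What is actually needed is a fluctuation-averaged estimate \emph{outside} the spectrum whose error gains a factor of the distance $\kappa=|E-L|$ --- in the spirit of Theorem~\ref{thm:local law outside}, but centered at the shifted edge, carried to accuracy $O(q^{-4})$ in the construction of $\wt m$ and $L$, and valid for every $\phi>0$ (that theorem is stated only for $\phi>1/6$). Establishing this refined estimate, and handling the marginal regime where $\im\wt m\sim\sqrt{\kappa}$ only barely dominates $q^{-2}$ (which also affects your lower-bound step), is the substance of \cite{LS18,HLY20}; your sketch correctly flags it as the main obstacle but does not supply it. A smaller inaccuracy: the third-cumulant terms are not removed ``by the symmetry $H_{ij}=H_{ji}$''; they are subleading by power counting, since each such term carries off-diagonal resolvent factors, exactly as in the paper's own treatment of the $\kappa^{(3)}$ terms in Appendices~\ref{sec:lem E e Tr G}--\ref{sec:proof of mean}.
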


Combining Theorem  \ref{thm:locallaw} and \ref{thm:matrix norm}, we can prove the local law on the outside of the spectrum. Since the proof is very similar to \cite{EKYY13_2} or \cite{BEKYY14}, we omit the details.
\begin{theorem}[Local law outside the spectrum]\label{thm:local law outside}
	Suppose that $H$ satisfies Definition \ref{def:cgSBM} with $\phi >1/6$. Then,
	\begin{align}
		|m(z) -\wt{m}(z)| &\prec \frac{1}{q^2}+\frac{1}{N\sqrt{(\kappa+\eta)}},\\
		\max_{i,j}|G_{ij}(z)-\delta_{ij}\wt{m}| &\prec \frac{1}{q} + \frac{1}{\sqrt{N}(\kappa+\eta)^{1/4}},
	\end{align}
	uniformly on the domain $\mathcal{D}_\tau:=\{ z=E+\ii \eta \in \mathcal{D}_\ell : |E-L|\geq N^{-2/3+\tau}\}$.
\end{theorem}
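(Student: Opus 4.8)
The plan is to upgrade the bulk local law of Theorem~\ref{thm:locallaw} to the $\kappa$-dependent estimates on $\mathcal{D}_\tau$ by re-running the self-consistent equation analysis, this time in the regime where $z$ sits outside the spectrum. Throughout, let $\kappa = \kappa(E)$ denote the distance from $E$ to the bulk $[-L,L]$, so that the constraint defining $\mathcal{D}_\tau$ reads $\kappa \gtrsim N^{-2/3+\tau}$; the two edges are handled symmetrically.

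First I would record two deterministic inputs. From Theorem~\ref{thm:matrix norm} together with $\phi>1/6$ (which gives $q^{-4}\le N^{-2/3}$), with overwhelming probability $\norm{H}\le L+N^{-2/3+\tau/2}$; hence on $\mathcal{D}_\tau$, where $\kappa\ge N^{-2/3+\tau}\gg N^{-2/3+\tau/2}$, one has $\operatorname{dist}(z,\operatorname{spec} H)\asymp\kappa+\eta$, and in particular $\norm{G(z)}\lesssim(\kappa+\eta)^{-1}$. Second, the rigidity of the eigenvalues that follows from Theorem~\ref{thm:locallaw} lets one estimate $\im m(z)=\frac{\eta}{N}\sum_k|\lambda_k-z|^{-2}\prec \eta\int\frac{\rho_{sc}(x)}{|x-z|^2}\di x\asymp\frac{\eta}{\sqrt{\kappa+\eta}}$, and similarly $\im G_{ii}(z)\prec\frac{\eta}{\sqrt{\kappa+\eta}}$. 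This is the source of all the gains, since it upgrades the generic control parameter $\sqrt{\im m/(N\eta)}$ to $(\sqrt{N}(\kappa+\eta)^{1/4})^{-1}$.

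Then I would set up the self-consistent equation exactly as in the proof of Theorem~\ref{thm:locallaw}: the Schur complement and cumulant (Stein) expansion of Lemma~\ref{lemma:Stein} for the diagonal resolvent entries, producing the approximate equation $G_{ii}^{-1}=-z-m+O_\prec(\Psi)$ together with the companion bound for $\Lambda_o:=\max_{i\ne j}|G_{ij}|$, where $\Psi$ collects the fluctuating and sparse errors. Feeding the improved bounds for $\im m$ and $\im G_{ii}$ into the fluctuation-averaging estimate and the sparse cumulant bounds, the error improves to $\Psi\prec\frac{1}{\sqrt{N}(\kappa+\eta)^{1/4}}+\frac1q$ at the level of individual entries and to $\frac{1}{N\sqrt{\kappa+\eta}}+\frac1{q^2}$ at the level of $m-\wt{m}$. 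One then invokes the stability of the (perturbed) self-consistent equation: outside the spectrum its stability function is of size $\asymp\sqrt{\kappa+\eta}$, and, crucially, the constraint $\kappa+\eta\ge N^{-2/3+\tau}$ forces $\Psi\ll\sqrt{\kappa+\eta}$, so the standard dichotomy in the stability argument always lands in the favorable branch. A continuity (bootstrap) argument in $\eta$, starting from the bound of Theorem~\ref{thm:locallaw} at $\eta\asymp1$ and decreasing $\eta$ along a fine grid of scales, then propagates the estimate to all of $\mathcal{D}_\tau$ and yields the two asserted bounds.

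The main obstacle is this last step carried out uniformly: running the bootstrap for $\Lambda:=\max_{ij}|G_{ij}-\delta_{ij}\wt{m}|$ down to the target scale while keeping track of the two competing error sources — the genuinely local (random) error, which decays like $(\kappa+\eta)^{-1/4}$ and so stays harmless thanks to $\kappa\gtrsim N^{-2/3+\tau}$, and the sparse error $1/q$, which does not improve with $\kappa$ and is exactly what dictates the hypothesis $\phi>1/6$. Since all the required analytic machinery — Schur complement identities, the fluctuation-averaging lemma, stability of the self-consistent equation, and eigenvalue rigidity — is already available through Theorems~\ref{thm:locallaw}–\ref{thm:matrix norm} and the cited works \cite{EKYY13_2,BEKYY14}, the remaining work is essentially careful bookkeeping, which is why the full details are omitted.
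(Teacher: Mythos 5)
Your proposal is correct and follows essentially the route the paper intends: the paper itself omits the details, stating only that the result follows by combining Theorem~\ref{thm:locallaw} and Theorem~\ref{thm:matrix norm} with the arguments of \cite{EKYY13_2,BEKYY14}, which is exactly the machinery you spell out (spectral gap from the norm bound, improved bound on $\im m$ outside the spectrum, self-consistent equation with fluctuation averaging, stability, and a bootstrap in $\eta$). Your observation that the sparse error $1/q$ does not improve with $\kappa$ and that $\phi>1/6$ is what keeps it under control is consistent with the paper's hypotheses.
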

Note that $\wt{m}$ therein Theorem \ref{thm:locallaw} and Theorem \ref{thm:local law outside} may be replaced by $m_{sc}$ without changing the error bound.
We also have the following averaging fluctuation results for the monomials in the Green function entries.
\begin{theorem}[Theorem 4.8, Proposition 3.3 in \cite{EKY13}]\label{thm:local law averaging}
	Under Definition \ref{def:cgSBM}, the following estimate hold for $z \in \mathcal{D}_\ell$ uniformly:
	\begin{equation}
		\absB{ \sum_{i=1}^{N}\kappa_{ij}^{(2)}G_{ii}(z) - m_{sc}(z)  } \prec \rho \Psi^2(z),
	\end{equation}
	where $\rho = O(\frac{1}{\sqrt{\kappa+\eta}})$ and $\max_{i,j}\abs{ G_{ij}(z) - \delta_{ij}m_{sc}(z) } \prec \Psi(z).$
\end{theorem}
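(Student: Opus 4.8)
This estimate is an instance of the \emph{fluctuation averaging} phenomenon: a weighted average of the diagonal Green-function entries, $\sum_i w_i\bigl(G_{ii}(z)-m_{sc}(z)\bigr)$, taken against deterministic weights $w=(w_i)$ with $\sum_i|w_i|\lesssim 1$ and $\max_i|w_i|\lesssim N^{-1}$, concentrates with an extra factor over the trivial bound $\prec\Psi$ that the entrywise local law (Theorem~\ref{thm:locallaw}) gives for each summand. The normalization in Definition~\ref{def:cgSBM} gives $\sum_i\kappa_{ij}^{(2)}=\sum_i\sigma_{ij}^2=1$, so, writing $v_i:=G_{ii}(z)-m_{sc}(z)$, we have $\sum_i\kappa_{ij}^{(2)}G_{ii}(z)-m_{sc}(z)=\sum_i\sigma_{ij}^2 v_i$, and the weights $w_i=\sigma_{ij}^2=\kappa_{ij}^{(2)}$ lie exactly in this admissible class. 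The plan is therefore to prove the general bound $\bigl|\sum_i w_i v_i\bigr|\prec\rho\Psi^2$ for all such $w$, uniformly on $\mathcal{D}_\ell$; the theorem is the special case $w_i=\kappa_{ij}^{(2)}$.

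First I would derive the approximate self-consistent equation for the $v_i$. By Schur's complement formula, $G_{ii}^{-1}=H_{ii}-z-\sum_{k,l\ne i}H_{ik}G_{kl}^{(i)}H_{li}$, where the superscript $(i)$ denotes the minor obtained by deleting row and column $i$. Splitting the quadratic form as its conditional expectation $\sum_k\sigma_{ik}^2 G_{kk}^{(i)}$ plus a remainder $Z_i$, and using $\sum_k\sigma_{ik}^2=1$, the relation $-z-m_{sc}=m_{sc}^{-1}$, the minor identity $G_{kk}^{(i)}-G_{kk}=-G_{ki}G_{ik}/G_{ii}=O_\prec(\Psi^2)$, and $G_{ii}=m_{sc}+O_\prec(\Psi)$, one obtains
\[
	v_i \;=\; m_{sc}^2\,\Upsilon_i \;+\; m_{sc}^2\sum_k\sigma_{ik}^2 v_k \;+\; O_\prec(\Psi^2), \qquad \Upsilon_i:=Z_i-H_{ii},
\]
where $|\Upsilon_i|\prec\Psi$ and, crucially, $\E[\Upsilon_i\mid H^{(i)}]=0$. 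Multiplying by $w_i$ and summing, the coupling term becomes $\sum_k\bigl(\sum_i w_i\sigma_{ik}^2\bigr)v_k$, a weighted sum of the $v_k$ whose weights again lie in the admissible class (the balanced block structure of $(\sigma_{ik}^2)$ maps admissible weights to admissible weights). Hence the estimate closes on this class of quantities once we control the genuinely fluctuating part, $\bigl|\sum_i w_i\Upsilon_i\bigr|\prec\rho\Psi^2$.

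That last bound is obtained by the standard high-moment estimate. For fixed $p\in\N$ one expands $\E\bigl|\sum_i w_i\Upsilon_i\bigr|^{2p}=\sum_{i_1,\dots,i_{2p}}\bigl(\prod_r w_{i_r}\bigr)\E\bigl[\Upsilon_{i_1}\cdots\overline{\Upsilon_{i_{2p}}}\bigr]$. Conditional centering kills every term in which some index occurs exactly once, so only partitions with all indices repeated survive; in those one expands the minors $G^{(i)}$ appearing inside each $\Upsilon_i$ around a common minor via the resolvent identities, which turns each repeated index into a pair of \emph{off-diagonal} entries of size $O_\prec(\Psi)$, while the higher cumulants $|\kappa^{(m)}_{ik}|\lesssim(Nq^{m-2})^{-1}$ contribute only extra negative powers of $q$ (the sparse corrections). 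A careful partition/graph count — exactly as in \cite{EKY13} and the general local-law literature — then shows that each repeated index beyond the first in its cluster costs a factor $\Psi^2$, and that pairing the weights $w_{i_r}$ with the $\im m_{sc}$-type factors generated by the expansion yields the edge gain $\rho=O\bigl((\kappa+\eta)^{-1/2}\bigr)$. This gives $\E\bigl|\sum_i w_i\Upsilon_i\bigr|^{2p}\le(C_p\,\rho\Psi^2)^{2p}$ for every $p$, and Markov's inequality with $p$ arbitrary yields $\sum_i w_i\Upsilon_i\prec\rho\Psi^2$; uniformity in $z\in\mathcal{D}_\ell$ comes from a lattice argument using that $G$ is polynomially Lipschitz on $\mathcal{D}_\ell$.

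The main obstacle is precisely this combinatorial moment bound: extracting the exact power $\Psi^2$ per repeated index and the sharp factor $\rho$ requires the full resolvent-expansion-and-counting machinery, and one must check that the variance profile $\sigma_{ik}^2$ and the sparse cumulants enter the count only through the features that matter — $\max_i\sigma_{ij}^2\lesssim N^{-1}$, $\sum_i\sigma_{ij}^2=1$, and $|\kappa^{(m)}_{ik}|\lesssim(Nq^{m-2})^{-1}$ — so that the argument of \cite{EKY13} transfers essentially verbatim. The one genuinely model-specific point is that $(\sigma_{ik}^2)$ preserves the admissible weight class, which keeps the self-consistent coupling term inside the recursion; this is immediate from the balanced block structure.
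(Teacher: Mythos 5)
The paper does not actually prove this statement: it is imported verbatim as a citation of Theorem 4.8 / Proposition 3.3 of \cite{EKY13}, so your sketch can only be measured against that fluctuation-averaging argument. Its skeleton --- Schur complement, splitting off the conditionally centered part $\Upsilon_i$, a high-moment/partition estimate for $\sum_i w_i\Upsilon_i$, and closing a self-consistent relation over the class of weights $w_i=\kappa_{ij}^{(2)}=\sigma_{ij}^2$ with $\sum_i w_i=1$ and $\max_i w_i=O(N^{-1})$ --- is indeed the standard route and matches the cited proof in outline.

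There is, however, a genuine gap at exactly the point where the factor $\rho$ in the statement is produced. You attribute $\rho=O((\kappa+\eta)^{-1/2})$ to the moment estimate (pairing the weights with $\im m_{sc}$-type factors), and you treat the closing of the self-consistent equation as automatic because $S=(\sigma_{ik}^2)$ maps admissible weights to admissible weights. Neither is right as stated. The conditionally centered average obeys $\absb{\sum_i w_i\Upsilon_i}\prec\Psi^2$ with no $\rho$ at all; $\rho$ is the \emph{stability factor} of the self-consistent equation, i.e.\ a bound on $\normb{(1-m_{sc}^2S)^{-1}}$, of size $|1-m_{sc}^2|^{-1}\sim(\kappa+\eta)^{-1/2}$ by Lemma \ref{lem:basic properties of m}. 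If one closes the equation the way you describe --- take the supremum of $\absb{\sum_i w_iv_i}$ over the admissible class and contract --- the only available contraction ratio is $|m_{sc}|^2$, and in the bulk $1-|m_{sc}|^2\sim\eta/\sqrt{\kappa+\eta}$, so this route produces a factor of order $\sqrt{\kappa+\eta}/\eta$ instead of $\rho$; for small $\eta$ this is worse even than the trivial entrywise bound $\Psi$ from Theorem \ref{thm:locallaw}, and the claimed estimate is lost. The correct closing keeps the vector identity $v=m_{sc}^2Sv+g$ and inverts $1-m_{sc}^2S$, using that $S$ is doubly stochastic and, for the balanced block profile, has the flat vector as Perron eigenvector with the remaining eigenvalues ($\zeta$ and $0$) bounded away from $1$, whence $\normb{(1-m_{sc}^2S)^{-1}}\lesssim|1-m_{sc}^2|^{-1}\sim\rho$; this spectral-gap input, not the mere preservation of the weight class, is where the block structure genuinely enters. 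Relatedly, you should target the fluctuation bound $\prec\Psi^2$ rather than $\rho\Psi^2$: proving only $\rho\Psi^2$ for the centered part and then paying the stability factor leaves you with $\rho^2\Psi^2$, which is weaker than the statement.
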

On the contour $\Gamma_2$ defined in \eqref{eq:contour Gamma1,2}, Theorem \ref{thm:local law outside} and Theorem \ref{thm:local law averaging} can be expressed simply as following:
\begin{proposition}\label{prop:local law on contour} Under Definition \ref{def:cgSBM}, for $z \in \Gamma_2$ given in \eqref{eq:contour Gamma1,2} uniformly,
	\begin{align}
		&|m(z) -{m}_{sc}(z)| \prec \frac{1}{q^2}, \qquad \max_{i,j}|G_{ij}(z)-\delta_{ij}m_{sc}| \prec \frac{1}{q}  , \\
	    &\absB{ \sum_{i=1}^{N}\kappa_{ij}^{(2)}G_{ii}(z) - m_{sc}(z)  } \prec \frac{1}{q^2}.
	\end{align}
\end{proposition}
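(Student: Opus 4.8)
The plan is to unwind the error terms in Theorems~\ref{thm:locallaw}, \ref{thm:local law outside} and~\ref{thm:local law averaging} using only the geometry of $\Gamma_2$. Recall that $\Gamma$ is the rectangle with vertices $\pm a \pm \ii v_0$, where $a \in (2,3)$ and $v_0 \in (0,1)$, so every point $z = E + \ii\eta$ of $\Gamma_2$ lies either on a horizontal side, where $\abs{\eta} = v_0 = \Theta(1)$, or on a vertical side $E = \pm a$, where the distance from $E$ to $[-2,2]$ is $a - 2 = \Theta(1)$ and $\abs{E - L}$ is bounded below by a positive constant since $L = 2 + O(q^{-2})$ by Theorem~\ref{thm:matrix norm}. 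In either case the spectral distance $\kappa$ satisfies $\kappa + \eta = \Theta(1)$, so the off-the-spectrum error terms $1/(N\sqrt{\kappa+\eta})$ and $1/(\sqrt{N}(\kappa+\eta)^{1/4})$ in Theorem~\ref{thm:local law outside} are $O(N^{-1}) \prec q^{-2}$ and $O(N^{-1/2}) \prec q^{-1}$ respectively, as $q = N^\phi$ with $\phi < 1/2$; likewise $\rho = O((\kappa+\eta)^{-1/2}) = O(1)$ in Theorem~\ref{thm:local law averaging}. Throughout I would also use that $\wt{m}$ may be replaced by $m_{sc}$ in all error bounds, and the symmetry $G(\bar z) = \overline{G(z)}$, $m_{sc}(\bar z) = \overline{m_{sc}(z)}$ to reduce to $\eta > 0$.

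Concretely, I would split $\Gamma_2$ into three pieces: (i) the horizontal sides $\eta = \pm v_0$; (ii) the parts of the vertical sides $E = \pm a$ with $N^{-1+\ell} \le \abs{\eta} \le v_0$; (iii) the parts of the vertical sides with $N^{-5} < \abs{\eta} < N^{-1+\ell}$, where $\ell > 0$ is a small constant to be fixed. On piece~(i), $z$ lies in the domain $\mathcal{D}_\ell$ of Theorem~\ref{thm:locallaw} with $\eta = \Theta(1)$, giving $\abs{m(z) - m_{sc}(z)} \prec q^{-2} + (N\eta)^{-1} \prec q^{-2}$ and $\max_{i,j}\abs{G_{ij}(z) - \delta_{ij} m_{sc}(z)} \prec q^{-1} + (N\eta)^{-1/2} \prec q^{-1}$; feeding $\Psi \prec q^{-1}$ and $\rho = O(1)$ into Theorem~\ref{thm:local law averaging} yields the cumulant estimate $\prec \rho \Psi^2 \prec q^{-2}$. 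On piece~(ii), $z$ lies in the domain $\mathcal{D}_\tau$ of Theorem~\ref{thm:local law outside}, so with $\kappa = \Theta(1)$ the same three estimates follow, the last again from Theorem~\ref{thm:local law averaging} now with the sharper $\Psi \prec q^{-1}$ of Theorem~\ref{thm:local law outside}. (This step uses $\phi > 1/6$, which holds whenever the proposition is invoked.)

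For piece~(iii) I would argue by analytic continuation. By Theorem~\ref{thm:matrix norm}, $\norm{H} = 2 + o(1) < a$ with overwhelming probability, and on that event every $z$ with $\re z = \pm a$ satisfies $\operatorname{dist}(z, \sigma(H)) \ge a - \norm{H} = \Theta(1)$, so $G$ is analytic in a neighborhood of the segment with $\norm{G(z)} = O(1)$. Hence $\abs{(G^2)_{ij}(z)} \le \norm{G(z)}^2 = O(1)$, $N^{-1}\abs{\Tr G^2(z)} = O(1)$, and $\absb{\sum_i \kappa_{ij}^{(2)} (G^2)_{ii}(z)} \le \max_i \abs{(G^2)_{ii}(z)} = O(1)$ (using $\sum_i \kappa_{ij}^{(2)} = \sum_i \sigma_{ij}^2 = 1$), and the analogous $O(1)$ Lipschitz bounds hold for $m_{sc}$ and its derivative off $[-2,2]$. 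Then for $z = E + \ii\eta$ on piece~(iii), setting $z_0 = E + \ii N^{-1+\ell}$, which lies on piece~(ii), and using $\abs{z - z_0} \le N^{-1+\ell}$ with the fundamental theorem of calculus, each of $m(z)$, $G_{ij}(z)$, $\sum_i \kappa_{ij}^{(2)} G_{ii}(z)$ and $m_{sc}(z)$ differs from its value at $z_0$ by $O(N^{-1+\ell})$. Choosing $\ell \in (0, 1 - 2\phi)$, possible since $\phi < 1/2$, makes $N^{-1+\ell} \prec q^{-2}$, so the bounds of piece~(ii) transfer to piece~(iii). Taking the finite union over the three pieces and over the sign of $\re z$, and noting that all the estimates are uniform in $z$, proves Proposition~\ref{prop:local law on contour}.

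The only genuine obstacle is piece~(iii): the contour $\Gamma_2$ descends below the height $N^{-1+\ell}$ at which the local laws of Theorems~\ref{thm:locallaw}--\ref{thm:local law averaging} are stated, so one cannot invoke them verbatim there and must instead exploit that, far to the left and right of the bulk, the resolvent is a bounded analytic function whose estimates can be propagated downward to $\abs{\eta} \gtrsim N^{-5}$ at negligible cost. Everything else is the routine bookkeeping of substituting $\kappa + \eta = \Theta(1)$ into the stated error terms.
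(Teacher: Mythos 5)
Your proposal is correct, and it follows the same basic route as the paper: the paper offers no written proof, simply asserting that Proposition \ref{prop:local law on contour} is Theorems \ref{thm:local law outside} and \ref{thm:local law averaging} "expressed simply" on $\Gamma_2$, which is exactly your pieces (i)--(ii) (horizontal sides via Theorem \ref{thm:locallaw}, vertical sides with $\kappa+\eta=\Theta(1)$ via Theorems \ref{thm:local law outside} and \ref{thm:local law averaging}, with $\wt m$ replaced by $m_{sc}$ and the symmetry $G(\bar z)=\overline{G(z)}$). What you add, and what the paper's implicit argument glosses over, is piece (iii): $\Gamma_2$ reaches down to $\eta>N^{-5}$ on the vertical sides, below the threshold $\eta>N^{-1+\ell}$ of the domains $\mathcal{D}_\ell$, $\mathcal{D}_\tau$, so the cited theorems cannot be invoked verbatim there. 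Your fix — restrict to the overwhelming-probability event $\norm{H}=2+o(1)<a$ from Theorem \ref{thm:matrix norm}, use $\norm{G}, \norm{G^2}=O(1)$ at distance $\Theta(1)$ from the spectrum, and propagate the estimates from $z_0=E+\ii N^{-1+\ell}$ down to $z$ by a Lipschitz bound of size $O(N^{-1+\ell})\prec q^{-2}$ with $\ell<1-2\phi$ (including for the weighted sum, using $\sum_i\kappa_{ij}^{(2)}=1$, and for $m_{sc}$) — is precisely the standard and correct way to close this gap, and your observation that the vertical-side step genuinely needs the outside-the-spectrum law (hence $\phi>1/6$, which holds wherever the proposition is used, even though the proposition's statement only assumes Definition \ref{def:cgSBM}) accurately identifies a hypothesis the paper leaves implicit.
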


\begin{lemma}[Basic properties of $m_{sc}$]\label{lem:basic properties of m} Define the distance to the spectral edge
	\begin{equation}
		\kappa \equiv \kappa(E) := \big| |E| -2 \big|.
	\end{equation}
	Then for $z \in D_\ell$ we have
	\begin{equation}\label{eq:m_z}
		|m_{sc}(z)| \sim 1, \qquad \qquad |1-m_{sc}^2| \sim \sqrt{\kappa+\eta}
	\end{equation}
	and 
	\begin{equation}\label{eq:imm_z}
		\emph{Im }  m_{sc}(z) \sim \begin{cases}
			\sqrt{\kappa+\eta} &\text{if $E \leq 2$}\\
			\frac{\eta}{\sqrt{\kappa+\eta}} &\text{if $E \geq 2$}.\\
		\end{cases}
	\end{equation}
	Moreover,
	\begin{equation}
		m_{sc}'(z)= -\frac{m_{sc}(z)}{z+2m_{sc}(z)}=\frac{m_{sc}^2(z)}{1-m_{sc}^2(z)}.
	\end{equation}
\end{lemma}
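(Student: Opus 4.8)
The plan is to deduce the whole lemma from the two structural facts about $m_{sc}$ recorded in Appendix~\ref{sec:prelim}: the quadratic relation $m_{sc}(z)^2 + z\,m_{sc}(z) + 1 = 0$ (equivalently $m_{sc} + m_{sc}^{-1} + z = 0$) and the explicit formula $m_{sc}(z) = \tfrac12\pb{-z + \sqrt{z^2-4}}$, which is analytic on $\C\setminus[-2,2]$, continuous up to $\overline{\C^+}$, and satisfies $m_{sc}(z) \sim -1/z$ at infinity. I would treat the four assertions in the order (i) the derivative formula, (ii) $|m_{sc}|\sim 1$, (iii) $|1-m_{sc}^2|\sim\sqrt{\kappa+\eta}$, (iv) the two-sided bound on $\im m_{sc}$; for the last, the symmetry $m_{sc}(-\bar z)=-\overline{m_{sc}(z)}$ (from the evenness of $\rho_{sc}$) lets me assume $E\ge 0$.

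For (i), differentiating the quadratic relation gives $(2m_{sc}+z)\,m_{sc}' = -m_{sc}$, so $m_{sc}' = -m_{sc}/(z+2m_{sc})$; since $z + m_{sc} = -m_{sc}^{-1}$ we have $z + 2m_{sc} = m_{sc} - m_{sc}^{-1} = (m_{sc}^2-1)/m_{sc}$, and substituting rewrites this as $m_{sc}^2/(1-m_{sc}^2)$. Multiplying $\sqrt{z^2-4} = 2m_{sc}+z$ by $-m_{sc}$ and using $z\,m_{sc} = -1-m_{sc}^2$ also gives the auxiliary identity $1-m_{sc}(z)^2 = -m_{sc}(z)\sqrt{z^2-4}$, which I use in (iii). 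For (ii): $m_{sc}$ is continuous on $\overline{\C^+}$ and vanishes nowhere there, since $m_{sc}(z_0)=0$ would force $1=0$ in the quadratic relation; as $\mathcal{D}_\ell$ lies in the compact set $\{|E|\le 3,\ 0\le\eta\le 3\}$, on which $|m_{sc}|$ is continuous and strictly positive, $|m_{sc}|$ is pinched between two positive constants there, uniformly in $N$. For (iii), by the auxiliary identity and $|m_{sc}|\sim 1$ it suffices that $|\sqrt{z^2-4}| = |z-2|^{1/2}|z+2|^{1/2} \sim \sqrt{\kappa+\eta}$ (with $\kappa=|E-2|$, $E\ge 0$); but $|z-2|^{1/2} = (\kappa^2+\eta^2)^{1/4}\sim(\kappa+\eta)^{1/2}$ and $|z+2|^{1/2} = \bigl((E+2)^2+\eta^2\bigr)^{1/4}\sim 1$ uniformly on $\mathcal{D}_\ell$, so this is immediate.

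For (iv), take $E\ge 0$, so the two regimes of the statement are the bulk $0\le E\le 2$ and the exterior $E\ge 2$. If $\kappa+\eta$ is bounded below the claim is trivial (all three quantities are then $\sim 1$ by the compactness argument), so assume $\kappa+\eta$ is at most a small absolute constant; then $z+2 = 4+O(\kappa+\eta)$, so $w:=z^2-4=(z-2)(z+2)$ satisfies $|w|\sim\kappa+\eta$ and $\arg w$ is comparable to $\arg(z-2)$, which lies in $[\tfrac\pi2,\pi)$ when $E\le 2$ and equals $\arctan(\eta/\kappa)\in[0,\tfrac\pi2)$ when $E\ge 2$. From $m_{sc}=\tfrac12(-z+\sqrt w)$ one gets $\im m_{sc}=\tfrac12\bigl(\im\sqrt w-\eta\bigr)$ with $\im\sqrt w=|w|^{1/2}\sin(\tfrac12\arg w)$, so in the bulk $\sin(\tfrac12\arg w)\sim 1$ gives $\im\sqrt w\sim\sqrt{\kappa+\eta}$, while in the exterior $\sin(\tfrac12\arg w)\sim\arctan(\eta/\kappa)\sim\eta/(\kappa+\eta)$ gives $\im\sqrt w\sim\eta/\sqrt{\kappa+\eta}$. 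In either case $\im\sqrt w$ dominates $\eta$ (because $\kappa+\eta$ is small), so $\im m_{sc}\sim\im\sqrt w$, which is exactly the claimed $\im m_{sc}\sim\sqrt{\kappa+\eta}$ for $E\le 2$ and $\im m_{sc}\sim\eta/\sqrt{\kappa+\eta}$ for $E\ge 2$. (The same analysis can instead be run on the Poisson integral $\im m_{sc}(z)=\int\frac{\eta\,\rho_{sc}(x)}{(x-E)^2+\eta^2}\,\di x$.)

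The only step with genuine content is (iv); parts (i)--(iii) amount to a line of algebra plus a compactness argument. In (iv) the work is pure bookkeeping — reducing to $\kappa+\eta$ small, keeping the regimes $E\le 2$ and $E\ge 2$ apart, and each time verifying that the $-\eta$ coming from $-z$ is negligible against $\im\sqrt{z^2-4}$, which is precisely where the hypothesis $z\in\mathcal{D}_\ell$ (bounding both $\kappa$ and $\eta$) enters.
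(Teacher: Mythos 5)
The paper itself does not prove this lemma --- it records it as ``an elementary calculation'' and cites Lemma 4.2 of \cite{EYY11} --- so your explicit derivation from $m_{sc}^2+zm_{sc}+1=0$ and $m_{sc}=\frac12(-z+\sqrt{z^2-4})$ is exactly the intended elementary route, and parts (i)--(iii) as well as your small-$(\kappa+\eta)$ analysis of $\arg(z^2-4)$ in (iv) are correct (including the check that the $-\eta$ coming from $-z$ is dominated by $\im\sqrt{z^2-4}$, and the symmetry reduction to $E\ge 0$).

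There is, however, one step that fails as written: the dismissal ``if $\kappa+\eta$ is bounded below the claim is trivial (all three quantities are then $\sim 1$ by the compactness argument).'' This is true in the bulk regime $E\le 2$, but false in the exterior regime. Take $E=3$ and $\eta=N^{-1+\ell}$, a point of $\mathcal{D}_\ell$ with $\kappa+\eta\sim 1$: there $\eta/\sqrt{\kappa+\eta}\asymp\eta=o(1)$ and also $\im m_{sc}(z)\asymp\eta$, so neither quantity is $\sim 1$, and no compactness argument can give a lower bound $\im m_{sc}\gtrsim 1$, since $\im m_{sc}$ vanishes on the real axis outside $[-2,2]$. The claimed asymptotics $\im m_{sc}\sim\eta/\sqrt{\kappa+\eta}$ does hold in this regime, but it needs its own (one-line) argument rather than ``everything is order one''; the cleanest fix is precisely the Poisson-integral remark you relegate to a final parenthesis: for $E\ge 2$ with $\kappa\ge c$ and $0<\eta\le 3$ one has $\im m_{sc}(z)=\eta\int\rho_{sc}(x)\,\bigl((x-E)^2+\eta^2\bigr)^{-1}\di x\asymp\eta\asymp\eta/\sqrt{\kappa+\eta}$, while if instead $\eta\ge c$ both sides are $\asymp 1$. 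With that sub-case repaired the proof is complete; the remaining regimes ($E\le 2$ with $\kappa+\eta$ bounded below, and the main case $\kappa+\eta$ small) are handled correctly by your compactness and $\arg w$ arguments.
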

\begin{proof}
	The proof is an elementary calculation; see Lemma 4.2 in \cite{EYY11}.
\end{proof}

\section{Proof of Theorem \ref{thm:CLT}}\label{sec:proof of thm CLT}
We first express the left-hand side of \eqref{eq:CLT dSBM} by using a contour integral via Cauchy's integration formula. The integral is then written in terms of the Stieltjes transforms of the empirical spectral measure and the semicircle measure. Since the Stieltjes transform of the empirical spectral measure converges weakly to a Gaussian process, we find that the linear eigenvalue statistic also converges to a Gaussian random variable.
We introduce the CLT for the centered stochastic block models. 
\begin{theorem}[\cite{LX20}] \label{thm:general CLT}
	Let $H$ be a  cgSBM defined in defintion \ref{def:cgSBM_main}. Then, for any function $f$ analytic on an open interval containing $[-2, 2]$,
	\[
	\pB{L_H(f) - N \int_{-2}^2 \frac{\sqrt{4-z^2}}{2\pi} f(z) \di z} \Rightarrow \mathcal{N}\left(m_0(f), V_0(f)\right)\,.
	\]
	where the mean $m_0(f)$ is given by \eqref{eq:m_k(f)} with $K=0$ and the variance $V_0(f)$ is given in \eqref{def:v_0(f)}.
\end{theorem}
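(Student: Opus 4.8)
The theorem is, in substance, the general CLT for linear eigenvalue statistics of generalized Wigner matrices established in \cite{LX20}, specialized to a cgSBM in the dense regime. So the plan is: (i) check that a cgSBM $H$ with $\phi = 1/2$ lies in the class covered by \cite{LX20}; (ii) invoke that CLT to get a Gaussian limit whose mean and variance are expressed there as contour integrals of $f$ against $m_{sc}$, its derivatives, the variance profile, and the entrywise cumulants; and (iii) evaluate those integrals for the cgSBM and match them to the closed forms \eqref{eq:m_k(f)} with $K=0$ and \eqref{def:v_0(f)}.

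For step (i), with $\phi = 1/2$ the sparsity parameter is $q = CN^{1/2}$, so the moment bound in \eqref{eq:moments} reads $\E|H_{ij}|^k \le (Ck)^{ck}C^{2-k}N^{-k/2}$, i.e.\ $\E|\sqrt N\,H_{ij}|^k \le (C'k)^{c'k}$ for all $k\ge 2$, which is the sub-exponential tail assumed in \cite{LX20}. The entries are real, centered, independent up to symmetry, and the variance profile $\sigma_{ij}^2$ is piecewise constant on the $K$-block partition, of order $N^{-1}$ entrywise, and column-normalized ($\sum_i\sigma_{ij}^2 = 1$); in the regime of interest ($|p_s - p_d|$ small, e.g.\ $|p_s - p_d| = O(N^{-1/2})$) one moreover has $\sigma_{ij}^2 = N^{-1}(1 + o(1))$, hence $\sum_i(\sigma_{ij}^2 - N^{-1})^2 = o(1)$. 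Thus $H$ is an essentially flat-profile generalized Wigner matrix of the type treated in \cite{LX20}, and Theorem \ref{thm:eigval bbp} (applied with no spike) gives $\|H\| \to 2$, so all eigenvalues lie inside a contour $\Gamma$ on which $f$ is analytic, permitting $L_H(f) = -\frac{1}{2\pi\ii}\oint_\Gamma f(z)\,\Tr(H - zI)^{-1}\,\di z$.

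For steps (ii)--(iii), the CLT of \cite{LX20} gives a Gaussian limit whose mean $m_0(f)$ and variance $V_0(f)$ split into: a reference part depending only on the real symmetry class and the diagonal normalization; a variance-profile correction governed by $\sum_i(\sigma_{ij}^2 - N^{-1})^2$, which is $o(1)$ by the flatness above; and a correction governed by the fourth cumulants $\kappa_{ij}^{(4)}$. For the remaining two one uses the standard Chebyshev expansion of contour integrals of analytic functions against powers of $m_{sc}$: since $\tau_\ell(f)$ is the $\ell$-th Chebyshev coefficient of $f$, contour integrals of $f(z)$ against $m_{sc}(z)^\ell$ (resp.\ $m_{sc}(z)^{\ell-1}m_{sc}'(z)$) reproduce $\tau_\ell(f)$ (resp.\ $\ell\,\tau_\ell(f)$) up to explicit constants. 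Carrying this out, the reference part of the mean becomes $\tfrac14(f(2)+f(-2)) - \tfrac12\tau_0(f) - \tau_2(f)$ and that of the variance becomes $-\tau_1(f)^2 + 2\sum_{\ell\ge1}\ell\,\tau_\ell(f)^2$; the fourth-cumulant corrections become $k_4\,\tau_4(f)$ and $2k_4\,\tau_2(f)^2$, where $k_4 = \lim_N \sum_{i,j}\kappa_{ij}^{(4)}$ is the limiting sum of the entrywise fourth cumulants of $H$. A short computation with the (rescaled, centered) Bernoulli entries --- noting that the rank-$(K-1)$ shift $\E M$ changes each entry by only $O(N^{-1})$ --- yields $k_4 = (1-7p+12p^2-6p^3)/(p(1-p)^2)$ with $p = \lim_N q^2/N$, which is \eqref{eq:k_4}. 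There is no $\sum_\ell \gamma_N^\ell\tau_\ell(f)$ term here since $K=0$ means there is no spike.

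The only substantive work is the bookkeeping in step (iii): matching the contour-integral formulas of \cite{LX20} --- with the correct signs, the normalization $m_{sc}(z) = (-z + \sqrt{z^2-4})/2$, and the real-symmetric symmetry factor --- to the Chebyshev-coefficient form, and verifying that the profile-dependent terms vanish under the cgSBM flatness. Once the dictionary $\oint_\Gamma f(z)\,m_{sc}(z)^\ell\,\di z \leftrightarrow \tau_\ell(f)$ is set up, what remains is a routine evaluation of a handful of low-order Chebyshev moments (one can sanity-check, e.g., $m_0(x)=0$, $V_0(x)=1$ against $\Tr H = \sum_i H_{ii}$, and $V_0(x^2) = 2k_4 + 4$ against $\var(\Tr H^2)$). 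If one preferred a self-contained argument, one could instead run the cumulant-expansion/characteristic-function method directly on the dense generalized Wigner matrix, exactly as is done for the sparse regime via Proposition \ref{prop:CLT sparse}.
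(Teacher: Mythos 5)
Your route is the same as the paper's: Theorem \ref{thm:general CLT} is not proved in the paper but imported directly from \cite{LX20}, so your plan --- verify that a cgSBM with $\phi=1/2$ is a generalized Wigner matrix with sub-exponential entries and (essentially flat) block variance profile, invoke the CLT of \cite{LX20}, and translate its mean/variance formulas into the Chebyshev-coefficient form with the fourth-cumulant parameter $k_4$ --- is exactly the justification the citation presupposes. Your bookkeeping is consistent with the paper's own remarks (the profile correction is $o(1)$ when $|p_s-p_d|=O(N^{-1/2})$, and $k_4=\lim_N\sum_{i,j}\kappa^{(4)}_{ij}$ indeed equals \eqref{eq:k_4} since $1-7p+12p^2-6p^3=(1-p)(1-6p+6p^2)$), so there is nothing substantive to add.
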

Recall the definitions of  $M(\theta)$ , 	 and contour $\Gamma$ from section \ref{subsec:sketch clt}. 
Define the resolvent $R(\theta, z)$ and the normalized trace of the resolvent $m_N(\theta,z)$ by
\begin{equation}
	R(\theta, z):= (M(\theta) - zI)^{-1}, \qquad m_N(\theta,z):=\frac{1}{N}\Tr R(\theta,z)=\frac{1}{N}\sum_{i = 1}^N\frac{1}{\lambda_{i}(\theta)-z}
\end{equation}
Then with Cauchy's integral formula, we have
\beq \begin{split} \label{eq:decouple2}
	\sum_{i=1}^N f(\lambda_i(1)) - N\langle\mu_{sc},f\rangle  &= -\frac{N}{2\pi \ii} \oint_{\Gamma} f(z) \big( m_N(1,z) - m_{sc}(z) \big) \dd z \\
	&= -\frac{1}{2\pi \ii} \oint_{\Gamma} f(z) \big( \Tr R(1,z) - \Tr R(0,z) \big) \dd z\\
	&\qquad  -\frac{1}{2\pi \ii} \oint_{\Gamma} f(z) \big( \Tr R(0,z) - Nm_{sc}(z) \big) \dd z.
\end{split} \eeq 
where the fluctuation result for \eqref{eq:decouple2} is already given by Theorem \ref{thm:general CLT}.
To analyze \eqref{eq:decouple2}, we use the results from the random matrix theory like isotropic local law given by Lemma \ref{lem:iso local law} below.

Hence, our strategy of the proof became to show that the limiting distribution of $ \Tr R(\theta,z)$ has deterministic shift when we integrate with respect to $\theta$. More precisely, we claim that
\begin{equation} \label{eq:fundamental}
	\frac{\partial}{\partial\theta} \Tr R(\theta,z) =  -\frac{K \sqrt{\lambda} m_{sc}'(z)}{(1+\theta \sqrt{\lambda} m_{sc}(z))^2} +O_\prec(N^{-\frac{1}{2}})
\end{equation}
uniformly on $z \in \Gamma$. Once we prove the claim, we can use the lattice argument to prove Theorem \ref{thm:CLT} as follows: Choose points $z_1, z_2, \dots, z_{16N} \in \Gamma$ so that $|z_i - z_{i+1}| \leq N^{-1}$ for $i=1, 2, \dots, 16N$ (with the convention $z_{16N+1} = z_1$). For each $z_i$, the claim \eqref{eq:fundamental} shows that
\begin{equation}
	\Tr R(1,z_i) - \Tr R(0,z_i) =   -\frac{K \sqrt{\lambda} m_{sc}'(z)}{1+ \sqrt{\lambda} m_{sc}(z)}  + O_\prec(N^{-\frac{1}{2}}).
\end{equation}
For any $z \in \Gamma$, if $z_i$ is the nearest lattice point from $z$, then $|z-z_i| \leq N^{-1}$. From the Lipschitz continuity of $\Tr R$, we then find $|\Tr R(\theta,z) - \Tr R(\theta,z_i)| = O_\prec(N^{-1})$ uniformly on $z$ and $z_i$. Hence, by triangular inequality, we can show that 
\beq \begin{split}
	&|\xi_N(1, z) - \xi_N(0, z)  +\frac{K \sqrt{\lambda} m_{sc}'(z)}{1+ \sqrt{\lambda} m_{sc}(z)} | \notag \\
	&\leq |\xi_N(1, z) - \xi_N(1, z_i)| + |\xi_N(1, z_i) - \xi_N(0, z_i) +\frac{K \sqrt{\lambda} m_{sc}'(z_i)}{1+ \sqrt{\lambda} m_{sc}(z_i)} | + |\xi_N(0, z_i) - \xi_N(0, z)| \notag \\
	&= O_\prec(N^{-\frac{1}{2}}).
\end{split} \eeq
Now, integrating over $\Gamma$, we get
\begin{equation} \label{eq:lattice}
	\frac{-1}{2\pi \ii} \oint_{\Gamma} f(z) \pB{ \xi_N(1, z) -\xi_N(0, z)} \dd z = \frac{K}{2\pi \ii} \oint_{\Gamma} f(z)\frac{ \sqrt{\lambda} m_{sc}'(z)}{1+ \sqrt{\lambda} m_{sc}(z)}\dd z + O_\prec(N^{-\frac{1}{2}}).
\end{equation}
Note that, for $\ell=0,1,2,\ldots,$
\begin{equation}
	\tau_{\ell}(f) = \frac{1}{2\pi}\int_{-\pi}^{\pi}f(2\cos\theta)\cos(\ell\theta)\dd\theta = \frac{(-1)^\ell}{2\pi \ii}\oint_{|s|=1}f\pa{-s-\frac{1}{s}} s^{\ell-1}\dd s
\end{equation}
where we set $s=-\e^{\ii\theta}$ for the second inequality.

Converting the integral on $z$ to the integral on $m_{sc}$ by $z=-m_{sc}-\frac{1}{m_{sc}}$, we conclude that the the difference between the LSS of $M$ and that of $H$ is
\[
K \sum_{\ell=1}^{\infty} \sqrt{\lambda^{\ell}} \tau_{\ell}(f).
\]

We now prove the claim \eqref{eq:fundamental}. For the ease of notation, we omit the $z$-dependence in some occasions. Using the formula
\begin{equation} \label{eq:matrixderivative}
	\frac{\partial R_{jj}(\theta)}{\partial M_{ab}(\theta)} =
	\begin{cases}
		-R_{ja}(\theta) R_{bj}(\theta) -R_{jb}(\theta) R_{aj}(\theta) & \text{ if } a \neq b, \\
		-R_{ja}(\theta) R_{aj}(\theta) & \text{ if } a=b,
	\end{cases}
\end{equation}
and the fact that $M$ and $R(\theta)$ are symmetric, it is straightforward to check that
\begin{equation} \label{eq:theta_derivative}
	\frac{\partial}{\partial \theta} \Tr R(\theta,z) = -\sum_{m=1}^{K}\sqrt{\lambda}\left( {\bold{v}^{(m)}}^T R(\theta, z)^2 {\bold{v}^{(m)}} \right)= -\sum_{m=1}^{K}\sqrt{\lambda}\frac{\partial}{\partial z} \left( {\bold{v}^{(m)}}^T R(\theta, z) {\bold{v}^{(m)}} \right).
\end{equation}
where 
\begin{equation}
	R(\theta, z)^2 = (M(\theta)-zI)^{-2} = \frac{\partial}{\partial z} (M(\theta)-zI)^{-1} = \frac{\partial}{\partial z} R(\theta, z),
\end{equation}
which can be checked from the definition of the resolvent.

Set $S(z) := R(0,z) = (H-zI)^{-1}$ for convenience. We have from the definition of the resolvents that
\begin{equation} \label{eq:resolvent_diff1}
	R(\theta, z)^{-1} - S(z)^{-1} = \theta\sqrt{\lambda}VV^T S(z)
\end{equation}
and after multiplying $S(z)$ from the right and $R(\theta, z)$ from the left, we find that
\begin{align} \label{eq:resolvent_diff2}
	S(z) - R(\theta, z) &= \theta \sqrt{\lambda} R(\theta, z) VV^T S(z)= \sum_{i=1}^{K}\theta\sqrt{\lambda}R(\theta,z)\b v^{(i)}(\b v^{(i)})^T S(z).
\end{align}
Thus,
\begin{equation} \begin{split} \label{eq:resolvent_expansion}
		\langle \b v^{(m)}, S(z) \b v^{(m)} \rangle &= \langle \b v^{(m)}, R(\theta, z) \b v^{(m)} \rangle + \theta\sqrt{\lambda}\sum_{i=1}^{K} \langle \b v^{(m)}, R(\theta, z) \b v^{(i)} (\b v^{(i)})^T S(z) \b v^{(m)} \rangle \\
	&= \langle \b v^{(m)}, R(\theta, z) \b v^{(m)} \rangle + \theta\sqrt{\lambda}\sum_{i=1}^{K} \langle \b v^{(m)}, R(\theta, z) \b v^{(i)} \rangle \langle \b v^{(i)}, S(z) \b v^{(m)} \rangle 
\end{split} \end{equation}

For the resolvents of the centered SBM, we have the following lemma.
\begin{lemma}[Isotropic local law] \label{lem:iso local law}
	For an $N$-independent constant $\epsilon > 0$, let $\Gamma^{\epsilon}$ be the $\epsilon$-neighborhood of $\Gamma$, i.e.,
	\[
	\Gamma^{\epsilon} = \{ z \in \C : \min_{w \in \Gamma} |z-w| \leq \epsilon \}.
	\]
	Choose $\epsilon$ small so that the distance between $\Gamma^{\epsilon}$ and $[-2, 2]$ is larger than $2\epsilon$, i.e., 
	\begin{equation}
		\min_{w \in \Gamma^{\epsilon}, x \in [-2, 2]} |x-w| > 2\epsilon.
	\end{equation}
	Assume that the matrix $H$ satisfies Definition \ref{def:cgSBM} with $q=c\sqrt{N}$ for some constant $0<c<1$.
	Then, for any deterministic $\bsv, \bsw \in \C^N$ with $\| \bsv \| = \| \bsw \| = 1$ and sufficiently small $\delta>0$, the following estimate holds uniformly on $z \in \Gamma^{\epsilon}$:
	\begin{equation}\label{eq:iso local law}
		\left|\langle \bsv, (H-zI)^{-1} \bsw \rangle - m_{sc}(z) \langle \bsv, \bsw \rangle \right| = O_\prec (N^{-\frac{1}{2}}).
	\end{equation}
\end{lemma}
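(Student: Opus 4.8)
The plan is to upgrade the entrywise local law to the isotropic one. Since $\operatorname{dist}(\Gamma^\epsilon,[-2,2])>2\epsilon$ and, by Theorem~\ref{thm:matrix norm} with $\phi=1/2$ (so $q=c\sqrt N$), $\|H\|=2+o_\prec(1)$, every $z\in\Gamma^\epsilon$ satisfies $\operatorname{dist}(z,\operatorname{spec}H)\ge\epsilon$ with overwhelming probability and $\|G(z)\|\le\epsilon^{-1}$ there. On this region $G$ is analytic in $z$ with $\|G^2\|\le\epsilon^{-2}$, so the entrywise estimates of Theorem~\ref{thm:local law outside} extend from $\mathcal D_\tau$ to all of $\Gamma^\epsilon$:
\[
\max_{i,j}\big|G_{ij}(z)-\delta_{ij}m_{sc}(z)\big|\prec q^{-1}+N^{-1/2}\asymp N^{-1/2}\,,\qquad |m(z)-m_{sc}(z)|\prec N^{-1/2}\,,
\]
uniformly on $\Gamma^\epsilon$. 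Decomposing $\langle\bsv,G\bsw\rangle-m_{sc}\langle\bsv,\bsw\rangle=\sum_i\bar v_iw_i(G_{ii}-m_{sc})+\sum_{i\ne j}\bar v_iw_jG_{ij}$, the diagonal sum is $O_\prec(N^{-1/2})$ immediately, since $\sum_i|v_iw_i|\le\|\bsv\|\|\bsw\|=1$. So the task is the off-diagonal sum, equivalently the isotropic law with general $\bsv,\bsw$.

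\textbf{One coordinate test vector.} I would first prove $\langle\bse_i,G\bsw\rangle-m_{sc}w_i=O_\prec(N^{-1/2})$. By the Schur complement identity $G_{ij}=-G_{ii}\sum_{k\ne i}H_{ik}G^{(i)}_{kj}$ for $j\ne i$, with $G^{(i)}$ the resolvent of the minor obtained by deleting row/column $i$ (hence independent of the $i$-th row of $H$), one gets
\[
\langle\bse_i,G\bsw\rangle-m_{sc}w_i=(G_{ii}-m_{sc})w_i-G_{ii}\sum_{k\ne i}H_{ik}a_k\,,\qquad a_k:=\langle\bse_k,G^{(i)}\bsw^{(i)}\rangle\,,
\]
where $\bsw^{(i)}$ is $\bsw$ with its $i$-th entry set to zero. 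The first term is $O_\prec(N^{-1/2})$. In the linear form, conditionally on the minor, $\sum_k|a_k|^2=\|G^{(i)}\bsw^{(i)}\|^2\le\epsilon^{-2}$ (interlacing keeps $\operatorname{spec}H^{(i)}$ away from $z$) and $\sigma_{ik}^2\le CN^{-1}$ by \eqref{eq:moments}, so $\sum_k\sigma_{ik}^2|a_k|^2\le CN^{-1}$; the moment bounds \eqref{eq:moments} with $q=c\sqrt N$ are exactly what the large-deviation estimate for linear forms in independent variables needs, giving $\big|\sum_k H_{ik}a_k\big|\prec(\sum_k\sigma_{ik}^2|a_k|^2)^{1/2}\prec N^{-1/2}$ (Markov on a high moment). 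Since $|G_{ii}|\le\epsilon^{-1}$ this closes the case $\bsv=\bse_i$.

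\textbf{General test vector --- the crux.} For general $\bsv$, set $Z_i:=\langle\bse_i,G\bsw\rangle-m_{sc}w_i$, so that $\langle\bsv,G\bsw\rangle-m_{sc}\langle\bsv,\bsw\rangle=\sum_i\bar v_iZ_i$. The triangle inequality only yields $\sum_i|v_i|\cdot N^{-1/2}$, which is $\asymp1$ for a delocalized $\bsv$; one truly needs cancellation among the $Z_i$. I would extract it from the even moments $\E\big|\sum_i\bar v_iZ_i\big|^{2p}$: up to the negligible $(G_{ii}-m_{sc})w_i$ pieces, $Z_i=-G_{ii}\sum_{k\ne i}H_{ik}a_k^{(i)}$ is a conditionally centered linear form in the $i$-th row of $H$, so on expanding $\prod_{a=1}^{2p}\bar v_{i_a}Z_{i_a}$ (via the cumulant expansion of Lemma~\ref{lemma:Stein}) and averaging, any term in which a row index appears only once is negligible, and the surviving pairings --- after resolving the resulting $G$-monomials by Proposition~\ref{prop:local law on contour} and summing over free indices using $\sum_i\sigma_{ij}^2=1$ --- are bounded by $(C_pN^{-1/2+\delta})^{2p}\|\bsv\|^{2p}\|\bsw\|^{2p}$, each higher-cumulant correction carrying an extra factor $q^{-1}\asymp N^{-1/2}$. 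Markov's inequality then gives $\sum_i\bar v_iZ_i=O_\prec(N^{-1/2})$ for fixed $\bsv,\bsw,z$; uniformity over $z\in\Gamma^\epsilon$ follows from an $N^{-10}$-net argument together with $|\partial_z\langle\bsv,G\bsw\rangle|=|\langle\bsv,G^2\bsw\rangle|\le\epsilon^{-2}$ on $\Gamma^\epsilon$.

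\textbf{Main obstacle, and a shortcut.} The hard part is this last step: extracting the $\sqrt N$ of cancellation for a delocalized $\bsv$, i.e., the moment/graphical bookkeeping that controls the weak dependence among the coordinatewise errors $Z_i$; the other steps are routine given Theorems~\ref{thm:locallaw}, \ref{thm:local law outside}, \ref{thm:matrix norm} and Lemma~\ref{lemma:Stein}. A shorter alternative is to quote the isotropic local law for generalized Wigner matrices from \cite{BEKYY14}, since \eqref{eq:moments} with $q=c\sqrt N$ gives, for each fixed $k$, a bound $\E|\sqrt NH_{ij}|^k\le C_k$ of the type required there; I would record this as a remark but carry out the self-contained argument above, whose ingredients (Schur reduction, large deviations, moment expansion) recur in the sparse regime.
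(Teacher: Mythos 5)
The paper's entire proof of this lemma is the shortcut you mention at the end: it observes that with $q=c\sqrt N$ the moment conditions of Definition \ref{def:cgSBM} reduce to those of a generalized Wigner matrix and then quotes Theorem 2.15 of \cite{BEKYY14} (pointing to Lemma 7.7 of \cite{CL19} for the details of the reduction). So your closing remark coincides exactly with the paper's argument, and on that route your proposal is correct and complete.

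The self-contained argument you say you would actually carry out is a genuinely different and much more ambitious route, and as written it has a real gap precisely at the step you flag as the crux. The quantities $Z_i=-G_{ii}\sum_{k\ne i}H_{ik}a_k^{(i)}$ are not linear forms with deterministic coefficients: $a_k^{(i)}=\langle\bse_k,G^{(i)}\bsw^{(i)}\rangle$ depends on all rows of $H$ except the $i$-th, and for different $i$ these minors share almost all of their randomness, so after the cumulant expansion of $\E\bigl|\sum_i\bar v_iZ_i\bigr|^{2p}$ the derivatives hit the resolvent entries inside the $a_k^{(i)}$'s and regenerate new $H$-dependence; the statement that ``any term in which a row index appears only once is negligible, and the surviving pairings are bounded by $(C_pN^{-1/2+\delta})^{2p}$'' is exactly the content of the graded, self-improving expansion that occupies most of the proof in \cite{BEKYY14}, and it cannot be dispatched by Proposition \ref{prop:local law on contour} plus $\sum_i\sigma_{ij}^2=1$ alone (naive bounding of the off-diagonal resolvent entries at scale $N^{-1/2}$ loses the needed cancellation once the number of summation indices grows with $p$). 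The other ingredients you list --- the extension of the entrywise law to $\Gamma^\epsilon$ using Theorem \ref{thm:matrix norm}, the Schur-complement and large-deviation step for $\bsv=\bse_i$, and the net argument for uniformity --- are fine. So either adopt the citation (as the paper does), or be prepared to reproduce the full moment bookkeeping of \cite{BEKYY14}; the intermediate level of detail in your crux paragraph does not yet constitute a proof.
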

\begin{proof}
	This follows from Theorem 2.15 of \cite{BEKYY14}. See Lemma 7.7 of \cite{CL19} for  details.
\end{proof}
From the isotropic local law, Lemma \ref{lem:iso local law}, we find that
\begin{equation}
	\langle \b v^{(i)}, S(z) \b v^{(m)} \rangle = \delta_{im}m_{sc}(z) + O_\prec(N^{-\frac{1}{2}}).
\end{equation}
Moreover, from the rigidity of the eigenvalues, for some constant $C$, we have
\begin{equation}
	 \langle \b v^{(m)}, R(\theta, z) \b v^{(m)} \rangle \leq \norm{R(\theta,z)}\leq C.
\end{equation}
We then have from \eqref{eq:resolvent_expansion} that
\begin{equation}
m_{sc} = \langle \b v^{(m)}, R(\theta, z) \b v^{(m)} \rangle + \theta\sqrt{\lambda} \langle \b v^{(m)}, R(\theta, z) \b v^{(m)} \rangle m_{sc} + O_\prec(N^{-1/2}).
\end{equation}
Therefore, we conclude that 
\begin{equation}
	 \langle \b v^{(m)}, R(\theta, z) \b v^{(m)} \rangle = \frac{m_{sc}(z)}{1 + \theta\sqrt{\lambda} m_{sc}(z)} + O_\prec(N^{-\frac{1}{2}}).
\end{equation}
Note that $|m_{sc}| \leq 1$ and $\lambda < 1$, hence $|1+\sqrt{\lambda} m_{sc}| > c > 0$ for some ($N$-independent) constant $c$.

Consider the boundary of the $\epsilon$-neighborhood of $z$, $\partial B_{\epsilon}(z) = \{ w \in \C : |w-z| = \epsilon \}$. If we choose $\epsilon$ as in the assumption of Lemma \ref{lem:iso local law}, $\partial B_{\epsilon}(z)$ does not intersect $[-2, 2]$. Applying Cauchy's integral formula, we get
\beq \begin{split} \label{eq:z_deriavtiave2}
\frac{\partial}{\partial z} \langle \b v^{(m)}, R(\theta, z) \b v^{(m)} \rangle & =\frac{1}{2\pi \ii} \oint_{\partial B_{\epsilon}(z)} \frac{\langle \b v^{(m)}, R(\theta, z) \b v^{(m)} \rangle}{(w-z)^2} \dd w \\
	& = \frac{m_{sc}'(z)}{(1 + \theta\sqrt{\lambda} m_{sc}(z))^2} + O_\prec(N^{-\frac{1}{2}}).
\end{split} \eeq
Plugging the estimate into the right-hand side of \eqref{eq:theta_derivative}, we get the claim \eqref{eq:fundamental}.

\section{Proof of Lemma \ref{lem:E e Tr G}}\label{sec:lem E e Tr G}
With Definitions in Appendix \ref{sec:prelim}, recall that our goal is to show that 
	\beq \begin{split} \label{eq:clt goal}
		&\E \qb{ e(\t)\cdot(\tr G(z)-\E\tr G(z)) \di z } \\ 
		&= -\phi(\t)\frac{\t\sqrt{N}}{\pi q}\xi^{(4)} m_{sc}(z)m_{sc}'(z)\oint_{\Gamma}f(z')m_{sc}(z')m_{sc}'(z') \di z'  + o(\frac{\sqrt{N}}{q})
	\end{split} \eeq
with overwhelming probability, where $e(\t)$ is defined by
\beq \label{eq:e(lambda)}
	e(\t) := \exp\hb{-\frac{\t q}{2\pi\sqrt{N}} \int_{\Gamma_2} f(z)(\tr G(z)-\E\tr G(z)) \di z    }.
\eeq
Then the characteristic function $\phi$ satisfies 
	$$
	\phi'(\t)=-\t \phi(\t)V(f) + o_\prec(1),
	$$
	where $V(f)$ is given by 
	\begin{equation}
		V(f) = 2\xi^{(4)}\tau_2(f)^2,
	\end{equation}
and which equals $2\tau_2(f)^2$ for $H$ given by Definition \ref{def:Csbm H}.

Throughout this section, we say that a random variable $Z$ is \emph{negligible} if $Z=o_\prec(\sqrt{N}q^{-1})$.
\begin{proof}
	To prove the proposition, we apply Lemma \ref{lemma:Stein} for $l=4$ and get
	\beq \begin{split}\label{eq:cumulant expansion 1}
		&z\E \qb{ e(\t)(\tr G(z) -\E \tr G(z))  } \\
		&= \E \qb{ e(\t) \sum_{i\neq j} (H_{ij}G_{ji} -\E [H_{ij}G_{ji}] ) } + \E \qb{ e(\t) \sum_{i} (H_{ii}G_{ii} -\E [H_{ii}G_{ii}] ) } \\
		& = I_1 + I_2 + I_3 + I_4 + R_5 + I_d,
	\end{split} \eeq
	where 
	\begin{align}
		I_d &= \E \qb{ e(\t) \sum_{i} (H_{ii}G_{ii} -\E [H_{ii}G_{ii}] ) }, \\
		I_1 &= \sum_{i\neq j} \kappa_{ij}^{(2)} \pB{\E\qb{\partial_{ij} e(\t) \cdot G_{ij}} + \E \qb{ \p{1-\E} \pb{\partial_{ij}G_{ij}  } \cdot e(\t)   } }, \\
		I_2 &= \sum_{i\neq j} \frac{\kappa_{ij}^{(3)}}{2!} \pB{\E\qb{\partial_{ij}^2 e(\t) \cdot G_{ij}} +2\E\qb{\partial_{ij} e(\t) \cdot \partial_{ij}G_{ij}}+ \E \qb{ \p{1-\E} \pb{\partial_{ij}^2G_{ij}  } \cdot e(\t)   } }, 
	\end{align}
	\begin{align}
		I_3 &= \sum_{i\neq j} \frac{\kappa_{ij}^{(4)}}{3!} \pB{\E\qb{\partial_{ij}^3 e(\t) \cdot G_{ij}} +3\E\qb{\partial_{ij}^2 e(\t) \cdot \partial_{ij}G_{ij}}+ 3\E\qb{\partial_{ij} e(\t) \cdot \partial_{ij}^2G_{ij}}+  \notag \\
			& \qquad \qquad \qquad \qquad \qquad \qquad \qquad \qquad \qquad \qquad \qquad  \E \qb{ \p{1-\E} \pb{\partial_{ij}^3G_{ij}  } \cdot e(\t)   } } ,\\
		I_4 &= \sum_{i\neq j} \frac{\kappa_{ij}^{(5)}}{4!} \pB{\E\qb{\partial_{ij}^4 e(\t) \cdot G_{ij}} +4\E\qb{\partial_{ij}^3 e(\t) \cdot \partial_{ij}G_{ij}}+ 6\E\qb{\partial_{ij}^2 e(\t) \cdot \partial_{ij}^2 G_{ij}}\notag \\
			& \qquad \qquad \qquad \qquad\qquad\qquad+	
			4\E\qb{\partial_{ij} e(\t) \cdot \partial_{ij}^3G_{ij}}+  \E \qb{ \p{1-\E} \pb{\partial_{ij}^4G_{ij}  } \cdot e(\t)   } } ,
	\end{align}
	and $R_5$ is the error term given by Lemma \ref{lemma:Stein}.

	\begin{lemma}\label{lem:main lemma}
	For sufficiently large $N$,	We have 
		\begin{align}
			I_d &= O(1) \label{eq:diag 0}\\
			I_1 &= \frac{m_{sc}'}{m_{sc}}(m_{sc}^2-1)\E \qb{e(\t) \cdot \p{1-\E} \tr G } \notag \\
			&\qquad  + m_{sc}^2 m_{sc}' \frac{\t\sqrt{N} }{\pi q }\xi^{(4)}\E\qb{e(\t) }\int_{\Gamma_2} f(z) m_{sc}(z) m_{sc}'(z)\di z   + o(\frac{\sqrt{N}}{q}),\\
			I_2 & = o(\frac{\sqrt{N}}{q}),\\
			I_3 & =  m_{sc}^2 \frac{\t\sqrt{N} }{\pi q }\xi^{(4)} \E\qb{e(\t)}\int_{\Gamma_2} f(z) m_{sc}(z) m_{sc}'(z)\di z  + o(\frac{\sqrt{N}}{q}),\\
			I_4 & = o(\frac{\sqrt{N}}{q}),\\
			R_5 & = o(\frac{\sqrt{N}}{q}).
		\end{align}
	with overwhelming probability.
	\end{lemma}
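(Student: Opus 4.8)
The plan is to bound the six quantities $I_d,I_1,I_2,I_3,I_4,R_5$ one by one, using throughout the local laws on $\Gamma_2$ in Proposition~\ref{prop:local law on contour}, the resolvent derivative identities $\partial_{ij}G_{ab}=-G_{ai}G_{jb}-G_{aj}G_{ib}$ for $i\neq j$ (so that $\partial_{ij}G_{ij}=-(G_{ii}G_{jj}+G_{ij}^2)$ and $\partial_{ij}\tr G=-2(G^2)_{ij}$), and the cumulant bounds $|\kappa_{ij}^{(k)}|=O(N^{-1}q^{-(k-2)})$. The identity that organises everything is the chain rule
\[
\partial_{ij}e(\t)=\frac{\t q}{\pi\sqrt N}\,e(\t)\int_{\Gamma_2}f(z)\,(G(z)^2)_{ij}\,\di z ,
\]
together with the size heuristic that for $i\neq j$ the entry $(G^2)_{ij}$ is $O_\prec(q^{-1})$ (it is $\partial_z G_{ij}$), whereas one further $\partial_{ij}$ produces the \emph{diagonal} combination $(G_{ii}G_{jj})'=O_\prec(1)$; thus each $\partial_{ij}$ landing on $e(\t)$ costs $O_\prec(q/\sqrt N)$ once differentiated a second time, while every bare off-diagonal resolvent entry is a gain of $q^{-1}$. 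A term built from $\kappa_{ij}^{(k)}$ and the summation $\sum_{i\neq j}$ thus has nominal size $Nq^{-(k-2)}$ times the product of these factors, and one reads off which combinations reach the target order $\sqrt N/q=N^{1/2-\phi}$. The term $I_d$ is immediate: the sum runs over only $N$ indices, $\sum_i\kappa_{ii}^{(2)}=\sum_i\sigma_{ii}^2=O(1)$, and on $\Gamma_2$ one has $|\partial_{ii}^r G_{ii}|=O_\prec(1)$, $|\partial_{ii}^r e(\t)|=o_\prec(1)$, so another application of Lemma~\ref{lemma:Stein} gives $I_d=O(1)$, which is \eqref{eq:diag 0}.

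The term $I_1$ produces both leading pieces. Writing $G_{ii}=m_{sc}+\delta_i$ with $\delta_i=O_\prec(q^{-1})$: in $\sum_{i\neq j}\kappa_{ij}^{(2)}\E[e(\t)(1-\E)\partial_{ij}G_{ij}]$ the $G_{ij}^2$ part is $O_\prec(q^{-2})$ and, after the summation and the identity $\tr G^2=Nm'$, contributes only $O(1)$; the part $G_{ii}G_{jj}=m_{sc}^2+m_{sc}(\delta_i+\delta_j)+\delta_i\delta_j$ splits into the term linear in the $\delta$'s, which via $\sum_{i\neq j}\kappa_{ij}^{(2)}G_{ii}=\tr G-\sum_i\sigma_{ii}^2G_{ii}$ and the averaged local law $\sum_i\kappa_{ij}^{(2)}G_{ii}=m_{sc}+O_\prec(q^{-2})$ yields a multiple of $\E[e(\t)(1-\E)\tr G]$, and the cross term $\sum_{i\neq j}\kappa_{ij}^{(2)}\E[e(\t)(1-\E)(\delta_i\delta_j)]$, which after a further cumulant expansion of one of the $\delta$'s contributes its own multiple of $\E[e(\t)(1-\E)\tr G]$; combining them and using $m_{sc}'=m_{sc}^2/(1-m_{sc}^2)$ gives the coefficient $\frac{m_{sc}'}{m_{sc}}(m_{sc}^2-1)$. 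The other half, $\sum_{i\neq j}\kappa_{ij}^{(2)}\E[\partial_{ij}e(\t)\cdot G_{ij}]$, carries the off-diagonal factor $G_{ij}$ of (nearly) vanishing mean; running one more Stein step in $H_{ij}$, the second- and third-cumulant contributions are $o(\sqrt N/q)$, and the first surviving contribution appears at fourth-cumulant order, where the community-block structure gives $\sum_i\kappa_{ij}^{(4)}=\xi^{(4)}/q^2$ and the extra resolvent $z$-derivative supplies a further factor $m_{sc}'$, producing exactly $m_{sc}^2m_{sc}'\frac{\t\sqrt N}{\pi q}\xi^{(4)}\E[e(\t)]\int_{\Gamma_2}f\,m_{sc}m_{sc}'\,\di z$.

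For $I_3$ the fourth cumulant is already present; the dominant contribution is $3\cdot\frac16\sum_{i\neq j}\kappa_{ij}^{(4)}\E[\partial_{ij}^2e(\t)\cdot\partial_{ij}G_{ij}]$, in which the order-one diagonal part of $\partial_{ij}^2e(\t)$ equals $-e(\t)\frac{\t q}{\pi\sqrt N}\int_{\Gamma_2}f(z')(G_{ii}G_{jj})'(z')\di z'$ and $\partial_{ij}G_{ij}\approx-m_{sc}(z)^2$; summing against $\kappa_{ij}^{(4)}$ and using $\sum_i\kappa_{ij}^{(4)}(G_{ii}G_{jj})'\approx\frac{2N\xi^{(4)}}{q^2}m_{sc}m_{sc}'$ gives $m_{sc}^2\frac{\t\sqrt N}{\pi q}\xi^{(4)}\E[e(\t)]\int_{\Gamma_2}f\,m_{sc}m_{sc}'\,\di z$, as claimed. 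The remaining three terms of $I_3$ each contain an extra bare off-diagonal entry ($G_{ij}$ or $(G^2)_{ij}$) or an extra $\partial_{ij}e(\t)$, hence are $O_\prec(\sqrt N q^{-3})=o(\sqrt N/q)$. In $I_2$ and $I_4$ the cumulants have odd order; any term whose diagonal-heavy part could reach $\sqrt N/q$ carries an odd number of off-diagonal resolvent entries $G_{ij}$, whose expectations are too small, so those terms are $o(\sqrt N/q)$, while the rest die by crude power counting ($\kappa^{(3)}$ brings an extra $q^{-1}$, $\kappa^{(5)}$ an extra $q^{-2}$). Finally, for the remainder, Lemma~\ref{lemma:Stein} bounds each summand by $C\,\E|H_{ij}|^6\sup_{|t|\le Q}|\partial_{ij}^5(e(\t)G_{ij})|$ plus a tail term; taking $Q=N^{-\epsilon'}$, using $\E|H_{ij}|^6=O(N^{-1}q^{-4})$ and that the fifth $H_{ij}$-derivative of $e(\t)G_{ij}$ is $O_\prec(1)$ on $\Gamma_2$, the sum over the $N^2$ pairs is $O_\prec(Nq^{-4})=O_\prec(N^{1-4\phi})=o(N^{1/2-\phi})$ precisely because $\phi>1/6$.

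The main obstacle is the family of ``$(1-\E)$ of a long sum'' contributions, the prototype being the cross term $\sum_{i\neq j}\kappa_{ij}^{(2)}\E[e(\t)(1-\E)(\delta_i\delta_j)]$ inside $I_1$: the crude per-summand bound only gives $O(Nq^{-2})$, far above $\sqrt N/q$, so one must instead exploit the self-averaging of the Green-function entries — the \emph{averaged} (not entrywise) local law together with one more cumulant step — to show these contributions close into the self-consistent equation for $\E[e(\t)(1-\E)\tr G]$ and reproduce the coefficient $\frac{m_{sc}'}{m_{sc}}(m_{sc}^2-1)$. Keeping every genuine error below $\sqrt N/q$ all the way down to $\phi>1/6$ — which is tight for $R_5$ and for the fourth/fifth-cumulant $(1-\E)$ terms — is the second delicate point, and it dictates the range of $\phi$ in the statement.
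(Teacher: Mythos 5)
Your decomposition, the treatment of $I_d$, $R_5$, $I_4$, and the identification of the dominant $\partial_{ij}^2 e(\t)\cdot\partial_{ij}G_{ij}$ term in $I_3$ all match the paper, and your final formulas agree with the statement. But there is a genuine gap in where you locate the $\xi^{(4)}$-term of $I_1$. You derive it from $\sum_{i\neq j}\kappa^{(2)}_{ij}\E\qb{\partial_{ij}e(\t)\,G_{ij}}$ at fourth-cumulant order; this is impossible, because that term is already $O_\prec(q/\sqrt N)=o(1)$ as it stands: writing $\partial_{ij}e(\t)=\frac{\t q}{\pi\sqrt N}e(\t)\int_{\Gamma_2}f(z')(G^2)_{ij}(z')\di z'$ and using $\kappa^{(2)}_{ij}=O(N^{-1})$ together with $\sum_{i,j}|G_{ij}(z)|^2=\Tr G(z)G(z)^*=O(N)$ and $\sum_{i,j}|(G^2)_{ij}(z')|^2=O(N)$ on $\Gamma_2$ (Cauchy--Schwarz), the double sum is $O_\prec(1)$ and the prefactor $q/\sqrt N$ kills it. A cumulant expansion is an identity, so no ``surviving fourth-cumulant contribution'' of size $\sqrt N/q\to\infty$ can emerge from a quantity that is $o(1)$; the paper simply discards this term. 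The actual source, which your outline misses, sits inside the piece you do keep: when one multiplies $\sum\kappa^{(2)}_{ij}\E\qb{e(\t)(1-\E)(G_{ii}G_{jj})}$ by $z$ and re-expands by Lemma \ref{lemma:Stein}, the fourth-cumulant term $\sum\kappa^{(2)}_{ij}\tfrac{\kappa^{(4)}_{ik}}{2}\E\qb{\partial^2_{ik}e(\t)\,G_{ii}G_{jj}G_{kk}}$ survives (two derivatives landing on $e(\t)$, whose $O(1)$ diagonal part is exactly the one you used for $I_3$), and the factor $m_{sc}'$ in the coefficient only appears after dividing by $z+2m_{sc}=-m_{sc}/m_{sc}'$. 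Your ``further cumulant expansion of one of the $\delta$'s'' is claimed to contribute only another multiple of $\E\qb{e(\t)(1-\E)\tr G}$, so this mechanism is absent and your route would not produce the stated $I_1$.

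A second, related gap is the dismissal of the pure-fluctuation terms. In $I_3$, the contribution $\sum\kappa^{(4)}_{ij}\E\qb{e(\t)(1-\E)(G_{ii}^2G_{jj}^2)}$ coming from $(1-\E)\partial^3_{ij}G_{ij}$ contains no off-diagonal resolvent entries at all, so your reason (``an extra bare off-diagonal entry'') does not apply, and the naive bound is $N/q^3$, which is not $o(\sqrt N/q)$ for $\phi\le 1/4$; the paper only gets it down to $O_\prec(N/q^4)$ (negligible precisely because $\phi>1/6$) by the same multiply-by-$z$, self-consistent expansion. Likewise $\sum\kappa^{(3)}_{ij}\E\qb{G_{ii}G_{jj}\partial_{ij}e(\t)}$ and $\sum\kappa^{(3)}_{ij}\E\qb{e(\t)(1-\E)(G_{ii}G_{jj}G_{ij})}$ in $I_2$ are borderline of size $\sqrt N/q$ by power counting, and ``expectations of odd products are too small'' is exactly what needs the self-consistent argument to prove. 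You flag the self-averaging issue in your closing paragraph, but only for the $I_1$ cross term; to close the proof you need to run the $z$-multiplication/Stein recursion for each of these borderline terms, as the paper does, and that is also where the restriction $\phi>1/6$ genuinely enters.
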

	
	Combining Lemma \ref{lem:main lemma} and \eqref{eq:cumulant expansion 1} together, with overwhelming probability, we obtain
	\begin{align}
		&z\E \qb{ e(\t)(\tr G(z) -\E \tr G(z))  } \notag \\
		&	= 	\frac{m_{sc}'}{m_{sc}}(m_{sc}^2-1)\E \qb{e(\t) \cdot \p{1-\E} \tr G } \notag\\
		&\qquad + m_{sc}^2(m_{sc}'+1) \frac{\t\sqrt{N} }{\pi q }\xi^{(4)} \E\qb{e(\t)}\int_{\Gamma_2} f(z) m_{sc}(z) m_{sc}'(z)\di z  + o(\frac{\sqrt{N}}{q})  \notag \\
		&= 	-m_{sc}\E \qb{e(\t) \cdot \p{1-\E} \tr G } +  m_{sc}' \frac{\t\sqrt{N} }{\pi q }\xi^{(4)} \E\qb{e(\t)}\int_{\Gamma_2} f(z) m_{sc}(z) m_{sc}'(z)\di z  +o(\frac{\sqrt{N}}{q}).\notag
	\end{align}
	
	Rearrange the equation above and divide both side by $(z+m_{sc})$, we obtain \eqref{eq:clt goal}.
\end{proof}

\subsection{Proof of Lemma \ref{lem:main lemma}}
For the estimates in this section, we use the following power counting argument frequently.
\begin{lemma}{(Lemma 6.5 of \cite{LS18})}\label{lemma:powercounting} For any ${i}$ and ${k}$,
	\begin{equation}
		\frac{1}{N}\sum_{{j}=1}^{N}\absa{G_{ij}(z)G_{jk}(z)} \prec \frac{\im m(z)}{N\eta}, \quad \frac{1}{N}\sum_{{j}=1}^{N}\absa{G_{ij}(z)} \prec \pa{\frac{\im m(z)}{N\eta}}^{1/2}, \quad (z\in\C^+).
	\end{equation}
	Moreover, For $z \in \Gamma$, we have 
	\begin{equation}
		\frac{1}{N}\sum_{{j}=1}^{N}\absa{G_{ij}(z)G_{jk}(z)} \prec \frac{1}{N}, \qquad \frac{1}{N}\sum_{{j}=1}^{N}\absa{G_{ij}(z)} \prec {\frac{1}{N^{1/2}}}, \qquad (z\in\Gamma).
	\end{equation}
\end{lemma}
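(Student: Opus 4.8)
The plan is to derive both estimates from a single exact identity — the Ward identity — together with the Cauchy--Schwarz inequality and the entrywise local law of Theorem~\ref{thm:locallaw}; this is essentially the power-counting argument of \cite{LS18}. Writing $z = E + \ii\eta$ and using $G(z)^{*} = G(\bar z)$ for the real symmetric matrix $H$, the resolvent identity $G(z) - G(\bar z) = (z - \bar z)\,G(z)G(\bar z) = 2\ii\eta\,G(z)G(z)^{*}$ gives, on reading off the $(i,i)$ entry,
\begin{equation*}
	\sum_{j=1}^{N}\absa{G_{ij}(z)}^{2} = \bigl(G(z)G(z)^{*}\bigr)_{ii} = \frac{\im G_{ii}(z)}{\eta},
\end{equation*}
and the analogous identity for $\sum_{j}\abs{G_{jk}}^{2}$ follows from the symmetry $G_{jk}=G_{kj}$. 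Cauchy--Schwarz in $j$ then yields, for all $i,k$,
\begin{equation*}
	\frac1N\sum_{j=1}^{N}\absa{G_{ij}(z)G_{jk}(z)} \le \frac{1}{N\eta}\bigl(\im G_{ii}(z)\,\im G_{kk}(z)\bigr)^{1/2}, \qquad \frac1N\sum_{j=1}^{N}\absa{G_{ij}(z)} \le \frac{1}{\sqrt N}\Bigl(\frac{\im G_{ii}(z)}{\eta}\Bigr)^{1/2}.
\end{equation*}
So everything reduces to controlling $\im G_{ii}(z)$.

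For $z\in\Gamma$ — the version actually used in Section~\ref{sec:lem E e Tr G} — this step is immediate and deterministic. The rectangle $\Gamma$ has vertices $(\pm a\pm\ii v_0)$ with $a\in(2,3)$, $v_0\in(0,1)$, so $\operatorname{dist}(z,[-2,2])\ge\min(a-2,v_0)=:c_0>0$ uniformly on $\Gamma$, and by the norm bound $\norm{H}=2+o(1)$ of Theorem~\ref{thm:matrix norm} the spectrum $\sigma(H)$ stays at distance $\ge c_0/2$ from $\Gamma$ with overwhelming probability. Diagonalizing $H=\sum_k\lambda_k\mathbf{u}_k\mathbf{u}_k^T$,
\begin{equation*}
	\frac{\im G_{ii}(z)}{\eta} = \sum_{k=1}^{N}\frac{\absa{(\mathbf{u}_k)_i}^{2}}{(\lambda_k-E)^{2}+\eta^{2}} \le \frac{1}{\operatorname{dist}(z,\sigma(H))^{2}} \le \frac{4}{c_0^{2}}
\end{equation*}
with overwhelming probability. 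Plugging this into the two displays above gives $\frac1N\sum_j\abs{G_{ij}G_{jk}}\prec\frac1N$ and $\frac1N\sum_j\abs{G_{ij}}\prec N^{-1/2}$ on $\Gamma$.

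For the first pair of bounds, where $z$ lies in the spectral domain $\mathcal{D}_\ell$, I would instead invoke the entrywise local law of Theorem~\ref{thm:locallaw}, $\absa{G_{ii}-\wt m}\prec q^{-1}+(N\eta)^{-1/2}$, together with $\wt m = m_{sc}+O_\prec(q^{-2}+(N\eta)^{-1})$ and $\im\wt m\sim\im m_{sc}\sim\im m$, to replace $\im G_{ii}(z)$ by $\im m(z)$; substituting into the two displays then produces the claimed $\tfrac{\im m}{N\eta}$ and $(\tfrac{\im m}{N\eta})^{1/2}$. The one step that needs genuine care is precisely this replacement near the spectral edge: there $\im m$ is only of order $\sqrt{\kappa+\eta}$ (and decays further just outside the bulk), so one must verify that the additive errors $q^{-1}+(N\eta)^{-1/2}$ are dominated by $\im m$ on the sub-domain where the estimate is applied, which amounts to keeping $\eta$ above a suitable threshold. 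In the bulk, where $\im m\sim1$, the replacement is trivial, and on $\Gamma$ the issue does not arise at all since $\im G_{ii}/\eta$ is controlled there without any probabilistic input beyond localizing $\sigma(H)$.
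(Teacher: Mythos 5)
Your mechanism --- the Ward identity $\sum_j|G_{ij}|^2=\im G_{ii}/\eta$ plus Cauchy--Schwarz, followed by control of $\im G_{ii}$ --- is exactly the standard power-counting argument behind Lemma 6.5 of \cite{LS18}, which the paper cites in place of a proof, and your treatment of the contour case is complete: on $\Gamma$ the spectral decomposition gives $\sum_j|G_{ij}(z)|^2=\sum_k|\b u_k(i)|^2|\lambda_k-z|^{-2}\le \operatorname{dist}(z,\sigma(H))^{-2}$, and Theorem~\ref{thm:matrix norm} localizes $\sigma(H)$ away from $\Gamma$ with overwhelming probability. (State this step via the spectral sum rather than via $\im G_{ii}/\eta$: the vertical sides of $\Gamma$ contain points with $\eta\le 0$, where the Ward identity as written is vacuous, while the spectral sum bound is valid for every $z\notin\sigma(H)$.)

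The genuine gap is in the first pair of bounds, at precisely the step you flagged: deducing $\im G_{ii}\prec\im m$ from the entrywise law $|G_{ii}-\wt m|\prec q^{-1}+(N\eta)^{-1/2}$ fails wherever $\im m$ is smaller than that error, i.e.\ near and beyond the spectral edge where $\im m_{sc}\sim\eta/\sqrt{\kappa+\eta}$ (Lemma~\ref{lem:basic properties of m}); your argument then only yields $\bigl(\im m+q^{-1}+(N\eta)^{-1/2}\bigr)/(N\eta)$ rather than the claimed $\im m/(N\eta)$, and restricting $\eta$ above a threshold does not prove the lemma as stated for all $z\in\C^+$. The standard way to close this is complete eigenvector delocalization, $\max_{k,i}|\b u_k(i)|^2\prec N^{-1}$, itself a consequence of the entrywise local law of Theorem~\ref{thm:locallaw}: then
\begin{equation*}
	\im G_{ii}(z)=\eta\sum_{k}\frac{|\b u_k(i)|^2}{|\lambda_k-z|^2}\prec\frac{\eta}{N}\sum_{k}\frac{1}{|\lambda_k-z|^2}=\im m(z)
\end{equation*}
pointwise for every $z\in\C^+$, with no edge issue and no restriction on $\eta$. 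Inserting this into your two Cauchy--Schwarz displays gives both claimed bounds uniformly on $\C^+$, and it also recovers the $\Gamma$ case at once, since there $\im m\le\eta\,\operatorname{dist}(z,\sigma(H))^{-2}$ so that $\im m/(N\eta)\prec N^{-1}$. With that one substitution (delocalization in place of the entrywise replacement of $\im G_{ii}$ by $\im m$), your proof is complete and coincides with the cited one.
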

\subsubsection{estimate on $e(\t)$, $R_5$ and $I_d$}
\begin{lemma}\label{lemma:partial e} Let $e(\t)$ defined in \eqref{eq:e(lambda)}. Then,
	\begin{align}
		\partial_{ij}e(\t) &= O_\prec \pB{\frac{\log N}{\sqrt{N}}},\\
		\partial_{ij}^2 e(\t) &=  -\frac{\t q}{\pi \sqrt{N}}e(\t) \int_{\Gamma_2} f(z) (G_{ii} (G')_{jj} + G_{jj} (G')_{ii} )\di z + O_\prec \pB{\frac{\log N}{\sqrt{N}}}, \\
		\partial_{ij}^k e(\t) &=  O_\prec(\frac{q}{\sqrt{N}}), \qquad (k \geq 3).
	\end{align}
\end{lemma}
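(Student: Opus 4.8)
The plan is to write $e(\t)=\exp(A)$ with
$A\equiv A(\t):=-\tfrac{\t q}{2\pi\sqrt N}\int_{\Gamma_2}f(z)\bigl(\tr G(z)-\E\tr G(z)\bigr)\di z$
and differentiate by the chain rule / Fa\`a di Bruno: $\partial_{ij}e(\t)=(\partial_{ij}A)\,e(\t)$, $\partial_{ij}^2 e(\t)=\bigl((\partial_{ij}A)^2+\partial_{ij}^2A\bigr)e(\t)$, and for general $k$, $\partial_{ij}^k e(\t)=B_k(\partial_{ij}A,\dots,\partial_{ij}^k A)\,e(\t)$ with $B_k$ the $k$-th complete Bell polynomial. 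Since the exponent over the full closed contour $\Gamma$ is purely imaginary (use $G(\bar z)=\overline{G(z)}$, $f(\bar z)=\overline{f(z)}$, and the symmetry of $\Gamma$ about the real axis), while the piece over $\Gamma_1$ is negligible as already noted in Section~\ref{sec:proof of prop CLT sparse} ($|\tr G-\E\tr G|=O(N)$ there and $|\Gamma_1|=O(N^{-5})$), we have $|e(\t)|=O_\prec(1)$. It then remains to estimate $\partial_{ij}^m A$ for $m\ge1$; throughout we take $i\neq j$, which is the only case used in Lemma~\ref{lem:main lemma}.

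As $\E\tr G(z)$ is deterministic, $\partial_{ij}^m A=-\tfrac{\t q}{2\pi\sqrt N}\int_{\Gamma_2}f(z)\,\partial_{ij}^m\!\bigl(\tr G(z)\bigr)\di z$. Iterating \eqref{eq:matrixderivative} (and using $(G^2)_{ab}=(G')_{ab}=\partial_z G_{ab}$) gives, for $i\neq j$, $\partial_{ij}\tr G=-2(G')_{ij}$, $\partial_{ij}^2\tr G=2\bigl(G_{ii}(G')_{jj}+G_{jj}(G')_{ii}+2G_{ij}(G')_{ij}\bigr)$, and in general $\partial_{ij}^m\tr G$ is a finite ($O_m(1)$-term) sum of products of at least two entries of $G,G',G'',\dots$. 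To control these entries I use that, although $\Gamma_2$ reaches $|\eta|$ as small as $N^{-5}$, it does so only on the vertical sides $\{|E|=a\}$ with $a\in(2,3)$, so every $z\in\Gamma$ has $\operatorname{dist}(z,[-2,2])$ and $|E-L|$ bounded below by a positive constant for large $N$; hence Theorem~\ref{thm:local law outside} (equivalently Proposition~\ref{prop:local law on contour}) applies on all of $\Gamma$, giving $\max_{a,b}|G_{ab}(z)-\delta_{ab}m_{sc}(z)|\prec 1/q$, and by Cauchy's estimate on a disk of fixed radius about $z$ the same bound holds for every $z$-derivative: $(G^{(r)})_{ab}=O_\prec(1/q)$ for $a\neq b$ and $O_\prec(1)$ for $a=b$.

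Plugging these in: for $m=1$, $\partial_{ij}A=\tfrac{\t q}{\pi\sqrt N}\int_{\Gamma_2}f(z)(G')_{ij}\di z=O_\prec\!\bigl(\tfrac q{\sqrt N}\cdot\tfrac1q\bigr)=O_\prec(N^{-1/2})$, which in particular is $O_\prec(\tfrac{\log N}{\sqrt N})$. For $m=2$, only the cross-term $G_{ij}(G')_{ij}$ carries two off-diagonal factors and contributes $O_\prec\!\bigl(\tfrac q{\sqrt N}\cdot\tfrac1{q^2}\bigr)=O_\prec\!\bigl(\tfrac1{q\sqrt N}\bigr)$; combining this with $(\partial_{ij}A)^2=O_\prec(N^{-1})$ and $|e(\t)|=O_\prec(1)$ yields the asserted formula for $\partial_{ij}^2e(\t)$ with error $O_\prec\!\bigl(\tfrac1{q\sqrt N}+N^{-1}\bigr)\subseteq O_\prec\!\bigl(\tfrac{\log N}{\sqrt N}\bigr)$. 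For $m\ge3$ I do not track cancellations: crudely $\partial_{ij}^m\tr G=O_\prec(1)$, hence $\partial_{ij}^m A=O_\prec(q/\sqrt N)$ for every $m\ge1$; since $q/\sqrt N<1$, every monomial of $B_k$ for $k\ge3$ contains at least one such factor and the remaining factors are $\le1$ in absolute value, so $\partial_{ij}^k e(\t)=O_\prec(q/\sqrt N)$.

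The only genuinely delicate point is the uniformity of the entrywise and $z$-derivative Green-function bounds down to $|\eta|\sim N^{-5}$; this rests entirely on the geometric fact that $\Gamma$ stays a constant distance from the spectrum and from the edge $L$, so that the outside-the-spectrum local law (Theorem~\ref{thm:local law outside}) is available there with the clean error $O_\prec(1/q)$, and Cauchy's integral formula upgrades it to all $z$-derivatives with no loss. The combinatorics of $\partial_{ij}^m\tr G$ is routine once the $m=1,2$ formulas are written out and one observes that each further $\partial_{ij}$ only produces sums of products of Green-function entries.
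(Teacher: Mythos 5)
Your proposal is correct, and its skeleton is the same as the paper's: differentiate $e(\t)$ by the chain rule, compute $\partial_{ij}^m \tr G$ (for $i\neq j$) from \eqref{eq:matrixderivative} as sums of products of entries of $G$ and $G'=G^2$, and control those entries on the contour via the local law together with Cauchy's integral formula. The differences are in two estimates, both to your credit. First, you justify $|e(\t)|=O_\prec(1)$ explicitly, by observing that the exponent over the full contour $\Gamma$ equals $\ii\t\frac{q}{\sqrt N}\,(L_H(f)-\E L_H(f))$, hence is purely imaginary, with the $\Gamma_1$ piece negligible; the paper uses this boundedness implicitly (it is genuinely needed, since the naive local-law bound on the exponent is only $O_\prec(\sqrt N/q)$). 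Second, where the paper bounds $(G')_{ij}$ by a Cauchy estimate on a disk of radius $\sim\im z$, yielding $|(G')_{ij}-\delta_{ij}m_{sc}'|\prec (q\,\im z)^{-1}$ and hence the $\log N$ after integrating $1/\eta$ along the vertical sides of $\Gamma_2$, you use fixed-radius disks, exploiting that $\Gamma$ stays at a constant distance from $[-2,2]$, to get $(G')_{ij}=O_\prec(1/q)$ uniformly; this gives the sharper bound $\partial_{ij}e(\t)=O_\prec(N^{-1/2})$, which of course implies the stated $O_\prec(\log N/\sqrt N)$. Two minor points of precision: Theorem \ref{thm:local law outside} does not literally apply on the horizontal sides of $\Gamma$ where $E$ passes through the window $|E-L|<N^{-2/3+\tau}$ (there one should invoke Theorem \ref{thm:locallaw}, with $\eta=v_0$ of order one, or simply Proposition \ref{prop:local law on contour}); and the bounds are needed not only on $\Gamma_2$ but on the fixed-radius circles around it, including points with $|\im w|$ below $N^{-1+\ell}$ or negative, which requires the standard extension (no eigenvalues near the vertical sides w.o.p., so $G$ is $O(1)$-Lipschitz there, plus $G(\bar w)=\overline{G(w)}$) — the same extension the paper already uses tacitly in stating Proposition \ref{prop:local law on contour} down to $\eta\sim N^{-5}$. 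Neither point affects the validity of your argument.
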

\begin{proof}
	Note that $G_{ij}(z)$ ism  analytic in $z \in \C \setminus \R$ and $G^2_{ij} = \frac{d}{dz}G_{ij}(z)$. Using the Cauchy integral formula and the local law, we get
	\begin{equation}\label{eq:estimate G^2}
		\absa{\frac{d}{dz}G_{ij}(z) - \delta_{ij}m'_{sc}(z) } \prec \frac{1}{q \im z}.
	\end{equation}
	Then with \eqref{eq:estimate G^2}, we obtain
	\begin{align}\label{eq:estimate d e}
		\partial_{ij}e(\t)&=-\frac{\t q}{2\pi \sqrt{N}}e(\t)\int_{\Gamma_2} f(z)\partial_{ij}\sum_{k}G_{kk}(z) \di z   \notag \\
		&=\frac{\t q}{\pi \sqrt{N}}e(\t)\int_{\Gamma_2} f(z)(G^2)_{ij} \di z  \notag \\
		&= O_\prec \pB{\frac{\log N}{\sqrt{N}}}.
	\end{align}
	Taking derivative of \eqref{eq:estimate d e} again, we get
	\beq\begin{split}\label{eq:estimate d^2 e}
		\partial_{ij}^2 e(\t)&= \pa{\frac{\t q}{\pi \sqrt{N}}\int_{\Gamma_2} f(z)(G^2)_{ij} \di z}^2 e(\t) + \frac{\t q}{\pi \sqrt{N}}e(\t)\int_{\Gamma_2} f(z) \sum_{k}\partial_{ij}(G_{ik}G_{kj}) \di z \\
		&= -\frac{\t q}{\pi \sqrt{N}}e(\t) \int_{\Gamma_2} f(z) (G_{ii} (G')_{jj} + G_{jj} (G')_{ii} )\di z + O_\prec \pB{\frac{\log N}{\sqrt{N}}}.
	\end{split} \eeq
	In general, repeatedly taking derivative with local law, we can show that for $k \geq 3$,
	\begin{align}\label{eq:estimate d^k e}
		\partial_{ij}^k e(\t) =  O_\prec(\frac{q}{\sqrt{N}}), \qquad (k \geq 3).
	\end{align}
\end{proof}
With above bounds in Lemma \ref{lemma:partial e}, the error term $R_5$ can be estimated as
\beq \begin{split}
	\abs{R_5} &\leq C N^2 \E |H_{ij}|^6 \pB{  \norm{\partial_{ij}^5 (e(\t)G_{ij})}_\infty +\norm{e(\t) \partial_{ij}^5 G_{ij}}_\infty   } \\
	&=O_\prec\pB{ N^2 \frac{1}{Nq^4}} =o_\prec\pB{ \frac{\sqrt{N}}{q} },
\end{split} \eeq
which is negligible.
Similar as Lemma \ref{lemma:partial e}, for all $k$, we can show that
\beq\label{eq:partial e ii}
\partial_{ii}^{k} e(\t) = O_\prec(1).
\eeq
Using Lemma \ref{lemma:Stein} with $\ell=1$, we can expand $I_d$ as follows
\beq \begin{split}
	&\E \qb{ e(\t) \sum_{i} (H_{ii}G_{ii} -\E [H_{ii}G_{ii}] ) } \\
	&= \sum_{i} \kappa_{ii}^{(2)} \pB{\E\qb{\partial_{ii} e(\t) \cdot G_{ii}} + \E \qb{ \p{1-\E} \pb{\partial_{ii}G_{ii}  } \cdot e(\t)   } } + O_\prec\pb{\frac{1}{q}}.\label{eq:cumulant diagonal expansion}
\end{split} \eeq
With \eqref{eq:partial e ii} and local law, we can easily conclude that \eqref{eq:cumulant diagonal expansion} is $O(1)$ with overwhelming probability. 
\begin{remark}
	In the rest of the section, for simplicity of notation, we sometimes omit the inequality sign for indices below the summation.
\end{remark}

\subsubsection{estimate on $I_2$}\label{sec:I_2}
In this section, we will show that $I_2$ is negligible, which is defined by
$$I_2 = \sum_{i\neq j} \frac{\kappa_{ij}^{(3)}}{2!} \pB{\E\qb{\partial_{ij}^2 e(\t) \cdot G_{ij}} +2\E\qb{\partial_{ij} e(\t) \cdot \partial_{ij}G_{ij}}+ \E \qb{ \p{1-\E} \pb{\partial_{ij}^2G_{ij}  } \cdot e(\t)   } }.$$
Using Lemma \ref{lemma:powercounting} and local law, one can show that the terms that cannot be clearly ignored are 
\begin{equation}
	I_{2,1}:=	\sum_{i\neq j}\kappa_{ij}^{(3)}\E[G_{ii}G_{jj}\partial_{ij}e(\t)]
\end{equation}
and
\begin{equation}
	I_{2,2}:=\sum_{i\neq j}\kappa_{ij}^{(3)}\E \qb{e(\t) \cdot\p{1-\E} \pb{G_{ii}G_{jj}G_{ij}  }     }.
\end{equation}

We first estimate $I_{2,1}$,
\begin{align}\label{eq:I_{2.1}}
	I_{2,1} &=\sum_{i\neq j}\kappa_{ij}^{(3)}\E[G_{ii}G_{jj}\partial_{ij}e(\t)]  \notag \\
	&=m_{sc}^2 \frac{\t}{\pi}\frac{q}{\sqrt{N}}\sum_{i,j,k} \kappa_{ij}^{(3)}\E\qb{ e(\t)\int_\Gamma f(z)G_{ik}(z)G_{kj}(z) \di z   } + o_\prec\pB{\frac{\sqrt{N}}{q}} \notag \\
	&= m_{sc}^2 \frac{\t}{\pi}\frac{q}{\sqrt{N}} \int_\Gamma f(z) \sum_{i,j,k} \kappa_{ij}^{(3)} \E \qb{ e(\t) G_{ik}G_{kj}}\di z +o_\prec\pB{\frac{\sqrt{N}}{q}}.
\end{align}

To estimate  $\sum_{i,j,k} \kappa_{ij}^{(3)}\E \qb{ e(\t) G_{ik}G_{kj}}$ with the error of $o(N^{-1}q^{-2})$, we again multiply $z$ and expand it using Lemma \ref{lemma:Stein} until $l=10$.
\begin{align}\label{eq:I_2,1}
	z \sum_{i,j,k} \kappa_{ij}^{(3)}\E \qb{ e(\t) G_{ik}&G_{kj}} =  \sum_{i,j,k} \kappa_{ij}^{(3)}\E \qb{ e(\t) \sum_{l} H_{il} G_{lk}G_{kj}} \notag \\
	=& \sum_{i,j,k,l} \kappa_{ij}^{(3)} \kappa_{il}^{(2)} \E \qb{ \partial_{il} ( e(\t)   G_{lk}G_{kj})} +\sum_{i,j,k,l} \kappa_{ij}^{(3)} \frac{\kappa_{il}^{(3)}}{2} \E \qb{ \partial_{il}^2 (e(\t)   G_{lk}G_{kj})} \notag \\
	&+ \sum_{i,j,k,l} \kappa_{ij}^{(3)} \frac{\kappa_{il}^{(4)}}{6} \E \qb{ \partial_{il}^3 (e(\t)   G_{lk}G_{kj})} + \cdots + R_{10}.
\end{align}
For each $s\geq 3$, $\kappa_{il}^{(s+1)} \leq \frac{C}{Nq^2}$ and $\partial_{il}^s (e(\t)   G_{lk}G_{kj})$ contains at least two off-diagonal entries. Hence by Lemma \ref{lemma:powercounting}, that terms appear in \eqref{eq:I_{2.1}} are negligible. Moreover, since $q \gg N^{1/6}$, $R_{10}\leq CN^4 \frac{1}{Nq} \frac{1}{Nq^9} = C \frac{N^2}{q^{10}} = o_\prec(Nq^{-2})$ and thus negligible in \eqref{eq:I_{2.1}}.

Using Lemma \ref{lemma:powercounting} and  Lemma \ref{lemma:partial e}  with local law, we obtain the main order term of the first term of \eqref{eq:I_2,1},
\begin{align}
	\sum_{i,j,k,l} \kappa_{ij}^{(3)} \kappa_{il}^{(2)} \E \qb{ \partial_{il} ( e(\t)   G_{lk}G_{kj})} &= - \sum_{i,j,k,l} \kappa_{ij}^{(3)}\kappa_{il}^{(2)} \E\qb{e(\t) G_{ik}G_{kj}G_{ll}  } + O_\prec(\frac{\sqrt{N}}{q}) \notag \\
	&=-m_{sc} \sum_{i,j,k} \kappa_{ij}^{(3)} \E\qb{e(\t) G_{ik}G_{kj}} + O_\prec(\frac{N}{q^3}+\frac{\sqrt{N}}{q}
	).
\end{align}
Similarly, we get the estimate for the second term of \eqref{eq:I_2,1} as
\begin{align}
	\sum_{i,j,k,l} \kappa_{ij}^{(3)} \frac{\kappa_{il}^{(3)}}{2} \E \qb{ \partial_{il}^2 (e(\t)   G_{lk}G_{kj})}&= \sum_{i,j,k,l} \kappa_{ij}^{(3)} \kappa_{il}^{(3)}\E \qb{ e(\t)G_{ii}G_{ll}G_{lk}G_{kj} } + O_\prec(\frac{N}{q^3}) \notag \\  	
	&= m_{sc}^2\sum_{i,j,l} \kappa_{ij}^{(3)} \kappa_{il}^{(3)}\E \qb{ e(\t)\sum_{k}G_{lk}G_{kj} } + O_\prec(\frac{N}{q^3}) \notag \\ 
	&= m_{sc}^2\sum_{i,j,l} \kappa_{ij}^{(3)} \kappa_{il}^{(3)}\E \qb{ e(\t)(G^2)_{lj} } + O_\prec(\frac{N}{q^3}) \notag \\
	&= O_\prec(\frac{N}{q^3}).
\end{align}
To sum up, 
\begin{align}
	(z+m_{sc}) \sum_{i,j,k} \kappa_{ij}^{(3)}\E \qb{ e(\t) G_{ik}G_{kj}} = O_\prec(\frac{N}{q^3}+\frac{\sqrt{N}}{q}).
\end{align}
Since $|z+m_{sc}|>c$ for some constant $c$, we can divide both side by $(z+m_{sc})$ and conclude that 
\begin{align}
	I_{2,1} &= m_{sc}^2 \frac{\t}{\pi}\frac{q}{\sqrt{N}} \int_\Gamma f(z) \sum_{i,j,k} \kappa_{ij}^{(3)} \E \qb{ e(\t) G_{ik}G_{kj}}\di z +o\pB{\frac{\sqrt{N}}{q}} \notag \\
	&= O_\prec(\frac{\sqrt{N}}{q^2}+1 ) + o_\prec\pB{\frac{\sqrt{N}}{q}},
\end{align}
and thus negligible.

Now we move on to $I_{2,2}$. Multiplying $z$, we get
\begin{align}\label{eq:I_2,2 a}
	zI_{2,2}&= z\sum_{i, j}\kappa_{ij}^{(3)}\E \qb{e(\t) \cdot\p{1-\E} \pb{G_{ii}G_{jj}G_{ij}  }     } \notag \\
	&=  m_{sc}^2	\sum_{i, j}\kappa_{ij}^{(3)}\E \qb{e(\t) \cdot\p{1-\E} \pb{zG_{ij}  }    } + o_\prec(\frac{\sqrt{N}}{q}) . 
\end{align}
Similar as above, it can be easily checked that the only non negligible term of $zI_{2,2}$ is
\begin{equation}
	-m_{sc}^2	\sum_{i, j,k}\kappa_{ij}^{(3)}\kappa_{ik}^{(2)}\E \qb{e(\t) \cdot\p{1-\E} \pb{G_{kk}G_{ij}  }    },
\end{equation}
which can be estimated by 
\begin{equation}\label{eq:I_2,2 b}
	-m_{sc}^2	\sum_{i, j,k}\kappa_{ij}^{(3)}\kappa_{ik}^{(2)}\E \qb{e(\t) \cdot\p{1-\E} \pb{G_{kk}G_{ij}  }    } =  -m_{sc}^3	\sum_{i, j}\kappa_{ij}^{(3)}\E \qb{e(\t) \cdot\p{1-\E} \pb{G_{ij}  }    } +  o_\prec(\frac{\sqrt{N}}{q}).
\end{equation}
Combining \eqref{eq:I_2,2 a} and \eqref{eq:I_2,2 b}, we conclude 
\begin{align}
	(z+m_{sc})I_{2,2} = o(\frac{\sqrt{N}}{q}),
\end{align}
thus $I_{2,2}$ is negligible and so is $I_2$.

\subsubsection{estimate on $I_3$}\label{sec:I_3}
Recall that $I_3$ is defined by
\begin{align}
		I_3 &= \sum_{i\neq j} \frac{\kappa_{ij}^{(4)}}{3!} \pB{\E\qb{\partial_{ij}^3 e(\t) \cdot G_{ij}} +3\E\qb{\partial_{ij}^2 e(\t) \cdot \partial_{ij}G_{ij}}+ 3\E\qb{\partial_{ij} e(\t) \cdot \partial_{ij}^2G_{ij}}  \notag \\
		& \qquad \qquad \qquad \qquad \qquad \qquad \qquad \qquad  \qquad \qquad  +\E \qb{ \p{1-\E} \pb{\partial_{ij}^3G_{ij}  } \cdot e(\t)   } }.
\end{align}
Using Lemma \ref{lemma:powercounting} and local law, the only term that is not clearly negligible is 
$$ I_{3,1}:= \sum_{i\neq j}\kappa_{ij}^{(4)} \E \qb{  e(\t) \cdot  \q{ G_{ii}^2 G_{jj}^2 -\E  (G_{ii}^2 G_{jj}^2 )}   }, $$
which comes from $\E \qb{ \p{1-\E} \pb{\partial_{ij}^3G_{ij}  } \cdot e(\t)   }$ and
$$-\sum_{i,j}\frac{\kappa_{ij}^{(4)}}{2}  m_{sc}^2\E\qb{\partial_{ij}^2 e(\t) },  $$
which comes from $\sum_{i\neq j} {\kappa_{ij}^{(4)}}\E\qb{\partial_{ij}^2 e(\t) \cdot \partial_{ij}G_{ij}}$.
Here, we claim that $I_{3,1}$ is negligible.
After multiplying $z$, we have
\begin{align}\label{eq:I3,1}
	z I_{3,1} &= \sum_{i, j}\kappa_{ij}^{(4)} \E \qb{  e(\t) \cdot  \q{ zG_{ii} G_{ii} G_{jj}^2 -\E  (zG_{ii} G_{ii} G_{jj}^2 )}   } \notag \\
	&= \sum_{i,j}\kappa_{ij}^{(4)} \E \qB{ \sum_{k} H_{ik} e(\t) G_{ik}G_{ii}G_{jj}^2 -e(\t)\E \q{\sum_{k} H_{ik}  G_{ik}G_{ii}G_{jj}^2}   } \notag \\
	&\qquad -\sum_{i, j}\kappa_{ij}^{(4)} \E \qb{  e(\t) \cdot  \q{  G_{ii} G_{jj}^2 -\E  ( G_{ii} G_{jj}^2 )}   } \notag \\
	&= \sum_{i,j,k}\kappa_{ij}^{(4)}  \kappa_{ik}^{(2)} \pB{\E\qb{\partial_{ik} e(\t) \cdot G_{ik}G_{ii}G_{jj}^2 } + \E \qb{ \p{1-\E} \pb{\partial_{ik}G_{ik}G_{ii}G_{jj}^2  } \cdot e(\t)   } }, \notag \\
	&\qquad + \sum_{i,j,k}\kappa_{ij}^{(4)}  \frac{\kappa_{ik}^{(3)}}{2!} \pB{\E\qb{\partial_{ik}^2 e(\t) \cdot G_{ik}G_{ii}G_{jj}^2}  +2\E\qb{\partial_{ik} e(\t) \cdot \partial_{ik}G_{ik}G_{ii}G_{jj}^2}} \notag\\
	&\qquad+ \sum_{i,j,k}\kappa_{ij}^{(4)}  \frac{\kappa_{ik}^{(3)}}{2!} \pB{\E \qb{ \p{1-\E} \pb{\partial_{ik}^2 G_{ik}G_{ii}G_{jj}^2  } \cdot e(\t)   } }  \notag\\
	&\qquad-\sum_{i, j}\kappa_{ij}^{(4)} \E \qb{  e(\t) \cdot  \q{  G_{ii} G_{jj}^2 -\E  ( G_{ii} G_{jj}^2 )}   }  + o_\prec(\frac{\sqrt{N}}{q}) \notag \\
	&=-m_{sc}\sum_{i,j}\kappa_{ij}^{(4)}   \pB{ \E \qb{   \p{1-\E} \pb{  G_{ii}^2G_{jj}^2  } \cdot e(\t)   } } \notag \\ 
	&\qquad-\sum_{i, j}\kappa_{ij}^{(4)} \E \qb{  e(\t) \cdot  \q{  G_{ii} G_{jj}^2 -\E  ( G_{ii} G_{jj}^2 )}   } + o_\prec(\frac{\sqrt{N}}{q}).
\end{align}

However, in the same way, we can estimate the last term by

\begin{align}
	&\sum_{i, j}\kappa_{ij}^{(4)} \E \qb{  e(\t) \cdot  \q{  (1-\E)  ( G_{ii} G_{jj}^2 )}   } \notag \\
	&= -m_{sc}\sum_{i, j}\kappa_{ij}^{(4)} \E \qb{  e(\t) \cdot  \q{   (1-\E)  ( G_{ii} G_{jj}^2 )}   }-\sum_{i, j}\kappa_{ij}^{(4)} \E \qb{  e(\t) \cdot  \q{   (1-\E)  ( G_{jj}^2 )}   } + o_\prec(\frac{\sqrt{N}}{q}) \notag \\
	&= -m_{sc}\sum_{i, j}\kappa_{ij}^{(4)} \E \qb{  e(\t) \cdot  \q{   (1-\E)  ( G_{ii} G_{jj}^2 )}   } -\sum_{j}\sum_{i}\kappa_{ij}^{(4)} \E \qb{  e(\t) \cdot  \q{   (1-\E)  ( G_{jj}^2 )}   } + o_\prec(\frac{\sqrt{N}}{q}) \notag \\
	&= -m_{sc}\sum_{i, j}\kappa_{ij}^{(4)} \E \qb{  e(\t) \cdot  \q{   (1-\E)  ( G_{ii} G_{jj}^2 )}   } -2\frac{\xi^{(4)}}{q^2}m_{sc} \E \qb{  e(\t) \cdot  \q{   (1-\E)  ( \tr G )}   } + o_\prec(\frac{\sqrt{N}}{q}) \notag \\
	&=  -m_{sc}\sum_{i, j}\kappa_{ij}^{(4)} \E \qb{  e(\t) \cdot  \q{   (1-\E)  ( G_{ii} G_{jj}^2 )}   } + O_\prec(\frac{N}{q^4}).
\end{align}
Since $|z+m_{sc}| > c$, the the last line of \eqref{eq:I3,1} is negligible, and therefore $I_{3,1}$ is also negligible.

To sum up,
\begin{equation}
	I_{3} = -\sum_{i,j}\frac{\kappa_{ij}^{(4)}}{2}  m_{sc}^2\E\qb{\partial_{ij}^2 e(\t) } + o_\prec(\frac{\sqrt{N}}{q}).
\end{equation}

\subsubsection{estimate on $I_4$ and $I_1$}
By simple moment counting, it can be easily shown that all terms in $I_4$ are negligible. We move on to $I_1$ defined by 
\begin{equation}\label{eq:I_1}
	I_1 = \sum_{i\neq j} \kappa_{ij}^{(2)} \pB{\E\qb{\partial_{ij} e(\t) \cdot G_{ij}} + \E \qb{ \p{1-\E} \pb{\partial_{ij}G_{ij}  } \cdot e(\t)   } }.
\end{equation}
Using Lemma \ref{lemma:powercounting} and local law, we can ignore the first term of the right hand side of \eqref{eq:I_1} and conclude that the only non negligible term of $I_1$ is 

\begin{equation}
	I_{1,1}=-\sum_{i\neq j} \kappa_{ij}^{(2)}\E \qb{ \p{1-\E} \pb{G_{ii}G_{jj}  } \cdot e(\t)   }.
\end{equation}

Multiplying $z$ to $I_{1,1}$ and using Lemma \ref{lemma:Stein} with sufficiently large $l$,  we have

\begin{align}
	zI_{1,1} & = \sum_{i,j,k} \kappa_{ij}^{(2)} \kappa_{ik}^{(2)} \E \qb{ e(\t)(1-\E) ( G_{ii} G_{jj} G_{kk})   } + \sum_{i,j,k} \kappa_{ij}^{(2)} \kappa_{ik}^{(4)} \E \qb{ e(\t)(1-\E) ( G_{ii}^2 G_{jj} G_{kk}^2)   } \notag \\
	& \qquad + \sum_{i,j,k} \kappa_{ij}^{(2)} \frac{\kappa_{ik}^{(4)}}{2} \E\qb{\partial_{ij}^2 e(\t) \cdot G_{ii}G_{jj}G_{kk}  } + \E \qb{e(\t)(\tr G -\E \tr G) } + o_\prec(\frac{\sqrt{N}}{q}) \notag \\
	& = \wt I_1 + \wt I_2 + \wt I_3 + \E \qb{e(\t)(\tr G -\E \tr G) } + o_\prec(\frac{\sqrt{N}}{q}),  
\end{align}
after adding up all the easily negligible terms.

We first estimate $\wt I_1$. With local law, $\sum_{j}\kappa_{ij}^{(2)}G_{jj}$ can be estimated by $m_{sc}$, therefore
\begin{align}
	\wt I_{1} &= \sum_{i}  \E \qb{ e(\t)(1-\E) ( G_{ii} \sum_{j}\kappa_{ij}^{(2)}G_{jj} \sum_{k}\kappa_{ik}^{(2)} G_{kk})   } \notag \\
	&=- m_{sc}^2 \E \qb{e(\t)(\tr G -\E \tr G) } + 2m_{sc} \sum_{i,j}\kappa_{ij}^{(2)}\E \qb{ e(\t)(1-\E) ( G_{ii} G_{jj} )   } + o_\prec(\frac{\sqrt{N}}{q}).
\end{align}

For $\wt I_2$, after summing up all the terms for $j$, we get 
\begin{align}
	\wt I_2 & = m_{sc}\sum_{i,k} \kappa_{ik}^{(4)} \E \qb{ e(\t)(1-\E) ( G_{ii}^2  G_{kk}^2)   } + o_\prec(\frac{\sqrt{N}}{q}).
\end{align}
Recall that $I_{3,1}= \sum_{i\neq j}\kappa_{ij}^{(4)} \E \qb{  e(\t) \cdot  \q{ G_{ii}^2 G_{jj}^2 -\E  (G_{ii}^2 G_{jj}^2 )}   }= O_\prec(\frac{{N}}{q^4})$. Hence, $\wt I_2$ is also  $O_\prec(\frac{{N}}{q^4})$, thus negligible.

Finally, $\wt I_3$ can be computed by 
\begin{align}
	\wt I_3 = m_{sc}^3 \sum_{i,j} \frac{\kappa_{ij}^{(4)}}{2} \E\qb{\partial_{ij}^2 e(\t)} + o(\frac{\sqrt{N}}{q}),
\end{align}
since $G_{ii}$ can be estimated by $m_{sc}$ with local law.

To sum up, $z I_{1,1}$ is computed by 
\begin{align}
	-\sum_{i\neq j} \kappa_{ij}^{(2)}&\E \qb{ \p{1-\E} \pb{G_{ii}G_{jj}  } \cdot e(\t)   }\notag \\
	=&-m_{sc}^2 \E \qb{e(\t)(\tr G -\E \tr G) } + 2m_{sc} \sum_{i,j}\kappa_{ij}^{(2)}\E \qb{ e(\t)(1-\E) ( G_{ii} G_{jj} )   } \notag \\
	& \quad+m_{sc}^3 \sum_{i,j} \frac{\kappa_{ij}^{(4)}}{2} \E\qb{\partial_{ij}^2 e(\t)} +\E \qb{e(\t)(\tr G -\E \tr G) } + o_\prec(\frac{\sqrt{N}}{q}).
\end{align}

After rearranging the terms containing $G_{ii}G_{jj}$ to the left hand side, and using the identity that $z+2m_{sc} = -\frac{m_{sc}'}{m_{sc}}$, we finally get 
\begin{align}
	I_{1,1} = \frac{m_{sc}'}{m_{sc}}(m_{sc}^2-1)\E \qb{e(\t) \cdot \p{1-\E} \tr G } - m_{sc}^2 m_{sc}' \sum_{i\neq j} \frac{\kappa_{ij}^{(4)}}{2} \E\qb{\partial_{ij}^2 e(\t)}  + o_\prec(\frac{\sqrt{N}}{q}).
\end{align}

\section{Proof of Proposition \ref{prop:mean of LSS}}\label{sec:proof of mean}

Throughout this section, we say that a random variable $Z$ is \emph{negligible} if $|Z|=o(N^{-1/2}q^{-1})$ with overwhelming probability. For the convenience of notation, we define the parameter $\Phi:=N^{-1/2-\delta}q^{-1}$ for sufficiently small $\delta$.
To prove Proposition \ref{prop:mean of LSS}, we need the following Lemma.
\begin{lemma}\label{lem:main lemma of mean LSS}
	Suppose that $H$ satisfies conditions in Definition \ref{def:Dsbm M} with $N^{-2/3}\ll p_a \ll 1$. Then,
	\begin{align*}
		\E \qb{(1+zm+m^2+\frac{\xi^{(4)}}{q^{2}}m^4)(z+m+\zeta m) } = O_\prec(\Phi)
	\end{align*}
	where $\xi^{(4)}$ and $\zeta$ are defined in \eqref{eq:xi,zeta}.
\end{lemma}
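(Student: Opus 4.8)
The plan is to read the stated identity as an approximate self-consistent equation for $\E m$, where $m = \tfrac1N\tr G$ with $G=(M-zI)^{-1}$ (equivalently the resolvent of $H$, the two differing by $O(1/N)$), and to derive it by a cumulant expansion that tracks the block structure of the variance profile; the argument runs parallel to the proof of Lemma~\ref{lem:main lemma}. First I would strip off the deformation: from the resolvent identity $zm+1=\tfrac1N\tr(HG)+\tfrac1N\tr(VDV^TG)$ and $\operatorname{rank}(VDV^T)\le K$, the second trace is $O(1/N)=O_\prec(\Phi)$ (using $\phi<1/2$ with $\delta$ chosen small, as in Appendix~\ref{sec:prelim}). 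So it suffices to work from the local identity $-1=zG_{ii}+(HG)_{ii}$, take expectations, and expand $\E[(HG)_{ii}]=\E[\sum_k H_{ik}G_{ki}]$ by Lemma~\ref{lemma:Stein}. The factor $z+m+\zeta m$ is precisely what $z+\sum_k\sigma_{ik}^2 G_{kk}$ collapses to once the block-averaged diagonals $m^{(c)}:=\tfrac KN\sum_{i\in c}G_{ii}$ are replaced by $m$: indeed $\sum_k\sigma_{ik}^2 G_{kk}=\zeta\,m^{(c(i))}+(1-\zeta)m$, using the normalization $\sum_k\sigma_{ik}^2=1$ and the definition of $\zeta$ in \eqref{eq:xi,zeta}, and $m^{(c)}-m$ is small by the local law.

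Next I would carry out the cumulant expansion to a fixed order $\ell$ large enough (comparable to the $\ell=10$ used elsewhere in the paper; the precise requirement is $q^{\ell-1}\gg N^{1/2+\delta}$, which holds since $\phi>1/6$) so that the Lemma~\ref{lemma:Stein} remainder is $O_\prec(\Phi)$ after normalization. The $\kappa^{(2)}$ term yields $-\E[G_{ii}\sum_k\sigma_{ik}^2 G_{kk}]$ plus an off-diagonal term $\sum_k\kappa_{ik}^{(2)}\E[G_{ki}^2]$ that is $O_\prec(1/N)$ by the Ward identity; averaging over $i$ and using the block identity above this produces $-\E[m^2]$ together with the block-variance correction $-\zeta\,\E[\tfrac1K\sum_c(m^{(c)}-m)^2]$. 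The $\kappa^{(4)}$ term yields $-\E[G_{ii}^2\sum_k\kappa_{ik}^{(4)}G_{kk}^2]$, and since $\sum_k\kappa_{ik}^{(4)}=\xi^{(4)}/q^2$ and $G_{ii}\approx m_{sc}\approx m$ by the local law (Proposition~\ref{prop:local law on contour}, Theorems~\ref{thm:locallaw} and~\ref{thm:local law outside}), this is $-\tfrac{\xi^{(4)}}{q^2}\E[m^4]+O_\prec(\Phi)$. The odd-cumulant terms ($\kappa^{(3)},\kappa^{(5)}$) each carry an odd number of off-diagonal Green-function factors and are negligible after one further round of multiplying by $z$ and re-expanding via Lemma~\ref{lemma:Stein}, exactly as for $I_2$ and $I_4$ in Lemma~\ref{lem:main lemma}; the diagonal piece $\sum_i H_{ii}G_{ii}$ and the higher-order cumulants are disposed of the same way. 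The error bookkeeping throughout uses Theorems~\ref{thm:locallaw}, \ref{thm:local law outside}, \ref{thm:local law averaging}, the power-counting estimates of Lemma~\ref{lemma:powercounting}, Lemma~\ref{lem:basic properties of m}, and the deterministic a priori bound $\|G\|\le\eta^{-1}$ to pass from high-probability bounds to bounds on expectations.

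Collecting the terms gives $\E[1+zm+m^2+\tfrac{\xi^{(4)}}{q^2}m^4]=-\zeta\,\E[\tfrac1K\sum_c(m^{(c)}-m)^2]+O_\prec(\Phi)$. To reach the stated multiplicative form, I would run the same expansion against the extra factor $z+\sum_k\sigma_{ik}^2 G_{kk}$; equivalently, prove the two self-consistent equations $\E[P]=O_\prec(\Phi)$ and $\E[Pm]=O_\prec(\Phi)$ (the latter obtained by inserting a factor $m$ into the expansion, so that the derivatives also hit it, in the spirit of the auxiliary factor $e(\t)$ in Appendix~\ref{sec:lem E e Tr G}) and combine them via $\E[P(z+m+\zeta m)]=z\,\E[P]+(1+\zeta)\,\E[Pm]$, where $P:=1+zm+m^2+\tfrac{\xi^{(4)}}{q^2}m^4$ and $z,\zeta=O(1)$ on $\Gamma$. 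In either route the remaining block-variance residual is $-\zeta\,\E[\tfrac1K\sum_c(m^{(c)}-m)^2\,(z+m+\zeta m)]$, and since $z+m+\zeta m=O(1)$ and, by the averaged local law applied to each block, $m^{(c)}-m=O_\prec(q^{-2})$, this is $O_\prec(q^{-4})=O_\prec(\Phi)$ under $\phi>1/6$ (alternatively it is $O(N^{-2})$ by invariance of the variance profile of $H$ under permutations within and across blocks). This yields the claim.

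I expect the main obstacle to be showing that each sparse-specific error term is $o_\prec(\Phi)$ \emph{in expectation}, not merely with high probability: several intermediate quantities, including $P$ itself, are only $O_\prec(q^{-2})$ as random variables, and the improvement to $O_\prec(\Phi)$ comes entirely from cancellation after taking $\E$. Capturing this forces one to re-expand those terms (multiply by $z$, apply Lemma~\ref{lemma:Stein} again) and carefully match the resulting double sums, which is the bulk of the technical work. A secondary difficulty is the block-structure bookkeeping needed to verify that the $\zeta$-weighted corrections collapse precisely to the factor $z+m+\zeta m$ with only an $O_\prec(q^{-4})$ residual, which is where the hypothesis $N^{-2/3}\ll p_a$ (i.e.\ $\phi>1/6$) enters.
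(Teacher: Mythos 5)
Your plan reduces the lemma to two unweighted identities, $\E\qb{P}=O_\prec(\Phi)$ and $\E\qb{Pm}=O_\prec(\Phi)$ with $P:=1+zm+m^2+\frac{\xi^{(4)}}{q^2}m^4$, plus a block-variance residual $\zeta\,\E\qb{\frac1K\sum_c(m^{(c)}-m)^2(\cdot)}$, and this is where the argument breaks. The bound you invoke, $|m^{(c)}-m|\prec q^{-2}$ ``by the averaged local law applied to each block,'' is not available: Theorem \ref{thm:locallaw} averages over the full trace, the entrywise law only gives $|m^{(c)}-m|\prec q^{-1}$, and Theorem \ref{thm:local law averaging} controls $\zeta(m^{(c)}-m_{sc})\prec q^{-2}$, which helps only when $\zeta\gtrsim1$. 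In the very regime motivating the lemma ($\gamma_N=O(1)$ for the SBM) one has $\zeta\asymp q^{-1}$, so the best these tools give is $\zeta\,\E\qb{(m^{(c)}-m)^2}\prec q^{-3}$, which exceeds $\Phi=N^{-1/2-\delta}q^{-1}$ throughout $1/6<\phi<1/4$; the alternative parenthetical claim that the residual is $O(N^{-2})$ by permutation invariance conflates the random variable with its expectation (symmetry gives $\E m^{(c)}=\E m$, not smallness of $(m^{(c)}-m)^2$). Closing this gap would require a block-level fluctuation-averaging/local law with an improved bound, which is additional work of the same order as the proof you are trying to shortcut. A smaller but related imprecision: replacing $G_{ii}^2G_{kk}^2$ by $m^4$ in the $\kappa^{(4)}$ term with a one-step local-law substitution leaves an error $O_\prec(q^{-3})$, not $O_\prec(\Phi)$; one must expand to first order around $m$ and use that the linear terms cancel after the $i,k$ averaging, as the paper does via $G_{ii}^2G_{jj}^2=2m^3(G_{ii}+G_{jj})-3m^4+O_\prec(q^{-2})$.

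The paper's proof avoids ever needing the block averages to be close to $m$: it expands $\E\qb{(1+zm)Q}$ with the weight $Q=z+m+2\zeta m$ kept inside the expectation, never replaces the within-block double sums $\frac{K}{N^2}\sum_i\sum_{j\sim i}G_{ii}G_{jj}$ by $m^2$ plus an error, but instead re-expands them by a second cumulant expansion (the $J_1$ and $J_3$ terms in Appendix \ref{sec:proof of mean}); the terms in which these sums reappear multiplied by $m$ are exactly absorbed because the weight is present, and subtracting one $\zeta m$ from $Q$ at the end yields the stated factor $z+m+\zeta m$. In other words, the multiplicative factor is the mechanism that makes the block-structure corrections cancel algebraically, not a cosmetic repackaging of $z+\sum_k\sigma_{ik}^2G_{kk}$; note also that your identification of the factor is arithmetically off, since $z+(1-\zeta)m+\zeta m^{(c)}$ collapses to $z+m$, not $z+m+\zeta m$, when $m^{(c)}$ is replaced by $m$. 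To repair your route you would either have to prove the missing block-trace estimate or adopt the paper's weighted, self-closing expansion.
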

When we prove Lemma \ref{lem:main lemma of mean LSS}, we have 
	\begin{align}
		&\E \qb{(1+zm+m^2)(z+m+\zeta m) } = - \E \qb{\frac{\xi^{(4)}}{q^{2}}m^4(z+m+\zeta m) } + O_\prec(\Phi), \\
		&\E \qb{(1+zm_{sc}+m_{sc}^2)(z+m+\zeta m) } = 0.
	\end{align}
	
	Using local law to estimate $|m-m_{sc}| \prec q^{-2}$ and subtracting left hand side of these two equations, we obtain 
	\begin{align}
		&\E \qb{ (z(m-m_{sc}) + m^2 -m_{sc}^2)(z+m(1+\zeta))     } \notag\\
		&= \E \qb{((z+2m_{sc})(m-m_{sc})+(m-m_{sc})^2) (z+m(1+\zeta))} \notag \\
		&= \E \qb{((z+2m_{sc})(m-m_{sc})+(m-m_{sc})^2) (z+m_{sc}(1+\zeta))} \notag \\
		& \qquad+  \E \qb{((z+2m_{sc})(m-m_{sc})+(m-m_{sc})^2) ((m-m_{sc})(1+\zeta))} \notag \\
		& = \E \qb{((z+2m_{sc})(m-m_{sc}) (z+m_{sc}(1+\zeta))} + O_\prec(\Phi).
	\end{align}
	Similarly subtracting right hand side of these two equations, we get
	\begin{align}
		- \E \qb{\frac{\xi^{(4)}}{q^{2}}m^4(z+m+\zeta m)} = - {\frac{\xi^{(4)}}{q^{2}}m_{sc}^4(z+(1+\zeta)m_{sc})}+ O_\prec(\Phi).
	\end{align} 
	Thus we conclude that 
	\begin{align}
		\E \qb{((z+2m_{sc})(m-m_{sc}) (z+m_{sc}(1+\zeta))} =  -{\frac{\xi^{(4)}}{q^{2}}m_{sc}^4(z+(1+\zeta)m_{sc})}+ O_\prec(\Phi),
	\end{align}
	and dividing the deterministic part, with overwhelming probability, we have
	\begin{align}\label{eq:expect mean 1}
		\E \qb{q\sqrt{N} (m-m_{sc})} &= -\frac{\sqrt{N}}{q}\frac{1}{z+2m_{sc}}\xi^{(4)} m_{sc}^4 + o(1) \notag \\
		& = \frac{\sqrt{N}}{q}\xi^{(4)} m_{sc}^3 m_{sc}' +  o(1).
	\end{align}
	By \eqref{eq:expect mean 1}, we finally obtain
	\begin{align}
		\E\qB{ \frac{q}{\sqrt{N}}\pb{ L_H(f) - N\langle\mu_{sc},f\rangle }} &= -\frac{1}{2\pi\ii} \oint_\Gamma f(z) \E \qb{q\sqrt{N}(m(z)-m_{sc}(z))} \di z \notag \\
		&= -\frac{1}{2\pi\ii} \frac{\sqrt{N}}{q} \oint_\Gamma f(z) \xi^{(4)} m_{sc}^3 m_{sc}'  \di z + o(1) \notag\\
		& = \frac{\sqrt{N}}{q}\xi^{(4)}\tau_4(f) + o(1).
	\end{align}
Indeed, since $q^2=Np_a$ and $\xi^{(4)}=1 +o(\frac{q}{\sqrt{N}})$ for $H$ given in \eqref{def:Csbm H}, Proposition \ref{prop:mean of LSS} follows.
\subsection{Proof of Lemma \ref{lem:main lemma of mean LSS}}
To prove Lemma \ref{lem:main lemma of mean LSS}, we return to Lemma \ref{lemma:Stein}  which reads
\begin{align}
	\E \qB{&(z+m+ 2\zeta m)(1+zm)} \notag \\
	=& \frac{1}{N}\sum_{i\neq k}\sum_{r=1}^{l}\frac{\kappa_{ik}^{(r+1)}}{r!}\E \qB{ (\partial_{ik})^r \pB{G_{ik}(z+m+ 2\zeta m) }}  + \E \qB{ \Omega_l \pB{(z+m+ 2\zeta m)(1+zm)}  },
\end{align}
where $\partial_{ij} = \partial/(\partial H_{ik})$ as before. Remark that similar as proof of \eqref{eq:diag 0}, we can ignore when $i=k$. We leave details to the reader. 

Abbreviate 
\begin{align}
	I\equiv I(z,m) &= (z+m+ 2\zeta m)(1+zm)= Q (1+zm).
\end{align}
where $Q:=z+m+ 2\zeta m$.
Then we can rewrite the cumulant expansion as 
\begin{align}\label{eq:cumulantexp1}
	&\E I = \sum_{r=1}^{l} \sum_{s=0}^{r} w_{I_{r,s}} \E I_{r,s} + \E \Omega_l(I),
\end{align}
where we set
\begin{align}
	I_{r,s}&=\frac{1}{N} \sum_{i\neq k} \kappa_{ik}^{(r+1)}{ \pB{\partial_{ik}^{r-s} G_{ik}} \pB{\partial_{ik}^{s}{Q}}},\\
	w_{I_{r,s}}& = \frac{1}{(r-s)!s!}.
\end{align}

We can prove Lemma \ref{lem:main lemma of mean LSS} directly from the following result.
\begin{lemma}\label{lem:truncated}
	Choose $\ell \geq 10$. Then we have,
	\begin{equation}\label{eq:estimate I_1}
		\begin{split}
			&w_{I_{1,0}}\E \qb{I_{1,0}}=-\E\qB{ (z+m+2\zeta m)(1-\zeta)m^2 +2\zeta^2m^3 - \zeta m - \zeta \frac{\xi^{(4)}}{q^{2}} m^5} + O(\Phi),\\
			&w_{I_{2,0}}\E \qb{I_{2,0}}= O(\Phi),\\
			&w_{I_{3,0}}\E \qb{I_{3,0}}= -\E \qB{q^{-2}\xi^{(4)}Qm^4 }+O(\Phi),\\
			&w_{I_{r,0}}\E \qb{I_{r,0}}=O(\Phi), \qquad (4\leq r\leq \ell), \\
			&w_{I_{r,s}}\absa{\E \qb{I_{r,s}}} = O(\Phi), \qquad (1\leq s\leq r\leq \ell),
		\end{split}
	\end{equation}
	uniformly in $z \in \Gamma_2$, for $N$ sufficiently large. Moreover, we have, for the remainder term,,
	\begin{equation}\label{eq:estimate I_2}
		\E \Omega_l(I) =O(\Phi),
	\end{equation}
	uniformly in $z \in \Gamma_2$, for $N$ sufficiently large.
\end{lemma}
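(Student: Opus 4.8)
The plan is to expand $\E I = \E[Q(1+zm)]$ by a cumulant expansion and then estimate every resulting term. I would start from the identity $1+zm = \frac1N\sum_{i,k}H_{ik}G_{ki}$ (a consequence of $(H-zI)G=I$), so that, after discarding the diagonal piece $\frac1N\sum_i H_{ii}G_{ii}Q$ which is $O(\Phi)$ exactly as in \eqref{eq:diag 0}, one has $\E I = \E\bigl[\frac1N\sum_{i\ne k}H_{ik}G_{ki}Q\bigr]$. Applying Lemma~\ref{lemma:Stein} to each $H_{ik}$ and distributing $\partial_{ik}^{r}$ over $G_{ki}$ and $Q$ by the Leibniz rule produces exactly the decomposition \eqref{eq:cumulantexp1} with the stated weights $w_{I_{r,s}}=\frac{1}{(r-s)!s!}$. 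It then remains to bound $\E I_{r,s}$ and $\E\Omega_{\ell}(I)$; throughout, the tools are the local law on $\Gamma_2$ from Proposition~\ref{prop:local law on contour} ($|m-m_{sc}|\prec q^{-2}$, $\max_{ij}|G_{ij}-\delta_{ij}m_{sc}|\prec q^{-1}$, and the averaged bound on $\sum_i\kappa_{ij}^{(2)}G_{ii}$), the power-counting estimates of Lemma~\ref{lemma:powercounting}, and $|\kappa_{ik}^{(r+1)}| = O((Nq^{r-1})^{-1})$.

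The key structural observation is that, by the differentiation rule \eqref{eq:matrixderivative}, every monomial in $\partial_{ik}^{m}G_{ki}$ contains a number of off-diagonal factors $G_{ik}$ congruent to $m+1\pmod 2$. Hence for \emph{even} $r$ each monomial in $\partial_{ik}^{r-s}G_{ki}$ carries at least one off-diagonal factor, and combined with $\frac1N\sum_k|G_{ik}|\prec N^{-1/2}$ one gets $w_{I_{r,0}}|\E I_{r,0}| = O(N^{-1/2}q^{-(r-1)})$, which is $O(\Phi)$ for every even $r\ge4$; for $r=2$ this is only the borderline bound $O(N^{-1/2}q^{-1})$, and the missing $N^{-\delta}$ is recovered by multiplying $I_{2,0}$ by $z$ and running a secondary cumulant expansion exactly as in Section~\ref{sec:I_2}, giving $w_{I_{2,0}}\E I_{2,0}=O(\Phi)$. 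For \emph{odd} $r$ the only monomial without an off-diagonal factor is the fully diagonal one; for $r=3$ one computes $\partial_{ik}^{3}G_{ki} = -6G_{ik}^{4} - 36\,G_{ii}G_{kk}G_{ik}^{2} - 6\,G_{ii}^{2}G_{kk}^{2}$, and using $\frac1N\sum_{i\ne k}\kappa_{ik}^{(4)} = \xi^{(4)}q^{-2}+O(\Phi)$ together with $G_{ii},G_{kk}=m_{sc}+O_\prec(q^{-1})$, the term $-6G_{ii}^{2}G_{kk}^{2}$ contributes $-\E[\xi^{(4)}q^{-2}Qm^{4}]+O(\Phi)$, while the two off-diagonal monomials are $O(\Phi)$ by power counting; for odd $r\ge5$ the fully diagonal monomial is $O(q^{-(r-1)})\le O(q^{-4}) = O(\Phi)$ since $q\gg N^{1/6}$. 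The terms with $s\ge1$ carry a factor $\partial_{ik}^{s}Q$, and since $Q = z+(1+2\zeta)m$ this equals (up to the constant $z$) a multiple of $\partial_{ik}^{s}m = \frac1N\sum_j\partial_{ik}^{s}G_{jj}$; by \eqref{eq:matrixderivative} each $\partial_{ik}^{s}G_{jj}$ is a sum of Green-function monomials containing at least two off-diagonal factors of the form $G_{\cdot i}$ or $G_{\cdot k}$, so Lemma~\ref{lemma:powercounting} yields $w_{I_{r,s}}|\E I_{r,s}| = O(\Phi)$ for all $1\le s\le r\le\ell$. Finally, for $\ell=10$ the remainder bound of Lemma~\ref{lemma:Stein} gives $|\E\Omega_{\ell}(I)| = O\bigl(N^{2}\,\E|H_{ik}|^{12}\,\sup|\partial_{ik}^{11}(G_{ki}Q)|\bigr) = O(N^{2}\cdot(Nq^{10})^{-1}\cdot N^{O(1)}) = O(Nq^{-10}) = O(\Phi)$ because $\phi>1/6$ (here $\|G\|\le N^{5}$ on $\Gamma_2$ bounds the derivatives polynomially), while the truncated tail term is superpolynomially small by the sub-exponential moment bounds of Definition~\ref{def:cgSBM}.

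The remaining term $\E I_{1,0}$ is the crux. Here $\partial_{ik}G_{ki} = -G_{ii}G_{kk}-G_{ik}^{2}$; the $G_{ik}^{2}$ part is $O(N^{-1})=O(\Phi)$ via $\sum_{i,k}|G_{ik}|^{2}=O(N)$ on $\Gamma$, so $w_{I_{1,0}}\E I_{1,0} = -\frac1N\E\bigl[\sum_{i\ne k}\kappa_{ik}^{(2)}G_{ii}G_{kk}\,Q\bigr]+O(\Phi)$. To evaluate this I would exploit the block structure: writing $m^{(c)} := \frac KN\sum_{k\in C_c}G_{kk}$ for the community-averaged resolvent diagonal and using $\sum_k\sigma_{ik}^{2}=1$ together with the definition \eqref{eq:xi,zeta} of $\zeta$, one obtains $\sum_{k:k\ne i}\kappa_{ik}^{(2)}G_{kk} = \zeta\,m^{(c(i))}+(1-\zeta)m+O(N^{-1})$, and hence $-\frac1N\sum_{i\ne k}\kappa_{ik}^{(2)}G_{ii}G_{kk}Q = -Q\bigl((1-\zeta)m^{2}+\zeta\,\frac1K\sum_c(m^{(c)})^{2}\bigr)+O(\Phi)$. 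The factor $\frac1K\sum_c(m^{(c)})^{2}$ is then rewritten by means of a community-refined self-consistent equation for the $m^{(c)}$ (of the schematic form $1+zm^{(c)}+\zeta(m^{(c)})^{2}+(1-\zeta)m\,m^{(c)}+\xi^{(4)}q^{-2}(m^{(c)})^{4}=O(\Phi)$), so that $\frac1K\sum_c(m^{(c)})^{2}$ becomes, up to $O(\Phi)$, an explicit polynomial in $m$ with coefficients involving $\zeta$ and $\xi^{(4)}q^{-2}$; collecting terms produces the claimed formula for $w_{I_{1,0}}\E I_{1,0}$. \textbf{I expect this last step — extracting the precise $\zeta$- and $\xi^{(4)}q^{-2}$-dependent shifts in $\E I_{1,0}$ from the block structure — to be the principal obstacle}, since it is the only place where the community-dependence of the diagonal resolvent must be carried beyond leading order rather than merely bounded, and establishing the community-refined local law for the $m^{(c)}$ to the required precision in the sparse regime is the technical heart of the argument.
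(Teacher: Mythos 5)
Your handling of $I_{2,0}$ (borderline bound, then multiply by $z$ and run a secondary cumulant expansion), of $I_{r,0}$ for $r\ge 4$, of $I_{r,s}$ for $s\ge 1$, and the parity observation on off-diagonal factors all match the paper's scheme. The genuine gap is the evaluation of $I_{1,0}$, the one term you yourself call the crux: it is not carried out, and the route you sketch needs an input you do not have. After writing $\sum_k\kappa^{(2)}_{ik}G_{kk}=(1-\zeta)m+\zeta m^{(c(i))}+O(N^{-1})$, you must show $\frac{1}{K}\sum_c \p{m^{(c)}-m}^2=O(\Phi)$ with $\Phi=N^{-1/2-\delta}q^{-1}$. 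The entrywise local law on $\Gamma_2$ only gives $|m^{(c)}-m|\prec q^{-1}$, hence an error of order $q^{-2}$, which exceeds $\Phi$ for every $\phi<1/2$; so you genuinely need a community-restricted averaged (partial-trace) local law of accuracy $\sim q^{-2}$ — exactly the ``community-refined self-consistent equation'' you flag as the principal obstacle — and neither your proposal nor the results quoted in the paper supply it. The paper's proof (Section \ref{sec:subsec5}) avoids any refined local law: it isolates $\frac{\zeta K}{N^2}\sum_i\sum_{j\sim i}G_{ii}G_{jj}Q$, writes $Q=z+(1+2\zeta)m$, applies a second cumulant expansion only to the $z$-part via $zG_{ii}=\sum_kH_{ik}G_{ki}-1$ (the terms $J_1$, $J_3$ in \eqref{eq:J11}, \eqref{eq:J12}, \eqref{eq:J3}); the unknown community-restricted sum $\frac{K}{N^2}\sum_i\sum_{j\sim i}mG_{ii}G_{jj}$ then reappears with coefficient $-\zeta(1+2\zeta)$ and cancels identically against the explicit $(1+2\zeta)m$-part of $Q$, while $J_3$ produces the $-\zeta\xi^{(4)}q^{-2}\E\qb{m^5}$ term and $J_1$ the $2\zeta^2m^3-\zeta m$ terms. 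Note also that even granting your refined input, your route yields $w_{I_{1,0}}\E I_{1,0}=-\E\qb{Qm^2}+O(\Phi)$, which agrees with the right-hand side stated in the lemma only up to $\E\qb{\zeta m\p{1+zm+m^2+\xi^{(4)}q^{-2}m^4}}$ — essentially the quantity Lemma \ref{lem:main lemma of mean LSS} is designed to control; that variant would still suffice for Proposition \ref{prop:mean of LSS}, but it is not the identity asserted here.

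There is also a quantitative failure in your $I_{3,0}$ step. Substituting $G_{ii},G_{kk}=m_{sc}+O_\prec(q^{-1})$ into $-\frac{1}{N}\sum_{i\ne k}\kappa^{(4)}_{ik}G_{ii}^2G_{kk}^2Q$ leaves an error of order $N\cdot\max_{i,k}\kappa^{(4)}_{ik}\cdot q^{-1}\sim q^{-3}$, and $q^{-3}=O(\Phi)$ only when $q\gtrsim N^{1/4}$; in the allowed range $1/6<\phi\le 1/4$ this is not $O(\Phi)$ (and the lemma is stated with $m$, not $m_{sc}$). The paper's fix is to expand to second order around $m$, namely $G_{ii}^2G_{kk}^2=2m^3(G_{ii}+G_{kk})-3m^4+O_\prec(q^{-2})$, and use that $\sum_iG_{ii}=Nm$ and $\sum_i\sum_{k\sim i}G_{kk}=\frac{N^2}{K}m$ hold exactly, so the first-order fluctuations cancel identically and the surviving error is $O(N^\epsilon q^{-4})=O(\Phi)$ for $\phi>1/6$; the same device is needed wherever you replace a diagonal entry inside a fourth-cumulant-weighted sum. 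Finally, your remainder estimate as written, $N^2\cdot(Nq^{10})^{-1}\cdot N^{O(1)}=O(Nq^{-10})$, is internally inconsistent (a genuine $N^{O(1)}$ factor would destroy the bound); one needs the cutoff structure of Lemma \ref{lemma:Stein} — near-field supremum with $O(1)$ Green-function entries on $\Gamma_2$, far-field supremum paired with a superpolynomially small tail — though this part is routine and the paper itself only sketches it.
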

\emph{Proof of Lemma \ref{lem:main lemma of mean LSS}.}	Recall that $Q:=(z+m+2\zeta m)$.  From Lemma \ref{lem:truncated},
	\begin{align}
		&\E \qb{(zm+1)(z+m+2\zeta m)} \notag \\
		&= -\E\qB{Q(1-\zeta)m^2 +2\zeta^2m^3 - \zeta m - \zeta \frac{\xi^{(4)}}{q^{2}} m^5}  -\E \qB{\frac{\xi^{(4)}}{q^{2}}Qm^4 } + O(\Phi) \notag\\
		&= \E \qB{-Q\pb{m^2 + \frac{\xi^{(4)}}{q^{2}}m^4}} + \E \qB{\zeta m \pb{1+zm+m^2+\frac{\xi^{(4)}}{q^{2}}m^4}} + O(\Phi).\notag
	\end{align}
	Collecting terms which contain $Q$, it can be shown that
	\begin{align*}
		\E \qb{(1+zm+m^2+\frac{\xi^{(4)}}{q^{2}}m^4)(z+m+2\zeta m) } = \E \qb{ \zeta m (1+zm+m^2 + \frac{\xi^{(4)}}{q^{2}}m^4)} + O(\Phi).
	\end{align*}
	Hence we conclude that 
	\begin{align*}
		\E \qb{(1+zm+m^2+\frac{\xi^{(4)}}{q^{2}}m^4)(z+m+\zeta m) } = O(\Phi).
	\end{align*}
	This proves Lemma \ref{lem:main lemma of mean LSS}.
\endproof
We now choose an initial (small) $\epsilon >0$. We use the factor $N^\epsilon$ and allow $\epsilon$ to increase by a tiny amount from line to line. We often drop $z$ from the notation; it always understood that $z \in \Gamma_2$ and all estimates uniform on $\Gamma_2$ and also for sufficiently large $N$. The proof of Lemma \ref{lem:truncated} is done in remaining Subsections \ref{sec:subsec1}-\ref{sec:subsec5} where $\E I_{r,s}$ and the error terms controlled.
Obviously, the error term $\E \Omega_l(I)$ is negligible by simple power counting, we omit the details.

\subsubsection{Estimate on {$I_{2,0}$}}\label{sec:subsec1}
Recall the definition of $I_{r,s}$. We have
\[
I_{2,0}:= \frac{1}{N} \sum_{i\neq k} \kappa_{ij}^{(3)}{ \pB{\partial_{ij}^{2} G_{ij}} Q}
\] 
We note that $I_{2,0}$ contains terms with one or three off-diagonal Green function entries $G_{ij}$. We denote them by $\E I_{2,0}^{(1)}$ and $\E I_{2,0}^{(3)}$, respectively.

\begin{lemma}\label{lemma:EI_{2,0}^1}
	For any small $\epsilon >0$ and for all $z \in \mathcal{E}$, we have
	\begin{equation}\label{eq:EI_{2,0}^1}
		|\E I_{2,0}^{(1)}| \leq \frac{N^\epsilon}{q^2\sqrt{N}}  +\Phi, \qquad 	|\E I_{2,0}^{(3)}| \leq \Phi.
	\end{equation}
	for $N$ sufficiently large. In particular, $\E I_{2,0}$ is negligible.
\end{lemma}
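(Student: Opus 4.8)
The plan is to expand $\partial_{ij}^2 G_{ij}$ by the resolvent differentiation rule $\partial_{ij}G_{ab}=-(G_{ai}G_{jb}+G_{aj}G_{ib})$ (for $i\neq j$), which after two applications writes $\partial_{ij}^2 G_{ij}$ as a finite linear combination of $G_{ii}G_{jj}G_{ij}$ (one off-diagonal factor) and $G_{ij}^3$ (three off-diagonal factors); the first produces $I_{2,0}^{(1)}=\frac cN\sum_{i\neq j}\kappa_{ij}^{(3)}G_{ii}G_{jj}G_{ij}Q$ and the second $I_{2,0}^{(3)}=\frac{c'}N\sum_{i\neq j}\kappa_{ij}^{(3)}G_{ij}^3 Q$, with $Q=z+m+2\zeta m$ and $|Q|\prec 1$ on the contour. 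Throughout, expectations are taken on the overwhelming-probability event where all local laws hold; on its complement the crude deterministic bounds $\max_{ab}|G_{ab}|\le\eta^{-1}\le N^{5}$ (valid since $\eta>N^{-5}$ on $\Gamma_2$) and $\P(\cdot)\le N^{-D}$ make the leftover smaller than any power of $N$, hence absorbed into $\Phi$.

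For $I_{2,0}^{(3)}$, I would use $|\kappa_{ij}^{(3)}|\lesssim (Nq)^{-1}$, $\max_{a\neq b}|G_{ab}|\prec q^{-1}$, and the Ward/power-counting bound $\sum_{i\neq j}|G_{ij}|^2\prec N$ (Lemma \ref{lemma:powercounting}) to get $\sum_{i\neq j}|G_{ij}|^3\le q^{-1}\sum_{i\neq j}|G_{ij}|^2\prec Nq^{-1}$, hence $|I_{2,0}^{(3)}|\prec \frac1N\cdot\frac1{Nq}\cdot Nq^{-1}=\frac1{Nq^2}\ll\Phi$; even the coarser estimate $|I_{2,0}^{(3)}|\prec q^{-4}$ suffices, and this is exactly where the hypothesis $\phi>1/6$ enters.

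For $I_{2,0}^{(1)}$, I would replace the diagonal entries: $G_{ii}G_{jj}=m_{sc}^2+m_{sc}(G_{ii}-m_{sc})+m_{sc}(G_{jj}-m_{sc})+(G_{ii}-m_{sc})(G_{jj}-m_{sc})$, using the entrywise local law $|G_{aa}-m_{sc}|\prec q^{-1}$ (Proposition \ref{prop:local law on contour}). The remainder terms contribute at most $\frac1N\cdot\frac1{Nq}\cdot q^{-1}\sum_{i\neq j}|G_{ij}|\prec \frac1{Nq^2}\cdot N^{3/2}=\frac1{\sqrt N q^2}$, where $\sum_{i\neq j}|G_{ij}|\prec N^{3/2}$ by Cauchy--Schwarz and the Ward bound; the $N^\epsilon$ loss of stochastic domination turns this into the claimed $N^\epsilon(q^2\sqrt N)^{-1}$. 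There remains the leading term $\frac{c\,m_{sc}^2 Q}{N}\sum_{i\neq j}\kappa_{ij}^{(3)}G_{ij}$, where the pointwise bound $|G_{ij}|\prec q^{-1}$ is too weak by a factor $q$; instead I would exploit the block structure of the cumulants, $\kappa_{ij}^{(3)}=\kappa_d^{(3)}+(\kappa_s^{(3)}-\kappa_d^{(3)})\mathbf 1_{i\sim j}$, together with the averaged estimates $\sum_{i,j}G_{ij}=N s(z)=N m_{sc}+O_\prec(Nq^{-1})$ from Proposition \ref{lemma:local law cgSBM} (note $\sqrt N q^{-4}\le q^{-1}$ when $\phi>1/6$), $\tr G=N m_{sc}+O_\prec(Nq^{-2})$ from the averaged local law, and the same-community analogue $\sum_{i\sim j}G_{ij}=N m_{sc}+O_\prec(Nq^{-1})$, obtained by writing $\mathbf 1_{C_c}/\sqrt{N/K}$ in the orthonormal block basis $\{\mathbf 1/\sqrt N,\b v^{(1)},\dots,\b v^{(K-1)}\}$ (Lemma \ref{lemma:EM}) and applying Lemma \ref{lemma:innerpro}. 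Subtracting $\tr G$ gives $\sum_{i\neq j}G_{ij},\ \sum_{i\sim j,\,i\neq j}G_{ij}=O_\prec(Nq^{-1})$, so the leading term is $O_\prec(\frac1N\cdot\frac1{Nq}\cdot Nq^{-1})=O_\prec((Nq^2)^{-1})\ll\Phi$. Collecting the pieces yields $|\E I_{2,0}^{(1)}|\le N^\epsilon(q^2\sqrt N)^{-1}+\Phi$ and $|\E I_{2,0}^{(3)}|\le\Phi$; since $\epsilon<\phi$, both bounds are $o(N^{-1/2}q^{-1})$, so $\E I_{2,0}$ is negligible.

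I expect the genuine obstacle to be the leading term of $I_{2,0}^{(1)}$: the naive bound on $\sum_{i\neq j}|\kappa_{ij}^{(3)}||G_{ij}|$ loses a full factor $q$, so the estimate rests on the averaged/summed Green-function control — the weak local law for $s(z)$ (Proposition \ref{lemma:local law cgSBM}) together with its block-vector version (Lemma \ref{lemma:innerpro}) — and on the exact algebraic cancellation $\sum_{i,j}G_{ij}-\tr G=N(s(z)-m(z))$ and its same-community counterpart. Checking that these inputs are valid uniformly on $\Gamma_2$ (in particular down to $\eta\sim N^{-5}$, using that $\Gamma$ keeps a fixed distance from $[-2,2]$) and that the error accounting lands precisely at $N^\epsilon(q^2\sqrt N)^{-1}+\Phi$ is the delicate bookkeeping.
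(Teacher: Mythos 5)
Your bounds for $I_{2,0}^{(3)}$ and for the fluctuation part of $I_{2,0}^{(1)}$ (replacing $G_{ii}G_{jj}$ by $m_{sc}^2$ up to $O_\prec(q^{-1})$ errors and using Lemma \ref{lemma:powercounting}) coincide with the paper's, but your handling of the leading term $\frac{m_{sc}^2}{N}\sum_{i\neq j}\kappa_{ij}^{(3)}\E[G_{ij}Q]$ is genuinely different. The paper never bounds this weighted sum directly: it multiplies $\E I_{2,0}^{(1)}$ by $z$, expands $zG_{ij}$ through $\sum_k H_{jk}G_{ki}$ with a second application of the cumulant expansion (the terms $\hat I_{r',s'}$ in \eqref{eq:zEhat I}), shows the only non-negligible contribution reproduces $-m_{sc}\E I_{2,0}^{(1)}$, and divides by the stable factor $|z+m_{sc}|>c$ on $\Gamma_2$; the only inputs are the entrywise local law and power counting, and the argument uses nothing about the block structure of $\kappa^{(3)}$ beyond $|\kappa_{ij}^{(3)}|\lesssim (Nq)^{-1}$. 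You instead write $\kappa_{ij}^{(3)}=\kappa_d^{(3)}+(\kappa_s^{(3)}-\kappa_d^{(3)})\b 1_{i\sim j}$ and control the unweighted sums $\sum_{i\neq j}G_{ij}$ and $\sum_{i\sim j,\,i\neq j}G_{ij}$ by the averaged law for $s(z)$ (Proposition \ref{lemma:local law cgSBM}), the bound $|m-m_{sc}|\prec q^{-2}$, and the block-sum mechanism behind Lemma \ref{lemma:innerpro}; since $\phi>1/6$ gives $\sqrt N q^{-4}\le q^{-1}$, both sums are $O_\prec(Nq^{-1})$ and the leading term is $O_\prec(N^{-1}q^{-2})\ll\Phi$, which indeed closes the proof. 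This shortcut avoids the entire second expansion and makes the cancellation explicit, but it imports machinery the paper only develops for the eigenvalue transition: you should state explicitly that Lemma \ref{lemma:innerpro} is formulated for the columns of $V$ while its proof uses only block-constancy (so it applies to $\b 1_{C_c}/\sqrt{N/K}$ and $\b 1/\sqrt N$, giving the same-community sum), and that Proposition \ref{lemma:local law cgSBM}, stated without a spectral domain, holds uniformly on $\Gamma_2$, where stability of $z+m_{sc}$ is automatic because $\Gamma$ keeps a fixed distance from $[-2,2]$. The paper's self-consistent route needs none of these extensions (and would survive for cumulant profiles without block structure), while yours is shorter and isolates exactly where the extra factor of $q$ is recovered; both yield the stated bound $N^\epsilon(q^2\sqrt N)^{-1}+\Phi$ and hence the negligibility of $\E I_{2,0}$.
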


\begin{proof}
	Since $I_{2,0}^{(3)}$ has three off diagonal terms, we can bound one $G_{ij}$ by $q^{-1}$ and extract one factor of $\frac{1}{N}$ from others. Thus  we have
	\begin{equation}
		\absb{\E I_{2,0}^{(3)}}=\absBB{N\E \qa{\sum_{i\neq j} \frac{\kappa_{ij}^{(3)}}{N^2}G_{ij}^3Q } } \leq \frac{N^\epsilon}{Nq^2} \leq \Phi.
	\end{equation}

	Fix a small $\epsilon >0$. From the definition of $I_{2,0}^{(1)}$ and local law, we have
	\begin{equation}\label{eq:def I_{2,0}}
		\E I_{2,0}^{(1)}=N\E \qa{\sum_{i\neq j} \frac{\kappa_{ij}^{(3)}}{N^2}G_{ij}G_{ii}G_{jj}Q    } = N\E \qa{\sum_{i\neq j} \frac{\kappa_{ij}^{(3)}}{N^2}G_{ij}m^2 Q    } + \Phi 
	\end{equation}
	Using the resolvent formula we expand in the index $j$ to get 
	\begin{equation}\label{eq:zEI_{2,0}^{(1)}}
		z\E I_{2,0}^{(1)} = N\E \qa{\sum_{i\neq j \neq k} \frac{\kappa_{ij}^{(3)}}{N^2}H_{jk}G_{ki}m^2 Q   }.
	\end{equation}
	Similar as Section \ref{sec:I_2} and \ref{sec:I_3}, applying the cumulant expansion to the right side of \eqref{eq:zEI_{2,0}^{(1)}}, we can show that the leading term of \eqref{eq:zEI_{2,0}^{(1)}} is $-\E[mI_{2,0}^{(1)}]$. Then changing $m(z)$ by the deterministic quantity $m_{sc} (z)$ and showing all the other terms in the cumulant expansion are negligible. Then we will get 
	\begin{equation}
		|z+m_{sc}(z)||\E I_{2,0}^{(1)}|\leq  \frac{N^\epsilon}{q^2\sqrt{N}}  +O(\Phi)  = O(\Phi),
	\end{equation}
	for sufficiently large $N$. Since $|z+m_{sc}| > c $ for some constant $c>0$ uniformly on $\Gamma_2$, the lemma follows directly.                                                                                                                                                                                                                                                                                                                                                                                                                                                                                                                                                                                                                                                                                                                                                                                                                                                                                                                                                                                                                                                                                                                                                                                                                                                                                                                                                                                                                                                                                                                                                                                                                                                                                                                                                                                                                                                                                                                                                                                                                                                                                                                                                                                                                                                                                                                                                                                                                                                                                                                                                                                                                                                                                                                                                                                                                                                                                                                                                                                                                                                                                                                                                                                         
	
	For simplicity we abbreviate $\hat{I}\equiv I_{2,0}^{(1)}$. Using Lemma \ref{lemma:Stein}, for arbitrary $l' \in \N$ we have the cumulant expansion
	\begin{equation}\label{eq:zEhat I}
		z\E \hat{I} = \sum_{r'=1}^{l'} \sum_{s'=0}^{r'} w_{\hat{I}_{r',s'}} \E \hat{I}_{r',s'} + \E \Omega_{l'}(\hat{I})
	\end{equation}
	with
	\begin{equation}
		\hat{I}_{r',s'}=\frac{1}{N} \sum_{i\neq j\neq k} \kappa_{ij}^{(3)} \kappa_{jk}^{(r'+1)}{ \pB{\partial_{jk}^{r'-s'} G_{ki}} \pB{\partial_{jk}^{s'}{m^2 Q}}}
	\end{equation}
	with $ w_{\hat{I}_{r',s'}}= \frac{1}{(r'-s')!s'!}$. It can be checked that the error term  $\E\Omega_{l'}(\hat{I})$ is negligible for large $l'$.
	
	\begin{remark}\label{rmk:powercounting}
		Consider the terms $\hat{I}_{r',s'}$ with $1\leq s'\leq r'$.
		We claim that it is enough to show that
		\begin{equation}\label{eq:wt I}
			\wt	{I}_{r',0}:=\frac{1}{N} \sum_{i\neq j\neq k} \kappa_{ij}^{(3)} \kappa_{jk}^{(r'+1)}{ \pB{\partial_{jk}^{r'} G_{ki}} } m^2Q,
		\end{equation}
		are negligible for $r' \geq 2$ since $\partial_{jk}^{r'} G_{ki}$ contains at least one off-diagonal entry.
		Consider when $\partial_{jk}$ acts on $Qm^2$. Note that $Q$ is third order polynomial in $m$.
		
		Using $|Q'|\prec 1$, $|Q''|\prec 1 $, $|Q'''|\prec 1 $ and Lemma \ref{lemma:powercounting} we have 
		\begin{align}\label{eq:partial Q}
			\abs{\partial_{jk} (m^2 Q)} &= \absB{\pB{\frac{1}{N}\sum_{u=1}^{N}G_{uj}G_{ku}} (m^2Q)'} \prec \frac{1 }{N}, 
		\end{align}
		where the summation index $u$ is generated from $\partial_{ik}Q$. More generally, it can be easily shown that $\partial_{jk}^{s'}(m^2Q)$ contains at least two off-diagonal Green function entries for $n\geq 1$. 
		Thus we conclude that if $\partial_{jk}$ acts $s' \geq 1 $ times on $m^2Q$ then with $\abs{G} \prec 1$, we have 
		
		\begin{equation}
			\hat{I}_{r',s'}=\frac{1}{N} \sum_{i\neq j\neq k} \kappa_{ij}^{(3)} \kappa_{jk}^{(r'+1)}{ \pB{\partial_{jk}^{r'-s'} G_{ki}} \pB{\partial_{jk}^{s'}m^2Q}} \prec  \frac{1}{q^rN},
		\end{equation}
		which is negligible.
	\end{remark}

	We now estimate $\hat{I}_{r',0}$. For $r'=1$, we compute
	\begin{align}\label{eq:hatI1 split}
		\E \hat{I}_{1,0} &= -\E \qB{\frac{1}{N} \sum_{i\neq j\neq k} \kappa_{ij}^{(3)} \kappa_{jk}^{(2)}{ G_{ji}G_{kk}m^2 {Q}}  }  -\E \qB{\frac{1}{N} \sum_{i\neq j\neq k} \kappa_{ij}^{(3)} \kappa_{jk}^{(2)}{ G_{jk}G_{ki}m^2 {Q}}  } \notag \\ 
		&=: \E \hat{I}_{1,0}^{(1)} + \E \hat{I}_{1,0}^{(2)} 
	\end{align}
	where we organize the terms according to the off-diagonal Green function entries. By Lemma \ref{lemma:powercounting},
	\begin{equation}\label{eq:hatI^2,3}
		|\E\hat{I}_{1,0}^{(2)}| \leq \frac{N^\epsilon}{Nq}\leq \Phi.
	\end{equation}
	We rewrite $\hat{I}_{1,0}^{(1)}$ with $m_{sc}$ as 
	\begin{align}\label{eq:hatI^1}
		\E \hat{I}_{1,0}^{(1)} &= -\E \qB{\frac{1}{N} \sum_{i\neq j\neq k} \kappa_{ij}^{(3)} \kappa_{jk}^{(2)}{ G_{ji}G_{kk}m^2 {Q}}  } \notag \\
		&= -\E \qB{\frac{1}{N} \sum_{i\neq j} \kappa_{ij}^{(3)} { G_{ji} m^3 Q}  } +O(\Phi) \notag \\
		&= -\E \qB{\frac{1}{N} \sum_{i\neq j} \kappa_{ij}^{(3)} { G_{ji} m_{sc} m^2 Q}  } -\E \qB{\frac{1}{N} \sum_{i\neq j} \kappa_{ij}^{(3)} { (m-m_{sc})G_{ji}m^2 Q}  }  +O(\Phi).
	\end{align}
	By local law, for sufficiently large $N$, the second term in \eqref{eq:hatI^1} bounded as 
	\begin{align}\label{eq:hatI^1 2term}
		\absbb{\E \qB{\frac{1}{N} \sum_{i\neq j} \kappa_{ij}^{(3)} { (m-m_{sc})G_{ji}m^2 Q}  } } \leq \frac{N^{\epsilon}}{q}\E \qB{\frac{1}{N^2} \sum_{i\neq j} |m-m_{sc}||G_{ij}||Q|} \leq \frac{N^\epsilon}{\sqrt{N}q^3} = O(\Phi).
	\end{align}
	Thus, we get that
	\begin{equation}\label{eq:EhatI_{1,0}}
		\E \hat{I}_{1,0} = -m_{sc} \E \qB{\frac{1}{N} \sum_{i\neq j} \kappa_{ij}^{(3)} { G_{ji}m^2 Q}  } + O(\Phi) = -m_{sc}\E  I_{2,0}^{(1)} + O(\Phi).
	\end{equation}
	We remark that in the expansion of $\E \hat{I} = \E I_{2,0}^{(1)} $ the only term with one off-diagonal entry is $\E\hat{I}_{2,0}^{(1)}$. All the other terms contain at least two off-diagonal entries, thus, negligible.

	To sum up, we find that for sufficiently large $N$,
	\begin{equation}
		|z+m_{sc}||\E I_{2,0}^{(1)}| = O(\Phi).
	\end{equation}
	Since $|z+m_{sc} |> c$, we obtain $|\E I_{2,0}^{(1)}| = O(\Phi)$. This concludes the proof of \eqref{eq:EI_{2,0}^1}.	
\end{proof}	
Summarizing, we showed that 
\begin{equation}
	|EI_{2,0}|\leq \Phi,
\end{equation}
for $N$ sufficiently large and the second estimate in \eqref{eq:estimate I_1} is proved.

\subsubsection{Estimate on \texorpdfstring{$I_{3,0}$}{I3,0}} Note that $I_{3,0}$ contains terms with zero, two or four off-diagonal Green function entries. We split accordingly
\[
w_{I_{3,0}}I_{3,0} = w_{I_{3,0}^{(0)}}I_{3,0}^{(0)} + w_{I_{3,0}^{(2)}}I_{3,0}^{(2)} +w_{I_{3,0}^{(4)}}I_{3,0}^{(4)}. 
\]
When there are two off-diagonal entries, from Lemma \ref{lemma:powercounting}, we obtain
\begin{equation}
	|\E I_{3,0}^{(2)} | \leq \absbb{ N\max_{i,j}\kappa_{ij}^{(4)} \E \qB{\frac{1}{N^2} \sum_{i\neq j} G_{ii}G_{jj}(G_{ij})^2 Q }   } \leq \frac{N^\epsilon}{Nq^2}\E|Q| \leq \Phi,
\end{equation}
for sufficiently large $N$ and similar argument holds for $\E I_{3,0}^{(4)}$. Thus the only non-negligible term is $I_{3,0}^{(0)}$.

\begin{align}
	w_{I_{3,0}^{(0)}}\E I_{3,0}^{(0)} &= -\frac{1}{N}\E \qB{ \sum_{i\neq j} \kappa_{ij}^{(4)}G_{ii}^2 G_{jj}^2Q} \nonumber \\
	&= -\frac{1}{N}\E \qB{ \sum_{i, j} \kappa_{d}^{(4)}G_{ii}^2 G_{jj}^2Q} -\frac{1}{N}\E \qB{ \sum_{i}\sum_{j\sim i} (\kappa_{s}^{(4)}-\kappa_{d}^{(4)})G_{ii}^2 G_{jj}^2Q} 	\nonumber \\ 
	&=-\frac{1}{N}\kappa_{d}^{(4)}\E \qB{\sum_{i, j}  G_{ii}^2 G_{jj}^2 Q}-\frac{1}{N}(\kappa_{s}^{(4)}-\kappa_{d}^{(4)})\E \qB{ \sum_{i}\sum_{j\sim i} G_{ii}^2 G_{jj}^2Q} 	\nonumber\\
	&=-\frac{1}{N}\kappa_{d}^{(4)}\E \qB{\sum_{i, j}  G_{ii}^2 G_{jj}^2Q}-\frac{1}{N}(\kappa_{s}^{(4)}-\kappa_{d}^{(4)})\E \qB{ \sum_{i}\sum_{j\sim i} G_{ii}^2 G_{jj}^2Q} .
\end{align}

We have
\begin{align}
G_{ii}^2 & G_{jj}^2 = (G_{ii}^2-m^2) (G_{jj}^2-m^2) + m^2 G_{ii}^2 + m^2 G_{jj}^2 -m^4 \notag \\
	&= O(q^{-2}) + m^2 \pB{(G_{ii}-m)^2 + 2G_{ii}m -m^2   } + m^2 \pB{(G_{jj}-m)^2 + 2G_{jj}m -m^2   } -m^4 \notag \\
	&=O(q^{-2}) + 2m^3(G_{ii}+G_{jj}) -3m^4,
\end{align}
where $|G_{ii}-m|\prec q^{-1}$ by local semicircle law.
Therefore, for the first term, we can conclude that 
\begin{align}
	\frac{1}{N}\kappa_{d}^{(4)}\E \qB{\sum_{i, j}  G_{ii}^2 G_{jj}^2Q} &= 	\frac{1}{N}\kappa_{d}^{(4)}\E \qB{\sum_{i, j}  \pa{2m^3(G_{ii}+G_{jj}) -3m^4}Q} +O(\frac{N^\epsilon}{q^4}) \notag \\
	&=N\kappa_{d}^{(4)}\E \qB{m^4 Q } + O(\frac{N^\epsilon}{q^4}).
\end{align}

Similarly we can estimate the second term by 	
\begin{align}
	\frac{1}{N}(&\kappa_{s}^{(4)}-\kappa_{d}^{(4)})\E \qB{ \sum_{i}\sum_{j\sim i} G_{ii}^2 G_{jj}^2Q} \notag\\
	&= 	\frac{1}{N}(\kappa_{s}^{(4)}-\kappa_{d}^{(4)}) \E \qB{ \sum_{i}\sum_{j\sim i}\pa{ 2m^3(G_{ii}+G_{jj}) -3m^4 }Q} + O(\frac{N^\epsilon}{q^4}) \notag \\
	&=\frac{N}{K}(\kappa_{s}^{(4)}-\kappa_{d}^{(4)})\E \qB{m^4Q}+O(\Phi).
\end{align}

Therefore we obtain
\begin{align}
	w_{I_{3,0}^{(0)}}\E I_{3,0}^{(0)} &= -\frac{1}{N}\E \qB{ \sum_{i\neq j} \kappa_{ij}^{(4)}G_{ii}^2 G_{jj}^2Q} \nonumber \\
	&=-N\kappa_{d}^{(4)}\E \qB{m^4Q}- \frac{N}{K}(\kappa_{s}^{(4)}-\kappa_{d}^{(4)})\E \qB{m^4Q}+O(\Phi) \nonumber \\
	&= -\E \qB{q^{-2}\xi^{(4)}Qm^4 }+O(\Phi).
\end{align}

\subsubsection{Estimate on \texorpdfstring{$I_{r,0}$ for $r\geq 4$}{Ir,0}} For $r \geq 5$ we use the bound $|G_{ii}| \prec 1 $ to obtain
\begin{align}
	|\E I_{r,0}| \leq \absB{N \E \qB{\frac{1}{N^2} \kappa_{ij}^{(r+1)} \sum_{i\neq j}  (\partial_{ij}^r G_{ij})Q } }
	\leq \frac{N^\epsilon}{q^4} \E\qB{\frac{1}{N^2}\sum_{i\neq j}1} = O(\Phi),
\end{align}
for sufficiently large $N$. For $r=4$, $\partial_{ij}^r G_{ij}$ contains at least one off-diagonal term. Hence 
\begin{align}
	\absB{N \E \qB{\frac{1}{N^2}  \sum_{i\neq j} \kappa_{ij}^{(5)} (\partial_{ij}^r G_{ij})Q } } &\leq \frac{N^\epsilon}{q^3}  \E \qB{\frac{1}{N^2} \sum_{i\neq j} |G_{ij}||Q|}   \leq \frac{C N^\epsilon}{\sqrt{N}q^3} = O(\Phi),
\end{align}
for $N$ sufficiently large. Thus we can conclude that all $I_{r,0}, r\geq 4$ are negligible. 

\subsubsection{Estimate on \texorpdfstring{$I_{r,s}$ for $r\geq 2, s\geq 1$}{Ir,s}} Similar to Remark \ref{rmk:powercounting}, if $\partial_{jk}$ act on $Q$ then it can be easily shown that those terms are negligible. We leave details for the reader. 

\subsubsection{Estimate on \texorpdfstring{$I_{1,0}$}{I1,0}}\label{sec:subsec5}
Finally we only need to estimate $\E I_{1,0}$. We have 
\begin{align}
	\E I_{1,0} =& \frac{1}{N} \sum_{i\neq j} \kappa_{ij}^{(2)}\E \qB{ \pB{\partial_{ij} G_{ij}} Q}\nonumber \\
	=&- \frac{1}{N} \E \qB{ \sum_{i,j} \kappa_{ij}^{(2)} G_{ii}G_{jj}Q}+\frac{1}{N} \E \qB{ \sum_{i} \kappa_{ii}^{(2)} G_{ii}^2Q}-\frac{1}{N} \E \qB{ \sum_{i\neq j} \kappa_{ij}^{(2)} G_{ij}^2Q}\nonumber\\
	=:& -\E I_{1,0}^{(2)} + \E I_{1,0}^{(1)} - \E I_{1,0}^{(0)}
\end{align}

The second and third term can be bounded by	$\Phi$ since
\begin{equation}\label{eq:EI_{1,0}^1}
	|\E I_{1,0}^{(1)}|= \absBB{\frac{1}{N} \E \qB{ \sum_{i} \kappa_{s}^{(2)} G_{ii}^2Q}} = O(\Phi),
\end{equation}
\begin{equation}\label{eq:EI_{1,0}^0}
	|\E I_{1,0}^{(0)}|=\absBB{\frac{1}{N} \E \qB{ \sum_{i\neq j} \kappa_{ij}^{(2)} G_{ij}^2Q}} \leq \frac{C}{N} \E {\abs{Q} }= O(\Phi).
\end{equation}

Hence we only need to estimate 
\begin{align}\label{eq:EI_{1,0}^2}
	&\E I_{1,0}^{(2)}=\frac{1}{N} \E \qB{ \sum_{i,j} \kappa_{ij}^{(2)} G_{ii}G_{jj}Q} \nonumber\\
	&=\frac{1}{N}\E \qB{ \sum_{i,j} \kappa_d^{(2)}G_{ii}G_{jj}Q}+\frac{1}{N}\E\qB{\sum_{i}\sum_{j\sim i} (\kappa_s^{(2)}-\kappa_d^{(2)})G_{ii}G_{jj}Q} \nonumber \\
	&= \E \qB{ N\kappa_d^{(2)}m^2 Q } + \frac{1}{N}\E\qB{\sum_{i}\sum_{j\sim i} (\kappa_s^{(2)}-\kappa_d^{(2)})G_{ii}G_{jj}Q} \nonumber \\
	&= \E \qB{(1-\zeta )m^2 Q } + \frac{\zeta K}{N^2}\E\qB{\sum_{i}\sum_{j\sim i} G_{ii}G_{jj}Q} \notag \\
	&= \E \qB{(1-\zeta )m^2 Q } +\frac{\zeta K}{N^2}\E\qB{\sum_{i}\sum_{j\sim i} zG_{ii}G_{jj}} + \frac{\zeta K}{N^2}\E\qB{\sum_{i}\sum_{j\sim i} G_{ii}G_{jj}\p{m+2\zeta m}}.
\end{align}

Using Lemma \ref{lemma:Stein}, we expand the second term of \eqref{eq:EI_{1,0}^2} as
\begin{align}\label{eq:expand J_{r,s}}
	\frac{\zeta K}{N^2}\E\qB{\sum_{i}\sum_{j\sim i} zG_{ii}G_{jj}} 	&= \frac{\zeta K}{N^2}\E\qB{\sum_{i}\sum_{j\sim i} \pB{\sum_{k}H_{ik}G_{ki} -1 } G_{jj}}\nonumber \\ 
	&= \frac{\zeta K}{N^2}\E\qB{\sum_{i}\sum_{j\sim i}\sum_{k\neq i}H_{ik}G_{ki} G_{jj}} - \zeta \E\qb{m}\nonumber \\
	&=  \sum_{r=1}^{l} \sum_{s=0}^{r} w_{J_{r,s}} \E J_{r,s} - \zeta \E\qb{m} + O(\frac{N^\epsilon}{q^l}),
\end{align}
where 
\begin{align}
	w_{J_{r}}= \frac{1}{r!}, \quad J_{r}=\frac{\zeta K}{N^2}\sum_{i}\sum_{j\sim i}\sum_{k\neq i}\kappa_{ik}^{(r+1)}\E \qB{ \partial_{ik}^{r}\pB{ G_{ik}G_{jj}} }.
\end{align}

Similar as estimating $I_{r,s}$, it can be shown that all terms of $J_{r}$ and the error term are negligible except $J_{1}$ and $J_{3}$ by using Lemma \ref{lemma:powercounting}. We omit the details.
\subsubsection{Estimate on \texorpdfstring{$J_{1}$}{J1}}

\beq \begin{split}
	\E J_{1}&=\frac{\zeta K}{N^2}\sum_{i}\sum_{j\sim i}\sum_{k\neq i}\kappa_{ik}^{(2)}\E \qB{ \pB{\partial_{ik} G_{ik}G_{jj}} } \\
	&=-\frac{\zeta K}{N^2}\sum_{i}\sum_{j\sim i}\sum_{k}\kappa_{d}^{(2)}\E \qB{ {G_{ii} G_{jj}G_{kk}+G_{ik}^2G_{kk}} } \\
	&-\frac{\zeta K}{N^2}\sum_{i}\sum_{j\sim i}\sum_{k\sim i}(\kappa_{s}^{(2)}-\kappa_{d}^{(2)})\E \qB{ {G_{ii} G_{jj}G_{kk}+G_{ik}^2G_{kk}} } \\
	& -2\frac{\zeta K}{N^2}\sum_{i}\sum_{j\sim i}\sum_{k\neq i}\kappa_{ik}^{(2)}\E \qB{ {G_{ij} G_{jk}G_{ki}}}.
\end{split} \eeq
Then we can show that 
\begin{align}\label{eq:EJ_{1,0}}
	\E J_{1}=&-\frac{\zeta K}{N^2}\sum_{i}\sum_{j\sim i}\sum_{k}\kappa_{d}^{(2)}\E \qB{ {G_{ii} G_{jj}G_{kk}} } \nonumber\\
	&-\frac{\zeta K}{N^2}\sum_{i}\sum_{j\sim i}\sum_{k\sim i}(\kappa_{s}^{(2)}-\kappa_{d}^{(2)})\E \qB{ {G_{ii} G_{jj}G_{kk}} } +O(\Phi),
\end{align}
since other terms are all negligible, similar as proving \eqref{eq:EI_{1,0}^0} and \eqref{eq:EI_{1,0}^1}.
The first term can be computed by
\begin{align}\label{eq:J11}
	-\frac{\zeta K}{N^2}\sum_{i}\sum_{j\sim i}\sum_{k\neq i}\kappa_{d}^{(2)}\E \qB{ {G_{ii} G_{jj}G_{kk}} }&=	-\frac{\zeta K}{N}\sum_{i}\sum_{j\sim i}\kappa_{d}^{(2)}\E \qB{ mG_{ii} G_{jj}}\notag \\
	&=-\frac{\zeta (1-\zeta )K}{N^2}\sum_{i}\sum_{j\sim i}\E \qB{ mG_{ii} G_{jj} }.
\end{align}
For the second term, we have
\begin{align}\label{eq:J12}
	-\frac{\zeta K}{N^2}\sum_{i}\sum_{j\sim i}\sum_{k\sim i}(\kappa_{s}^{(2)}-\kappa_{d}^{(2)})\E &\qB{ {G_{ii} G_{jj}G_{kk}} } = -\frac{\zeta K}{N^2}\sum_{i}\sum_{j\sim i}\sum_{k\sim i}(\kappa_{s}^{(2)}-\kappa_{d}^{(2)})\E \qB{ {G_{ii} G_{jj}G_{kk}} } \notag \\
	&\qquad= -\frac{\zeta^2 K^2}{N^3}\sum_{i}\sum_{j\sim i}\sum_{k\sim i}\E \qB{ {G_{ii} G_{jj}G_{kk}} } \notag \\
	&\qquad= -\frac{\zeta^2 K^2}{N^3}\sum_{i\sim j\sim k}\E \qB{ m^3 - 3m^2 G_{ii} +3mG_{ii}G_{jj} }  \notag \\
	&\qquad= -\frac{\zeta^2 K^2}{N^3}\E \qB{ \frac{N^3}{K^2}m^3 - \frac{N^3}{K^2}3m^3 + \frac{N}{K}\sum_{i}\sum_{j\sim i}3mG_{ii}G_{jj} } \notag\\
	&\qquad= -\frac{3\zeta^2 K}{N^2}\E \qB{ \sum_{i}\sum_{j\sim i}mG_{ii}G_{jj}   } + \zeta^2 \E \qb{2m^3}.
\end{align}

\subsubsection{Estimate on \texorpdfstring{$J_{3}$}{J3}}
Recall that 
\[ 
w_{J_{3}}\E J_{3}=\frac{\zeta K}{N^2}\sum_{i}\sum_{j\sim i}\sum_{k\neq i}\kappa_{ik}^{(4)}\E \qB{ \partial_{ik}^{3} \pB{G_{ik}G_{jj}} }.
\]
Note that the terms contained in $J_{3}$ with more than two off-diagonal Green function entries are negligible by using Lemma \ref{lemma:powercounting}.
The only non-negligible terms are $J_{3}^{(0)}$ and $J_{3}^{(1)}$ which contain no and one off-diagonal Green function entry respectively.
By simple calculation, we get
\begin{align}
	J_{3}^{(0)}= -\frac{\zeta K}{N^2} \sum_{i}\sum_{j\sim i}\sum_{k}\kappa_{ik}^{(4)}G_{ii}^2 G_{jj}G_{kk}^2,\\
	J_{3}^{(1)}= -\frac{\zeta K}{N^2} \sum_{i}\sum_{j\sim i}\sum_{k}\kappa_{ik}^{(4)}G_{ii} G_{jj}G_{kk}^2G_{ij},
\end{align}
To estimate 	$J_{3}^{(0)}$ we expand 
\begin{align}
	(G_{ii}^2-m^2)(G_{jj}-m)(G_{kk}^2-m^2)=&-m^5+G_{jj}m^4 +(G_{ii}^2+G_{kk}^2)m^3\nonumber \\
	&-G_{jj}(G_{ii}^2+G_{kk}^2)m^2 - G_{ii}^2G_{kk}^2m + G_{ii}^2G_{jj}G_{kk}^2,
\end{align}
and by simple calculation, we obtain
\begin{align}\label{eq:G_ii^2G_jjG_kk^2}
	G_{ii}^2 G_{jj}G_{kk}^2 = -4m^5 +m^4 (G_{jj} + 2G_{ii}+2G_{kk})+O(N^\epsilon q^{-2}).
\end{align}

Hence we get 
\begin{align}
	\E J_{3}^{(0)}&=\frac{\zeta K}{N^2} \sum_{i}\sum_{j\sim i}\sum_{k}\kappa_{ik}^{(4)}\E \qb{G_{ii}^2 G_{jj}G_{kk}^2} \nonumber\\
	&= -\frac{\zeta K}{N^2} \sum_{i}\sum_{j\sim i}\sum_{k}\kappa_{d}^{(4)}\E \qb{G_{ii}^2 G_{jj}G_{kk}^2}-\frac{\zeta K}{N^2} \sum_{i}\sum_{j\sim i}\sum_{k\sim i}(\kappa_{s}^{(4)}-\kappa_{d}^{(4)})\E \qb{G_{ii}^2 G_{jj}G_{kk}^2} \nonumber\\
	&=-\zeta N\kappa_{d}^{(4)}\E \qb{m^5}-\frac{\zeta K}{N^2}(\kappa_{s}^{(4)}-\kappa_{d}^{(4)})\E \qb{ \frac{N^3}{K^2}m^5} + O(N^\epsilon q^{-4})\nonumber\\
	&= -\zeta \pB{\frac{N}{K} (\kappa_s^{(4)} - \kappa_d^{(4)} ) + N \kappa_d^{(4)}}\E\qb{m^5}+ O(\Phi) \nonumber\\
	&= -\zeta q^{-2}\xi^{(4)}\E\qb{m^5}+ O(\Phi).
\end{align}

Now we show that $\E J_{3,0}^{(1)}$ is also negligible. Using $|G_{ii}|, |G_{jj}|, |G_{kk}| \prec 1 $ and Lemma \ref{lemma:powercounting}, we get 
\begin{equation}
	|\E J_{3}^{(1)}| \leq \frac{N^\epsilon}{\sqrt{N}q^2}= O(\Phi).
\end{equation}
To sum up, we conclude that 
\begin{equation}\label{eq:J3}
	w_{J_{3}}\E J_{3} = -\zeta q^{-2}\xi^{(4)}\E\qb{m^5}+ O(\Phi).
\end{equation}

Now we estimate $\E I_{1,0}$. By \eqref{eq:EI_{1,0}^2}, \eqref{eq:J11}, \eqref{eq:J12}, \eqref{eq:expand J_{r,s}} and \eqref{eq:J3}
\begin{align}
	&\E I_{1,0}=\frac{1}{N} \E \qB{ \sum_{i,j} \kappa_{ij}^{(2)} G_{ii}G_{jj}Q} + O(\Phi) \nonumber\\
	&= -\E \qB{(1-\zeta )m^2 Q } -\frac{\zeta K}{N^2}\E\qB{\sum_{i}\sum_{j\sim i} zG_{ii}G_{jj}} - \frac{\zeta K}{N^2}\E\qB{\sum_{i}\sum_{j\sim i} G_{ii}G_{jj}\p{m+2\zeta m}} +O(\Phi) \notag\\
	&= -\frac{K\zeta(1+2\zeta)}{N^2}\E\qB{ \sum_{i}\sum_{j\sim i}   mG_{ii}G_{jj}} + \E \qb{ 2\zeta ^2 m^3 - \zeta m - \zeta q^{-2} \xi^{(4)}m^5 } \notag \\
	& \qquad- \E \qB{(1-\zeta )m^2 Q }  - \frac{\zeta K}{N^2}\E\qB{\sum_{i}\sum_{j\sim i} G_{ii}G_{jj}\p{m+2\zeta m}} + O(\Phi)  \notag\\
	& =  -\E\qB{ (z+m+2\zeta m)(1-\zeta)m^2 +2\zeta^2m^3 - \zeta m - \zeta \frac{\xi^{(4)}}{q^{2}} m^5} + O(\Phi),
\end{align}
and this concluded the proof of Lemma \ref{lem:truncated}.

\section{Proof of Lemma \ref{lemma:local law cgSBM}} \label{sec:proof of local law}
With Definitions in Appendix \ref{sec:prelim}, recall that our goal is to show that 
\begin{align}
    \left\lvert s(z)-\frac{-1}{z+m_{sc}} \right \rvert = \abs{s(z)-m_{sc}(z)} \prec \frac{\sqrt{N}}{q^4}+\frac{1}{q}
\end{align}
where $s(z)$ is defined as
\begin{align}
    s(z) = \frac{1}{N}\sum_{i,j}\G{ij}(z)
\end{align}
and $G(z) = (H-zI)^{-1}$ is the green function of matrix $H$ satisfying the Definition \ref{def:cgSBM} with $\phi>1/8$. To prove this Lemma, we prove a Lemma that gives better upper bounds for sums of entries of $G$.

\begin{lemma}[Improved Bound for $T_k$]\label{lem:T_k}
    For any $\phi>0$, 
    \begin{align}
        T_k \prec \frac{1}{q^2}
    \end{align}
    where $T_k$ is defined as
    \begin{align}
        T_k(z) = \frac{1}{\sqrt{N}} \sum_{j} \G{kj}
    \end{align}
\end{lemma}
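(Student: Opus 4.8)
\textbf{Proof sketch for Lemma~\ref{lem:T_k}.} The plan is to extract a self-consistent equation for $T_k$ by deleting the $k$-th row and column of $H$, and then to run a short bootstrap in which each pass gains a factor $q^{-1}$. Write $\mathbf 1$ for the all-ones vector, so $\sqrt N\,T_k = (G\mathbf 1)_k$. From $HG = I + zG$, reading off the $k$-th coordinate of $HG\mathbf 1 = \mathbf 1 + zG\mathbf 1$ gives $\sum_j H_{kj}\sqrt N\,T_j = 1 + z\sqrt N\,T_k$. Let $G^{(k)}$ be the resolvent of $H$ with the $k$-th row and column removed, $T_j^{(k)} := \tfrac{1}{\sqrt N}\sum_{\ell\neq k}G^{(k)}_{j\ell}$; the Schur-complement identity $G_{j\ell} = G^{(k)}_{j\ell} + G_{jk}G_{k\ell}/G_{kk}$ for $j,\ell\neq k$ gives the decoupling $T_j = T_j^{(k)} + (G_{jk}/G_{kk})T_k$. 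Substituting this and $\sum_{j\neq k}H_{kj}G_{jk} = (HG)_{kk} - H_{kk}G_{kk} = 1 + zG_{kk} - H_{kk}G_{kk}$ into the previous display, the terms carrying $z$ and $H_{kk}$ cancel and one is left with
\begin{equation}
	T_k = G_{kk}\Bigl(\tfrac{1}{\sqrt N} - \sum_{j\neq k} H_{kj}\,T_j^{(k)}\Bigr).
\end{equation}

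The gain is that $\{H_{kj}\}_{j\neq k}$ is independent of $H^{(k)}$, hence of $(T_j^{(k)})_{j\neq k}$, so $\sum_{j\neq k}H_{kj}T_j^{(k)}$ is a linear form in independent sparse mean-zero variables. The standard large-deviation estimate for such forms (using the moment bound $\E|H_{ij}|^p \le (Cp)^{cp}/(Nq^{p-2})$) gives, with overwhelming probability,
\begin{equation}
	\Bigl|\sum_{j\neq k} H_{kj}\,T_j^{(k)}\Bigr| \prec \Bigl(\sum_{j\neq k}\sigma_{kj}^2\,|T_j^{(k)}|^2\Bigr)^{1/2} + \frac{1}{q}\,\max_{j\neq k}|T_j^{(k)}|.
\end{equation}
For the $\ell^2$ term I would use balancedness ($\max_j\sigma_{kj}^2 = O(N^{-1})$) together with the crude bound $\sum_j |T_j^{(k)}|^2 = \tfrac1N\|G^{(k)}\mathbf 1\|^2 \le \tfrac1N\|G^{(k)}\|^2\|\mathbf 1\|^2 = O(1)$, valid because on the spectral domain under consideration $z$ is a fixed distance from $[-2,2]$ and $\|H\|$ is within $o(1)$ of $2$ (Theorem~\ref{thm:matrix norm}), so by eigenvalue interlacing $\|G^{(k)}\| = O(1)$; this makes the term $O_\prec(N^{-1/2})$. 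For $G_{kk}$ and for comparing $\max_j|T_j^{(k)}|$ with $\Psi := \max_k|T_k|$ I would invoke the entrywise local law (Theorem~\ref{thm:locallaw}), which gives $|G_{kk}| = O(1)$ and $|G_{jk}| \prec q^{-1}$, whence $|T_j^{(k)}| \le |T_j| + (|G_{jk}|/|G_{kk}|)|T_k| \prec \Psi$. Combining and taking a union bound over the $N$ indices $k$ (harmless for stochastic domination),
\begin{equation}
	\Psi \;\prec\; N^{-1/2} + \frac{\Psi}{q}.
\end{equation}

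Finally I would bootstrap: the trivial a priori bound $\Psi \prec 1$, which follows from $\sqrt N|T_k| = |\langle e_k, G\mathbf 1\rangle| \le \|G\|\sqrt N \prec \sqrt N$, fed back into the inequality gives $\Psi \prec N^{-1/2} + q^{-1}$; a second pass gives $\Psi \prec N^{-1/2} + q^{-2}$, which is stable since $q^{-1} = o(1)$. This yields the claimed estimate $T_k \prec q^{-2}$ (the additive $N^{-1/2}$, coming from the deterministic part $G_{kk}/\sqrt N$ of $T_k$ and the $\ell^2$ part of the large-deviation bound, is of lower order in the sparse range of interest). The main obstacle is the bookkeeping in the self-consistent step: one must verify that every object produced by the resolvent expansions — the diagonal entry $G_{kk}$, the minor quantities $T_j^{(k)}$, and the interlacing bound $\|G^{(k)}\| = O(1)$ — is controlled by Theorem~\ref{thm:locallaw} with an error that is either genuinely lower order or carries a gain of $q^{-1}$, so that the iteration converges; the large-deviation bound for sparse linear forms and the $\ell^2$ estimate $\sum_j|T_j^{(k)}|^2 = O(1)$ are the two quantitative inputs needing the most care.
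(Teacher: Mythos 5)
Your route is genuinely different from the paper's, and the core of it is sound. The paper proves Lemma \ref{lem:T_k} by the high-moment method: it applies the cumulant expansion (Lemma \ref{lemma:Stein}) to $z\,\E\bigl[|T_k|^{2D}\bigr]$, extracts the self-consistent contributions $-m_{sc}\,\E|T_k|^{2D}$ and $\kappa_3 m_{sc}^2\,\E|T_k|^{2D}$ from the first two cumulant orders, bounds every remaining term by $O(q^{-(r-1)})\,\E|T_k|^{2D-1}$ through the power-counting Lemma \ref{lemma:powercounting} and a lengthy bookkeeping of the derivative structure, and closes with Young's and Jensen's inequalities. You instead derive the exact scalar identity $T_k=G_{kk}\bigl(N^{-1/2}-\sum_{j\neq k}H_{kj}T_j^{(k)}\bigr)$ (your algebra is right: the $z$ and $H_{kk}$ terms do cancel), apply the sparse large-deviation estimate to the linear form in the independent row $\{H_{kj}\}_{j\neq k}$, and absorb. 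Your quantitative inputs ($|G_{jk}|\prec q^{-1}$, $|G_{kk}|\asymp 1$, interlacing plus the norm bound to get $\|G^{(k)}\|=O(1)$, and $\sum_j|T_j^{(k)}|^2=O(1)$) are all available on the relevant domain, $z$ at fixed distance from $[-2,2]$, which is also where the paper's proof implicitly works (it uses the $z\in\Gamma$ case of Lemma \ref{lemma:powercounting}). Your approach trades the combinatorial expansion for minor/Schur-complement machinery and is much shorter; note also that the inequality $\Psi\prec N^{-1/2}+\Psi/q$ absorbs in a single step (since $N^{\epsilon}/q\le \tfrac12$ for $\epsilon<\phi$), so the two-pass bootstrap is not even needed.

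The one point to correct is your closing claim that the additive $N^{-1/2}$ is of lower order ``in the sparse range of interest.'' What your argument proves is $T_k\prec q^{-2}+N^{-1/2}$ (indeed $T_k\prec N^{-1/2}$ after full absorption), and this implies the stated bound $q^{-2}$ only when $q\le N^{1/4}$, i.e.\ $\phi\le 1/4$, whereas the lemma is asserted for all $\phi>0$ and is invoked in the range $1/8<\phi<1/2$. For $\phi>1/4$ the bound $q^{-2}$ alone cannot hold: $T_k=\langle e_k,G\mathbf{e}\rangle$ with $\mathbf{e}=N^{-1/2}\mathbf{1}$ carries a deterministic component of size $\asymp |m_{sc}(z)|/\sqrt{N}\gg q^{-2}$, visible in your own identity through the $G_{kk}/\sqrt{N}$ term. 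This is not a defect of your proof relative to the paper's: the paper's argument discards the term $N^{-1/2}\,\E\bigl[T_k^{D-1}\overline{T_k}^{\,D}\bigr]$ in its very first display as ``negligible,'' which is legitimate only under the same restriction $q\le N^{1/4}$, so the moment method as written also only yields $T_k\prec q^{-2}+N^{-1/2}$. You should therefore state your conclusion as $T_k\prec q^{-2}+N^{-1/2}$ rather than asserting the $N^{-1/2}$ can be dropped; whether this honest bound still suffices for the downstream estimates in the proof of Proposition \ref{lemma:local law cgSBM} when $\phi>1/4$ is a question the paper's own write-up does not settle either.
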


This Lemma will be proved in Appendix \ref{subsec:Tk}. To prove the Lemma \ref{lemma:local law cgSBM}, which is the local law of cgSBM in sparse regime, the goal is to find some $\Psi(z) = o(1)$ such that
\begin{align}
    \expect{\abs{P(s)}^{2D}} \leq \abs{\Psi(z)}^{2D}
\end{align}
where $P(s)$ is defined as
\begin{align}
    P(s) = 1+(z+m_{sc}(z)-\kappa_3 m_{sc}(z)^2 )s(z)
\end{align}
with $\kappa_{3} = \sum_j \kappa_{ij}^{(3)}$.

By rewriting only one $P(s)$ in $\expect{\abs{P(s)}^{2D}}$,
\begin{align}
    & \expect{\abs{P(s)}^{2D}} \nonumber\\
    =\ & \expect{P(s)^{D-1}\overline{P(s)}^D \cdot \left\{1+(z+m_{sc}-\kappa_3 m_{sc}^2 )s(z)\right\} } \nonumber\\
    =\ & \expect{P(s)^{D-1}\overline{P(s)}^D} + \expect{P(s)^{D-1}\overline{P(s)}^D \cdot \frac{1}{N}\sum_{i,j}(HG-I)_{ij}} \nonumber\\
    & + m_{sc}\expect{P(s)^{D-1}\overline{P(s)}^D \cdot s(z)}
    - \kappa_3 m_{sc}^2 \expect{P(s)^{D-1}\overline{P(s)}^D \cdot s(z)} \nonumber\\
    =\ & \frac{1}{N}\sum_{i,j,k}\expect{P(s)^{D-1}\overline{P(s)}^D \cdot H_{ik}\G{kj}} \nonumber\\
    & + m_{sc}\expect{P(s)^{D-1}\overline{P(s)}^D \cdot s(z)}
    - \kappa_3 m_{sc}^2 \expect{P(s)^{D-1}\overline{P(s)}^D \cdot s(z)} \nonumber\\
    =\ & C_1 + C_2 + \cdots + C_t + R_t \nonumber\\
    & + m_{sc}\expect{P(s)^{D-1}\overline{P(s)}^D \cdot s(z)}
    - \kappa_3 m_{sc}^2 \expect{P(s)^{D-1}\overline{P(s)}^D \cdot s(z)} \label{eq:Ps_cumul_sum}
\end{align}
where the terms $C_1, C_2, \cdots, C_t$ and $R_t$ is generated by Lemma \ref{lemma:Stein} (cumulant expansion) and
\begin{align}
    C_r = \sum_{i,j,k} \frac{\kappa_{ik}^{(r+1)}}{r!}\expect{\partial_{ik}^{r}\left(P(s)^{D-1}\overline{P(s)}^D \cdot \frac{1}{N}\G{kj}\right)}
\end{align}
for $1 \leq r \leq t$ and
\begin{align}
    R_t = \sum_{i,j,k}\expect{\Omega_t\left(P(s)^{D-1}\overline{P(s)}^D \cdot \frac{1}{N}\G{kj}H_{ik} \right)}
\end{align}
where
\begin{align}
    \expect{\Omega_t\left(P(s)^{D-1}\overline{P(s)}^D \cdot \frac{1}{N}\G{kj}H_{ik} \right)} \leq 2C_t' \cdot \expect{\abs{H_{ij}}^{t+2}} \cdot \| \partial_{ij}^{t+1}(P(s)^{D-1}\overline{P(s)}^D \cdot \frac{1}{N}\G{kj}) \|_\infty
\end{align}
Then, we have a lemma for $C_1,\cdots,C_t$ and $R_t$ which will be proved in Appendix \ref{subsec:lemmaPs}.
\begin{lemma} \label{lemmaPs}
For sufficiently large $N$ and $r,t \geq 3$, we have
\begin{align}
    &C_1+m_{sc}\expect{P(s)^{D-1}\overline{P(s)}^D \cdot s(z)} = O\left(\frac{\sqrt{N}}{q^6}\right)\cdot\expect{\abs{P(s)}^{2D-1}} \\
    &C_2 - \kappa_3 m_{sc}^2 \expect{P(s)^{D-1}\overline{P(s)}^D \cdot s(z)} =O\left(\frac{\sqrt{N}}{q^4}\right)\expect{\abs{P(s)}^{2D-1}} \\
    &C_r = O\left(\frac{\sqrt{N}}{q^{r+1}}\right)\expect{\abs{P(s)}^{2D-1}} \\
    &R_t = O\left(\frac{\sqrt{N}}{q^{t+2}}\right) \cdot \| \abs{P(s)}^{2D-1} \|_\infty
\end{align}
\end{lemma}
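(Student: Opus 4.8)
The plan is to run a moment (self-consistent equation) argument on $P(s)$, using the identity \eqref{eq:Ps_cumul_sum}: after replacing $\tfrac1N\sum_{i,j}(HG-I)_{ij}+1$ by $\tfrac1N\sum_{i,j,k}H_{ik}\G{kj}$ and applying the cumulant expansion (Lemma~\ref{lemma:Stein}) in the variable $H_{ik}$, $\expect{\abs{P(s)}^{2D}}$ is written as $C_1+\dots+C_t+R_t$ plus the two self-consistent terms $m_{sc}\expect{P(s)^{D-1}\overline{P(s)}^D s(z)}$ and $-\kappa_3 m_{sc}^2\expect{P(s)^{D-1}\overline{P(s)}^D s(z)}$, where $C_r=\sum_{i,j,k}\tfrac{\kappa_{ik}^{(r+1)}}{r!}\expect{\partial_{ik}^r(P(s)^{D-1}\overline{P(s)}^D\cdot\tfrac1N\G{kj})}$. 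The goal is to show the \emph{leading} parts of $C_1$ and $C_2$ reconstruct exactly $-m_{sc}\expect{\cdots s}$ and $\kappa_3 m_{sc}^2\expect{\cdots s}$, so they cancel, and that every remaining piece is $\expect{\abs{P(s)}^{2D-1}}$ times an $o(1)$ factor controlled by $\Psi:=\sqrt N/q^4+1/q$. The workhorses are: the derivative formula $\partial_{ik}\G{ab}=-(\G{ai}\G{kb}+\G{ak}\G{ib})$, which gives $\partial_{ik}s(z)=-2T_iT_k$ and hence $\partial_{ik}P(s)=-2(z+m_{sc}-\kappa_3 m_{sc}^2)T_iT_k$; the entrywise local law (Theorem~\ref{thm:locallaw}), $\abs{\G{ij}-\delta_{ij}m_{sc}}\prec 1/q+1/\sqrt{N\eta}$; the power-counting Lemma~\ref{lemma:powercounting}; the block normalizations $\sum_k\sigma_{ik}^2=1$ and $\sum_k\kappa_{ik}^{(r+1)}=O(q^{-(r-1)})$; and, decisively, the improved bound $T_k\prec 1/q^2$ of Lemma~\ref{lem:T_k}.

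For $C_1$ (where $\kappa_{ik}^{(2)}=\sigma_{ik}^2$), I would expand $\partial_{ik}(P(s)^{D-1}\overline{P(s)}^D\cdot\tfrac1N\G{kj})$ by Leibniz. When $\partial_{ik}$ hits $\tfrac1N\G{kj}$ it gives $-\tfrac1N(\G{ki}\G{kj}+\G{kk}\G{ij})$; substituting $\G{kk}=m_{sc}+O_\prec(1/q)$ and using $\sum_k\sigma_{ik}^2=1$ turns the $\G{kk}\G{ij}$ piece into $-m_{sc}\expect{P(s)^{D-1}\overline{P(s)}^D s(z)}$, cancelling the self-consistent term. The $\G{ki}\G{kj}$ piece (one extra off-diagonal entry $\prec 1/q$, and a factor $\sqrt N\,T_k$ after the $j$-sum), the error from $\G{kk}-m_{sc}$, and all terms where $\partial_{ik}$ hits a $P(s)$ or $\overline{P(s)}$ factor (each producing $\partial_{ik}P(s)\prec T_iT_k\prec 1/q^4$, then summed against $\sigma_{ik}^2=O(1/N)$ and $\sum_j\G{kj}=\sqrt N\,T_k$) are all $O_\prec(\sqrt N/q^6)\,\expect{\abs{P(s)}^{2D-1}}$. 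For $C_2$ the same scheme applies with two derivatives: the unique monomial of $\partial_{ik}^2\G{kj}$ with a single off-diagonal entry is $2\G{kk}\G{ii}\G{kj}\approx 2m_{sc}^2\G{kj}$, which together with $\sum_k\kappa_{ik}^{(3)}=\kappa_3$ yields $\kappa_3 m_{sc}^2\expect{P(s)^{D-1}\overline{P(s)}^D s(z)}$ and cancels; every other monomial carries at least two off-diagonal entries, so with $\abs{\kappa_{ik}^{(3)}}=O(1/(Nq))$ they sum to $O_\prec(\sqrt N/q^4)\,\expect{\abs{P(s)}^{2D-1}}$.

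For $C_r$ with $r\ge3$, Leibniz, the derivative formula, and $\abs{\kappa_{ik}^{(r+1)}}=O(1/(Nq^{r-1}))$ give, after power counting, $O_\prec(\sqrt N/q^{r+1})\,\expect{\abs{P(s)}^{2D-1}}$ once the deterministic leading piece $\propto \kappa_{r+1}m_{sc}^{r+1}=O(q^{-(r-1)})$ is split off (which is itself $\le q^{-1}\le\Psi$, hence harmless). The Stein remainder $R_t$ is bounded via $\expect{\abs{H_{ik}}^{t+2}}=O(1/(Nq^t))$, the $j$-sum $\sum_j\G{kj}=\sqrt N\,T_k\prec\sqrt N/q^2$, and the (polynomially bounded) uniform norm $\|\partial_{ik}^{t+1}(P(s)^{D-1}\overline{P(s)}^D\tfrac1N\G{kj})\|_\infty$, yielding $R_t=O(\sqrt N/q^{t+2})\,\|\abs{P(s)}^{2D-1}\|_\infty$, negligible for $t$ large. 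All estimates are uniform in $z\in\mathcal{D}_\ell$ (equivalently on $\Gamma_2$); feeding them into \eqref{eq:Ps_cumul_sum} and applying Young's inequality gives $\expect{\abs{P(s)}^{2D}}\le C_D\,\Psi^{2D}$, hence $\abs{P(s)}\prec\Psi$, and since $P(s)=-\kappa_3 m_{sc}^3+(z+m_{sc}-\kappa_3 m_{sc}^2)(s-m_{sc})$ with $\abs{z+m_{sc}-\kappa_3 m_{sc}^2}\sim1$ on $\mathcal{D}_\ell$ and $\abs{\kappa_3}=O(1/q)$, this delivers the local law $\abs{s(z)-m_{sc}(z)}\prec\sqrt N/q^4+1/q$ of Lemma~\ref{lemma:local law cgSBM}.

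The main obstacle is the fluctuation-averaging cancellation in $C_1$ and $C_2$: one must check that after reconstructing the self-consistent equation and subtracting it, the residual genuinely gains the extra powers of $q$ claimed, uniformly down to $\eta\sim N^{-1}$, rather than only what naive power counting gives. This is exactly where the block normalizations and, above all, the improved bound $T_k\prec q^{-2}$ of Lemma~\ref{lem:T_k} are indispensable: without it, sums like $\sum_j\G{kj}$ would only be $O_\prec(1)$ rather than $O_\prec(\sqrt N/q^2)$, the $T_iT_k$ factors created when $\partial_{ik}$ hits $P(s)$ would only be $O_\prec(1)$ rather than $O_\prec(q^{-4})$, and the error terms would overwhelm $\Psi$.
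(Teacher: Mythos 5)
Your proposal follows essentially the same route as the paper's proof: the same cumulant-expansion decomposition according to whether $\partial_{ik}$ hits $P(s)$, $\overline{P(s)}$ or $\tfrac1N\G{kj}$, reconstruction of the self-consistent terms from the monomials $\G{kk}\G{ij}$ (for $C_1$) and $\G{kk}\G{ii}\G{kj}$ (for $C_2$) so that they cancel against $m_{sc}\expect{\cdots s}$ and $-\kappa_3 m_{sc}^2\expect{\cdots s}$, power counting driven by the improved bound $T_k\prec q^{-2}$ of Lemma \ref{lem:T_k} together with $\partial_{ik}P(s)\prec q^{-4}$, and the same moment bound $\E|H_{ik}|^{t+2}=O(N^{-1}q^{-t})$ for $R_t$. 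The only caveat, which mirrors a looseness already present in the paper's own proof (whose estimate for $C_{1,3}$ plus the self-consistent term is $O(1/q^2)$, not $O(\sqrt N/q^6)$), is that the $\G{ki}\G{kj}$ piece and the $\G{kk}-m_{sc}$ replacement error in $C_1$ are only $O_\prec(1/q^2)$ resp.\ $O_\prec(\sqrt N/q^4)$ by the tools you invoke, and likewise the leading part of $C_{r,3}$ is controlled through $s(z)\prec\sqrt N/q^2$ rather than by splitting off a deterministic piece; this does not affect the final application, since the proof of Lemma \ref{lemma:local law cgSBM} only requires all residuals to be dominated by $\sqrt N/q^4+1/q$.
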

Take the integer $t$ larger than $8D-2$. Continuing (\ref{eq:Ps_cumul_sum}), by Lemma \ref{lemmaPs},
\begin{align}
    \expect{\abs{P(s)}^{2D}} =\ & O\left(\frac{\sqrt{N}}{q^6}\right) \cdot \expect{\abs{P(s)}^{2D-1}} + O\left(\frac{\sqrt{N}}{q^4}\right) \cdot \expect{\abs{P(s)}^{2D-1}} \nonumber\\
    & + O\left(\frac{\sqrt{N}}{q^4}\right) \cdot \expect{\abs{P(s)}^{2D-1}} + O\left(\frac{\sqrt{N}}{q^5}\right) \cdot \expect{\abs{P(s)}^{2D-1}} + \cdots \nonumber\\
    & + O\left(\frac{1}{q^{t+2}}\right) \cdot \| \abs{P(s)}^{2D-1} \|_\infty \nonumber\\
    =\ & O\left(\frac{\sqrt{N}}{q^4}\right) \cdot \expect{\abs{P(s)}^{2D-1}} + O\left(\frac{\sqrt{N}}{q^{t+2}}\right) \cdot \| \abs{P(s)}^{2D-1} \|_\infty
\end{align}
Then since the infinite norm is constant, by Young's inequality and Jensen's inequality,
\begin{align}
    \expect{\abs{P(s)}^{2D}} & \leq \frac{1}{2D} \cdot O\left(\frac{\sqrt{N}}{q^4}\right)^{2D} + \frac{2D-1}{2D} \cdot \expect{\abs{P(s)}^{2D-1}}^{\frac{2D}{2D-1}} + O\left(\frac{1}{q^{t+2}}\right) \nonumber\\
    & \leq \frac{1}{2D} \cdot O\left(\frac{\sqrt{N}}{q^4}\right)^{2D} + \frac{2D-1}{2D} \cdot \expect{\abs{P(s)}^{2D}} + O\left(\frac{1}{q^{t+2}}\right) \nonumber\\
    & = \frac{1}{2D} \cdot O\left(\frac{\sqrt{N}}{q^4}\right)^{2D} + \frac{2D-1}{2D} \cdot \expect{\abs{P(s)}^{2D}}
\end{align}
since $t>8D-2$ and
\begin{align}
    \expect{\abs{P(s)}^{2D}} \leq O\left(\frac{\sqrt{N}}{q^4}\right)^{2D}
\end{align}
which implies
\begin{align}
    P(s) = 1+ (z+m_{sc}-\kappa_3 m_{sc}^2)s(z) \prec \frac{\sqrt{N}}{q^4}
\end{align}
so that
\begin{align}
    \abs{s(z)-\frac{-1}{z+m_{sc}-\kappa_3 m_{sc}^2}} \prec \frac{\sqrt{N}}{q^4}
\end{align}
Also,
\begin{align}
    \abs{\frac{-1}{z+m_{sc}-\kappa_3 m_{sc}^2}-\frac{-1}{z+m_{sc}}} = \abs{\frac{-1}{z+m_{sc}}}\cdot\abs{\left(1-\frac{\kappa_3 m_{sc}^2}{z+m_{sc}}\right)^{-1}-1} \prec \frac{1}{q}
\end{align}
leads to
\begin{align}
    \abs{s(z)-\frac{-1}{z+m_{sc}}} = \abs{s(z)-m_{sc}(z)} \prec \frac{\sqrt{N}}{q^4}+\frac{1}{q}
\end{align}
which finalizes the proof of Lemma \ref{lemma:local law cgSBM}.

\subsection{Proof of Lemma \ref{lem:T_k}} \label{subsec:Tk}
For any $t \geq 3$ and sufficiently large integer $D$,
\beq \begin{split}
    z\expect{\abs{T_k(z)}^{2D}}
    &= \expect{T_k(z)^{D-1}\overline{T_k(z)}^D \cdot \frac{1}{\sqrt{N}}\sum_{j}z\G{kj}} \\
    &= \expect{T_k(z)^{D-1}\overline{T_k(z)}^D \cdot \frac{1}{\sqrt{N}}\sum_{j}(GH-I)_{kj}} \\
    &= \expect{T_k(z)^{D-1}\overline{T_k(z)}^D \cdot \frac{1}{\sqrt{N}}\sum_{i,j}\G{ki}H_{ij}}
    - \expect{T_k(z)^{D-1}\overline{T_k(z)}^D \cdot \frac{1}{\sqrt{N}}} \label{T_k_first}\\
    &\approx \sum_{i,j}\expect{T_k(z)^{D-1}\overline{T_k(z)}^D \cdot \frac{1}{\sqrt{N}}\G{ki}H_{ij}}
\end{split} \eeq
where the second term in (\ref{T_k_first}) is $O(1/\sqrt{N})$ so that negligible.  By Lemma \ref{lemma:Stein},
\begin{align}
    z\expect{\abs{T_k(z)}^{2D}} = A_1 + A_2 + \cdots + A_t + R_t
\end{align}
where
\begin{align}
    A_r = \sum_{i,j} \frac{\kappa_{ij}^{(r+1)}}{r!}\expect{\partial_{ij}^{r}(T_k(z)^{D-1}\overline{T_k(z)}^D \cdot \frac{1}{\sqrt{N}}\G{ki})}
\end{align}
for $1 \leq r \leq t$ and
\begin{align}
    R_t = \sum_{i,j}\expect{\Omega_t\left(T_k(z)^{D-1}\overline{T_k(z)}^D \cdot \frac{1}{\sqrt{N}}\G{ki}H_{ij} \right)}
\end{align}
where
\begin{align}
    \expect{\Omega_t\left(T_k(z)^{D-1}\overline{T_k(z)}^D \cdot \frac{1}{\sqrt{N}}\G{ki}H_{ij} \right)} \leq 2C_t \cdot \expect{\abs{H_{ij}}^{t+2}} \cdot \| \partial_{ij}^{t+1}(T_k(z)^{D-1}\overline{T_k(z)}^D \cdot \frac{1}{\sqrt{N}}\G{ki}) \|_\infty
\end{align}
In this proof, we use Lemma \ref{lemma:powercounting} for power counting argument frequently. For expanding the derivatives in $A_r$, we should divide the cases for the indices $i=j$ by the matrix differentiation formula (\ref{eq:matrixderivative}). However, dividing the $i=j$ cases makes the bounds of equations smaller so it is okay to consider the derivatives with respect to $H_{ij}$ for $i=j$ as same as $i \neq j$.

\subsubsection{Estimates for $A_1$}
The goal of this subsection is to prove 
\begin{align}
    A_1 = -m_{sc} \expect{\abs{T_k(z)}^{2D}}
\end{align}
where $A_1$ is defined as
\begin{align}
    A_1 \equiv \sum_{i,j} \kappa_{ij}^{(2)} \expect{\partial_{ij}\left( T_k(z)^{D-1} \overline{T_k(z)}^D \cdot \frac{1}{\sqrt{N}} \G{ki} \right)}
\end{align}
Among the terms in $A_1$, the only term that is not clearly negligible is
\begin{align}
    A_1 \approx \frac{1}{\sqrt{N}} \sum_{i,j}\kappa_{ij}^{(2)} \expect{T_k(z)^{D-1} \overline{T_k(z)}^D(-\G{kj}\G{ii})}
\end{align}
Since local law implies $\abs{\sum_i \kappa_{ij}^{(2)} \G{ii} - m_{sc}}\prec 1/q^2$, by taking $i$-sum first,
\begin{align}
    A_1 &\approx -m_{sc} \cdot \frac{1}{\sqrt{N}} \sum_{j}\expect{T_k(z)^{D-1} \overline{T_k(z)}^D\G{kj}} \nonumber\\
    &= -m_{sc} \cdot \expect{T_k(z)^{D-1} \overline{T_k(z)}^D \frac{1}{\sqrt{N}} \sum_{j}\G{kj}} \nonumber\\
    &= -m_{sc} \expect{\abs{T_k(z)}^{2D}}
\end{align}

\subsubsection{Estimates for $A_2$}
The goal of this subsection is to prove
\begin{align}
    A_2 = \kappa_3 \cdot m_{sc}^2 \cdot  \expect{\abs{T_k(z)}^{2D}} + O(1/q^2) \expect{\abs{T_k(z)}^{2D-1}}
\end{align}
where $A_2$ is defined as
\begin{align}
    A_2 \equiv \sum_{i,j} \frac{\kappa_{ij}^{(3)}}{2} \expect{\partial_{ij}^2 \left( T_k(z)^{D-1} \overline{T_k(z)}^D \cdot \frac{1}{\sqrt{N}} \G{ki} \right)}
\end{align}
For this subsection and some next subsections, we will introduce some additional definition.
\begin{definition}
For integer $r \geq 2$ and integer $1 \leq p \leq 6$, define $A_{r,p}$ as following:
\begin{align}
    &A_{r,1} = \frac{1}{r!\sqrt{N}} \sum_{i,j} \kappa_{ij}^{(r+1)} \expect{\partial_{ij}^{r-2}\{\partial_{ij}^2(T_k(z)^{D-1}) \overline{T_k(z)}^D\G{ki}\}} \\
    &A_{r,2} = \frac{1}{r!\sqrt{N}} \sum_{i,j} \kappa_{ij}^{(r+1)} \expect{\partial_{ij}^{r-2}\{T_k(z)^{D-1} \partial_{ij}^2(\overline{T_k(z)}^D)\G{ki}\}} \\
    &A_{r,3} = \frac{1}{r!\sqrt{N}} \sum_{i,j} \kappa_{ij}^{(r+1)} \expect{\partial_{ij}^{r-2}\{T_k(z)^{D-1} \overline{T_k(z)}^D\partial_{ij}^2(\G{ki})\}} \\
    &A_{r,4} = \frac{1}{r!\sqrt{N}} \sum_{i,j} \kappa_{ij}^{(r+1)} \expect{\partial_{ij}^{r-2}\{2\partial_{ij}(T_k(z)^{D-1}) \partial_{ij}(\overline{T_k(z)}^D)\G{ki}\}} \\
    &A_{r,5} = \frac{1}{r!\sqrt{N}} \sum_{i,j} \kappa_{ij}^{(r+1)} \expect{\partial_{ij}^{r-2}\{2\partial_{ij}(T_k(z)^{D-1}) \overline{T_k(z)}^D\partial_{ij}(\G{ki})\}} \\
    &A_{r,6} = \frac{1}{r!\sqrt{N}} \sum_{i,j} \kappa_{ij}^{(r+1)} \expect{\partial_{ij}^{r-2}\{2T_k(z)^{D-1} \partial_{ij}(\overline{T_k(z)}^D)\partial_{ij}(\G{ki})\}} 
\end{align}
It is clear that $A_r = A_{r,1}+A_{r,2}+A_{r,3}+A_{r,4}+A_{r,5}+A_{r,6}$.
\end{definition}
By simple calculation and counting, it is clear that $A_{2,3}$ is the only non-negligible term among the six components of $A_2$. By calculating the derivative in 
\begin{align}
    A_{2,3} = \frac{1}{2\sqrt{N}} \sum_{i,j} \kappa_{ij}^{(3)} \expect{T_k(z)^{D-1} \overline{T_k(z)}^D\partial_{ij}^2(\G{ki})},
\end{align}
the only non-negligible term is 
\begin{align}
    A_{2,3} &\approx \frac{1}{\sqrt{N}} \sum_{i,j} \kappa_{ij}^{(3)} \expect{T_k(z)^{D-1} \overline{T_k(z)}^D\G{ki}\G{jj}\G{ii}} \nonumber\\
    &= \frac{1}{\sqrt{N}} \sum_{i,j} \kappa_{ij}^{(3)} \expect{T_k(z)^{D-1} \overline{T_k(z)}^D\G{ki}(\G{jj}-m_{sc})\G{ii}} \label{A231_1}\\
    &\quad + \frac{1}{\sqrt{N}} \sum_{i,j} \kappa_{ij}^{(3)} \expect{T_k(z)^{D-1} \overline{T_k(z)}^D\G{ki}m(\G{ii}-m_{sc})} \label{A231_2}\\
    &\quad + \frac{1}{\sqrt{N}} \sum_{i,j} \kappa_{ij}^{(3)} \expect{T_k(z)^{D-1} \overline{T_k(z)}^D\G{ki}m_{sc}^2} \label{A232}
\end{align}
Define $A_{2,3,1}$ as the sum of (\ref{A231_1}) and (\ref{A231_2}), and $A_{2,3,2}$ as (\ref{A232}) so that $A_{2,3} = A_{2,3,1} + A_{2,3,2}$. Then,
\begin{align}
    \abs{A_{2,3,1}} &\leq \frac{2}{\sqrt{N}}\cdot N^2 \cdot \expect{\abs{T_k(z)}^{2D-1}} \cdot \frac{1}{Nq} \cdot \frac{1}{\sqrt{N}} \cdot \frac{1}{q} \cdot C \nonumber\\
    &= 2C \cdot \frac{1}{q^2} \expect{\abs{T_k(z)}^{2D-1}}
\end{align}
and by taking $j$-sum first,
\begin{align}
    A_{2,3,2} &= \kappa_3 \cdot m^2 \cdot \expect{T_k(z)^{D-1} \overline{T_k(z)}^D \cdot \frac{1}{\sqrt{N}} \sum_{i}\G{ki}} \nonumber\\
    &= \kappa_3 \cdot m^2 \cdot \expect{\abs{T_k(z)}^{2D}}
\end{align}
Therefore,
\begin{align}
    A_2 \approx A_{2,3} = A_{2,3,1} + A_{2,3,2} = O(1/q^2) \expect{\abs{T_k(z)}^{2D-1}} + \kappa_3 \cdot m^2 \cdot  \expect{\abs{T_k(z)}^{2D}}
\end{align}

\subsubsection{Estimates for $A_r \ (r\geq3)$}
The goal of this subsection is to prove
\begin{align}
    A_r = O(1/q^{r-1}) \expect{\abs{T_k(z)}^{2D-1}}
\end{align}
for all integer $r \geq 3$ where $A_r$ is defined as
\begin{align}
    A_r = \sum_{i,j} \frac{\kappa_{ij}^{(r+1)}}{r!}\expect{\partial_{ij}^{r}\left(T_k(z)^{D-1}\overline{T_k(z)}^D \cdot \frac{1}{\sqrt{N}}\G{ki}\right)}
\end{align}
We introduce new definitions and lemmas for proof.
\begin{definition}[Effective Non-diagonal Entries] \label{effentry}
The number of \textbf{effective non-diagonal entries} of some product of the entries of $G = (H-zI)^{-1}$ is defined as the number of non-diagonal entries of $G$ except $\G{ij},\G{ji}$ while the power larger than $2$ is counted as $2$. For example, the number of effective non-diagonal entries in $\G{ki}^3\G{kj}\G{ij}^2\G{ii}\G{jj}$ is $2+1=3$.
\end{definition}
\begin{definition}[Effectively negligible] \label{effneg}
The term with constant, sigma, kappa, expectation, $T_k$, and some entries of $G=(H-zi)^{-1}$ is \textbf{effectively negligible} if
\begin{align}
    n_c+n_s-1-\frac{1}{2}n_{end}<0
\end{align}
where $n_c$ is the degree of $N$ of the constant, $n_s$ is the number of indices of sigma and $n_{end}$ is the number of effective non-diagonal entries of the term.  By Lemma \ref{lemma:powercounting}, if a term is effectively negligible, then the term is negligible even without $\G{ij}, \G{ji}$ hence negligible. 
\end{definition}
For example, negligible term
\begin{align}
    \frac{1}{\sqrt{N}}\sum_{i,j}\kappa_{ij}^{(4)}\expect{T_k(z)^{D-1}\overline{T_k(z)}^D \cdot \G{ki}\G{ij}}
\end{align}
is not effectively negligible since $n_c=-1/2, n_s=2$, and $n_{end}=1$. Using this definition, we can make a lemma
\begin{lemma} \label{lemma_partial_rev}
Let $F(H_{ij})$ be a multiplication of the entries of $G=(H-zi)^{-1}$ with $n_{end}$ effectively negligible entries. Also, suppose that the powers of indices except $k,i,j$ and the powers of $\G{ki},\G{kj},\G{ik},\G{jk}$ are at most $1$. Then, among $\partial_{ij} F(H_{ij})$, there is no terms whose number of effectively negligible entries less than $n_{end}$.
\end{lemma}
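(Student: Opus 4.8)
The plan is to prove the statement by reducing it to a single elementary differentiation move and then running a short case analysis based on the resolvent differentiation identity. First I would recall that $\partial_{ij}\G{ab} = -\G{ai}\G{jb} - \G{aj}\G{ib}$ when $i \neq j$, and $\partial_{ii}\G{ab} = -\G{ai}\G{ib}$; the diagonal case will be treated as a degenerate instance of the off-diagonal one, since identifying $j$ with $i$ can only merge or delete effective non-diagonal entries and therefore never yields a monomial with fewer such entries than in the $i \neq j$ case. Applying the Leibniz rule to $F(H_{ij}) = \prod_m \G{a_m b_m}$, every monomial occurring in $\partial_{ij}F$ arises from $F$ by deleting exactly one factor $\G{ab}$ and inserting one of the two pairs $\G{ai}\G{jb}$ or $\G{aj}\G{ib}$. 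It therefore suffices to show that each such elementary replacement does not decrease the number of effective non-diagonal entries.

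Next I would split according to the deleted factor $\G{ab}$. If $a = b$, the deleted factor contributes $0$ to the count; checking the position of $a$ relative to $\{i,j\}$, when $a \notin \{i,j\}$ the inserted pair consists of two non-diagonal entries different from $\G{ij}$ and $\G{ji}$, hence effective, while when $a \in \{i,j\}$ one inserted factor is diagonal and the other equals $\G{ij}$ or $\G{ji}$, so the pair again contributes $0$; in either subcase the count does not drop. If instead $\G{ab}$ is an effective non-diagonal factor (so $a \neq b$ and $\{a,b\} \neq \{i,j\}$), its deletion lowers the capped count by at most one unit, and one checks using $a \neq b$ and $\{a,b\} \neq \{i,j\}$ that at least one of the two inserted factors $\G{ai}, \G{jb}$ (resp. $\G{aj}, \G{ib}$) is always effective non-diagonal, so the insertion compensates for the deletion. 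This is the point where the two hypotheses of the lemma enter: because $\G{ki}, \G{kj}, \G{ik}, \G{jk}$ and every entry carrying an index outside $\{k,i,j\}$ appear with power at most one, neither newly inserted factor can land in a subscript slot already saturated at the cap $2$, so the effective inserted factor genuinely raises the count by one rather than being absorbed by the cap. Combining the cases, every elementary move preserves or increases the effective-non-diagonal count, and summing over the Leibniz expansion gives the claim.

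The hard part will be the bookkeeping around the power cap at $2$ and the two excluded entries $\G{ij}, \G{ji}$: one must carefully rule out the scenario in which deleting an effective factor costs one unit while both inserted factors are \emph{wasted}, i.e.\ either diagonal, equal to $\G{ij}$ or $\G{ji}$, or hitting an already power-$2$ slot. This is precisely what the power hypotheses are designed to forbid, and making the interaction with the distinguished index $k$ airtight (since $\G{ki},\G{ik},\G{kj},\G{jk}$ are exactly the entries that could otherwise saturate a slot after insertion) is where the argument needs the most care. Everything else is a finite enumeration over where $a$ and $b$ sit relative to $\{i,j\}$.
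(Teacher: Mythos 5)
Your argument is correct and is essentially the paper's proof in contrapositive form: the paper assumes a term loses an effective entry and derives a contradiction from the same index configurations of the deleted factor relative to $\{i,j\}$ and $k$, using the same power-at-most-one hypotheses to rule out saturation at the cap. The only cosmetic omission in your case split is the deleted factor being $\G{ij}$ or $\G{ji}$ (non-diagonal but excluded), which is handled exactly like your diagonal case since it also contributes zero to the count.
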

\begin{proof}
Since $\partial_{ij}\G{pq} = -\G{pi}\G{jq} - \G{pj}\G{iq}$, without loss of generality, we will consider $\partial_{ij}$ replaces $\G{pq}$ into $\G{pi}\G{jq}$. Suppose that there exists a term in $\partial_{ij} F(H_{ij})$ such that have effectively negligible entries less than $n_{end}$. Then, $\G{pq}$ should be effectively negligible but $\G{pi}$ and $\G{jq}$ which are newly added should be not effectively negligible. Then, $\G{pi}$ and $\G{jq}$ should satisfy one of following conditions. The first one is to be diagonal entry, $\G{ij}$, or $\G{ji}$ and the second condition is to be the entry originally placed so that the power of such entry became larger than $2$ and not in the first condition. Now, there are three possible cases.
\begin{enumerate}
    \item If both $\G{pi}$ and $\G{jq}$ satisfy the first condition, then $p$ and $q$ should be either $i$ and $j$ which makes $\G{pq}$ be not effectively negligible entry.
    \item If one of $\G{pi}$ and $\G{jq}$ satisfies the first condition and the another satisfies the second condition, without loss of generality, suppose that $\G{pi}$ and $\G{jq}$ satisfy the first and second condition respectively. Then since $p=i$ or $p=j$, $q$ should be $k$ or index except $i,j,k$ so that $\G{pq}$ be effectively negligible. However, if $q=k$, then $\G{jq} = \G{jk}$ should have power larger than $2$ which is impossible by assumption. Also, if $q$ is a index except $i,j,k$, namely $\alpha$, then $\G{jq} = \G{j\alpha}$ should have power larger than $2$ which is impossible by assumption again.
    \item If both $\G{pi}$ and $\G{jq}$ satisfy the second condition, since they should not satisfy the first condition, $p$ and $q$ cannot be $i,j$. By the same reason as second case, $p$ and $q$ cannot be the indices except $i,j,k$. Therefore, $p=k$ and $q=k$ which leads to contradiction since $\G{pq}$ is not diagonal entry.
\end{enumerate}
Therefore, there is no such terms with effectively negligible entries less than $n_{end}$.
\end{proof}

\begin{lemma} \label{lemma_eff_rev}
Let $F(H_{ij})$ be a sum of some multiplications of the entries of $G=(H-zI)^{-1}$. Also, suppose that the powers of indices except $k,i,j$ and the powers of $\G{ki},\G{kj},\G{ik},\G{jk}$ are at most $1$. Then if $\sum_{i,j}\kappa_{ij}^{(r)}\expect{T_k(z)^\alpha\overline{T_k(z)}^\beta F(H_{ij}) }$ is effectively negligible, then
\begin{align}
    \sum_{i,j}\kappa_{ij}^{(r+1)}\expect{\partial_{ij}\left\{T_k(z)^\alpha\overline{T_k(z)}^\beta F(H_{ij})\right\} }
\end{align}
is also effectively negligible where $\alpha, \beta \geq 2$ and $n\geq3$
\end{lemma}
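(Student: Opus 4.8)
The plan is to expand $\partial_{ij}\{T_k(z)^\alpha\overline{T_k(z)}^\beta F(H_{ij})\}$ by the Leibniz rule and check that every monomial it produces — together with the replacement of the cumulant $\kappa_{ij}^{(r)}$ by $\kappa_{ij}^{(r+1)}$ — is again effectively negligible, i.e.\ has negative defect
\[
\Delta := n_c + n_s - 1 - \frac{1}{2}n_{end}.
\]
By hypothesis $\Delta<0$ for $S_r := \sum_{i,j}\kappa_{ij}^{(r)}\expect{T_k(z)^\alpha\overline{T_k(z)}^\beta F(H_{ij})}$, and since $\partial_{ij}$ is linear it suffices to treat one monomial of $F$ at a time. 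Two preliminary reductions: first, passing from $\kappa_{ij}^{(r)}$ to $\kappa_{ij}^{(r+1)}$ only multiplies the coefficient by an extra $q^{-1}$, which is harmless as $q=CN^\phi\to\infty$, and it alters neither $n_c$ (the $N^{-1}$ in $\kappa_{ij}^{(m)}=O(N^{-1}q^{-(m-2)})$ is present for all $m$) nor $n_s$ (still the two indices $i,j$); second, $\partial_{ij}\G{pq}=-(\G{pi}\G{jq}+\G{pj}\G{iq})$ for $i\neq j$, and the diagonal contributions $i=j$ drop a summation index and are strictly smaller, so they are handled exactly as in the proof of Lemma~\ref{lem:T_k}.

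Next I would run through the three Leibniz cases. When $\partial_{ij}$ falls on $T_k(z)^\alpha$ (the case of $\overline{T_k(z)}^\beta$ being symmetric), I would use
\[
\partial_{ij}T_k(z)=-\bigl(\G{ki}\,T_j(z)+\G{kj}\,T_i(z)\bigr),
\]
so a factor $T_k$ of size $\prec 1$ is replaced by a factor $T_i$ or $T_j$, again of size $\prec 1$ uniformly in its index, times one new off-diagonal Green-function entry $\G{ki}$ or $\G{kj}$. That entry is neither $\G{ij}$ nor $\G{ji}$, and its power in the monomial of $F$ was at most $1$ by hypothesis, so appending it raises the (capped) effective count by exactly $1$: $n_{end}$ increases by $1$, $n_c$ and $n_s$ are unchanged, hence $\Delta$ drops by $\tfrac12$ and stays negative. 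When $\partial_{ij}$ falls on $F$, the coefficient, the cumulant indices and the $T$-powers are left untouched, so $n_c,n_s$ are unchanged; and Lemma~\ref{lemma_partial_rev}, whose hypotheses are exactly those assumed here, tells us that every monomial of $\partial_{ij}F$ has at least as many effective off-diagonal entries as the monomial of $F$ it came from, so $n_{end}$ does not decrease and $\Delta$ does not increase. Thus every monomial of $\sum_{i,j}\kappa_{ij}^{(r+1)}\expect{\partial_{ij}\{T_k^\alpha\overline{T_k}^\beta F\}}$ has defect $\le\Delta(S_r)<0$; that is, the expression is a finite sum of effectively negligible monomials, and in particular negligible, as asserted.

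The delicate point is the $T_k$-derivative case: one has to be sure the freshly produced $T_i,T_j$ (with a summed index) do not spoil the power count. They are simply bounded by $\prec 1$ uniformly and absorbed, contributing nothing to $n_c,n_s,n_{end}$ — this discards the $N^{-1/2}$ hidden inside them, a lossy but perfectly admissible estimate in the spirit of Definition~\ref{effneg} — and one must record that the new entry $\G{ki}$ or $\G{kj}$ is genuinely \emph{effective}, which is precisely where the assumption that $\G{ki},\G{kj},\G{ik},\G{jk}$ occur in $F$ with power at most $1$ enters, guaranteeing the net gain in $n_{end}$ is $1$ rather than $0$; the rare index coincidences $i=k$ or $j=k$ lose a summation index and only lower $\Delta$ further. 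Granting Lemma~\ref{lemma_partial_rev}, the case where $\partial_{ij}$ hits $F$ is immediate, the cumulant bump is a non-issue, and the argument applies for every $r\ge 3$ in the statement (in fact it uses only $r\ge 2$, and the restrictions $\alpha,\beta\ge 2$ are not needed here).
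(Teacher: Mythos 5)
Your proposal is correct and takes essentially the same route as the paper: the same Leibniz decomposition into the pieces where $\partial_{ij}$ hits $T_k(z)^\alpha$, $\overline{T_k(z)}^\beta$, or $F$, with the derivative-of-$F$ piece handled by invoking Lemma \ref{lemma_partial_rev}, exactly as in the paper's proof. Your bookkeeping for the $\partial_{ij}T_k$ piece (absorbing $\frac{1}{\sqrt{N}}\sum_q \G{jq}$ into a factor $T_j\prec 1$ and counting only the new entry $\G{ki}$ or $\G{kj}$, which adds exactly one to the capped count by the power-one hypothesis) is numerically equivalent to the paper's explicit count $n_c'=n_c-\frac{1}{2}$, $n_s'=n_s+1$, $n_{end}'\geq n_{end}+1$, so in both accountings the defect does not increase and the conclusion follows.
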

\begin{proof}
Since $\sum_{i,j}\kappa_{ij}^{(r)}\expect{T_k(z)^\alpha\overline{T_k(z)}^\beta F(H_{ij}) }$ is effectively negligible, $n_c+n_s-1-\frac{1}{2}n_{end}<0$ where $n_c$ is the degree of $N$ of the constant in $F$, $n_s$ is the number of indices of sigma and $n_{end}$ is the number of effective non-diagonal entries of $G$ in $F$. Then,
\begin{align}
    \sum_{i,j}\kappa_{ij}^{(r+1)}\expect{\partial_{ij}\left\{T_k(z)^\alpha\overline{T_k(z)}^\beta F(H_{ij})\right\} } = L_1 + L_2 + L_3
\end{align}
where
\begin{align}
    &L_1 = \alpha \sum_{i,j}\kappa_{ij}^{(r+1)}\expect{T_k(z)^{\alpha-1}\overline{T_k(z)}^\beta F(H_{ij})\cdot \partial_{ij}T_k(z)} \\
    &L_2 = \beta \sum_{i,j}\kappa_{ij}^{(r+1)}\expect{T_k(z)^{\alpha}\overline{T_k(z)}^{\beta-1} F(H_{ij})\cdot \partial_{ij}\overline{T_k(z)}} \\
    &L_3 = \sum_{i,j}\kappa_{ij}^{(r+1)}\expect{T_k(z)^\alpha\overline{T_k(z)}^\beta F'(H_{ij})}
\end{align}
Let $n_c', n_s'$, and $n_{end}'$ be the constant of any term of $L_1$. Then since
\begin{align}
    \abs{\partial_{ij}T_k(z)} = \abs{\frac{1}{\sqrt{N}}\sum_q(-\G{ki}\G{jq}-\G{kj}\G{iq}) }
\end{align}
has new effectively non-diagonal term $\G{jq}$ and $\G{iq}$, $n_c' = n_c -1/2$, $n_s' = n_s + 1$, and $n_{end}' \geq n_{end}+1$. Therefore,
\begin{align}
    n_c'+n_s'-1-\frac{1}{2}n_{end}' &\leq n_c -1/2 +n_s+1-1-\frac{1}{2}(n_{end}+1) \nonumber\\
    & = n_c+n_s-1-\frac{1}{2}n_{end} <0
\end{align}
which implies any term of $L_1$ is effectively negligible so that $L_1$ is effectively negligible. Similarly, $L_2$ is N-effectively negligible also. For $L_3$, consider any term of $L_3$ with constants $n_c'', n_s''$ and $n_{end}''$, By the Lemma \ref{lemma_partial_rev}, $n_{end}'' \geq n_{end}$ which implies
\begin{align}
    n_c'' + n_s'' -1 -\frac{1}{2}n_{end}'' \leq n_c + n_s -1 -\frac{1}{2}n_{end} <0
\end{align}
so that any terms of $L_3$ is effectively negligible which makes $L_3$ became effectively negligible. Since $L_1$, $L_2$, and $L_3$ are effectively negligible,
\begin{align}
    \sum_{i,j}\kappa_{ij}^{(r+1)}\expect{\partial_{ij}\left\{T_k(z)^\alpha\overline{T_k(z)}^\beta F(H_{ij})\right\} }
\end{align}
is effectively negligible also.
\end{proof}

We will prove for $A_{r,p}$ ($p\neq3$) first.
\begin{theorem} \label{Arpthm}
$A_{r,p}$ is effectively negligible for $r \geq 2$ and $p \neq 3$
\end{theorem}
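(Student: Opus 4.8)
The plan is to reduce the statement to the case $r=2$ and then propagate to general $r$ via Lemma~\ref{lemma_eff_rev}. Fix $p\in\{1,2,4,5,6\}$ and write $A_{r,p}=\frac{1}{r!\sqrt N}\sum_{i,j}\kappa_{ij}^{(r+1)}\E[\,\partial_{ij}^{r-2}\{P_p\}\,]$, where $P_p$ is the ``two--derivative seed'': for $p\in\{1,2,4\}$ two of the derivatives are forced onto the factors $T_k(z)^{D-1}$ and $\overline{T_k(z)}^{D}$ (both onto one of them when $p=1,2$, one onto each when $p=4$), while for $p\in\{5,6\}$ one derivative is forced onto a $T$--factor and one onto $G_{ki}$. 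First I would prove that the seed obtained by taking $r=2$, namely $\sum_{i,j}\kappa_{ij}^{(3)}\E[P_p]$, is effectively negligible. The general case would then follow by applying Lemma~\ref{lemma_eff_rev} a total of $r-2$ times, each application absorbing one more derivative from $\partial_{ij}^{r-2}$ and raising the cumulant order by one, with Lemma~\ref{lemma_partial_rev} ensuring that no intermediate differentiation decreases the number of effective non-diagonal entries. One has to check that the hypotheses of Lemma~\ref{lemma_eff_rev} persist through the iteration: $\alpha,\beta\ge 2$ because $D$ is chosen large, and the bound that the powers of $G_{ki},G_{kj},G_{ik},G_{jk}$ and of the indices different from $k,i,j$ stay at most one holds because each differentiation introduces a fresh index carried with power one.

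For the $r=2$ seed I would power-count directly, using
\[
\partial_{ij}T_k(z)=\frac{1}{\sqrt N}\sum_q\bigl(-G_{ki}G_{jq}-G_{kj}G_{iq}\bigr)
\]
and the product rule, and checking the criterion $n_c+n_s-1-\tfrac12 n_{end}<0$ of Definition~\ref{effneg}. The explicit prefactor $\tfrac1{\sqrt N}$ already contributes $n_c=-\tfrac12$; every derivative landing on a $T$--factor supplies one further $\tfrac1{\sqrt N}$ (lowering $n_c$ by $\tfrac12$), a new summation index, and at least one new effective non-diagonal entry, namely one of $G_{jq}, G_{iq}$, whose index $q$ lies outside $\{k,i,j\}$. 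When $p\in\{1,2,4\}$ both forced derivatives hit $T$--factors, producing two extra $\tfrac1{\sqrt N}$'s and at least two new effective entries besides the surviving $G_{ki}$, so the criterion holds with room to spare. When $p\in\{5,6\}$ only one forced derivative hits a $T$--factor; the other turns $G_{ki}$ into $-G_{ki}G_{ji}-G_{kj}G_{ii}$, and although $G_{ji}$ is not effective and $G_{ii}$ is diagonal, the factor $G_{ki}$ (resp.\ $G_{kj}$) remains effective and is joined by the one created by the $T$--derivative, so $n_{end}\ge 3$ while $n_c\le -1$, which again forces the criterion below zero. This is to be contrasted with $A_{2,3}$, whose leading sub-term $\tfrac1{\sqrt N}\sum_{i,j}\kappa_{ij}^{(3)}\E[\,T_k(z)^{D-1}\overline{T_k(z)}^{D}G_{ki}G_{jj}G_{ii}\,]$ realizes $n_c+n_s-1-\tfrac12 n_{end}=0$ exactly and is therefore not effectively negligible: the gain in the cases $p\neq 3$ comes precisely from the obligatory $T$--differentiation.

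The hard part will be the bookkeeping underlying the iteration: tracking which Green-function factors are still effective after repeated differentiation (relying on the monotonicity of $n_{end}$ from Lemma~\ref{lemma_partial_rev}) and confirming that the structural hypotheses of Lemma~\ref{lemma_eff_rev} really do survive each step. In particular one must single out and bound separately the ``collision'' sub-terms, in which a differentiation recreates one of the distinguished entries $G_{ki},G_{kj},G_{ik},G_{jk}$ and raises its power above one, together with the sub-terms in which the exponent of $T_k(z)$ or $\overline{T_k(z)}$ drops below two; in each of these cases the offending sub-terms pick up additional factors $N^{-1/2}$ and additional effective entries, so that a crude application of Lemma~\ref{lemma:powercounting} already renders them negligible and they may be peeled off before Lemma~\ref{lemma_eff_rev} is used. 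Carrying this through for each of $p=1,2,4,5,6$ then shows that $A_{r,1},A_{r,2},A_{r,4},A_{r,5},A_{r,6}$ are all effectively negligible, which is the assertion of the theorem.
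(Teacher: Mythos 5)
Your proposal follows essentially the same route as the paper's proof: verify the $r=2$ seed by direct power counting and then propagate to general $r$ by iterating Lemma~\ref{lemma_eff_rev} (the paper phrases this as induction on $r$, using Lemma~\ref{lemma_partial_rev} for monotonicity of $n_{end}$), with the ``collision'' sub-terms in which $G_{ki}$ or $G_{kj}$ acquires power two peeled off and bounded separately by the extra $N^{-1/2}$ they carry --- exactly the paper's split of the expansion into the sets $T_1$ and $T_2$. One small correction to your base-case bookkeeping: each $\partial_{ij}T_k=\frac{1}{\sqrt N}\sum_q(-G_{ki}G_{jq}-G_{kj}G_{iq})$ contributes \emph{two} effective non-diagonal entries per term (both $G_{ki}$ or $G_{kj}$ and $G_{jq}$ or $G_{iq}$ count, since only $G_{ij},G_{ji}$ are excluded), and the sub-case where both derivatives land on the same $T$-factor produces only one new $1/\sqrt N$ and one new index; with your stated ``at least two new effective entries'' for $p\in\{1,2,4\}$ the criterion evaluates to exactly $0$ rather than being strictly negative, whereas the correct count gives $\le -1/2$.
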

\begin{proof}
Consider $A_{r,1}$ first. We will use induction on $r \geq 2$. First, it can be shown that $A_{2,1}$ is effectively negligible by calculation. Suppose that
\begin{align}
    A_{r',1} = \frac{1}{(r')!\sqrt{N}} \sum_{i,j} \kappa_{ij}^{(r'+1)} \expect{\partial_{ij}^{r'-2}\left\{\partial_{ij}^2(T_k(z)^{D-1}) \overline{T_k(z)}^D\G{ki}\right\}}
\end{align}
is effectively negligible and define $F(H_{ij})$ as the sum of the terms $T_k(z)^{\alpha} \overline{T_k(z)}^{\beta} F(H_{ij})$
\begin{align}
    \sum_{t\in T} T_k(z)^{\alpha_t} \overline{T_k(z)}^{\beta_t} F_t(H_{ij}) = \partial_{ij}^{r'-2}\left\{\partial_{ij}^2(T_k(z)^{D-1}) \overline{T_k(z)}^D\G{ki}\right\}
\end{align}
Since the indices except $i,j,k$ should be came from
\begin{align}
    \partial_{ij}T_k(z) = \frac{1}{\sqrt{N}}\sum_{\gamma}(-\G{ki}\G{j\gamma}-\G{kj}\G{i\gamma}),
\end{align}
the new index($\gamma$) appears only one time. Also, define the subsets $T_1$ and $T_2$ of $T$ by
\begin{align}
    T_1 = \{ t\in T \mid \text{indices of }\G{ki}, \G{kj}, \G{ik}, \G{jk} \text{ of }F_t(H_{ij})\text{ are less than 2} \}, \ T_2 = T - T_1
\end{align}
Then,
\begin{align}
    A_{r'+1,1} &= \frac{1}{(r'+1)!\sqrt{N}} \sum_{i,j} \kappa_{ij}^{(r'+2)} \expect{\partial_{ij}^{r'-1}\left\{\partial_{ij}^2(T_k(z)^{D-1}) \overline{T_k(z)}^D\G{ki}\right\}} \nonumber\\
    &=\frac{1}{(r'+1)!\sqrt{N}} \sum_{i,j} \kappa_{ij}^{(r'+2)} \expect{\partial_{ij}\left\{\sum_{t\in T} T_k(z)^{\alpha_t} \overline{T_k(z)}^{\beta_t} F_t(H_{ij})\right\}} \nonumber\\
    &=\sum_{t\in T_1}\frac{1}{(r'+1)!\sqrt{N}} \sum_{i,j} \kappa_{ij}^{(r'+2)} \expect{\partial_{ij}\left\{ T_k(z)^{\alpha_t} \overline{T_k(z)}^{\beta_t} F_t(H_{ij})\right\}} \label{Arp_proof_neg} \\
    &+\sum_{t\in T_2}\frac{1}{(r'+1)!\sqrt{N}} \sum_{i,j} \kappa_{ij}^{(r'+2)} \expect{\partial_{ij}\left\{ T_k(z)^{\alpha_t} \overline{T_k(z)}^{\beta_t} F_t(H_{ij})\right\}} \label{Arp_proof_neg_2}
\end{align}
and (\ref{Arp_proof_neg}) is effectively negligible by the Lemma \ref{lemma_eff_rev}. Also, the baddest case in (\ref{Arp_proof_neg_2}) is the case when the number of effectively negligible entries decreases, denoted by $n_{end}' = n_{end}-1$. However, in this case, since $F_2$ contains either $\G{ki}^2$ or $\G{kj}^2$, which can be made only from $\partial_{ij}T_k$, $n_c + n_s -1 -\frac{1}{2}n_{end} < -\frac{1}{2}$ which leads to $n_c' + n_s' -1 -\frac{1}{2}n_{end}'<0$ so that (\ref{Arp_proof_neg_2}) is also effectively negligible. Therefore, $A_{r,1}$ is N-effectively negligible for $r \geq 2$. By similar induction, the cases for $p=2,4,5,6$ also can be proved since $A_{2,p}$ is effectively negligible for $p\neq3$.
\end{proof}

Next, we have a lemma for the case $A_{r,3}$.
\begin{lemma} \label{Ar3lemma}
Among $A_{r,3}$, the non effectively negligible terms have form of
\begin{align}
    \frac{1}{\sqrt{N}}\sum_{i,j}\kappa_{ij}^{(r+1)}\expect{T_k(z)^{D-1}\overline{T_k(z)}^D F(H_{ij})}
\end{align}
for all $r \geq 2$ where $F(H_{ij})$ is a sum of constant multiple of product of entries of $G=(H-zI)^{-1}$ and $n_{end}$ of $F(H_{ij})$ satisfies $n_{end} \geq 1$.
\end{lemma}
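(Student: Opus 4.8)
The plan is to analyse $A_{r,3}$ directly, since Theorem~\ref{Arpthm} already handles $A_{r,p}$ for $p\neq3$. First I would peel off the $r-2$ outer derivatives by the generalized Leibniz rule: writing $A_{r,3}=\frac{1}{r!\sqrt N}\sum_{i,j}\kappa_{ij}^{(r+1)}\expect{\partial_{ij}^{r-2}\bigl(T_k^{D-1}\,\overline{T_k}^{D}\,\partial_{ij}^{2}\G{ki}\bigr)}$ and expanding $\partial_{ij}^{r-2}$ over the three factors, $A_{r,3}$ becomes a finite sum of terms
\[
\frac{c_{a,b}}{\sqrt N}\sum_{i,j}\kappa_{ij}^{(r+1)}\expect{\bigl(\partial_{ij}^{a}T_k^{D-1}\bigr)\bigl(\partial_{ij}^{b}\overline{T_k}^{D}\bigr)\bigl(\partial_{ij}^{r-a-b}\G{ki}\bigr)},\qquad a+b\le r-2 .
\]
The term with $a=b=0$ is exactly $\frac{1}{r!\sqrt N}\sum_{i,j}\kappa_{ij}^{(r+1)}\expect{T_k^{D-1}\overline{T_k}^{D}\,\partial_{ij}^{r}\G{ki}}$, i.e.\ the form claimed in the lemma with $F=\partial_{ij}^{r}\G{ki}$. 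So it remains to show (i) every monomial of $\partial_{ij}^{r}\G{ki}$ carries $n_{end}\ge1$, except for contributions that are effectively negligible anyway, and (ii) every term with $a\ge1$ or $b\ge1$ is effectively negligible.

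For (i) I would record a stability fact for $\partial_{ij}\G{pq}=-\G{pi}\G{jq}-\G{pj}\G{iq}$: if $\G{pq}$ is an effective off-diagonal entry then $\{p,q\}\not\subseteq\{i,j\}$, hence in each of $\G{pi}\G{jq}$ and $\G{pj}\G{iq}$ at least one factor is again off-diagonal and distinct from $\G{ij},\G{ji}$; thus $\partial_{ij}$ applied to any Green-function product containing an effective off-diagonal entry yields only monomials that again contain one. Since $\G{ki}$ is effective whenever $k\notin\{i,j\}$, induction gives $n_{end}\ge1$ for every monomial of $\partial_{ij}^{r}\G{ki}$ coming from such $(i,j)$. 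The remaining $O(N)$ pairs with $k\in\{i,j\}$ turn the $(i,j)$-sum into a single free sum and, by the power count of Definition~\ref{effneg} together with Lemma~\ref{lemma:powercounting} and $|\G{kk}|\prec1$, give an effectively negligible contribution.

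For (ii) the crucial identity is $\partial_{ij}T_k=-\G{ki}T_j-\G{kj}T_i$ (and, with $T_i$ defined analogously, $\partial_{ij}T_j=-\G{ij}T_j-\G{jj}T_i$, etc.), obtained by differentiating $T_k=N^{-1/2}\sum_\gamma\G{k\gamma}$ entrywise and resumming. The first $\partial_{ij}$ that lands on a copy of $T_k$ inside $T_k^{D-1}$ or on $\overline{T_k}^{D}$ manufactures a new effective off-diagonal entry $\G{ki}$ or $\G{kj}$, while the accompanying $T_i$ or $T_j$, together with the $N^{-1/2}$ it hides and its fresh summation index, is neutral for $n_c+n_s-1-\tfrac12 n_{end}$ by Lemma~\ref{lemma:powercounting}; by (i) that new effective entry survives all subsequent differentiations. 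Hence, relative to the $a=b=0$ term, any such term has $n_c+n_s-1-\tfrac12 n_{end}$ smaller by at least $\tfrac12$, and since the $a=b=0$ terms already satisfy $n_c+n_s-1-\tfrac12 n_{end}=\tfrac12-\tfrac12 n_{end}\le0$ (using $n_{end}\ge1$ from (i)), the terms with $a\ge1$ or $b\ge1$ satisfy it strictly, i.e.\ are effectively negligible. Combining (i) and (ii) proves the lemma. I expect the delicate point to be precisely the neutrality bookkeeping in (ii): one must confirm that the $N^{-1/2}$, the fresh index worth $N^{1/2}$ after summation against the single entry carrying it, and the brand-new off-diagonal entry combine so that only $n_{end}$ — and nothing else in $n_c+n_s-1-\tfrac12 n_{end}$ — changes, uniformly over all ways the $r-2$ derivatives are distributed and over which $T$-factor each hits; granting this, the rest is routine use of Lemma~\ref{lemma:powercounting} and $\partial_{ij}\G{pq}=-\G{pi}\G{jq}-\G{pj}\G{iq}$.
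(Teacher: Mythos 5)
Your argument is correct, but it is organized differently from the paper's. The paper proves the lemma by induction on $r$: it computes the base case $r=2$ explicitly, then applies one more $\partial_{ij}$ to the inductively given non-negligible part, splitting into $L_1,L_2,L_3$, and disposes of $L_1,L_2$ by unfolding $\partial_{ij}T_k=\tfrac{1}{\sqrt N}\sum_q(-\G{ki}\G{jq}-\G{kj}\G{iq})$ (net change $n_c-\tfrac12$, $n_s+1$, $n_{end}+2$), while the surviving part of $L_3$ inherits the claimed form; the persistence of effective entries is packaged in Lemmas \ref{lemma_partial_rev} and \ref{lemma_eff_rev}, which carry side conditions (powers of $\G{ki},\G{kj}$ and of extra indices at most one) that force an extra case distinction in the companion argument of Theorem \ref{Arpthm}. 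You instead expand $\partial_{ij}^{r-2}$ once and for all by Leibniz over the three factors and argue structurally: the $(a,b)=(0,0)$ term is literally of the claimed form with $F=\tfrac{1}{r!}\partial_{ij}^r\G{ki}$, whose monomials contain $\G{ki}$ or $\G{kj}$ (effective whenever $k\notin\{i,j\}$, the exceptional pairs losing a free summation index and hence being effectively negligible), while any term with $a+b\ge1$ picks up, from the first derivative hitting a $T$-factor via $\partial_{ij}T_k=-\G{ki}T_j-\G{kj}T_i$, a second effective entry at neutral cost (the hidden $N^{-1/2}$, the fresh index, and the generic off-diagonal entry of the unfolded $T_j$ cancel in $n_c+n_s-\tfrac12 n_{end}$, exactly as in the paper's count for $L_1,L_2$), so its counting drops to at most $-\tfrac12$. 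The two proofs rest on the same two pillars (the derivative identity for $T_k$ and persistence of effective off-diagonal entries under $\partial_{ij}$), so the mechanism is identical; what your version buys is that tracking only the invariant ``contains at least one effective entry'' avoids the side conditions of Lemmas \ref{lemma_partial_rev}--\ref{lemma_eff_rev} and the attendant case split, and it makes the exceptional indices $k\in\{i,j\}$ explicit, which the paper leaves implicit. Your own flagged ``delicate point'' (the neutrality bookkeeping when a derivative lands on a $T$-power) is resolved exactly as in the paper's treatment of $L_1,L_2$, so there is no gap beyond the level of rigor the paper itself employs.
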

\begin{proof}
We will use induction on $r$. First, when $r=2$, we can calculate $A_{2,3}$ as
\begin{align}
    A_{2,3} = \frac{1}{\sqrt{N}}\sum_{i,j}\kappa_{ij}^{(3)}\expect{T_k(z)^{D-1}\overline{T_k(z)}^D F(H_{ij})} \label{A23proof_rev}
\end{align}
where $F(H_{ij})$ can be written as
\begin{align}
    F(H_{ij}) = \G{ki}\G{ji}^2 + \G{kj}\G{ii}\G{ji} + \G{kj}\G{ii}\G{ij} + \G{ki}\G{ii}\G{jj}
\end{align}
Then, since $n_c=-1/2, n_s=2, n_{end}=1$, (\ref{A23proof_rev}) is not effectively negligible. Therefore, $F(H_{ij})$ is desired form and it implies that our claim holds for $r=2$. Suppose that the non effectively negligible terms of $A_{r',3}$ has form of 
\begin{align}
    \frac{1}{\sqrt{N}}\sum_{i,j}\kappa_{ij}^{(r'+1)}\expect{T_k(z)^{D-1}\overline{T_k(z)}^D F(H_{ij})}
\end{align}
where $F(H_{ij})$ is a sum of constant multiple of product of entries of $G=(H-zI)^{-1}$ and $n_{end}\geq1$. Then,
\begin{align}
    A_{r',3} = A_{r',3,eff} + \frac{1}{\sqrt{N}}\sum_{i,j}\kappa_{ij}^{(r'+1)}\expect{T_k(z)^{D-1}\overline{T_k(z)}^D F(H_{ij})}
\end{align}
where $A_{r',3,eff}$ is effectively negligible term. By the definition of $A_{r'+1,3}$,
\begin{align}
    A_{r'+1,3} &= \frac{1}{r'+1}A_{r',3,eff}' + \frac{1}{(r'+1)\sqrt{N}}\sum_{i,j}\kappa_{ij}^{(r'+2)}\expect{\partial_{ij}\left\{T_k(z)^{D-1}\overline{T_k(z)}^D F(H_{ij})\right\}} \nonumber\\
    &= \frac{1}{r'+1}A_{r',3,eff}' + L_1 + L_2 + L_3
\end{align}
with $A_{r',3,eff}'$ is defined by replacing $\kappa_{ij}^{(r'+1)}$ with $\kappa_{ij}^{(r'+2)}$ in $A_{r',3,eff}$ and put $\partial_{ij}$ inside the expectation of $A_{r',3,eff}$ which is effectively negligible by the proof of above lemmas and theorems. Also, $L_1$, $L_2$, and $L_3$ is defined by
\begin{align}
    &L_1 = \frac{D-1}{(r'+1)\sqrt{N}}\sum_{i,j}\kappa_{ij}^{(r'+2)}\expect{T_k(z)^{D-2}\overline{T_k(z)}^D F(H_{ij}) \cdot \partial_{ij}T_k(z) } \\
    &L_2 = \frac{D}{(r'+1)\sqrt{N}}\sum_{i,j}\kappa_{ij}^{(r'+2)}\expect{T_k(z)^{D-1}\overline{T_k(z)}^{D-1} F(H_{ij})\cdot \partial_{ij}\overline{T_k(z)} } \\
    &L_3 = \frac{1}{(r'+1)\sqrt{N}}\sum_{i,j}\kappa_{ij}^{(r'+2)}\expect{T_k(z)^{D-1}\overline{T_k(z)}^D F'(H_{ij})}
\end{align}
For $L_1$ and $L_2$, since
\begin{align}
    \partial_{ij}T_k(z) = \frac{1}{\sqrt{N}}\sum_q(-\G{ki}\G{jq}-\G{kj}\G{iq})
\end{align}
, $n_c'=-1,n_s'=3,n_{end}'=n_{end}+2\geq3$ implies $L_1$ and $L_2$ is effectively negligible. Therefore, non effectively negligible term of $A_{r'+1,3}$ is subsum of $L_3$ denoted by
\begin{align}
    \frac{1}{\sqrt{N}}\sum_{i,j}\kappa_{ij}^{(r'+2)}\expect{T_k(z)^{D-1}\overline{T_k(z)}^D \hat{F}(H_{ij})} 
\end{align}
while $\hat{F}(H_{ij})$ is the smallest subsum of $\frac{1}{r'+1}F'(H_{ij})$ that makes the equation
\begin{align}
    L_3 - \frac{1}{\sqrt{N}}\sum_{i,j}\kappa_{ij}^{(r'+2)}\expect{T_k(z)^{D-1}\overline{T_k(z)}^D \hat{F}(H_{ij})} 
\end{align}
is effectively negligible. Since $\hat{F}(H_{ij})$ is desired form, what remains to prove is that $n_{end}$ of $\hat{F}$ satisfies $n_{end}\geq 1$. This can be proved since $\hat{F}$ contains at least one of $\G{ki}$ or $\G{kj}$. Therefore, our claim holds for $r=r'+1$ also.
\end{proof}
Now we can prove the main theorem for $A_{r,3}$.
\begin{theorem} \label{Ar3thm}
$A_{r,3} = O(1/q^{r-1})\expect{\abs{T_k(z)}^{2D-1}}$ for all $r \geq 3$
\end{theorem}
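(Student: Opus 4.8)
The plan is to reduce Theorem~\ref{Ar3thm} to the power-counting Lemma~\ref{lemma:powercounting} after first invoking the structural Lemma~\ref{Ar3lemma}. By that lemma we split
\[
A_{r,3} \;=\; A_{r,3}^{\mathrm{eff}} \;+\; \frac{1}{\sqrt N}\sum_{i,j}\kappa_{ij}^{(r+1)}\,\E\qB{\,T_k(z)^{D-1}\overline{T_k(z)}^{D}\,F(H_{ij})\,},
\]
where $A_{r,3}^{\mathrm{eff}}$ is a finite sum of effectively negligible terms in the sense of Definition~\ref{effneg}, and $F(H_{ij})$ is a finite sum of constant multiples of products of entries of $G$, every product containing at least one factor $\G{ki}$ or $\G{kj}$ and, as in the hypotheses carried through Lemma~\ref{Ar3lemma}, having no index other than $i,j,k$ occurring with power exceeding one. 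The first step is to discard $A_{r,3}^{\mathrm{eff}}$: by Definition~\ref{effneg} combined with Lemma~\ref{lemma:powercounting} each of its terms is $o\pB{q^{-(r-1)}}\E\qB{\abs{T_k(z)}^{2D-1}}$ — it still carries the single cumulant factor $\abs{\kappa_{ij}^{(r+1)}}\le C N^{-1}q^{-(r-1)}$, while the defining inequality $n_c+n_s-1-\tfrac12 n_{\mathrm{end}}<0$ forces a strictly negative surplus power of $N$ once all index sums are resolved — so it is absorbed into the asserted error term.

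For the remaining sum I would estimate each monomial $P$ of $F(H_{ij})$ in turn. Pull out $\abs{\kappa_{ij}^{(r+1)}}\le CN^{-1}q^{-(r-1)}$, the explicit prefactor $N^{-1/2}$, and the factor $N^{-m/2}$ produced by the $m\ (\le r-2)$ derivatives that landed on a $T_k$ factor; bound every diagonal entry of $P$ by $O_\prec(1)$ and every off-diagonal entry other than a distinguished $\G{ki}$ or $\G{kj}$ by $O_\prec(q^{-1})$ via the local law on the contour. What is left is the summation over $i$, $j$, and the $m$ new indices $\gamma_1,\dots,\gamma_m$, each of which occurs in exactly one entry of the form $\G{a\gamma_\ell}$ with $a\in\{i,j\}$. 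Applying Lemma~\ref{lemma:powercounting} in the forms $\tfrac1N\sum_a\abs{\G{ka}}\prec N^{-1/2}$ and $\tfrac1N\sum_a\abs{\G{ab}}\prec N^{-1/2}$, one sums each $\gamma_\ell$ at cost $N^{1/2}$, cancelling the $N^{-1/2}$ it was born with; one sums the distinguished $\G{ki}$ (resp.\ $\G{kj}$) over $i$ (resp.\ $j$) at cost $N^{1/2}$; and one sums any index that survives trivially at cost $N$. The extreme case is the monomial equal to $\G{ki}$ times diagonal factors (no new indices, $j$ summed trivially), which contributes
\[
\frac{1}{\sqrt N}\cdot\frac{C}{Nq^{r-1}}\cdot N\cdot N^{1/2}\cdot\E\qB{\abs{T_k(z)}^{2D-1}}\;=\;\frac{C}{q^{r-1}}\,\E\qB{\abs{T_k(z)}^{2D-1}},
\]
and every other monomial either carries at least one extra $O_\prec(q^{-1})$ off-diagonal factor or pays a genuine $N^{-1/2}$ for a second $\G{k\cdot}$-summation, hence is no larger. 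Summing the finitely many monomials and adding the bound for $A_{r,3}^{\mathrm{eff}}$ gives $A_{r,3}=O\pB{q^{-(r-1)}}\E\qB{\abs{T_k(z)}^{2D-1}}$; together with the estimates already obtained for $A_1$, $A_2$, Theorem~\ref{Arpthm}, and the remainder $R_t$, this closes the proof of Lemma~\ref{lem:T_k}.

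The main obstacle is the bookkeeping concealed in the phrase ``$F(H_{ij})$ has $n_{\mathrm{end}}\ge 1$'': after the $r-2$ extra differentiations $F$ is a large sum of monomials, and for each one must check simultaneously that the factor $\G{ki}$ or $\G{kj}$ guaranteed by Lemma~\ref{Ar3lemma} has not already been consumed while bounding some other index sum — so that it genuinely remains available to carry the last summation through $\tfrac1N\sum_a\abs{\G{ka}}\prec N^{-1/2}$ — and that every free index spawned by a derivative acting on a $T_k$ arrives paired both with its compensating $N^{-1/2}$ and with a fresh off-diagonal entry that absorbs it. It is exactly the effective-non-diagonal counting of Definitions~\ref{effentry}--\ref{effneg}, as exploited in Lemma~\ref{Ar3lemma}, that quarantines into $A_{r,3}^{\mathrm{eff}}$ the monomials for which this can fail; on the complement the uniform estimate above then applies. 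Carrying this through for general $r$, rather than verifying small cases by hand, is where the genuine work lies.
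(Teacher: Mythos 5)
Your proposal is correct and takes essentially the same route as the paper: you invoke Lemma~\ref{Ar3lemma}, discard the effectively negligible part, and bound the surviving monomials via the cumulant factor $N^{-1}q^{-(r-1)}$ and the power-counting Lemma~\ref{lemma:powercounting}, with the extremal monomial $\G{ki}$ (or $\G{kj}$) times diagonal entries giving exactly the paper's $\tfrac{1}{\sqrt N}\cdot N^{2}\cdot\tfrac{1}{Nq^{r-1}}\cdot N^{-1/2}=q^{-(r-1)}$ times $\E\qb{\abs{T_k(z)}^{2D-1}}$. The only cosmetic difference is that you also allow monomials with new indices $\gamma_\ell$ coming from derivatives hitting $T_k$ factors, which by the form in Lemma~\ref{Ar3lemma} (powers of $T_k$ exactly $D-1$ and $D$) actually belong to the effectively negligible part, whereas the paper instead counts the $2r+2$ indices of $\partial_{ij}^{r}\G{ki}$ to force $n_{end}=1$; this over-inclusion is harmless and does not affect the bound.
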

\begin{proof}
By Lemma \ref{Ar3lemma}, we know that the non effectively negligible part of $A_{r,3}$ has the form of
\begin{align}
    \frac{1}{\sqrt{N}}\sum_{i,j}\kappa_{ij}^{(r+1)}\expect{T_k(z)^{D-1}\overline{T_k(z)}^D F(H_{ij})} \label{Ar3thmproof}
\end{align}
where $F(H_{ij})$ is a sum of constant multiple of product of entries of $G=(H-zI)^{-1}$ and $n_{end}\geq1$ for $r \geq 2$. Since $T_k(z)^{D-1}\overline{T_k(z)}^D F(H_{ij})$ is some part of $\partial_{ij}^{r}\{T_k(z)^{D-1}\overline{T_k(z)}^D\G{ki}\} $, we can know that $F(H_{ij})$ is some part of $\partial_{ij}^r \G{ki}$. By differentiating a product of entries of $G$, the length of terms increases by 1 and the index $i$ and the index $j$ appear one more tie. Therefore, $\partial_{ij}^r \G{ki}$(as so $F(H_{ij})$) is a sum of constant multiple of product of $r+1$ entries of $G$ with $2r+2$ indices ($k$ : $1$ time, $i$ : $r+1$ times, $j$ : $r$ times). Going back to (\ref{Ar3thmproof}), since the equation is not N-effectively negligible,
\begin{align}
    n_c+n_s-1-\frac{1}{2}n_{end} = -\frac{1}{2}+2-1-\frac{1}{2}n_{end} \geq 0
\end{align}
which implies $n_{end} \leq 1$ so that $n_{end} = 1$. Consider the indices of the terms in $F(H_{ij})$. Since there is only one $k$ index, $\G{ki}$ or $\G{kj}$ is the corresponding effectively non-diagonal entry($n_{end}=1$). Therefore, the largest possible term in $F(H_{ij})$ has the form
\begin{align}
    \G{ki}\times\text{(}r\text{ diagonal entries) or }\G{kj}\times\text{(}r\text{ diagonal entries)}
\end{align}
whose absolute value has order
\begin{align}
    &\abs{\frac{1}{\sqrt{N}}\sum_{i,j}\kappa_{ij}^{(r+1)}\expect{T_k(z)^{D-1}\overline{T_k(z)}^D \G{ki}\times\text{(}r\text{ diagonal entries)}}} \nonumber\\
    &\text{or } \abs{\frac{1}{\sqrt{N}}\sum_{i,j}\kappa_{ij}^{(r+1)}\expect{T_k(z)^{D-1}\overline{T_k(z)}^D \G{kj}\times\text{(}r\text{ diagonal entries)}}} \nonumber\\
    &\leq \frac{1}{\sqrt{N}}\cdot N^2 \frac{1}{Nq^{r-1}}\cdot \expect{\abs{T_k(z)}^{2D-1}} \cdot \frac{1}{\sqrt{N}} \cdot \nonumber\\
    &= \frac{1}{q^{r-1}}\expect{\abs{T_k(z)}^{2D-1}}\cdot C
\end{align}
where $C$ is a constant. Therefore, since N-effectively negligible term is negligible,
\begin{align}
    A_{r,3} &\approx \frac{1}{\sqrt{N}}\sum_{i,j}\kappa_{ij}^{(r+1)}\expect{T_k(z)^{D-1}\overline{T_k(z)}^D F(H_{ij})} \nonumber\\
    &= O(1/q^{r-1}) \expect{\abs{T_k(z)}^{2D-1}}
\end{align}
for all $r \geq 3$
\end{proof}
Finally, by Theorem \ref{Arpthm} and Theorem \ref{Ar3thm},
\begin{align}
    A_r &= (A_{r,1}+A_{r,2}+A_{r,4}+A_{r,5}+A_{r,6})+A_{r,3} \nonumber\\
    &= O(1/q^{r-1}) \expect{\abs{T_k(z)}^{2D-1}}
\end{align}

\subsubsection{Estimates for $R_t$}
The goal of the subsection is to prove
\begin{align}
    R_t = O(1/q^t) \cdot \| \abs{T_k(z)}^{2D-1} \|_\infty
\end{align}
where $R_t$ is defined as
\begin{align}
    R_t = \sum_{i,j}\expect{\Omega_t\left(T_k(z)^{D-1}\overline{T_k(z)}^D \cdot \frac{1}{\sqrt{N}}\G{ki}H_{ij} \right)}
\end{align}
where
\begin{align}
    \expect{\Omega_t\left(T_k(z)^{D-1}\overline{T_k(z)}^D \cdot \frac{1}{\sqrt{N}}\G{ki}H_{ij} \right)} \leq C_t \cdot \expect{\abs{H_{ij}}^{t+2}} \cdot \| \partial_{ij}^{t+1}(T_k(z)^{D-1}\overline{T_k(z)}^D \cdot \frac{1}{\sqrt{N}}\G{ki}) \|_\infty
\end{align}
for some constant $C_t$. Then,
\begin{align}
    \abs{R_t} &\leq C_t \sum_{i,j} \expect{\abs{H_{ij}}^{t+2}} \cdot \left \| \partial_{ij}^{t+1}(T_k(z)^{D-1}\overline{T_k(z)}^D \cdot \frac{1}{\sqrt{N}}\G{ki}) \right \|_\infty \nonumber\\
    &= C_t \sum_{i,j} \kappa_{ij}^{(t+2)} \cdot \left \| \partial_{ij}^{t+1}(T_k(z)^{D-1}\overline{T_k(z)}^D \cdot \frac{1}{\sqrt{N}}\G{ki}) \right \|_\infty \label{Rteq}
\end{align}
while (\ref{Rteq}) can be made by replacing the expectation symbol of $A_{r+1}$ with infinite norm. Also, the process of estimating the bound of $A_{r+1}$ contains no procedure using expectation. In other words, the expectation acts only as a linear operator in the process of estimating $A_{r+1}$. Therefore, we can calculate the bound of $R_t$ by same way as in $A_{r+1}$ and
\begin{align}
    R_t = O(1/q^t) \cdot \| \abs{T_k(z)}^{2D-1} \|_\infty
\end{align}
\emph{Proof for Lemma \ref{lem:T_k}.} By the subsections above, we can calculate $z\expect{\abs{T_k(z)}^{2D}}$. Take the integer $t$ larger than $4D$. Then,
\begin{align}
    z\expect{\abs{T_k(z)}^{2D}} &= A_1 + \cdots + A_t + R_t \nonumber\\
    &= -m_{sc} \expect{\abs{T_k(z)}^{2D}} + \kappa_3 m_{sc}^2 \expect{\abs{T_k(z)}^{2D}} + O(1/q^2) \expect{\abs{T_k(z)}^{2D-1}} \nonumber\\
    &\quad+ O(1/q^2) \expect{\abs{T_k(z)}^{2D-1}} + \cdots + O(1/q^{t-1})\expect{\abs{T_k(z)}^{2D-1}}\nonumber\\
    &\quad+ O(1/q^t) \cdot \| \abs{T_k(z)}^{2D-1} \|_\infty \nonumber\\
    &= -m_{sc} \expect{\abs{T_k(z)}^{2D}} + \kappa_3 m_{sc}^2 \expect{\abs{T_k(z)}^{2D}} + O(1/q^2) \expect{\abs{T_k(z)}^{2D-1}}\nonumber\\
    &\quad+ O(1/q^t) \cdot \| \abs{T_k(z)}^{2D-1} \|_\infty
\end{align}
Now,
\begin{align}
    (z+m_{sc}-\kappa_3 m^2)\expect{\abs{T_k(z)}^{2D}} = O(1/q^2) \expect{\abs{T_k(z)}^{2D-1}} + O(1/q^t) \cdot \| \abs{T_k(z)}^{2D-1} \|_\infty
\end{align}
Since $z+m_{sc} \gg \kappa_3 m^2$, $\abs{z+m_{sc}-\kappa_3 m^2}>c$ for some constant $c$. Also, the infinite norm is constant. Therefore, by dividing each side by $z+m_{sc}-\kappa_3 m^2$,
\begin{align}
    \expect{\abs{T_k(z)}^{2D}} = O(1/q^2) \expect{\abs{T_k(z)}^{2D-1}} + O(1/q^t)
\end{align}
Then by Young's inequality and Jensen's inequality,
\begin{align}
    \expect{\abs{T_k(z)}^{2D}} & \leq \frac{1}{2D} \cdot O(1/q^2)^{2D} + \frac{2D-1}{2D} \cdot \expect{\abs{T_k(z)}^{2D-1}}^{\frac{2D}{2D-1}} + O(1/q^t) \nonumber\\
    & \leq \frac{1}{2D} \cdot O(1/q^2)^{2D} + \frac{2D-1}{2D} \cdot \expect{\abs{T_k(z)}^{2D}} + O(1/q^t) \nonumber\\
    & = \frac{1}{2D} \cdot O(1/q^2)^{2D}+ \frac{2D-1}{2D} \cdot \expect{\abs{T_k(z)}^{2D}}
\end{align}
since $t>4D$ and
\begin{align}
    \expect{\abs{T_k(z)}^{2D}} \leq O(1/q^2)^{2D}
\end{align}
which implies
\begin{align}
    T_k \prec 1/q^2
\end{align}

\subsection{Proof of Lemma \ref{lemmaPs}} \label{subsec:lemmaPs}
In this subsection, we use Lemma \ref{lemma:powercounting} with Lemma \ref{lem:T_k} for power counting argument frequently. As in the proof of Lemma \ref{subsec:Tk} in subsection \ref{subsec:Tk}, we will consider the derivatives with respect to $H_{ik}$ when $i=k$ as same as $i\neq j$.

\subsubsection{Estimates for $C_1$}
In this subsection, we will estimate the order of
\begin{align}
    C_1+m_{sc}\expect{P(s)^{D-1}\overline{P(s)}^D \cdot s(z)}
\end{align}
By simple derivative calculation, we can know that $C_1$ = $C_{1,1}+C_{1,2}+C_{1,3}$ where
\begin{align}
    C_{1,1} &= \frac{D-1}{N} \sum_{i,j,k} \kappa_{ik}^{(2)} \expect{P(s)^{D-2}\overline{P(s)}^{D}\G{kj} \cdot \partial_{ik}P(s) } \nonumber\\
    &= \frac{D-1}{\sqrt{N}} \sum_{i,k} \kappa_{ik}^{(2)} \expect{P(s)^{D-2}\overline{P(s)}^{D} \cdot (z+m_{sc}-\kappa_3 m_{sc}^2) \cdot T_k (Q_{ik}+Q_{ki}) } \\
    C_{1,2} &= \frac{D}{N} \sum_{i,j,k} \kappa_{ik}^{(2)} \expect{P(s)^{D-1}\overline{P(s)}^{D-1}\G{kj} \cdot \overline{\partial_{ik}P(s)} } \nonumber\\
    &= \frac{D}{\sqrt{N}} \sum_{i,k} \kappa_{ik}^{(2)} \expect{P(s)^{D-2}\overline{P(s)}^{D} \cdot (\overline{z}+\overline{m_{sc}}-\overline{\kappa_3 m_{sc}^2}) \cdot T_k (\overline{Q_{ik}}+\overline{Q_{ki}}) } \\
    C_{1,3} &= \frac{1}{N} \sum_{i,j,k} \kappa_{ik}^{(2)} \expect{P(s)^{D-1}\overline{P(s)}^{D}(-\G{ki}\G{kj}-\G{kk}\G{ij}) } \nonumber\\
    &= \frac{1}{N} \sum_{i,j,k} \kappa_{ik}^{(2)} \expect{P(s)^{D-1}\overline{P(s)}^{D} \cdot (-\G{ki}\G{kj})} - m_{sc}\expect{P(s)^{D-1}\overline{P(s)}^D s(z)}
\end{align}
Then by moment counting,
\begin{align}
    &C_{1,1} = O\left(\frac{\sqrt{N}}{q^6}\right)\cdot \expect{\abs{P(s)}^{2D-1}},\ C_{1,2} = O\left(\frac{\sqrt{N}}{q^6}\right)\cdot \expect{\abs{P(s)}^{2D-1}},\\
    &C_{1,3} + m_{sc}\expect{P(s)^{D-1}\overline{P(s)}^D \cdot s(z)} = O\left(\frac{1}{q^2} \right)\cdot \expect{\abs{P(s)}^{2D-1}}
\end{align}
are negligible since $q \geq N^\phi > N^{1/8}$. Therefore,
\begin{align}
    C_1+m_{sc}\expect{P(s)^{D-1}\overline{P(s)}^D \cdot s(z)} = O\left(\frac{\sqrt{N}}{q^6}\right)\cdot\expect{\abs{P(s)}^{2D-1}}
\end{align}

\subsubsection{Estimates for $C_2$}
In this subsection, we will estimate the order of
\begin{align}
    C_2 - \kappa_3 m_{sc}^2 \expect{P(s)^{D-1}\overline{P(s)}^D \cdot s(z)} 
\end{align}
Before calculating the bound, we will define $C_{r,p} \ (r\geq2, p=1,2,3)$ which clearly satisfy $C_r = C_{r,1}+C_{r,2}+C_{r,3}$ as following:
\begin{align}
    C_{r,1} &= \frac{D-1}{N} \sum_{i,j,k} \frac{\kappa_{ik}^{(r+1)}}{r!} \expect{\partial_{ik}^{r-1}\left\{P(s)^{D-2}\overline{P(s)}^{D}\G{kj} \cdot \partial_{ik}P(s)\right\} } \\
    C_{r,2} &= \frac{D}{N} \sum_{i,j,k} \frac{\kappa_{ik}^{(r+1)}}{r!} \expect{\partial_{ik}^{r-1}\left\{P(s)^{D-1}\overline{P(s)}^{D-1}\G{kj} \cdot \overline{\partial_{ik}P(s)}\right\} } \\
    C_{r,3} &= \frac{1}{N} \sum_{i,j,k} \frac{\kappa_{ik}^{(r+1)}}{r!} \expect{\partial_{ik}^{r-1}\left\{P(s)^{D-1}\overline{P(s)}^{D}(-\G{ki}\G{kj}-\G{kk}\G{ij}) \right \} }
\end{align}
Then by derivative calculation, we can know that $C_{2,1}$ and $C_{2,2}$ have following bound
\begin{align}
    C_{2,1} = O\left(\frac{\sqrt{N}}{q^7}\right)\cdot\expect{\abs{P(s)}^{2D-1}},\ C_{2,2} = O\left(\frac{\sqrt{N}}{q^7}\right)\cdot\expect{\abs{P(s)}^{2D-1}}
\end{align}
For $C_{2,3}$, the only non-negligible term is
\begin{align}
    \frac{1}{N}\sum_{i,j,k}\frac{\kappa_{ik}^{(3)}}{2}\expect{P(s)^{D-1}\overline{P(s)}^D\G{kk}\G{ii}\G{kj}} = O\left(\frac{\sqrt{N}}{q^3}\right)\expect{\abs{P(s)}^{2D-1}}
\end{align}
which can be written as
\begin{align}
    &\frac{1}{N}\sum_{i,j,k}\frac{\kappa_{ik}^{(3)}}{2}\expect{P(s)^{D-1}\overline{P(s)}^D(\G{kk}-m_{sc})\G{ii}\G{kj}} \nonumber\\
    + &\frac{1}{N}\sum_{i,j,k}\frac{\kappa_{ik}^{(3)}}{2}\expect{P(s)^{D-1}\overline{P(s)}^D m_{sc}(\G{ii}-m_{sc})\G{kj}} \nonumber\\
    + &\frac{1}{N}\sum_{i,j,k}\frac{\kappa_{ik}^{(3)}}{2}\expect{P(s)^{D-1}\overline{P(s)}^D m_{sc}^2\G{kj}} \nonumber\\
    = &\  O\left(\frac{\sqrt{N}}{q^4}\right)\cdot\expect{\abs{P(s)}^{2D-1}} + \kappa_3 m_{sc}^2 \expect{P(s)^{D-1}\overline{P(s)}^D \cdot s(z)} 
\end{align}
Therefore,
\begin{align}
    C_{2,3} - \kappa_3 m_{sc}^2 \expect{P(s)^{D-1}\overline{P(s)}^D \cdot s(z)} =O\left(\frac{\sqrt{N}}{q^4}\right)\expect{\abs{P(s)}^{2D-1}}
\end{align}
which implies the desired term satisfies
\begin{align}
    C_2 - \kappa_3 m_{sc}^2 \expect{P(s)^{D-1}\overline{P(s)}^D \cdot s(z)} =O\left(\frac{\sqrt{N}}{q^4}\right)\expect{\abs{P(s)}^{2D-1}}
\end{align}

\subsubsection{Estimates for $C_r \ (r \geq 3)$}
In this subsection, we will change the two definitions \ref{effentry} and \ref{effneg} by replacing $\G{ij},\G{ji}$ into $\G{ik},\G{ki}$ since we differentiate the terms by $H_{ik}$ in this subsection. Then the number $n_c+n_s-1-\frac{1}{2}n_{end}$ in definition \ref{effneg} implies the order of $N$ in the term while $q$ is not counted.
\begin{theorem}
$C_{r,1}$ and $C_{r,2}$ are negligible for all integer $r \geq 2$.
\end{theorem}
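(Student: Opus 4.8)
The plan is to exploit the explicit algebraic form of $\partial_{ik}P(s)$. Recall $P(s)=1+(z+m_{sc}-\kappa_3 m_{sc}^2)s(z)$ with $s(z)=\frac1N\sum_{a,b}G_{ab}$, the prefactor $z+m_{sc}-\kappa_3 m_{sc}^2$ being deterministic and $O(1)$ on $\Gamma_2$ (since $\kappa_3=\sum_j\kappa_{ij}^{(3)}=O(q^{-1})$). First I would compute, using $\partial_{ik}G_{ab}=-(G_{ai}G_{kb}+G_{ak}G_{ib})$ and the symmetry $G=G^{T}$, that
\[
\partial_{ik}s(z)=-\frac1N\Big[\big(\textstyle\sum_a G_{ia}\big)\big(\sum_b G_{kb}\big)+\big(\sum_a G_{ka}\big)\big(\sum_b G_{ib}\big)\Big]=-2\,T_i T_k ,
\]
hence $\partial_{ik}P(s)=-2(z+m_{sc}-\kappa_3 m_{sc}^2)\,T_iT_k$, where $T_\ell=\frac1{\sqrt N}\sum_b G_{\ell b}$. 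By Lemma~\ref{lem:T_k}, $|T_\ell|\prec q^{-2}$ uniformly in $\ell$, so $|\partial_{ik}P(s)|\prec q^{-4}$.

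Next I would upgrade this to all orders: for every $m\ge1$, $|\partial_{ik}^{m}P(s)|\prec q^{-4}$ uniformly. The key invariant is that $\partial_{ik}T_\ell=-G_{\ell i}T_k-G_{\ell k}T_i$ and $\partial_{ik}G_{ab}=-(G_{ai}G_{kb}+G_{ak}G_{ib})$, so $\partial_{ik}$ applied to any product containing (with multiplicity) at least two factors from $\{T_i,T_k\}$ produces a sum of such products: a derivative hitting a $T$-factor replaces it by (a Green entry)$\times$(a $T$-factor), leaving the $T$-count unchanged, while a derivative hitting a Green entry leaves it unchanged. Since $\partial_{ik}s(z)=-2T_iT_k$ already has two $T$-factors, induction on $m$ shows $\partial_{ik}^{m}s(z)$ is a finite sum of products each carrying at least two factors $T_i$ or $T_k$ times $O_\prec(1)$ Green entries, hence $\prec q^{-4}$ by Lemma~\ref{lem:T_k}; the deterministic coefficient does not affect this.

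With this in hand the two estimates follow from the Leibniz rule and the power counting of Lemma~\ref{lemma:powercounting}. Expanding $\partial_{ik}^{r-1}$ in $C_{r,1}=\frac{D-1}{N}\sum_{i,j,k}\frac{\kappa_{ik}^{(r+1)}}{r!}\,\mathbb{E}\big[\partial_{ik}^{r-1}\{P(s)^{D-2}\overline{P(s)}^{D}G_{kj}\cdot\partial_{ik}P(s)\}\big]$, every resulting term contains a factor $\partial_{ik}^{m}P(s)$ with $m\ge1$ (from the last factor), hence a scalar $\prec q^{-4}$; derivatives falling on the $P(s)$ or $\overline{P(s)}$ powers only make the term smaller, so the power of $|P(s)|$ is at most $2D-2$; and $j$ occurs only in $G_{kj}$, so $\sum_j|G_{kj}|\prec\sqrt N$. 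Collecting $|\kappa_{ik}^{(r+1)}|\prec (Nq^{r-1})^{-1}$, $\sum_{i,k}\le N^2$, the factor $q^{-4}$ and $\sqrt N$ gives $|C_{r,1}|\prec \sqrt N\,q^{-(r+3)}\,\mathbb{E}[|P(s)|^{2D-2}]$ (in fact smaller, using $|T_k|\prec q^{-2}$ and $\sum_k|T_k|^2\prec1$ when a $T$-factor still carries the index $k$), which is $o_\prec(\sqrt N q^{-4})$ and, after one application of Young's inequality, is absorbed into the master recursion for $\mathbb{E}[|P(s)|^{2D}]$; in particular it is negligible next to the $\sqrt N\,q^{-(r+1)}$ size of $C_{r,3}$. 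The estimate for $C_{r,2}$ is identical, using $\overline{\partial_{ik}P(s)}=-2\,\overline{(z+m_{sc}-\kappa_3 m_{sc}^2)}\,\overline{T_i}\,\overline{T_k}$ and $|\overline{T_\ell}|=|T_\ell|\prec q^{-2}$. The main obstacle is the all-orders bound $|\partial_{ik}^{m}P(s)|\prec q^{-4}$ — i.e.\ verifying that the ``at least two $T$-factors'' structure is preserved under every differentiation — but once that invariant is in place the remaining bookkeeping is routine.
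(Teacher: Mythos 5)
Your argument is correct in substance but organizes the estimate differently from the paper. The paper proceeds by induction on $r$: starting from the explicit computation of $C_{2,1},C_{2,2}=O(\sqrt N/q^{7})\,\E[|P(s)|^{2D-1}]$, it splits $C_{r'+1,1}$ into three pieces $L_1+L_2+L_3$ according to where the additional derivative falls, kills $L_1,L_2$ using $\partial_{ik}P(s)\prec q^{-4}$ together with the extra $q^{-1}$ from the higher cumulant, and handles $L_3$ with the ``effectively negligible'' bookkeeping (the adapted version of Lemma~\ref{lemma_eff_rev}) plus the presence of a $T$-factor coming from the single $j$-indexed entry. You instead prove a single structural bound, $|\partial_{ik}^{m}P(s)|\prec q^{-4}$ for all $m\ge 1$, via the invariant that every term of $\partial_{ik}^{m}s(z)$ carries at least two factors from $\{T_i,T_k\}$; this is correct (indeed $\partial_{ik}T_\ell=-G_{\ell i}T_k-G_{\ell k}T_i$ preserves the $T$-count, and $\partial_{ik}s=-2T_iT_k$, with the $i=k$ case giving $-T_i^2$), and it lets you treat all $r\ge 2$ at once by Leibniz and power counting, avoiding both the induction and the effective-negligibility formalism. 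That is a genuinely cleaner route to the same estimates.

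One quantitative point in your write-up needs to be tightened. Your headline bound $|C_{r,1}|\prec \sqrt N\,q^{-(r+3)}\,\E[|P(s)|^{2D-2}]$ is declared negligible because it is $o_\prec(\sqrt N q^{-4})$, but that yardstick is only appropriate for coefficients of $\E[|P(s)|^{2D-1}]$. A term $c_N\,\E[|P(s)|^{2D-2}]$ can be absorbed into the recursion for $\E[|P(s)|^{2D}]$ (via Young and Jensen) only if $c_N\lesssim(\sqrt N/q^{4})^{2}=N/q^{8}$, and $\sqrt N q^{-(r+3)}\le Nq^{-8}$ fails at $r=2$ for $\phi>1/6$ and at $r=3$ for $\phi>1/4$, i.e.\ in part of the admissible range $1/8<\phi<1/2$. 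So the refinement you mention only parenthetically is in fact essential: since the index $j$ sits in a single Green-function factor $G_{xj}$, $x\in\{i,k\}$, and the rest of the term is $j$-independent, you must perform the $j$-sum without absolute values to get $\sqrt N\,T_x\prec\sqrt N q^{-2}$ instead of $\sum_j|G_{kj}|\prec\sqrt N$; this yields $c_N\prec\sqrt N\,q^{-(r+5)}$, which matches the paper's $O(\sqrt N/q^{7})$ at $r=2$ and satisfies $c_N\lesssim N/q^{8}$ for every $r\ge2$ and every $\phi<1/2$. With the same accounting, your remark that derivatives landing on the $P(s)$, $\overline{P(s)}$ powers ``only make the term smaller'' is justified: each lost power of $|P|$ tightens the absorbability threshold by a factor $\sqrt N/q^{4}$ while the term gains a factor $\partial_{ik}P\prec q^{-4}\le\sqrt N/q^{4}$. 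Once these two points are made explicit, your proof is complete and covers $C_{r,2}$ verbatim.
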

\begin{proof}
By the previous subsection, we can know that $C_{2,1} = O \left( \sqrt{N}/q^7 \right)\cdot\expect{\abs{P(s)}^{2D-1}} $ is negligible. Now, suppose that $C_{r',1}$ is negligible for some integer $r' \geq 2$. Write $C_{r',1}$ as following:
\begin{align}
    C_{r',1} = \frac{1}{(r')!N}\sum_{i,j,k} \kappa_{ik}^{(r'+1)} \expect{\partial_{ik}^{r'-1} \left \{ P(s)^{D-2}\overline{P(s)}^{D}\G{kj} \cdot \partial_{ik}P(s) \right \}  }
\end{align}
and define $F_t(H_{ik})$ as
\begin{align}
    \sum_{t\in T} P(s)^{\alpha_t} \overline{P(s)}^{\beta_t} F_t(H_{ik}) = \partial_{ik}^{r'-1} \left \{ P(s)^{D-2}\overline{P(s)}^{D}\G{kj} \cdot \partial_{ik}P(s) \right \}
\end{align}
Then, $C_{r'+1,1}$ can be interpreted as
\begin{align}
    (r'+1)C_{r'+1,1} = L_1 + L_2 + L_3
\end{align}
where
\begin{align}
    L_1 &= \sum_{t\in T}\frac{\alpha_t}{(r')!N}\sum_{i,j,k} \kappa_{ik}^{(r'+2)} \expect{P(s)^{\alpha_t-1} \overline{P(s)}^{\beta_t} F_t(H_{ik}) \cdot \partial_{ik}P(s) } \\
    L_2 &= \sum_{t\in T}\frac{\beta_t}{(r')!N}\sum_{i,j,k} \kappa_{ik}^{(r'+2)} \expect{P(s)^{\alpha_t} \overline{P(s)}^{\beta_t-1} F_t(H_{ik}) \cdot \partial_{ik}\overline{P(s)} } \\
    L_3 &= \sum_{t\in T}\frac{1}{(r')!N}\sum_{i,j,k} \kappa_{ik}^{(r'+2)} \expect{P(s)^{\alpha_t} \overline{P(s)}^{\beta_t} F_t'(H_{ik}) }
\end{align}
Since $\partial_{ik}P(s)\prec 1/q^4$ and $\kappa_{ik}^{(r'+2)}$ has one more $q$ in the denominator than $\kappa_{ik}^{(r'+1)}$, the bound of $L_1$ has 5 more $q$ in the denominator than $C_{r',1}$ hence negligible. Similarly, $L_2$ is negligible also. For $L_3$, by the changed version of Lemma \ref{lemma_eff_rev}, the index of $N$ of order of $C_{r'+1}$ is smaller than that of $C_{r',1}$ (hence smaller than that of $C_{2,1}$ (equals to $1/2$)). Also, since $r' \geq 2$, $\kappa_{ik}^{(r'+2)}$ has more than two $q$ in the denominator. Plus, since there is only one entry of $G$ with index $j$ inside $L_3$, there exists at least one $T_\gamma$ ($\gamma$ is arbitrary) inside $F'(H_{ik})$ which means at least two $q$ appear again inside $F(H_{ik})$. Combining previous two facts, the order of $L_3$ is
\begin{align}
    L_3 = O\left( \frac{N^{p_1}}{q^{p_2}} \right),\quad p_1<\frac{1}{2},\ p_2\geq4
\end{align} 
which means $L_3$ is negligible since $\phi>1/8$. Therefore, since $L_1$, $L_2$, and $L_3$ are negligible, $C_{r'+1,1}$ is negligible also. By similar process, $C_{r'+1,2}$ is negligible.
\end{proof}

\begin{theorem}
$C_{r,3} = O(\sqrt{N}/q^{r+1})\expect{\abs{P(s)}^{2D-1}}$ for all integer $r \geq 3$.
\end{theorem}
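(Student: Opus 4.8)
The plan is to run the induction on $r$ that proves Theorem~\ref{Ar3thm}, but to track powers of $q$ sharply and to invoke Lemma~\ref{lem:T_k} at the decisive step. Recall that $C_{r,3}=\frac{1}{r!\,N}\sum_{i,j,k}\kappa_{ik}^{(r+1)}\,\E[\partial_{ik}^{r-1}\{P(s)^{D-1}\overline{P(s)}^D(-\G{ki}\G{kj}-\G{kk}\G{ij})\}]$, so $C_{r+1,3}$ is obtained from $C_{r,3}$ by applying one more $\partial_{ik}$ and replacing $\kappa_{ik}^{(r+1)}$ by $\kappa_{ik}^{(r+2)}=O(q^{-1}\kappa_{ik}^{(r+1)})$. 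First I would sort the Leibniz expansion of $\partial_{ik}^{r-1}$ according to how many of the derivatives fall on the two $P(s)$-type factors and how many fall on the Green-function ``seed'' $-\G{ki}\G{kj}-\G{kk}\G{ij}$, using the identity
\[
\partial_{ik}P(s)=(z+m_{sc}-\kappa_3 m_{sc}^2)\,\partial_{ik}s(z)=-2(z+m_{sc}-\kappa_3 m_{sc}^2)\,T_iT_k=O_\prec(q^{-4}),
\]
together with the elementary facts that (i) in $\partial_{ik}^{m}(-\G{ki}\G{kj}-\G{kk}\G{ij})$ the index $j$ sits in exactly one Green-function entry, necessarily off-diagonal of the form $G_{j\alpha}$ or $G_{\alpha j}$ with $\alpha\in\{i,k\}$, while the other $m+1$ entries have both indices in $\{i,k\}$; and (ii) each differentiation of a product of $\ell$ Green-function entries produces $2\ell$ terms of length $\ell+1$.

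The first real step is the analogue of Lemma~\ref{Ar3lemma}: up to negligible terms,
\[
C_{r,3}=\frac{1}{r!\,N}\sum_{i,j,k}\kappa_{ik}^{(r+1)}\,\E\!\left[P(s)^{D-1}\overline{P(s)}^D\,F_r(H_{ik})\right],\qquad F_r:=\partial_{ik}^{\,r-1}\!\left(-\G{ki}\G{kj}-\G{kk}\G{ij}\right),
\]
i.e.\ every term in which a derivative hits a $P(s)$ or $\overline{P(s)}$ factor is negligible. Indeed, such a term carries at least one factor $\partial_{ik}P(s)=O_\prec(q^{-4})$ and a reduced power of $|P(s)|$; combined with the $O_\prec(q^{-2})$ gained from summing the surviving seed in $j$ (see the next paragraph) and the bound $\sum_{i,k}|\kappa_{ik}^{(r+1)}|\le N^2\max_{i,k}|\kappa_{ik}^{(r+1)}|=O(Nq^{-(r-1)})$, it contributes $\prec \sqrt N\,q^{-(r+5)}\,\E[|P(s)|^{2D-2}]$, which by Young's inequality is $\le \sqrt N q^{-(r+1)}\E[|P(s)|^{2D-1}]+o(1)$ since $q\ge N^{1/8}$; iterating handles several derivatives on the $P(s)$-factors at once. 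Establishing this classification uniformly for $r\le t$ — that no distribution of the $r-1$ derivatives escapes the pattern ``one off-diagonal $j$-entry times entries indexed in $\{i,k\}$'' — is the part I expect to be most delicate, and it is most cleanly organized through the adapted ``effectively negligible'' bookkeeping of Definitions~\ref{effentry}--\ref{effneg} (with $\G{ik},\G{ki}$ now the privileged off-diagonal pair), exactly as in the proofs of Lemma~\ref{Ar3lemma} and Theorem~\ref{Ar3thm}; the base case $r=2$ is checked by computing $F_2=2\G{ki}^2\G{kj}+2\G{kk}\G{ii}\G{kj}+4\G{ik}\G{kk}\G{ij}$ directly.

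Finally I would read off the estimate from the reduced expression. Each monomial of $F_r$ is a product of $r+1$ Green-function entries: the unique $j$-entry $G_{\alpha j}$ with $\alpha\in\{i,k\}$, together with $r$ entries indexed in $\{i,k\}$, each of which is either diagonal ($O_\prec(1)$) or equal to $\G{ik}/\G{ki}$ ($O_\prec(q^{-1})$). Summing the $j$-entry gives $\sum_j G_{\alpha j}=\sqrt N\,T_\alpha$ with $|T_\alpha|\prec q^{-2}$ by Lemma~\ref{lem:T_k}, and bounding the remaining factors crudely,
\[
|C_{r,3}|\ \prec\ \frac1N\cdot \sqrt N\,q^{-2}\cdot \Big(\textstyle\sum_{i,k}|\kappa_{ik}^{(r+1)}|\Big)\cdot \E\big[|P(s)|^{2D-1}\big]\ \prec\ \frac{\sqrt N}{q^{\,r+1}}\,\E\big[|P(s)|^{2D-1}\big].
\]
Monomials of $F_r$ carrying one or more off-diagonal $\G{ik}$-type factors only improve this (each costs a further $q^{-1}$, and summing an off-diagonal entry can only shrink the count), hence are negligible by comparison. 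This is precisely the mechanism that is unavailable at $r=2$: there the crude $j$-summation only reaches $O_\prec(\sqrt N q^{-3})$, which is why $r=2$ requires the extra extraction of $m_{sc}$ from the diagonal entries and the separate ``$-\kappa_3 m_{sc}^2\,\E[\cdots s(z)]$'' correction, whereas for $r\ge 3$ the bound $\sqrt N q^{-(r+1)}$ drops out immediately, completing the induction.
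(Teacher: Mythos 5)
Your proposal is correct and follows essentially the same route as the paper's proof: discard the terms where derivatives hit $P(s)$ or $\overline{P(s)}$ via $\partial_{ik}P(s)\prec q^{-4}$, reduce to a single off-diagonal $j$-entry times entries indexed in $\{i,k\}$ (with the $\G{ik}$-type factors only improving the bound), sum over $j$ using Lemma~\ref{lem:T_k} to gain $\sqrt N q^{-2}$, and combine with $\kappa_{ik}^{(r+1)}=O(N^{-1}q^{-(r-1)})$ to obtain $O(\sqrt N/q^{r+1})\E[|P(s)|^{2D-1}]$. The only differences are organizational (you phrase the structural classification as an induction in the style of Lemma~\ref{Ar3lemma}, whereas the paper's proof of this theorem counts the indices of $\partial_{ik}^r\G{kj}$ directly), which does not change the substance.
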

\begin{proof}
Among the terms in
\begin{align}
    \frac{1}{N}\sum_{i,j,k} \frac{\kappa_{ik}^{(r+1)}}{r!} \expect{P(s)^{\alpha}\overline{P(s)}^{\beta}F(H_{ik})}
\end{align}
, if $F(H_{ik})$ has more than two non-diagonal entries, then such terms are negligible since the order of $N$ of their bounds are negative. Also, $F(H_{ik})$ have at least one non-diagonal entries containing the index $j$. Therefore, we can only consider the case that $F(H_{ik})$ contains the only non-diagonal entry (containing $j$) which leads the order of $N$ in their bounds are $1/2$. Consequently, if $\alpha<D-1$ or $\beta <D$, it means that at least one of derivative has effected to at least one $P(s)$ or $\overline{P(s)}$. Then since $\partial_{ik}P(s) \prec 1/q^4$, the order is smaller than $O(\sqrt{N}/q^4)$ which is negligible by the fact that $\phi>1/8$. In conclusion, the possible non-negligible term in $A_{r,3}$ is
\begin{align}
    \frac{1}{N}\sum_{i,j,k} \frac{\kappa_{ik}^{(r+1)}}{r!} \expect{P(s)^{D-1}\overline{P(s)}^{D}\partial_{ik}^r (\G{kj})}
\end{align}
Since $\partial_{ik}^r (\G{kj})$ contains $2r+2$ indices consisted of one $j$, r $i$, and r+1 $k$, the possible non-negligible terms are followings:
\begin{align}
    &\frac{1}{N}\sum_{i,j,k} \frac{\kappa_{ik}^{(r+1)}}{r!} \expect{P(s)^{D-1}\overline{P(s)}^{D}\G{ij}\times\text{(}r\text{ diagonal entries)}} \nonumber\\
    \text{or }&\frac{1}{N}\sum_{i,j,k} \frac{\kappa_{ik}^{(r+1)}}{r!} \expect{P(s)^{D-1}\overline{P(s)}^{D}\G{kj}\times\text{(}r\text{ diagonal entries)}}
\end{align}
which has order of
\begin{align}
    O\left(\frac{\sqrt{N}}{q^{r+1}}\right)\expect{\abs{P(s)}^{2D-1}}
\end{align}
\end{proof}

\subsubsection{Estimates for $R_t$}
The goal of the subsection is to prove
\begin{align}
    R_t = O(\sqrt{N}/q^t) \cdot \| \abs{P(s)}^{2D-1} \|_\infty
\end{align}
where $R_t$ is defined as
\begin{align}
    R_t = \sum_{i,j,k}\expect{\Omega_t\left(P(s)^{D-1}\overline{P(s)}^D \cdot \frac{1}{N}\G{kj}H_{ik} \right)}
\end{align}
where
\begin{align}
    \expect{\Omega_t\left(P(s)^{D-1}\overline{P(s)}^D \cdot \frac{1}{N}\G{kj}H_{ik} \right)} \leq C_t' \cdot \expect{\abs{H_{ij}}^{t+2}} \cdot \| \partial_{ij}^{t+1}(P(s)^{D-1}\overline{P(s)}^D \cdot \frac{1}{N}\G{kj}) \|_\infty
\end{align}
for some constant $C_t$. Then,
\begin{align}
    \abs{R_t} &\leq C_t' \sum_{i,j,k} \expect{\abs{H_{ij}}^{t+2}} \cdot \left \| \partial_{ij}^{t+1}(P(s)^{D-1}\overline{P(s)}^D \cdot \frac{1}{N}\G{kj}) \right \|_\infty \nonumber\\
    &= C_t' \sum_{i,j,k} \kappa_{ij}^{(t+2)} \cdot \left \| \partial_{ij}^{t+1}(P(s)^{D-1}\overline{P(s)}^D \cdot \frac{1}{N}\G{kj}) \right \|_\infty \label{Rteq2}
\end{align}
while the equation \ref{Rteq2} can be made by replacing the expectation symbol of $C_{t+1}$ with infinite norm. Also, the process of estimating the bound of $C_{t+1}$ contains no procedure using expectation. In other words, the expectation acts only as a linear operator in the process of estimating $C_{t+1}$. Therefore, we can calculate the bound of $R_t$ by same way as in $C_{t+1}$ and
\begin{align}
    R_t = O(\sqrt{N}/q^{t+2}) \cdot \| \abs{P(s)}^{2D-1} \|_\infty
\end{align}

\section{Proof of Lemma \ref{lemma:innerpro} and Lemma \ref{lemma:EM}} \label{sec:EM}

\subsection{Proof of Lemma \ref{lemma:innerpro}}
By the block structure of $M$, reordering the indices without loss of generality, there exists $T>0$ such that $N$ can be divided by $T$. For $i=1,\cdots,K$
    \begin{align}
        \mathbf{v}_i = (\mathbf{v}_{i1}\cdots\mathbf{v}_{i1} \vert \mathbf{v}_{i2}\cdots\mathbf{v}_{i2} \vert \cdots \vert
        \mathbf{v}_{iT}\cdots\mathbf{v}_{iT})^T
    \end{align}
    For the orthnormality of $\{\mathbf{v}_i, \ i=1,\cdots,K\}$, $\mathbf{v}_{ij} \ (i=1,\cdots,K, \ j,j'=1,\cdots,T)$ should satisfy
    \begin{align}
        \frac{N}{T}\sum_{j=1}^T \mathbf{v}_{ij}^2 =1, \ \frac{N}{T}\sum_{j=1}^T \mathbf{v}_{ij}\mathbf{v}_{i'j} = 0
    \end{align}
    if $i\neq i'$. Define $s_{\alpha\beta}$ which is partial sum calculated by $\alpha$-th block of $\mathbf{v}_i$ and $\beta$-th block of $\mathbf{v}_j$ inside $\left\langle \mathbf{v}_i,G(z)\mathbf{v}_j\right \rangle$ by
    \begin{align}
        s_{\alpha\beta} = \mathbf{v}_{i\alpha}\mathbf{v}_{j\beta}\cdot\sum_{u=1+\frac{N}{T}(\alpha-1)}^{\frac{N}{T}\alpha}\sum_{v=1+\frac{N}{T}(\beta-1)}^{\frac{N}{T}\beta}\G{uv}
    \end{align}
    for $\alpha,\beta = 1,\cdots,T$. If $\alpha \neq \beta$, there is no $\G{uv}$ inside $s_{\alpha\beta}$ such that $u=v$ which means there is no diagonal entries in $s_{\alpha\beta}$. Therefore, $s_{\alpha\beta}$ with $\alpha\neq\beta$ are negligible as $N\rightarrow\infty$ since they are consisted of non-diagonal entries of $G$. Otherwise, for $\alpha = 1,\cdots,T$,
    \begin{align}
        s_{\alpha\alpha} = \mathbf{v}_{i\alpha}\mathbf{v}_{j\alpha}\cdot\sum_{u,v=1+\frac{N}{T}(\alpha-1)}^{\frac{N}{T}\alpha}\G{uv}
    \end{align}
    has same behavior with
    \begin{align}
        \mathbf{v}_{i\alpha}\mathbf{v}_{j\alpha}\cdot\sum_{u,v=1}^{\frac{N}{T}}\G{uv}
        =\frac{N}{T}\mathbf{v}_{i\alpha}\mathbf{v}_{j\alpha}\cdot \frac{1}{N/T}\sum_{u,v=1}^{\frac{N}{T}}\G{uv}
        =\frac{N}{T}\mathbf{v}_{i\alpha}\mathbf{v}_{j\alpha}\cdot\left(m_{sc}+O\left(\frac{\sqrt{N/T}}{q^4}+\frac{1}{q^2}\right)\right)
    \end{align}
    while the last equality is by Lemma \ref{lemma:local law cgSBM}. Then by summing all $s_{\alpha\beta}$,
    \begin{align}
        \scalar{\mathbf{v}_i}{G(z)\mathbf{v}_j} = \sum_{\alpha=1}^T s_{\alpha\alpha} =& \frac{N}{T}\sum_{\alpha=1}^T \mathbf{v}_{i\alpha}\mathbf{v}_{j\alpha} \cdot \scalar{\mathbf{v}_i}{\mathbf{v}_j} \left(m_{sc}+O\left(\frac{\sqrt{N/T}}{q^4}+\frac{1}{q^2}\right)\right) \\
        =& \scalar{\mathbf{v}_i}{\mathbf{v}_j} \left(m_{sc}+O\left(\frac{\sqrt{N}}{q^4}+\frac{1}{q^2}\right)\right)
    \end{align}

\subsection{Proof of Lemma \ref{lemma:EM}}
Without loss of generality, re-order the indices $\mathbb{E}M$ so that $i_1<i_2$ for all $i_1 \in C_{j_1}$ and $i_2 \in C_{j_2}$ with $j_1<j_2$ so that $\mathbb{E}M$ becomes block matrix. By (\ref{eq:M}), the expectation matrix $\mathbb{E}M$ satisfies
    \begin{align}
        \mathbb{E}M_{ij} = \begin{cases}
            \frac{p_s-p_a}{\sigma} = \frac{1}{\sigma} \cdot \frac{K-1}{K} (p_s-p_d) & (i\sim j) \\
            \frac{p_d-p_a}{\sigma} = \frac{1}{\sigma} \cdot \frac{1}{K} (p_d-p_s) & (i\not\sim j) 
        \end{cases}
        = \begin{cases}
            (K-1)x & (i\sim j) \\
            -x & (i\not\sim j)
        \end{cases}
    \end{align}
    where $x$ is defined as $x=\frac{1}{\sigma K}(p_s-p_d)$. Then $\mathbb{E}M$ is a block matrix as following:
    \begin{align}
        \begin{pmatrix}
        \begin{matrix}
        (K-1)x & \cdots & (K-1)x \\
        \vdots & & \vdots \\
        (K-1)x & \cdots & (K-1)x
        \end{matrix}
        & \rvline & 
        \begin{matrix}
        -x & \cdots & -x \\
        \vdots & & \vdots \\
        -x & \cdots & -x
        \end{matrix}
        & \rvline & \cdots & \rvline &
        \begin{matrix}
        -x & \cdots & -x \\
        \vdots & & \vdots \\
        -x & \cdots & -x
        \end{matrix} \\
        \hline
        \begin{matrix}
        -x & \cdots & -x \\
        \vdots & & \vdots \\
        -x & \cdots & -x
        \end{matrix}
        & \rvline & 
        \begin{matrix}
        (K-1)x & \cdots & (K-1)x \\
        \vdots & & \vdots \\
        (K-1)x & \cdots & (K-1)x
        \end{matrix}
        & \rvline & \cdots & \rvline &
        \begin{matrix}
        -x & \cdots & -x \\
        \vdots & & \vdots \\
        -x & \cdots & -x
        \end{matrix}
        \\
        \hline
        \vdots & \rvline & \vdots & \rvline & & \rvline & \vdots \\
        \hline
        \begin{matrix}
        -x & \cdots & -x \\
        \vdots & & \vdots \\
        -x & \cdots & -x
        \end{matrix}
        & \rvline & 
        \begin{matrix}
        -x & \cdots & -x \\
        \vdots & & \vdots \\
        -x & \cdots & -x
        \end{matrix}
        & \rvline & \cdots & \rvline &
        \begin{matrix}
        (K-1)x & \cdots & (K-1)x \\
        \vdots & & \vdots \\
        (K-1)x & \cdots & (K-1)x
        \end{matrix}
        \end{pmatrix}
    \end{align}
    Since the rank of a matrix is the number of nonzero rows of its reduced row-echelon form, $rank(A) = rank(A')$ while the $K\times K$ matrix $A'$ is defined as
    \begin{align}
        A' = \begin{pmatrix}
            (K-1)x & -x & \cdots & -x \\
            -x & (K-1)x & \cdots & -x \\
            \vdots & \vdots & & \vdots \\
            -x & -x & \cdots & (K-1)x
        \end{pmatrix}
    \end{align}
    while $rank(A')=K-1$ since $null(A')=span(\{ (1,1,\cdots,1)^T \})$ which implies $nullity(A')=1$. Therefore, the rank of $\mathbb{E}M$ is $K-1$. Define $N$-dimensional vectors $\mathbf{w}_i \ (i=1,\cdots,K-1)$ by
    \begin{align}
        \mathbf{w}_i(j) = \begin{cases}
            1 & \text{if  } (i-1)K/N \leq j < iK/N \\
            -1 & \text{if  } iK/N \leq j < (i+1)K/N \\
            0 & \text{else}
        \end{cases}
    \end{align}
    Then $(A-(Nx)I_N)\mathbf{w}_i=\mathbf{0}$ for all $i=1,\cdots,K-1$ and $\{\mathbf{w}_i, \ i=1,\cdots,K-1\}$ is independent. Since $rank(A)=K-1$, the non-zero eigenvalues of $A$ is $Nx = \frac{N}{\sigma K}(p_s-p_d)$ with multiplicity $K-1$. Consider the orthonormal eigenvectors $\mathbf{v}_i \ (i=1,\cdots,K-1)$ of $\mathbb{E}M$ with respect to eigenvalue $\frac{N}{\sigma K}(p_s-p_d)$ generated by using the Gram-Schmidt process to $\{\mathbf{w}_i, \ i=1,\cdots,K-1\}$. Since $\mathbf{w}_i \ (i=1,\cdots,K-1)$ satisfy the block structure properties described in the lemma and the Gram-Schmidt process cannot affect to this property, $\mathbf{v}_i \ (i=1,\cdots,K-1)$ satisfies such property also.

\end{document}